\documentclass[11pt,reqno]{amsproc}
\usepackage[margin=1in]{geometry}
\usepackage{amsmath, amsthm, amssymb}
\usepackage{hyperref}
\hypersetup{colorlinks=true, pdfstartview=FitV, linkcolor=blue,citecolor=blue, urlcolor=blue}
\usepackage[abbrev,lite,nobysame]{amsrefs}
\usepackage{times}
\usepackage[usenames,dvipsnames]{color}
\usepackage{bbm}
\usepackage{bm}
\usepackage{subcaption}
\usepackage{mathtools}
\usepackage{pdfpages}
\usepackage{comment}
\usepackage{appendix}
\definecolor{labelkey}{rgb}{0,0,1}

\newcommand{\R}{\mathbb{R}}

\newcommand{\N}{\mathbb{N}}

\providecommand{\R}{\mathbb{R}}

\providecommand{\N}{\mathbb{N}}

\providecommand{\eps}{\varepsilon}

\newcommand{\pv}{\operatorname{p.\!v.}}

\renewcommand{\leq}{\leqslant}
\renewcommand{\geq}{\geqslant}

\renewcommand{\div}{\operatorname{div}}

\newcommand{\dist}{\operatorname{dist}}

\newcommand{\Id}{\operatorname{Id}}

\DeclareMathOperator{\supp}{supp}

\newcommand{\loc}{\mathrm{loc}}

  \newtheorem{thm}{Theorem}[section]
  \newtheorem{cor}[thm]{Corollary}
  \newtheorem{Lemma}[thm]{Lemma}
  \newtheorem{Proposition}[thm]{Proposition}
    \newtheorem{Definition}[thm]{Definition}
  \newtheorem{remark}[thm]{Remark}

\usepackage{etoolbox}
\patchcmd{\subsubsection}{\itshape}{\itshape\bfseries}{}{} % THIS CHANGE ITALIC FONT OF SUBSUBSECTION INTO BOLD-ITALIC

\title{Pointwise Hadamard variational formula for the fractional Laplacian}

\author{Sidy M. Djitte and Franck Sueur}

\address[S. M. Djitte]{Department of Mathematics
Maison du nombre, 6 avenue de la Fonte, 
University of Luxembourg,  
L-4364 Esch-aur-Alzette, Luxembourg}
\email{sidymoctar.djitte@uni.lu, sidy.m.djitte@aims-senegal.org}
\address[F. Sueur]{Department of Mathematics
Maison du nombre, 6 avenue de la Fonte, 
University of Luxembourg,  
L-4364 Esch-aur-Alzette, Luxembourg}
\email{Franck.Sueur@uni.lu}

\keywords{Fractional Laplacian, Shape derivative, Green function, Robin function}
\begin{document}

\begin{abstract}
We establish pointwise formulas for the shape derivative of solutions to the Dirichlet problem associated with the fractional Laplacian. Specifically, we consider the equation $(-\Delta)^s u = h$ in $\Omega$ and $u=0$ in $\Omega^c$, where the right-hand side $h$ is either a Dirac delta distribution or a Lipschitz function. In both cases, we prove that the corresponding solution is shape differentiable in every direction and we derive a formula for the pointwise value of its shape derivative.
%In the case where the right hand side is  a Dirac distribution, so that the corresponding solution is the Green function associated with the fractional Laplacian, we cover the case where the order $s$ of the fractional Laplace operator  $(-\Delta)^s$ is in 
% $(0,1)$. We use the latter to also prove a similar formula for the pointwise value of the shape derivative in the case where the right hand side is a regular function. 
These formulas involve integral on the domain's boundary and fractional Neumann's traces. This extends to the case of the fractional Laplacian the well-known Hadamard variational formula for the standard Laplacian. Our argument is in the spirit of \cite{Ushikoshi, Kozono-Ushikoshi} and is based on PDEs techniques.
\end{abstract}
%
%%%%%%%%%%%%%%%%%%%%%%%%%%%%%%%%%%%%%%
\maketitle

\setcounter{tocdepth}{3}
\tableofcontents

\section{Introduction and statement of the results}
The fractional Laplace operators appear in different disciplines
of mathematics: PDEs and probability theory among others, in various applications, issued from biology or finance, for example.
 The fractional Laplace operator  $(-\Delta)^s$ of order $s\in(0,1)$ can be defined by its action on real valued  functions $u$, defined on $\R^N$, with $N \geq 1$,  through the formula:
\begin{equation}\label{def-int}
     (-\Delta)^su (x) := c_{N,s}\,  \pv \int_{\R^N}\frac{u(x)-u(y)}{|x-y|^{N+2s}}dy,
\end{equation}
with 
 \begin{equation*}
c_{N,s} :=\pi^{-N/2}s4^s\frac{\Gamma(\frac{N+2s}{2})}{\Gamma(1-s)},
\end{equation*}
where $\Gamma$ is the gamma function, and where $\pv$ refers to the Cauchy principal value.  
The reason for the presence of the normalization constant $c_{N,s}$
 is to match with another natural definition, based on the Fourier transform which sets the fractional Laplace operator $(-\Delta)^s$ as the Fourier multiplier of symbol 
$|\xi|^{2s}$. 
Yet another viewpoint is to consider the operator  $(-\Delta)^s$ as the Euler-Lagrange equations associated with the  quadratic form: 
\begin{equation*}
    \mathcal E_s(u,v):=\frac{c_{N,s}}{2}\iint_{\R^{2N}}\frac{(u(x)-u(y))(v(x)-v(y))}{|x-y|^{N+2s}}dxdy.
\end{equation*}
First the latter can be used as the square of the norm of the fractional Sobolev space of order $s$.  We shall therefore denote by $H^s(\R^N)$ the set of all those $L^2(\R^N)$ functions $u$ for which $\mathcal E_s(u,u)<\infty$. 
Moreover this  quadratic form, or rather its associated symmetric bilinear form may be used 
to define a notion of weak solution of Dirichlet type problems. 
Indeed, for a given bounded open set $\Omega\subset\R^N$  and for a given source term $h$, a classical problem associated with the fractional Laplace operator is 
\begin{equation} \label{Dir-01}
\left\{ \begin{array}{rcll} (-\Delta)^s u&=& h  &\textrm{in }\;\;\Omega \\ u&=&0&
\textrm{in }\;\;\R^N\setminus\Omega. \end{array}\right. 
\end{equation}
Observe that, as opposed to the classical  Dirichlet problem for the Laplace operator, the 
second condition is set on the whole complement of $\Omega$, rather than solely on its boundary $\partial \Omega$. While the standard Laplace operator is not sensitive to the remote values of the test function, the fractional  Laplace operator is non-local in nature, as exemplified by the integral formula \eqref{def-int}, which leads to such adaptation of the Dirichlet problem. Letting  $\mathcal H^s_0(\Omega)$ be  the space of the elements of $ H^s(\R^N)$ for which $u\equiv 0$ in $\R^N\setminus\Omega$, we say that $u$ in  $\mathcal H^s_0(\Omega)$  is a  weak solution to \eqref{Dir-01} if 
$$
\mathcal E_s(u,v)=\int_{\Omega}h(y)v(y)dy\quad   \text{for all } \, v\in \mathcal H^s_0(\Omega). 
$$
By standard compactness argument, it is well known that if $h\in L^2(\Omega)$, then there exists a solution and it is unique by the maximum principle.
We refer to the recent nice book \cite{FRO} for a far more complete overview of the fractional  Laplace operator.
\par\;

% \subsection{Case of a regular right hand side}
Our aim in this work is to examine the effect of a variation of the 
domain $\Omega$ on the solution 
 to the Dirichlet problem \eqref{Dir-01}. 
More precisely, we aim to compute the shape derivative of the solution $u$ with respect to the variation of the domain $\Omega$ along a given vector field $Y$. 
We define this notion more precisely, following  \cite{Sokolowski}.
We suppose that  the source term $h$ is sufficiently regular in $D$, where  $\Omega\Subset D\subset \R^N$. 
\begin{Definition} \label{def-ad}
Let $Y$ be a globally $C^{1,1}$ vector field in $\R^N$ and  let $(\Phi_t)_{t\in (-1,1)}$ 
be a family of diffeomorphisms such that
\begin{equation}\label{transformations}
\text{for all $t\in (-1,1)$, the mapping}\quad t\mapsto \Phi_t\in C^{1,1}(\R^N,\R^N)\;\text{is of class $C^2$},
\end{equation}
with
\begin{equation*}
\Phi_0=\Id_{\R^N} \quad\text{and}\quad \partial_t{\Phi_t}\big|_{t=0}= Y. 
\end{equation*} 
For any $t\in (-1,1)$,   let $u_t$ the unique weak  solution of the Dirichlet problem:
\begin{equation*}
\left\{ \begin{array}{rcll} (-\Delta)^s u_t&=& h  &\textrm{in }\;\;\Omega_t \\ u_t&=&0&
\textrm{in }\;\;\R^N\setminus\Omega_t , \end{array}\right. 
\end{equation*}
where $\Omega_t$ denotes the transformed domain: 
\begin{equation*}
   \Omega_t:=\Phi_t(\Omega).  
\end{equation*}
For $t\in (-1,1)$, let $v_t$ the function on $\Omega$ 
 defined by $v_t:=u_t\circ \Phi_t$. 
Then the Lagrangian shape derivative of $u$ in the direction of the vector field $Y$ is defined as
$$v'=\partial_t{v_t}\big|_{t=0}.$$
The Eulerian shape derivative  of $u$ 
in the direction of the vector field $Y$ is then defined as 
\begin{equation}\label{shape-deriv}
u':=v'-\nabla u\cdot Y.
\end{equation} 
We say that $u$ is shape differentiable in the direction of $Y$ if $u'(x)$ exists for all $x\in \Omega$.
\end{Definition}
The interest of the functions $v_t$ is that they are all defined on the same domain, that is the initial domain $\Omega$. 
On the other hand the Eulerian shape derivative, which we will simply call the shape derivative in the rest of the paper, is a more natural notion since 
 by the chain rule we observe that for any $x$ in $\Omega$, 
\begin{align} \label{taudacroisse}
 u'(x)  :=\lim_{t\to 0}\frac{u_t (x) - u_0(x)}{t} .
\end{align}
We aim to find a pointwise formula for the shape derivative $u'$ when the data $h$ is sufficiently regular so that \eqref{shape-deriv} make sense. We will prove that the pointwise values of the shape derivative $u'$ at any $x\in\Omega$ can be written as an integral over $\partial\Omega$ of some fractional traces of both $u$ and the Green function $G^s_\Omega(\cdot,\cdot)$ associated to the operator $(-\Delta)^s$. 
The latter is, by definition, the solution to 
\begin{equation*}
    (-\Delta)^s G_\Omega^s(x,\cdot)=\delta_x \quad\text{in}\quad \mathcal D'(\Omega)\qquad\text{and}\qquad G_\Omega^s(x,\cdot)=0\quad\text{in}\quad \R^N\setminus\Omega.
\end{equation*}
Here $\delta_x$ denote the Dirac delta distribution at the position $x$ in $\Omega$. 
 The  fractional traces hinted above are defined in terms of limits of quotients by fractional powers of the distance to the boundary. From now on, $\Omega$ will be a bounded open set of $\R^N$ of class $C^{1,1}$. Let $\delta$ in $C^{1,1}(\R^N)$ which coincides with the signed distance function near the boundary. Moreover, we assume that $\delta$ is positive in $\Omega$ and negative in $\R^N\setminus\Omega$. We  denote by $\nu$ the outward unit normal to the boundary. We recall that by regularity theory (see e.g \cite{RS}),  the fractional Neumann traces 
$$
\gamma_0^s(G_\Omega^s(x,\cdot)):= \big(G_\Omega^s(x,\cdot)/\delta^s\big)\big|_{\partial\Omega} \text{ and } \gamma_0^s(u):=\big(u/\delta^s\big)\big|_{\partial\Omega}\quad\text{are well defined for each $x\in \Omega$.}
$$
 \par\;
 
Our first result can then be stated as follows.
\begin{thm}\label{THM-1.2}
Let $\Omega\subset\R^N$  be  a  bounded open set of class $C^{1,1}$ with $N>2s$ and let $s\in (0,1)$. 
Let $h$ be Lipschitz continuous in $D$, where  $\Omega\Subset D\subset \R^N$ and $u$ be the unique weak solution of \eqref{Dir-01}.
Let $Y$ be a globally $C^{1,1}$ vector field in $\R^N$. Then, $u$ is shape differentiable in the direction of $Y$. Moreover, for all $x\in\Omega$, there holds:
  \begin{equation}\label{repres-ahape-deriv}
      u'(x)= \Gamma^2(1+s)\int_{\partial\Omega}\gamma_0^s(G_\Omega^s(x,\cdot))\gamma_0^s(u)Y\cdot\nu \, d\sigma .
  \end{equation}
In the above, $\Gamma$ stands for the classical gamma function and $\nu$ denotes the outward unit normal to the boundary.
\end{thm}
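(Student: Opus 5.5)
The plan is to reduce Theorem~\ref{THM-1.2} to a pointwise Hadamard formula for the Green function. Representation gives $u_t(x)=\int_{\Omega_t}G^s_{\Omega_t}(x,y)h(y)\,dy$ for $x\in\Omega$ and $|t|$ small, since $x\in\Omega_t$ then. So the proof splits into two steps: first establish the Green function formula, then differentiate this integral in $t$.

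\textbf{Step 1 (Green function).} For fixed $x\neq z$ in $\Omega$ one shows
\begin{equation*}
\partial_t G^s_{\Omega_t}(x,z)\big|_{t=0}=\Gamma^2(1+s)\int_{\partial\Omega}\gamma_0^s(G^s_\Omega(x,\cdot))\,\gamma_0^s(G^s_\Omega(z,\cdot))\,Y\cdot\nu\,d\sigma ,
\end{equation*}
in the spirit of \cite{Ushikoshi, Kozono-Ushikoshi}. Write $G_t=G^s_{\Omega_t}$ and split $G_t(x,\cdot)=F_{N,s}(x-\cdot)-H_t(x,\cdot)$, where $F_{N,s}$ is the fundamental solution (this uses $N>2s$). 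The regular part $H_t$ solves $(-\Delta)^sH_t(x,\cdot)=0$ in $\Omega_t$ with exterior datum $F_{N,s}(x-\cdot)$. The symmetric quantity $G_t(x,z)-G_0(x,z)$ is then obtained by testing with the other Green function. The key tool is a fractional integration by parts (Pohozaev-type) identity of Ros-Oton--Serra: for $w,\varphi$ vanishing outside $\Omega$, behaving like $\delta^s$, and with $(-\Delta)^s w,(-\Delta)^s\varphi$ regular near $\partial\Omega$,
\begin{equation*}
\int_\Omega (Y\cdot\nabla w)(-\Delta)^s\varphi+\int_\Omega (Y\cdot\nabla\varphi)(-\Delta)^s w=-\int_\Omega \div Y\ldots+\Gamma^2(1+s)\int_{\partial\Omega}\gamma_0^s(w)\gamma_0^s(\varphi)Y\cdot\nu\,d\sigma
\end{equation*}
(the bilinear version of the Pohozaev boundary term). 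The $\Gamma^2(1+s)$ arises exactly there.

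Concretely, set $v_t=G_t(x,\Phi_t(\cdot))$. Pull the quadratic form back to $\Omega$; its kernel is $|\Phi_t(a)-\Phi_t(b)|^{-N-2s}$ times Jacobians. Differentiate at $t=0$ to get the equation satisfied by $v'$. Its forcing is the derivative of the kernel, namely the bilinear form with kernel $\frac{c_{N,s}}{2}\big[\div Y(a)+\div Y(b)-(N+2s)\frac{(Y(a)-Y(b))\cdot(a-b)}{|a-b|^2}\big]|a-b|^{-N-2s}$. Then test this against $G_0(z,\cdot)$. The Pohozaev identity converts the commutator form into the boundary integral plus the interior terms $Y\cdot\nabla G$ evaluated at the poles. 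Those interior terms cancel precisely the correction $-\nabla G\cdot Y$ in \eqref{shape-deriv}. The Dirac singularities at $x,z$ are handled by the splitting $F-H$: near the poles everything is explicit or smooth, so Pohozaev is only applied where the functions are $C^s$ up to the boundary with good $\delta^{s-1}$ gradient bounds.

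\textbf{Step 2 (Lipschitz $h$).} Differentiate $u_t(x)=\int_{\Omega}G_t(x,\Phi_t(y))h(\Phi_t(y))\,\mathrm{Jac}\,\Phi_t(y)\,dy$. The derivatives of $h\circ\Phi_t$ and of the Jacobian combine with $Y\cdot\nabla_yG$ to give a divergence. Because $G$ vanishes on $\partial\Omega$, this divergence integrates to zero. What remains is $\int_\Omega G'(x,y)h(y)\,dy$. Insert Step 1 and use Fubini. The identity $\int_\Omega\gamma_0^s(G(z,\cdot))h(z)\,dz=\gamma_0^s(u)$ holds since $u=\int G(\cdot,z)h(z)\,dz$ and $G$ is symmetric, and this yields \eqref{repres-ahape-deriv}. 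Exchanging derivative and integral requires uniform bounds. I would aim for $|G_t(x,y)-G_0(x,y)|\le C|t|\,\min\{\ldots\}$ of the form $C|t|\delta^s(x)\delta^s(y)|x-y|^{-N}$, which is integrable, and justify them through the $C^s$/$\delta^s$ regularity of \cite{RS} applied uniformly in $t$, since $\Omega_t$ is uniformly $C^{1,1}$.

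\textbf{Main obstacle.} The hard part is Step 1. First, one must justify differentiability of $t\mapsto v_t$ pointwise, not only in $\mathcal H^s_0$. Second, one must apply the bilinear Pohozaev identity to functions whose fractional Laplacians are Dirac masses, or only Lipschitz after the splitting. I would first prove $H^s$ differentiability of $v_t$ with the Dirac replaced by mollified sources $\delta_x^\epsilon$. Then obtain the formula for those, and pass $\epsilon\to0$ using uniform-in-$\epsilon$ boundary regularity of $\gamma_0^s$ away from the poles.
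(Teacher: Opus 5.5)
Your route is a legitimate alternative at the level of formulas. You use the Eulerian derivative $\partial_tG^s_{\Omega_t}(x,y)|_{t=0}$ at fixed points and Reynolds transport for $u_t(x)=\int_{\Omega_t}G^s_{\Omega_t}(x,y)h(y)\,dy$; the boundary term dies since $G^s_\Omega(x,\cdot)=0$ on $\partial\Omega$. You then use Fubini and the trace interchange, which is the paper's Lemma~\ref{Ab} and does need an argument. This would bypass Lemma~\ref{reduc-FY} and the Brezis--Peletier type identities of \cite{Sidy-Franck-BP}. The paper needs these because it differentiates the fully Lagrangian quantity $G^s_{\Omega_t}(\Phi_t(x),\Phi_t(y))$ and must then absorb $\nabla_xG^s_\Omega\cdot Y(x)+\nabla_yG^s_\Omega\cdot Y(y)$. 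However, the analytic core of your proof is missing, and the one estimate you propose for it is false.

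The bound $|G^s_{\Omega_t}(x,y)-G^s_\Omega(x,y)|\le C|t|\,\delta^s(x)\delta^s(y)|x-y|^{-N}$ fails near $\partial\Omega$. Where $tY\cdot\nu<0$ there are $y\in\Omega\setminus\Omega_t$ with $\delta(y)\simeq|t|$. For these, $G^s_{\Omega_t}(x,y)=0$ while $G^s_\Omega(x,y)\simeq|t|^s\gg|t|^{1+s}$. Consistently, the Eulerian derivative given by the formula itself is of size $\delta^{s-1}(y)$, not $\delta^s(y)$, near parts of $\partial\Omega$ where $Y\cdot\nu\neq0$. This is because $\int_{\partial\Omega}\gamma_0^s(G^s_\Omega(y,\cdot))\,d\sigma\simeq\delta^{s-1}(y)$ as $y\to\partial\Omega$. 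Moreover, uniform-in-$t$ $C^s$ or $\delta^s$ regularity as in \cite{RS} gives compactness, never a factor $|t|$.

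What is missing is an equation for the difference quotient with $t$-uniform estimates. You need it both for this domination and for the pointwise differentiability you flag as the main obstacle. $H^s$-differentiability with mollified poles plus trace bounds at $t=0$ does not suffice, since you would need the formula along the whole flow and its continuity in $t$, uniformly in $\epsilon$. This is the content of the paper's Lemma~\ref{diff-reg-part}:
\begin{itemize}
\item The Lagrangian quotient $S^t_x$ of the regular part solves $\mathcal L_tS^t_x=f_t[H^s_\Omega(x,\cdot)]$ in $\Omega$ and $S^t_x=g^t_x$ in $\R^N\setminus\Omega$.
\item Here $|f_t|\le C\delta_\Omega^{-s}$ by Lemma~\ref{LM1}, and $g^t_x$ is uniformly $C^\alpha$ by Lemma~\ref{unifest-gtx}.
\item Green and Poisson kernel bounds for $\mathcal L_t$, together with the Kim--Weidner estimate (Proposition~\ref{Dir-non-boundary-data}), give uniform $L^\infty$ and $C^\beta(\overline\Omega)$ bounds.
\item From these follow differentiability (by Arzel\`a--Ascoli and uniqueness for the limit problem) and the domination $|q^t_x(y)|\le C(1+|x-y|^{2s-N})$. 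This domination is what lets the paper differentiate $u_t\circ\Phi_t(x)$ under the integral.
\end{itemize}
The Lagrangian frame is what makes this work: the quotient vanishes outside $\Omega$ and the pole does not move.

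In your pulled-back integral with $x$ fixed, the pole sits at $\Phi_t^{-1}(x)$. The differentiated integrand then contains $\nabla_yG^s_\Omega(x,y)\cdot Y(y)h(y)\sim|x-y|^{2s-N-1}$, which is not integrable for $2s\le 1$. So for $s\le 1/2$, both the interchange and the claim $\int_\Omega\div(G^s_\Omega(x,\cdot)hY)\,dy=0$ hold only in a principal-value sense. This is fixable by splitting off $F_s(x,\cdot)$, or by also moving $x$, as the paper does.

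To keep your Eulerian route, you would have to deduce from the Lagrangian estimates, locally uniformly in the pole, a bound like $C\delta^{s-1}(y)$ for the Eulerian quotient up to $\partial\Omega$. The paper's remark after Theorem~\ref{THM-1.5} gets the Eulerian formula only for $x,y$ in compact subsets of $\Omega$, and only via locally uniform differentiability of the Lagrangian map.

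A minor point: in your schematic Pohozaev identity the boundary term has the wrong sign. By polarizing the Ros-Oton--Serra identity, the interior terms equal $-E_Y(w,\varphi)-\Gamma^2(1+s)\int_{\partial\Omega}\gamma_0^s(w)\gamma_0^s(\varphi)Y\cdot\nu\,d\sigma$.
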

\par\;
The proof of Theorem \ref{THM-1.2}, which will be a consequence of a more general result, is given in Section \ref{Proof-THM-1.2}.
\begin{remark}
By standard regularity theory, it is well known that if, for instance, $h\in L^\infty(D)$, then the Neumann trace $\gamma_0^s(u)$ exists and therefore the RHS of the identity \eqref{repres-ahape-deriv} still make sense. This indicates that, in principle, it should be possible to extends Theorem \ref{THM-1.2}, up to replacing the LHS of \eqref{repres-ahape-deriv} by \eqref{taudacroisse} (which might exist regardless of whether $\nabla u$ exists or not) to less regular functions than Lipschitz ones. However, for technical reasons, in here we only restrict ourselves to the case where the Dirichlet data $h$ is Lipschitz continuous.
\end{remark}
\par\;
%\subsection{Case of the Green function}
Our second result extends Theorem \ref{THM-1.2} --in some sense-- to the case where $h=\delta_x$ is the Dirac delta distribution so that the corresponding solution $u$ is the Green function $G_\Omega^s(x,\cdot)$  associated to the operator $(-\Delta)^s$. In the case of the classical laplacian, that is, when $s=1$, such result were obtained by Hadamard after  his pioneering work \cite{Hadamard}. More precisely, let $\Omega_t$ be defined from the initial domain  $\Omega$ and a vector field  of the form $Y=\alpha\nu$  
 with $\alpha\in C^\infty(\partial\Omega)$ and $\nu$ the outer unit normal, by setting 
$$
\partial\Omega_t=\Big\{y=x+t\alpha(x)\nu(x), \;x\in\partial\Omega\Big\}.
$$

 This is a particular case of the domain perturbations given by  Definition   \ref{def-ad}.  For any $x,y\in \Omega$, set 
 \begin{align} \label{Hadamard-Def}
  \partial_t G_{\Omega_t}(x,y){\big|_{t=0}}  :=\lim_{t\to 0}\frac{G_{\Omega_t}(x,y)-G_\Omega(x,y)}{t},
\end{align}
where $G_{\Omega_t}(\cdot,\cdot)$ is the Green function of the perturbed domain $\Omega_t$ associated to the Laplace operator associated with homogeneous Dirichlet condition, that is the unique solution to 
\begin{equation*}
    -\Delta G_{\Omega_t}(x,\cdot)=\delta_x \quad\text{in}\quad \mathcal D'(\Omega_t)\qquad\text{and}\qquad G_{\Omega_t}(x,\cdot)=0\quad\text{on}\quad \partial \Omega_t.
\end{equation*}
Then, Hadamard result says that the ratio \eqref{Hadamard-Def} exists and is given by
\begin{align} \label{Hadamard}
  \partial_t G_{\Omega_t}(x,y){\big|_{t=0}} 
    =\int_{\partial\Omega}\gamma_N( G_\Omega(x,\cdot))(z)\gamma_N( G_\Omega(y,\cdot))(z)\alpha(z)d\sigma(z) .
\end{align}
where $\gamma_N(G_1(x,\cdot))(z):= \nabla_z G_1(x, z)\cdot\nu(z)$ is the Neumann trace. Hadamard proved his result under the assumption that the weight function $\alpha$ does not change sign. A generalization for signed changing $\alpha's$ were later obtained by  Schiffer, Schiffer and Garabedian \cite{Schiffer, Garabedian} who also considered more general perturbations. After these pioneering works, shape derivative computations has received significant interest and several extensions of Hadamard's formula for more general elliptic boundary values problems. We refer here to \cite{HenrotPierre,Sokolowski} and  to \cite{Oz} for more. \medskip 
\par\;

Our next result extends the identity \eqref{Hadamard} --in some sense-- to the fractional Green function $G^s_\Omega(\cdot,\cdot)$.
We continue to assume that $\Omega$ is a bounded open set of class $C^{1,1}$ of $\R^N$. Because the Green function depends on two variables, the notion of directional shape derivative defined in \eqref{shape-deriv} has to be modified properly. 
\begin{Definition}Let $s$ in $(0,1)$ and $Y$ be a globally $C^{1,1}$ vector field in $\R^N$. We shall say that the Green function $G_\Omega^s(\cdot,\cdot)$ is shape differentiable in the direction of $Y$ if for all $x,y\in \Omega$ with $x\neq y$, the limit 
\begin{equation}\label{def-direc-shape-deriv}
 D G_\Omega^s(Y)(x,y):= \lim_{t\to 0}\frac{ G^s_{\Omega_t}(\Phi_t(x),\Phi_t(y))- G^s_{\Omega}(x,y)}{t}-\nabla_yG_\Omega^s(x,y)\cdot Y(y)-\nabla_xG_\Omega^s(x,y)\cdot Y(x),
\end{equation}
exists and is finite.    
\end{Definition}
 \par\;

We then have the following more general result.
\begin{thm}\label{THM-1.5} Let $s$ in $(0,1)$ and  $Y$ be a globally $C^{1,1}$ vector field in  $\R^N$ with $N>2s$. Then the Green function $G_\Omega^s(\cdot,\cdot)$ is shape differentiable in the direction of $Y$. Moreover, for all $x,y\in \Omega$ with $x\neq y$, there holds:
\begin{align} 
\label{var-green}
 D G_\Omega^s(Y)(x,y)=\Gamma^2(1+s)\int_{\partial\Omega}\gamma_0^s(G_\Omega^s(x,\cdot))\gamma_0^s(G^s_\Omega(y,\cdot))Y\cdot\nu \;d\sigma.
\end{align}
\end{thm}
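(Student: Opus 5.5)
Fix $x\neq y$ in $\Omega$. The plan is to reduce \eqref{var-green} to a Lipschitz-type statement like Theorem \ref{THM-1.2}. We do this by splitting the Green function into a singular part that does not depend on the domain and a regular part solving a Dirichlet problem with smooth exterior data. Let $F_s(z)=b_{N,s}|z|^{2s-N}$ be the fundamental solution of $(-\Delta)^s$; here we use $N>2s$. Write
\[
G^s_{\Omega_t}(\xi,z)=F_s(\xi-z)-H_t(\xi,z),
\]
where $H_t(\xi,\cdot)$ is $s$-harmonic in $\Omega_t$ and equals $F_s(\xi-\cdot)$ in $\R^N\setminus\Omega_t$. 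Since $\Phi_t$ is a diffeomorphism, $F_s(\Phi_t(x)-\Phi_t(y))$ is $C^1$ in $t$. Its $t$-derivative at $0$ is exactly $\nabla_x F_s\cdot Y(x)+\nabla_yF_s\cdot Y(y)$. So this part contributes nothing to $DG^s_\Omega(Y)(x,y)$ beyond the two gradient terms subtracted in \eqref{def-direc-shape-deriv}. The problem reduces to showing that
\[
D H(Y)(x,y):=\partial_t\big[H_t(\Phi_t(x),\Phi_t(y))\big]_{t=0}-\nabla_xH_0\cdot Y(x)-\nabla_y H_0\cdot Y(y)
\]
exists and equals minus the right-hand side of \eqref{var-green}.

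\textbf{Step 1: derivative in the second variable.} First freeze the first variable $\xi$ near $x$. The function $w_t=H_t(\xi,\cdot)$ solves a Dirichlet problem on $\Omega_t$ whose exterior datum $g_\xi=F_s(\xi-\cdot)$ is smooth near $\partial\Omega$, since $\xi$ stays away from the boundary. Set $\tilde w_t=w_t-\chi g_\xi$, where $\chi$ is a cutoff equal to $1$ near $\R^N\setminus\Omega$ and vanishing near $\xi$. Then $\tilde w_t$ solves $(-\Delta)^s\tilde w_t=-(-\Delta)^s(\chi g_\xi)=:h_\xi$ in $\Omega_t$ with zero exterior data, and $h_\xi$ is smooth, hence Lipschitz, in a neighbourhood $D$ of $\overline\Omega$.

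Theorem \ref{THM-1.2}, or rather the more general result from which it is derived (uniform in the source, and valid for $h$ depending smoothly on a parameter), then gives the Eulerian shape derivative at $y$:
\[
\tilde w'(y)=\Gamma^2(1+s)\int_{\partial\Omega}\gamma_0^s(G^s_\Omega(y,\cdot))\,\gamma_0^s(\tilde w_0)\,Y\cdot\nu\,d\sigma .
\]
Since $\chi g_\xi$ does not depend on $t$, we have $w'=\tilde w'$. It remains to identify $\gamma_0^s(\tilde w_0)$. Note that $\tilde w_0=F_s(\xi-\cdot)-\chi g_\xi-G^s_\Omega(\xi,\cdot)$ on $\Omega$, and $(1-\chi)g_\xi$ vanishes near $\partial\Omega$, so
\[
\gamma_0^s(\tilde w_0)=-\gamma_0^s(G^s_\Omega(\xi,\cdot)).
\]
This yields the correct sign. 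If the general theorem is stated in terms of $v'$ instead, we use \eqref{shape-deriv} to convert between the two.

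\textbf{Step 2: joint variation.} The $t$-derivative of $t\mapsto H_t(\Phi_t(x),\Phi_t(y))$ combines the variation in $y$ treated in Step 1 with two further effects: the movement of the first argument $\Phi_t(x)$, and the dependence of the datum $g_{\Phi_t(x)}$ on $t$. Perturbing $\xi=\Phi_t(x)$ changes $g_\xi$ smoothly, and $H_0(\xi,y)$ is smooth in $\xi$ away from $\partial\Omega$. A chain-rule argument therefore produces the term $\nabla_xH_0\cdot Y(x)$, provided we know that $\partial_tH_t(\xi,y)$ is continuous in $\xi$ locally uniformly. This continuity should follow from the uniformity of the estimates in Theorem \ref{THM-1.2} with respect to the source $h_\xi$, which depends smoothly on $\xi$.

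\textbf{Main obstacle.} The hard part is this uniformity and joint differentiability. Theorem \ref{THM-1.2} as stated gives pointwise differentiability for a fixed $h$, whereas here we need differentiability with respect to $(t,\xi)$ jointly. I would go back to its proof, which is a PDE argument in the spirit of Ushikoshi and Kozono–Ushikoshi: transport to $\Omega$, then a representation of $u_t-u$ through the Green function and boundary-layer estimates of $u_t/\delta_t^s$. I would check that every constant there depends only on $\|h\|_{C^{0,1}(D)}$. Applied to the family $h_\xi$, this gives differentiability of $(t,\xi)\mapsto H_t(\xi,y)$ and completes the proof. Symmetry $G^s_\Omega(x,y)=G^s_\Omega(y,x)$ provides a consistency check on the final formula.
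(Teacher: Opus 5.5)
\textbf{Circularity in Step 1.} Step 1 invokes Theorem \ref{THM-1.2}, and in fact a source-uniform, parametric version of it. In this paper, however, Theorem \ref{THM-1.2} is \emph{deduced from} Theorem \ref{THM-1.5}, not the other way round. In Section \ref{Proof-THM-1.2}, $u_t\circ\Phi_t(x)$ is written as $\int_\Omega G^s_{\Omega_t}(\Phi_t(x),\Phi_t(y))\,h(\Phi_t(y))\,\mathrm{Jac}_{\Phi_t}(y)\,dy$. This is differentiated under the integral using the bound \eqref{TGH}, which rests on Lemma \ref{diff-reg-part}. The formula of Lemma \ref{babyformula} is then integrated against $h$ using Lemmas \ref{Ab} and \ref{reduc-FY}. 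So the ``more general result from which it is derived'' is exactly the statement you are proving. The proof of Theorem \ref{THM-1.2} you plan to revisit is not an independent boundary-layer argument on $u_t/\delta_t^s$; it runs through Theorem \ref{THM-1.5}.

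Several of your steps are correct and give the right sign: the splitting $G=F_s-H$, the conversion to zero exterior data with source $h_\xi=-(-\Delta)^s(\chi g_\xi)$, and the identification $\gamma_0^s(\tilde w_0)=-\gamma_0^s(G^s_\Omega(\xi,\cdot))$. But they only show that Theorem \ref{THM-1.5} is equivalent to a pointwise Hadamard formula for smooth sources. Neither the existence of the shape derivative nor the boundary integral with constant $\Gamma^2(1+s)$ is actually established. Moreover, even with Theorem \ref{THM-1.2} as a black box for a fixed source, Step 2 needs differentiability of $(t,\xi)\mapsto H_t(\xi,y)$ jointly at $(0,x)$. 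You flag this but do not supply it.

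\textbf{What the paper does instead.} The paper proves the analytic core directly for the Green function, in two parts.

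For existence, it transports \emph{both} points. The quotient $S_x^t=\big(H^s_{\Omega_t}(\Phi_t(x),\Phi_t(\cdot))-H^s_\Omega(x,\cdot)\big)/t$ solves, on the fixed domain $\Omega$, the problem $\mathcal L_tS_x^t=f_t[H^s_\Omega(x,\cdot)]$ with exterior datum $S_x^t=g_x^t$, where $\mathcal L_t$ has the pulled-back kernel $\kappa_t$. Green and Poisson kernel bounds give $\|S_x^t\|_{L^\infty(\Omega)}\le C$. Proposition \ref{Dir-non-boundary-data}, a Kim--Weidner estimate with nonzero exterior data fed by Lemmas \ref{Useful-bounds}, \ref{FundLemma2}, \ref{LM1} and \ref{unifest-gtx}, gives uniform $C^\beta(\overline\Omega)$ bounds. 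Arzel\`a--Ascoli plus uniqueness for the limit problem then yields Lemma \ref{diff-reg-part}. Moving $x$ with $\Phi_t$ is exactly what removes your joint-differentiability issue.

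For the formula, the paper first shows $\int_\Omega g_s(x,\cdot)(-\Delta)^s\psi\,dz=-E_Y(G^s_\Omega(x,\cdot),\psi)$ (Lemma \ref{lem-eq-ahape-deriv-Green}). It then tests with $\xi_k^2g_\gamma(y,\cdot)\phi_x^\mu$, rewrites $E_Y$ via \eqref{64} and the Pohozaev-type identity of \cite{DFW2023}, and lets $k\to\infty$, $\mu\to0^+$, $\gamma\to0^+$. In that limit:
\begin{itemize}
\item \cite[Proposition 2.2]{DFW} produces two halves of $\Gamma^2(1+s)\int_{\partial\Omega}\gamma_0^s(G^s_\Omega(x,\cdot))\gamma_0^s(G^s_\Omega(y,\cdot))Y\cdot\nu\,d\sigma$;
\item the interior terms produce $\nabla_xG^s_\Omega\cdot Y(x)+\nabla_yG^s_\Omega\cdot Y(y)$;
\item the singular contributions at $x$ cancel by Lemmas \ref{Lem.sec-4.3}, \ref{main-res-sec4.4} and \ref{appen}.
\end{itemize}
To turn your reduction into a proof, you would need an independent proof of Theorem \ref{THM-1.2} for smooth sources, uniform in the source parameter. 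That is where all of this work lives.
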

The proof of Theorem \ref{THM-1.5} is given in Section \ref{Sec-proof-main-result}. 
\begin{remark}
Formula \eqref{var-green} can be regarded as the fractional analogue of the classical Hadamard formula \eqref{Hadamard}. Indeed, for every $x\neq y$ in $\Omega$, we have
\[
G^s_{\Omega_t}(x,y)
   = G^s_{\Omega_t}\!\big(\Phi_t(\cdot),\Phi_t(\cdot)\big)
     \circ (\Phi_t^{-1}(x),\Phi_t^{-1}(y)).
\]
If the mapping
\begin{equation}\label{CHG}
    t \mapsto 
    G^s_{\Omega_t}\!\left(\Phi_t(\cdot),\Phi_t(\cdot)\right)
    \in C(\Omega\times\Omega \setminus \{(x,x)\})
\end{equation}
were differentiable at $t=0$ away from the diagonal, then applying the chain rule to the identity above would give
\begin{equation}\label{MD==SD}
 \partial_t G^s_{\Omega_t}(x,y)\big|_{t=0}
     = D G_\Omega^s(Y)(x,y)=\Gamma^2(1+s)\int_{\partial\Omega}\gamma_0^s(G_\Omega^s(x,\cdot))\gamma_0^s(G^s_\Omega(y,\cdot))Y\cdot\nu \;d\sigma
\end{equation}
which yields the desired relation. Unfortunately, we are not able to justify the differentiability of the mapping in \eqref{CHG} at the strength required in order to apply the chain rule: the theorem guarantees differentiability only for the scalar-valued mapping
\[
t \mapsto G^s_{\Omega_t}\big(\Phi_t(x),\Phi_t(y)\big)\in\mathbb{R},
\]
which is weaker than the functional differentiability required in \eqref{CHG}. Nevertheless, one can show that
\[
t\mapsto 
G^s_{\Omega_t}\big(\Phi_t(\cdot),\Phi_t(\cdot)\big)\in C_{\mathrm{loc}}(\Omega\times\Omega\setminus\{(x,x)\})
\]
is differentiable. Consequently, for all $x,y\in K\Subset\Omega$ with $x\neq y$, the mapping $t\mapsto G_{\Omega_t}(x,y)$ is differentiable at zero and formula \eqref{MD==SD} holds true.
\end{remark}

As a consequence of Theorem \ref{THM-1.5}, we derive a formula for the shape derivative of the Robin function associated to the fractional Laplacian. Recall that when $N>2s$, the Green function $G_\Omega^s(x,\cdot)$ splits into:
\begin{equation}\label{Eq-apliting-of-Green}
    G_\Omega^s(x,\cdot)= F_s(x,\cdot)-H^s_\Omega(x,\cdot),
\end{equation}
where 
\begin{equation*}
   F_s(x,\cdot) :=\frac{b_{N,s}}{|x-\cdot|^{N-2s}}\quad\text{with}\quad b_{N,s} :=\pi^{-N/2} \, 4^{-s}  \, \frac{\Gamma(\frac{N-2s}{2})}{\Gamma(s)},
\end{equation*}
%
%with 
%
%\begin{equation}\label{fundaC}
%b_{N,s} :=\pi^{-N/2} \, 4^{-s}  \, \frac{\Gamma(\frac{N-2s}{2})}{\Gamma(s)},
%\end{equation}
%
 is the  fundamental solution of $(-\Delta)^s$ and 
$H^s_\Omega(x,\cdot)$ solves the equation
\begin{equation}\label{regular-part-of-Green}
\left\{ \begin{array}{rcll} (-\Delta)^s H^s_\Omega(x,\cdot)&=& 0  &\textrm{in }\Omega, \\ H^s_\Omega(x,z)&=&F_s(x,z)&%\textrm{for } x 
\textrm{in }\R^N\setminus\Omega. \end{array}\right. 
\end{equation}
By standard regularity theory we know that $H_\Omega^s(x,\cdot)\in C^\infty(\Omega)$. The Robin function $R^s_\Omega\in C^\infty(\Omega)$ is then defined by 
$$
R^s_\Omega(x) := H^s_\Omega(x,x).
$$
We then have the following result regarding the fractional Robin function. 
\begin{cor}\label{cor6}
 Let $s$ in $(0,1)$ and  $Y$ be a globally $C^{1,1}$ vector field in  $\R^N$ with $N>2s$. Then, the Robin function $R_\Omega^s$ is shape differentiable in the direction of $Y$ in the sense that, for all $x\in \Omega$, the limit 
 \begin{equation*}
DR_\Omega^s(Y)(x):=\lim_{t\to 0}\frac{ H^s_{\Omega_t}(\Phi_t(x),\Phi_t(x))- H^s_{\Omega}(x,x)}{t}-2\nabla_xH_\Omega^s(x,x)\cdot Y(x),
 \end{equation*}
exists and is finite. Moreover, there holds
\begin{align}\label{var-Robin-func}
DR_\Omega^s(Y)(x)= -\Gamma^2(1+s)\int_{\partial\Omega}\Big(\gamma_0^s(G_\Omega^s(x,\cdot))\Big)^2Y\cdot\nu \;d\sigma.
\end{align}
\end{cor}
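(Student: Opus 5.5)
The corollary should follow from Theorem \ref{THM-1.5} by passing to the diagonal $y\to x$ in the splitting \eqref{Eq-apliting-of-Green}. Fix $x\in\Omega$ and, for $y\neq x$ near $x$, write
\[
H^s_{\Omega_t}(\Phi_t(x),\Phi_t(y)) = F_s(\Phi_t(x),\Phi_t(y)) - G^s_{\Omega_t}(\Phi_t(x),\Phi_t(y)).
\]
The difference quotient of the fundamental-solution term is explicit. Since $F_s$ depends only on $|x-y|$, we get
\[
\partial_t F_s(\Phi_t(x),\Phi_t(y))|_{t=0}=\nabla_xF_s\cdot Y(x)+\nabla_yF_s\cdot Y(y).
\]
Combining this with \eqref{def-direc-shape-deriv}, for $x\neq y$ the limit
\[
\lim_{t\to0}\frac{H^s_{\Omega_t}(\Phi_t(x),\Phi_t(y))-H^s_\Omega(x,y)}{t}-\nabla_xH^s_\Omega(x,y)\cdot Y(x)-\nabla_yH^s_\Omega(x,y)\cdot Y(y)
\]
exists and equals $-DG^s_\Omega(Y)(x,y)$, which Theorem \ref{THM-1.5} gives as $-\Gamma^2(1+s)\int_{\partial\Omega}\gamma_0^s(G^s_\Omega(x,\cdot))\gamma_0^s(G^s_\Omega(y,\cdot))Y\cdot\nu\,d\sigma$.

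The main step is to justify this identity at $y=x$, where Theorem \ref{THM-1.5} does not directly apply. Two things are needed.

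First, the right-hand side must be continuous in $y$ up to $y=x$. This holds because $y\mapsto\gamma_0^s(G^s_\Omega(y,\cdot))$ is continuous into $C(\partial\Omega)$. Indeed, $\gamma_0^s(G^s_\Omega(y,\cdot))=\gamma_0^s(H^s_\Omega(y,\cdot))$ up to the explicit smooth boundary behaviour of $F_s$. Alternatively, one uses the $C^s$-type boundary regularity of \cite{RS} applied to $G^s_\Omega(y,\cdot)$, with $y$ in a compact set away from $\partial\Omega$, where the singularity is cut off.

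Second, and harder, the derivative at $t=0$ must commute with $y\to x$. My plan is to reproduce the argument of Theorem \ref{THM-1.5} at the level of the regular part. The function $w_t(z):=H^s_{\Omega_t}(\Phi_t(x),\Phi_t(z))$ is the pullback of an $s$-harmonic function in $\Omega_t$ with exterior datum $F_s(\Phi_t(x),\cdot)$, which is smooth near $\partial\Omega_t$. So the difference quotients $(w_t-w_0)/t$ should converge locally uniformly in $z$ on compact subsets of $\Omega$. By interior regularity of $s$-harmonic functions they converge even in $C^1_{\loc}$, uniformly in $z$ near $x$, as in the remark following Theorem \ref{THM-1.5}, where $C_{\loc}$ differentiability off the diagonal is asserted. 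Uniform convergence near $z=x$, together with continuity of the limit, then gives the formula at $z=x$.

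The remaining point is $2\nabla_xH^s_\Omega(x,x)\cdot Y(x)$. By the symmetry $H^s_\Omega(x,y)=H^s_\Omega(y,x)$, inherited from the symmetry of $G^s_\Omega$ and $F_s$, the two gradient terms coincide on the diagonal. Evaluating at $y=x$ then yields \eqref{var-Robin-func}, with the square of the trace and the minus sign coming from $H=F-G$.

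I expect the uniformity in $y$ near $x$ of the difference quotient to be the main obstacle. It requires controlling the perturbation of the regular part, for which the singular term cancels exactly, rather than of $G^s$ itself.
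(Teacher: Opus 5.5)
Your route is the paper's. The $F_s$ contribution to \eqref{def-direc-shape-deriv} cancels identically, Theorem \ref{THM-1.5} then gives the identity for the regular part $H^s_\Omega$ when $y\neq x$, and one lets $y\to x$. The paper does this last step in one line, and what justifies it is Lemma \ref{diff-reg-part}: for fixed $x$, the map $t\mapsto H^s_{\Omega_t}(\Phi_t(x),\Phi_t(\cdot))$ is differentiable at $t=0$ with values in $C^\gamma(\overline{\Omega})$. This gives two things at once. First, the limit defining $DR^s_\Omega(Y)(x)$ exists, since it is the case $y=x$ of the lemma. Second, $y\mapsto\partial_tH^s_{\Omega_t}(\Phi_t(x),\Phi_t(y))|_{t=0}$ is continuous on $\overline{\Omega}$. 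Combined with the continuity of $\nabla H^s_\Omega$ on $\Omega\times\Omega$, the symmetry of $H^s_\Omega$, and the continuity in $y$ of the boundary integral, this lets the identity pass to $y=x$. So the step you call the main obstacle is already proved in the paper, and you should invoke that lemma rather than re-derive it.

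The re-derivation you sketch would not work as stated, so as written your proposal has a gap at exactly this point.

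\begin{itemize}
\item The difference quotient $S^t_x=(w_t-w_0)/t$ is not $s$-harmonic. Pulled back to $\Omega$, it solves $\mathcal L_t S_x^t=f_t[H^s_\Omega(x,\cdot)]$ with the variable kernel $\kappa_t$ and exterior datum $g^t_x$; see \eqref{dest-eq-Stx}. Interior regularity of $s$-harmonic functions therefore does not apply.
\item Your phrase ``should converge'' hides the real content, namely a $t$-uniform bound on $S^t_x$. In the paper this needs the Green and Poisson kernel estimates \eqref{Gts-es}--\eqref{Pts-es} for $\mathcal L_t$, the boundary H\"older estimate of Proposition \ref{Dir-non-boundary-data}, compactness, identification of the limit problem \eqref{EqS0x}, and uniqueness.
\item The remark after Theorem \ref{THM-1.5} does not help. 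It concerns $G^s_{\Omega_t}$ off the diagonal and gives no uniformity as $z\to x$.
\item You give two arguments for the continuity of $y\mapsto\gamma_0^s(G^s_\Omega(y,\cdot))$, and only the second is sound: cut off the singularity and apply \cite{RS} uniformly for $y$ in a compact subset of $\Omega$. The first fails because $F_s(y,\cdot)$ and $H^s_\Omega(y,\cdot)$ are positive on $\partial\Omega$, so neither has a finite $\delta^s$-trace.
\end{itemize}
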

\par\,
The proof of Corollary \ref{cor6} is given in Section \ref{sec-ftt}.
\par\;

\section{Notations}\label{notations}
For the reader convenience, we record here several notations and recall some classical estimates that will be used all along the manuscript. All along the paper, except otherwise stated, $\Omega$ will be a bounded open set of class $C^{1,1}$ in $\R^N$ and we shall denote by $\delta:=\dist(\R^N\setminus\Omega, \cdot)-\dist(\Omega,\cdot)$ the signed distance function to the boundary of the domain $\Omega$.
\par\;
We fix $\rho\in C^\infty_c(-2,+2)$ such that $\rho\equiv 1$ in $(-1,+1)$.  For any $k\in \mathbb N^*$, we define 
\begin{equation}\label{eta-k}
    \xi_k: \R^N\to \R,\; y\mapsto \xi_k(y):=1-\rho(k\delta(y)),
\end{equation}
where $\delta$ is the \textit{signed} distance function to the boundary. 
%Moreover, we assume that $\delta$ is positive in $\Omega$ and negative in $\R^N\setminus\Omega$. 
For a fixed $x\in \Omega$, and $\mu\in (0,1)$, we also define the function $\phi_x^{\mu}$ by:
\begin{equation}\label{psi-mu}
     \phi_x^{\mu}: \R^N\to \R,\; y\mapsto \phi_x^{\mu}(y):=1-\rho_\mu^x(y),
\end{equation}
where 
\begin{equation*}
\rho_\mu^x(y):=\rho\Big(\frac{8}{\delta^2_\Omega(x)}\frac{|x-y|^2}{\mu^2}\Big).
\end{equation*}
Here $\delta_\Omega:=\dist(\cdot,\partial \Omega)$ is the distance function to the boundary of the domain $\Omega$.
\par\;
Except otherwise stated, $Y$ will be a globally $C^{1,1}$ vector field in $\R^N$, and we define the weight functions
\begin{equation} \label{defoY-notations}
\omega_Y(y,z):=\frac{c_{N,s}}{2}\Big[\div Y(y)+\div Y(z)-(N+2s)\frac{(Y(y)-Y(z))\cdot(y-z)}{|y-z|^2}\Big],
\end{equation}
for all $y,z\in \R^N$ and 
\begin{equation} \label{defWsam}
 \omega^x_{Y}(y,z) := \frac{c_{N,s}}{2}\Big[ 2\div Y(x)-(N+2s)\frac{[DY(x)\cdot(y-z)]\cdot(y-z)}{|y-z|^2}\Big],  
\end{equation}
for a fixed $x\in\Omega$ and for all $y,z\in \R^N$. We also define the deformation kernel $\kappa_Y(\cdot,\cdot)$ by: 
\begin{equation}\label{kappaY}
\kappa_Y(y,z):=\omega_Y(y,z)|y-z|^{-2s-N}.  
\end{equation}

Next, we let $\textrm{Jac}_{\Phi_t}$ be the Jacobian of the transformation $\Phi_t$ defined in \eqref{transformations} and  define the family of symmetric kernels $\kappa_t(\cdot,\cdot)$ by:
\begin{equation}\label{ktyz}
    \kappa_t(y,z):=\frac{c_{N,s}}{2}\frac{\textrm{Jac}_{\Phi_t}(y)\textrm{Jac}_{\Phi_t}(z)}{|\Phi_t(y)-\Phi_t(z)|^{N+2s}}\quad\text{for $y,z\in \R^N$ with $y\neq z$.}
\end{equation}
\par\;

By direct computation using the hypothesis \eqref{transformations}  (see e.g \cite[Eq (3.3)]{DFW}) we have --for $|t|\ll 1$-- sufficiently small:
\begin{equation}\label{expansion-k-t-notations}
 \kappa_t(y,z)-\frac{c_{N,s}}{2}|y-z|^{-N-2s}-t\kappa_Y(y,z)=O(t^2)|y-z|^{-N-2s},
\end{equation}
uniformly for all $y\neq z\in \R^N$ where $\kappa_Y(\cdot,\cdot)$ is defined as in \eqref{kappaY}. 
\par\;

In view of \eqref{expansion-k-t-notations}, it is clear that the kernel $\kappa_t(\cdot,\cdot)$ is uniformly elliptic. In other words, we have:
\begin{equation}\label{ellptic-kernel}
    \lambda|y-z|^{-2s-N}\leq \kappa_t(y,z)\leq \Lambda|y-z|^{-2s-N}\qquad \forall\, y\neq z\in \R^N,
\end{equation}
for some $\lambda, \Lambda>0$ that are independent of $t$. 
\par\;

For all $\beta\in\R$, we also have the following asymptotic (for $|t|\ll 1$ small enough), see e.g \cite{DFW}:
\begin{equation}\label{UExp}
    \big|\Phi_t(y)-\Phi_t(z)\big|^{\beta}=|y-z|^{\beta}\Big(1+2t\frac{(Y(y)-Y(z))\cdot(y-z)}{|y-z|^2}+O(t^2)\Big)^{\frac{\beta}{2}}\quad\text{uniformly $\forall\,y\neq z\in\R^N$}.
\end{equation}
This, in particular, yields
\begin{align}\label{ellip-reg}
\frac{1}{c}|y-z|^\beta \leq  \big|\Phi_t(y)-\Phi_t(z)\big|^{\beta}\leq c |y-z|^\beta\quad\text{for some $c>0$ independent of $t$}.
\end{align}
\par\;

Finally, the following estimate will be important later on. For  $|t|\ll 1$ small enough, there holds:
\begin{equation}\label{diff-quotient-fund-sol}
    \Big||x_t-y_t|^{2s-N}-|x_t-z_t|^{2s-N}\Big|\leq C f_x(y,z),
\end{equation}
for some constant $C>0$ that is independent of $t,y$ and $z$ and where 
\par\;

\begin{equation}\label{f_xyz}
f_x(y,z)=
\begin{cases}
\max\big(|x-y|^{2s-N},|x-z|^{2s-N}\big) & \text{if } 2s<1,\\
\big|y-z\big|\max\big(|x-y|^{2s-N-1},|x-z|^{2s-N-1}\big) & \text{if } 2s>1,\\
|y-z|^\beta\max\big(|x-y|^{1-N-\beta},|x-z|^{1-N-\beta}\big)\quad\text{for all $\beta\in (0,1)$}&\text{if $s=1/2$.}
\end{cases}
\end{equation}
\par\;
\par\;

In the case for $2s<1$, the bound \eqref{diff-quotient-fund-sol} is a easy consequence of the uniform estimate \eqref{ellip-reg}. The case $2s>1$ follows by using the elementary estimate 
$$
|a^{2s-N}-b^{2s-N}|\leq C(N,s))|a-b|\max\Big(\frac{1}{a^{N-2s+1}}, \frac{1}{b^{N-2s+1}}\Big) ,
$$
with $a=|x_t-y_t|$ and $b=|x_t-z_t|$ combine with the uniform estimate \eqref{ellip-reg}. %Indeed, using the above estimate, we get
%\begin{align*}
%\Big||x_t-y_t|^{2s-N}-|x_t-z_t|^{2s-N}\Big|
%&\leq C\big|y_t-z_t\big|\max\Big(\big|x_t-y_t\big|^{2s-N-1},\big|x_t-z_t\big|^{2s-N-1}\Big),\\
%&\leq C\big|y-z\big|\max\Big(\big|x-%y\big|^{2s-N-1},\big|x-z\big|^{2s-N-1}\Big),
%\end{align*}
%where in the last line we have used again the uniform estimate \eqref{ellip-reg}. 
The case $s=1/2$ follows similarly by using instead the estimate 
\begin{align*}
\Big|a^{1-N}-b^{1-N}\Big|\leq C(N,\beta)\max\big(a^{1-N-\beta},b^{1-N-\beta}\big)|a-b|^\beta\quad\text{for all $\beta\in (0,1)$},
\end{align*}
applied with $a=|x_t-y_t|$ and $b=|x_t-z_t|$.
%Finally, we recall the following integral estimate (see e.g \cite[Lemma 1.4]{Abdellaoui}) that will also be use repeatedly in this paper.
%\par\;
%Let $x,y$ in $B_R(0)$ and $\alpha,\beta\in (-\infty,N)$. Then, there exists a constant $C=C(N,\alpha,\beta, R)>0$ such that 
%\begin{align}\label{Abdellaoui1}
 %  \int_{B_R(0)}\frac{dz}{|x-z|^\alpha|y-z|^\beta}\leq C(1+|x-y|^{N-\alpha-\beta})\qquad\text{if \; $N-\alpha-\beta\neq 0$},
%\end{align}
%and 
%\begin{align}\label{Abdellaoui2}
%\int_{B_R(0)}\frac{dz}{|x-z|^\alpha|y-z|^\beta}\leq C(1+\big|\log|x-y|\big|\Big)\qquad\text{if \; $N-\alpha-\beta= 0$}. 
%\end{align}
\section{Preliminary estimates}
In this section, we collect a couple of preliminary results that will be used later.
We start with two fundamental lemmas.
Their proofs, given respectively in Appendix \ref{Appendix.A} and Appendix \ref{Appendix.B}, rely on elementary calculus and the assumption \eqref{transformations}.
%estimates will play a crucial role in the computations that follows that will be used later in the paper. We start with the following simple estimate that will be important for the computations that follow. Its proof is given in the Appendix \ref{Appendix.A}.
%
\begin{Lemma}\label{Useful-bounds}
Let $\Phi_t$ satisfy the hypotheses \eqref{transformations} and let $\beta$ in $\R$. Then, for $|t|\ll 1/2$ sufficiently small and for all $y,z\in \R^N$, there holds:
\begin{align}\label{WWW}
\Big|\big|\Phi_t(y)-\Phi_t(y+z)\big|^{-\beta}-\big|\Phi_t(y)-\Phi_t(y-z)\big|^{-\beta}\Big|\leq C|z|^{1-\beta},    
\end{align} 
for some constant $C>0$ that is independent of $t,y$ and $z$. Moreover, letting $\kappa_t(\cdot,\cdot)$ be as in \eqref{ktyz}, we also have
\begin{equation}\label{Sauveur1}
    |\kappa_t(y+h,z+h)-\kappa_t(y,z)|\leq \frac{C|h|}{|y-z|^{N+2s}}\qquad\text{$\forall\,y,z,h\in \R^N$},
\end{equation}
for some $C>0$ that is independent of $y,z,h$ and $t$.
\end{Lemma}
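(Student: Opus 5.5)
The plan is to reduce both estimates to two facts about $\Phi_t$ that hold uniformly in $t$ for $|t|$ small. We then apply the mean value theorem to the radial function $r\mapsto r^{-\beta}$ on $(0,\infty)$.

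\emph{Step 1: uniform facts about $\Phi_t$.} By \eqref{transformations}, $t\mapsto\Phi_t$ is $C^2$ into $C^{1,1}(\R^N,\R^N)$ with $\Phi_0=\Id$. Hence for $|t|\le t_0$ we have $\|D\Phi_t\|_{L^\infty}+[D\Phi_t]_{\mathrm{Lip}}\le C$, and $\|D\Phi_t-\Id\|_{L^\infty}\le C|t|$. The second bound makes $\Phi_t-\Id$ Lipschitz with constant $C|t|\le 1/2$ for $t_0$ small. This gives the uniform bi-Lipschitz bound
\[
\tfrac12|y-z|\le|\Phi_t(y)-\Phi_t(z)|\le C|y-z|,
\]
which is \eqref{ellip-reg} for $\beta=1$. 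Also $\textrm{Jac}_{\Phi_t}=\det D\Phi_t$ is bounded above and below by positive constants, and it is Lipschitz uniformly in $t$, being a polynomial in the bounded Lipschitz entries of $D\Phi_t$. I expect this step, getting all constants independent of $t$, to be the only real point of care; it is precisely where smallness of $|t|$ is used.

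\emph{Step 2: proof of \eqref{WWW}.} Set $a:=\Phi_t(y+z)-\Phi_t(y)$ and $b:=\Phi_t(y)-\Phi_t(y-z)$, so that
\[
a=\int_0^1 D\Phi_t(y+\tau z)z\,d\tau,\qquad b=\int_0^1 D\Phi_t(y-\tau z)z\,d\tau .
\]
The Lipschitz bound on $D\Phi_t$ gives $|a-b|\le \int_0^1 2\tau[D\Phi_t]_{\mathrm{Lip}}|z|^2d\tau\le C|z|^2$. By Step 1, both $|a|$ and $|b|$ lie in $[|z|/2,C|z|]$. The mean value theorem applied to $r\mapsto r^{-\beta}$ on this interval then yields
\[
\big||a|^{-\beta}-|b|^{-\beta}\big|\le |\beta|\,\sup_{r\in[|z|/2,C|z|]}r^{-\beta-1}\,\big||a|-|b|\big|\le C|z|^{-\beta-1}|z|^2=C|z|^{1-\beta},
\]
which is \eqref{WWW}. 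The case $z=0$ is trivial.

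\emph{Step 3: proof of \eqref{Sauveur1}.} Write $J:=\textrm{Jac}_{\Phi_t}$, $A:=\Phi_t(y+h)-\Phi_t(z+h)$ and $B:=\Phi_t(y)-\Phi_t(z)$. We split
\[
\tfrac{2}{c_{N,s}}\big(\kappa_t(y+h,z+h)-\kappa_t(y,z)\big)=\big[J(y+h)J(z+h)-J(y)J(z)\big]|A|^{-N-2s}+J(y)J(z)\big[|A|^{-N-2s}-|B|^{-N-2s}\big].
\]
For the first term, boundedness and uniform Lipschitz continuity of $J$ give $|J(y+h)J(z+h)-J(y)J(z)|\le C|h|$. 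Step 1 gives $|A|^{-N-2s}\le C|y-z|^{-N-2s}$, so the first term is bounded by $C|h||y-z|^{-N-2s}$.

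For the second term, we write
\[
A-B=\int_0^1\big[D\Phi_t(z+h+\tau(y-z))-D\Phi_t(z+\tau(y-z))\big](y-z)\,d\tau,
\]
so $|A-B|\le C|h||y-z|$. Since $|A|$ and $|B|$ are both comparable to $|y-z|$ by Step 1, the mean value theorem for $r\mapsto r^{-N-2s}$ gives
\[
\big||A|^{-N-2s}-|B|^{-N-2s}\big|\le C|y-z|^{-N-2s-1}\,|h||y-z|=C|h||y-z|^{-N-2s}.
\]
Combining the two terms with the boundedness of $J$ yields \eqref{Sauveur1}, with $C$ independent of $y,z,h,t$.
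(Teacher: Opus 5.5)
Your proof is correct and follows essentially the same route as the paper. For \eqref{WWW} both arguments use the second-difference bound $|2\Phi_t(y)-\Phi_t(y+z)-\Phi_t(y-z)|\le C|z|^2$ together with the mean value theorem, and for \eqref{Sauveur1} both use the same Jacobian/distance splitting with $|A-B|\le C|h||y-z|$. The only cosmetic difference is in \eqref{WWW}: you compare $|a|$ and $|b|$ directly through the reverse triangle inequality, whereas the paper works with the squared norms $r_\pm$ and the identity $r_+-r_-=(A_+-A_-)\cdot(A_++A_-)$.
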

\begin{Lemma}\label{FundLemma2}
    Let $\kappa_t(\cdot,\cdot)$ be given as in \eqref{ktyz} and set, for $t\neq 0$,
\begin{equation*}
\overline{\kappa_t}(y,z):=\frac{\kappa_t(y,z)-\kappa_0(y,z)}{t}\qquad\text{for $y\neq z$ in $\R^N$.}
\end{equation*}
Let $\overline{\kappa_t}^o(\cdot,\cdot)$ be the odd part of $z\mapsto \overline{\kappa_t}(y,y+z)$, i.e.
\begin{equation*}
\overline{\kappa_t}^o(y,z)=\frac{\overline{\kappa_t}(y,y+z)-\overline{\kappa_t}(y,y-z)}{2}.
\end{equation*}
Then there holds
\begin{equation}\label{SauVsec3}
    \big|\overline{\kappa_t}^o(y,z)\big|\leq C\min(|z|^{-2s-N},|z|^{1-2s-N})
\end{equation}
for some constant $C>0$ that is independent of $t$, $y$ and $z$.
\end{Lemma}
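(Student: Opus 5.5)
We need to prove Lemma \ref{FundLemma2}: the odd part of the difference quotient of $\kappa_t$ satisfies $|\overline{\kappa_t}^o(y,z)|\le C\min(|z|^{-2s-N},|z|^{1-2s-N})$.

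The plan is to treat the two regimes $|z|\geq 1$ and $|z|<1$ separately. The bound $|z|^{-2s-N}$ is the relevant one for $|z|\ge 1$. It follows directly from the expansion \eqref{expansion-k-t-notations}. Indeed, $\overline{\kappa_t}(y,y\pm z)=\kappa_Y(y,y\pm z)+O(t)|z|^{-N-2s}$, and $|\omega_Y|$ is bounded because $\div Y$ is bounded and $|(Y(y)-Y(y\pm z))\cdot z|/|z|^2\le \|DY\|_\infty$. Hence $|\overline{\kappa_t}(y,y\pm z)|\le C|z|^{-N-2s}$ uniformly in $t,y,z$, and the same bound holds for the odd part by the triangle inequality. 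This bound holds for all $z$, so the remaining task is to show $|\overline{\kappa_t}^o(y,z)|\le C|z|^{1-2s-N}$. That estimate is only needed (and only better) for $|z|<1$.

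For the gain of one power of $|z|$, I would avoid the expansion and write the difference quotient exactly. By the mean value theorem in $t$ (the map $t\mapsto\Phi_t$ is $C^2$),
\[
\overline{\kappa_t}(y,y\pm z)=\int_0^1 \partial_\tau\kappa_\tau(y,y\pm z)\big|_{\tau=\theta t}\,d\theta .
\]
So it suffices to show that $|\partial_\tau\kappa_\tau(y,y+z)-\partial_\tau\kappa_\tau(y,y-z)|\le C|z|^{1-2s-N}$ uniformly for $|\tau|$ small. Differentiating \eqref{ktyz} gives
\[
\partial_\tau\kappa_\tau(y,w)=\frac{c_{N,s}}{2}\Big[\frac{\partial_\tau(J_\tau(y)J_\tau(w))}{|\Phi_\tau(y)-\Phi_\tau(w)|^{N+2s}}-(N+2s)J_\tau(y)J_\tau(w)\frac{(\Phi_\tau(y)-\Phi_\tau(w))\cdot(\dot\Phi_\tau(y)-\dot\Phi_\tau(w))}{|\Phi_\tau(y)-\Phi_\tau(w)|^{N+2s+2}}\Big],
\]
where $J_\tau=\textrm{Jac}_{\Phi_\tau}$, $\dot\Phi_\tau=\partial_\tau\Phi_\tau$, and $w=y\pm z$. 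I would then estimate the $\pm$ difference term by term.

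In the first term, $J_\tau$ and $\partial_\tau J_\tau$ are Lipschitz in space uniformly in $\tau$, since $\Phi_\tau\in C^{1,1}$ with $C^2$ dependence on $\tau$. Therefore $\partial_\tau(J_\tau(y)J_\tau(y\pm z))$ differs between the two signs by $O(|z|)$, which multiplies $|z|^{-N-2s}$ via \eqref{ellip-reg}. The remaining part is a bounded factor times the difference of $|\Phi_\tau(y)-\Phi_\tau(y\pm z)|^{-N-2s}$, which is $O(|z|^{1-N-2s})$ by \eqref{WWW} with $\beta=N+2s$.

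The second term needs the most care. I would write $\Phi_\tau(y)-\Phi_\tau(y\pm z)=\mp D\Phi_\tau(y)z+O(|z|^2)$ and $\dot\Phi_\tau(y)-\dot\Phi_\tau(y\pm z)=\mp D\dot\Phi_\tau(y)z+O(|z|^2)$, both by the $C^{1,1}$ regularity in space, uniformly in $\tau$. The leading parts of the numerator are then even in $z$. Their contributions to the two signs differ only through $J_\tau(y\pm z)$, which contributes $O(|z|)$, and through the denominator $|\Phi_\tau(y)-\Phi_\tau(y\pm z)|^{-N-2s-2}$, which is handled by \eqref{WWW} with $\beta=N+2s+2$ and gives $O(|z|^{-1-N-2s})$. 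Since the numerator is $O(|z|^2)$, each piece is $O(|z|^{1-N-2s})$. The $O(|z|^2)$ remainders in the two factors contribute $O(|z|^3)\cdot|z|^{-N-2s-2}=O(|z|^{1-N-2s})$.

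The main obstacle is obtaining this parity cancellation uniformly in $\tau$. That requires the $C^{1,1}$ bounds on $\Phi_\tau$ and $\dot\Phi_\tau$ to hold uniformly for small $\tau$, which I would extract from \eqref{transformations}. Combining the two regimes gives \eqref{SauVsec3}.
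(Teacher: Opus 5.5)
Your proposal is correct and follows essentially the same route as the paper's proof in Appendix~\ref{Appendix.B}. The $|z|^{-2s-N}$ bound comes from \eqref{expansion-k-t-notations}; the $|z|^{1-2s-N}$ bound comes from the fundamental theorem of calculus in $\tau$, explicit differentiation of $\kappa_\tau$, uniform Lipschitz bounds on $\textrm{Jac}_{\Phi_\tau}$ and $\partial_\tau\textrm{Jac}_{\Phi_\tau}$, Lemma~\ref{Useful-bounds} with $\beta=N+2s$ and $\beta=N+2s+2$, and a parity cancellation in $(\Phi_\tau(y)-\Phi_\tau(y\pm z))\cdot(\partial_\tau\Phi_\tau(y)-\partial_\tau\Phi_\tau(y\pm z))$, where your Taylor expansions with $O(|z|^2)$ remainders are just a repackaging of the paper's second-difference bounds \eqref{E5}.
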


The following proposition will play an essential role in the proof of the differentiability of the mapping $t\mapsto H^s_{\Omega_t}(\Phi_t(x),\Phi_t(\cdot))\in C(\overline{\Omega})$ in Lemma \ref{diff-reg-part}. 
 \begin{Lemma}\label{LM1}
Let $s$ in $(0,1)$ and $x\in\Omega$ be fixed. Consider the function $f_t[H_\Omega^s(x,\cdot)]:\Omega\to \R$ defined by:
\begin{equation}
    f_t[H_\Omega^s(x,\cdot)](y):=p.v\int_{\R^N}(H_\Omega^s(x,y)-H_\Omega^s(x,z))\frac{\kappa_t(y,z)-\kappa_0(y,z)}{t}dz.
\end{equation}
Then, for $|t|\ll 1$ sufficiently small, there holds 
\begin{equation}\label{pointwise-est-fyreg}
    |f_t[H_\Omega^s(x,\cdot)](y)|\leq C\delta^{-s}_\Omega(y).
\end{equation}
for some $C>0$ that is independent of $t$ and $y$. Moreover, we have, as $t$ goes to zero,
\begin{align}\label{pointwise-ft-lim}
    f_t[H_\Omega^s(x,\cdot)]\quad\to\quad f_Y[H_\Omega^s(x,\cdot)]\quad\text{pointwise in $\Omega$}.
\end{align}
where we set 
\begin{equation}
    f_Y[H_\Omega^s(x,\cdot)](y):=p.v\int_{\R^N}\frac{H_\Omega^s(x,y)-H_\Omega^s(x,z)}{|y-z|^{N+2s}}\omega_Y(y,z)dz.
\end{equation}
with $\omega_Y(\cdot,\cdot)$ given as in \eqref{defoY-notations}. Moreover, for all $y$ in $\Omega$, we have
\begin{equation}\label{pointwise-ftY-es}
 |f_Y[H_\Omega^s(x,\cdot)] (y)|  \leq C\delta^{-s}_\Omega(y).
\end{equation}
\end{Lemma}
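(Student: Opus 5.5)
The plan is to split the integral defining $f_t[H_\Omega^s(x,\cdot)](y)$ into a near region $|z-y|<r$, with $r:=\delta_\Omega(y)/2$, and a far region $|z-y|\geq r$. Near $y$ I will exploit the smoothness of $H_\Omega^s(x,\cdot)$ inside $\Omega$ together with the odd/even structure of the kernel difference quotient. Far from $y$ I will use only the global H\"older bound on $H_\Omega^s(x,\cdot)$. Throughout I write $H:=H_\Omega^s(x,\cdot)$, change variables $z\mapsto y+z$, and read the principal value as $\lim_{\eps\to0}\int_{|z|>\eps}$.

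\textbf{Regularity input on $H$.} This is where I expect the main obstacle, since everything else is bookkeeping with Lemma \ref{FundLemma2} and \eqref{expansion-k-t-notations}. We have $H=F_s(x,\cdot)-G_\Omega^s(x,\cdot)$, and $F_s(x,\cdot)$ is smooth and bounded away from $x$. The function $H$ is bounded on $\R^N$, because it equals $F_s(x,\cdot)$ outside $\Omega$ and $\dist(x,\partial\Omega)>0$. By the boundary regularity of \cite{RS}, $G_\Omega^s(x,\cdot)\in C^s$ away from $x$ and $G_\Omega^s(x,\cdot)\leq C\delta^s$. Hence the global bound $|H(y)-H(y+z)|\leq C\min(|z|^s,1)$ holds. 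For $y$ near $\partial\Omega$, I will rescale the interior estimates for $(-\Delta)^s$ on $B_{r}(y)$, using $|G_\Omega^s(x,\cdot)-G_\Omega^s(x,y)|\leq C\delta^s$ there together with the $C^s$ tail control. This should give
\[
|\nabla H(y)|\leq C\delta_\Omega^{s-1}(y),\qquad \|D^2H\|_{L^\infty(B_r(y))}\leq C\delta_\Omega^{s-2}(y).
\]
For $y$ away from $\partial\Omega$, these bounds follow from plain interior smoothness, with the singularity at $x$ cancelling because $H$ is smooth in all of $\Omega$.

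\textbf{Estimate \eqref{pointwise-est-fyreg}.} On the near region, write $\overline{\kappa_t}(y,y+z)$ as the sum of its even part and its odd part $\overline{\kappa_t}^o(y,z)$. Also write $H(y)-H(y+z)=-\nabla H(y)\cdot z+R(y,z)$ with $|R(y,z)|\leq C\delta^{s-2}|z|^2$. The linear term against the even part vanishes by symmetry, which handles the principal value. The linear term against the odd part is bounded by Lemma \ref{FundLemma2}:
\[
\int_{|z|<r}\delta^{s-1}|z|\,|z|^{1-2s-N}\,dz\leq C\delta^{s-1}r^{2-2s}\leq C\delta^{1-s}.
\]
The remainder is bounded using $|\overline{\kappa_t}(y,y+z)|\leq C|z|^{-N-2s}$, which follows from \eqref{expansion-k-t-notations}:
\[
\int_{|z|<r}\delta^{s-2}|z|^{2-N-2s}\,dz\leq C\delta^{s-2}r^{2-2s}=C\delta^{-s}.
\]
On the far region, the global bound gives $\int_{|z|>r}\min(|z|^s,1)|z|^{-N-2s}dz\leq Cr^{-s}$. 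All constants are uniform in $t$, and since $\delta_\Omega$ is bounded on $\Omega$, the total is at most $C\delta_\Omega^{-s}(y)$.

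\textbf{Convergence \eqref{pointwise-ft-lim} and bound \eqref{pointwise-ftY-es}.} Fix $y$ and use the same decomposition. By \eqref{expansion-k-t-notations}, $\overline{\kappa_t}(y,y+z)\to\kappa_Y(y,y+z)$ pointwise, and the odd parts converge accordingly. The dominating functions obtained above, namely $C\delta^{s-1}|z|^{2-2s-N}$, $C\delta^{s-2}|z|^{2-N-2s}$ on $|z|<r$ and $C\min(|z|^s,1)|z|^{-N-2s}$ on $|z|>r$, are integrable and independent of $t$. Dominated convergence then yields $f_t[H]\to f_Y[H]$.

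The bound on $f_Y[H]$ follows by repeating the argument with $\kappa_Y$. The required inputs are $|\kappa_Y(y,y+z)|\leq C|z|^{-N-2s}$ and the odd-part bound $|\omega_Y(y,y+z)-\omega_Y(y,y-z)|\leq C|z|$. The first is immediate from \eqref{kappaY} since $\omega_Y$ is bounded. The second follows from the $C^{1,1}$ regularity of $Y$, or alternatively by passing to the limit in \eqref{SauVsec3}.
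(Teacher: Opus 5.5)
Your proof is correct and uses the same ingredients as the paper's: the even/odd splitting of $\overline{\kappa_t}(y,y+\cdot)$, Lemma \ref{FundLemma2}, interior smoothness and the global $C^s$ bound of $H:=H_\Omega^s(x,\cdot)$, and dominated convergence. It organizes them differently, however.

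\textbf{The paper's route.} The paper splits the \emph{whole} integral into even and odd parts. The even part is symmetrized into the second difference $2H(y)-H(y+z)-H(y-z)$, bounded by $|z|^2$ on $B_{\delta_\Omega(y)/4}$ and by $|z|^s$ outside. The odd part is treated globally, using only $|H(y)-H(y+z)|\le C|z|^s$ against $\min(|z|^{-N-2s},|z|^{1-N-2s})$. This is absolutely integrable and contributes $O(1)$, so no gradient estimate is needed.

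\textbf{Your route.} You split in space at $r=\delta_\Omega(y)/2$ and Taylor expand to first order. This costs the extra input $|\nabla H(y)|\le C\delta_\Omega^{s-1}(y)$ for the linear--odd term, so the paper's treatment of the odd part is more economical.

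\textbf{Where your version is stronger.} Your bookkeeping is more careful at the boundary. The paper asserts that the constant in the near-field bound $C_1|z|^2$ is independent of $y$, citing only $C^2_{\mathrm{loc}}(\Omega)$ regularity. No such uniformity is available: since $G_\Omega^s(x,\cdot)$ is comparable to $\delta_\Omega^s$ near $\partial\Omega$, even $\nabla H$ is unbounded there. With your rate $\|D^2H\|_{L^\infty(B_r(y))}\le C\delta_\Omega^{s-2}(y)$, the near-field even contribution is of order $\delta_\Omega^{s-2}(y)\,\delta_\Omega^{2-2s}(y)=\delta_\Omega^{-s}(y)$ rather than $O(1)$. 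The stated bound $C\delta_\Omega^{-s}(y)$ therefore still holds, and your tracking of the blow-up rate is what actually justifies it.

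\textbf{A simplification.} The simplest way to obtain your two derivative bounds is to apply the rescaled interior estimate directly to $H-H(y)$. This function is $s$-harmonic in $B_{\delta_\Omega(y)}(y)$ for every $y\in\Omega$, bounded, and globally $C^s$. Routing the argument through $G_\Omega^s(x,\cdot)$, as you propose, also works. In that case, though, the tail term must absorb the pole at $x$, which a plain $C^s$ bound does not cover.
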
 
\begin{proof}
We set
\begin{align*}
\overline{\kappa_t}(y,z):=\frac{\kappa_t(y,z)-\kappa_0(y,z)}{t}.
\end{align*}
For $z$ in $\R^N$ we define the even and odd parts of $\overline{\kappa_t}(y,y+z)$ by
\begin{align*}
\overline{\kappa_t}^{e}(y,z):=\frac{\overline{\kappa_t}(y,y+z)+\overline{\kappa_t}(y,y-z)}{2}
\qquad\text{and}\qquad
\overline{\kappa_t}^{o}(y,z):=\frac{\overline{\kappa_t}(y,y+z)-\overline{\kappa_t}(y,y-z)}{2}.
\end{align*}
With the change of variable $z\mapsto y+z$, we write
\begin{align*}
f_t[H_\Omega^s(x,\cdot)](y)
&=\mathrm{p.v.}\int_{\R^N}\big(H_\Omega^s(x,y)-H_\Omega^s(x,y+z)\big)\overline{\kappa_t}(y,y+z)\,dz\\
&=f_t^{e}(y)+f_t^{o}(y),
\end{align*}
where
\begin{align*}
f_t^{e}(y)
&:=\mathrm{p.v.}\int_{\R^N}\big(H_\Omega^s(x,y)-H_\Omega^s(x,y+z)\big)\overline{\kappa_t}^{e}(y,z)\,dz,\\
f_t^{o}(y)
&:=\mathrm{p.v.}\int_{\R^N}\big(H_\Omega^s(x,y)-H_\Omega^s(x,y+z)\big)\overline{\kappa_t}^{o}(y,z)\,dz.
\end{align*}
We fix $y$ in $\Omega$ and set $\rho:=\delta_\Omega(y)/2$.

\medskip

\noindent\textbf{Estimate of the even part.}
Since $\overline{\kappa_t}^{e}(y,\cdot)$ is even, we may symmetrize:
\begin{align*}
2f_t^{e}(y)=\int_{\R^N}\big(2H_\Omega^s(x,y)-H_\Omega^s(x,y+z)-H_\Omega^s(x,y-z)\big)\overline{\kappa_t}^{e}(y,z)\,dz.
\end{align*}
By \cite[Lemma A.1]{KN}, $H_\Omega^s(x,\cdot)\in C^2_{\loc}(\Omega)$, and we also have $H_\Omega^s(x,\cdot)\in C^s(\R^N)$.
Hence there are constants $C_1,C_2>0$ (depending on $x$, but independent of $t$, $y$) such that
\begin{align*}
\big|2H_\Omega^s(x,y)-H_\Omega^s(x,y+z)-H_\Omega^s(x,y-z)\big|
\le C_1|z|^{2}1_{B_{\rho/2}}(z)+C_2|z|^{s}1_{\R^N\setminus B_{\rho/2}}(z).
\end{align*}
Moreover, by \eqref{expansion-k-t-notations} we have $|\overline{\kappa_t}^{e}(y,z)|\le C|z|^{-N-2s}$.
Combining the above estimates yields
\begin{align*}
|f_t^{e}(y)|
&\le C\int_{B_{\rho/2}}|z|^{2-N-2s}\,dz + C\int_{\R^N\setminus B_{\rho/2}}|z|^{s-N-2s}\,dz\\
&\le C\rho^{2-2s}+C\rho^{-s}\le C\delta_\Omega(y)^{-s}.
\end{align*}

\medskip

\noindent\textbf{Estimate of the odd part.}
By \cite[Eq. (16)]{Sidy-Franck-BP} we have
\begin{align*}
|\overline{\kappa_t}^{o}(y,z)|\le C\min\big(|z|^{-N-2s},|z|^{1-N-2s}\big).
\end{align*}
Since $H_\Omega^s(x,\cdot)\in C^s(\R^N)$, we obtain
\begin{align*}
|f_t^{o}(y)|
&\le C\int_{\R^N}|z|^{s}\min\big(|z|^{-N-2s},|z|^{1-N-2s}\big)\,dz\\
&\le C\int_{B_1}|z|^{1-s-N}\,dz + C\int_{\R^N\setminus B_1}|z|^{-N-s}\,dz
\le C.
\end{align*}
In particular, $|f_t^{o}(y)|\le C\le C\delta_\Omega(y)^{-s}$ and therefore
\begin{align*}
\big|f_t[H_\Omega^s(x,\cdot)](y)\big|\le C\delta_\Omega(y)^{-s}.
\end{align*}

\medskip

\noindent\textbf{Convergence.}
We use the pointwise convergences following from \eqref{expansion-k-t-notations} (for the even part) and \cite[Eq. (16)]{Sidy-Franck-BP} (for the odd part), together with the previous integrable domination, and we apply the dominated convergence theorem.
We obtain $f_t^{e}(y)\to f_Y^{e}(y)$ and $f_t^{o}(y)\to f_Y^{o}(y)$, where
\begin{align*}
f_Y^{e}(y)
&:=\frac12\int_{\R^N}\frac{2H_\Omega^s(x,y)-H_\Omega^s(x,y+z)-H_\Omega^s(x,y-z)}{|z|^{N+2s}}\omega_Y^{e}(y,z)\,dz,\\
f_Y^{o}(y)
&:=\mathrm{p.v.}\int_{\R^N}\frac{H_\Omega^s(x,y)-H_\Omega^s(x,y+z)}{|z|^{N+2s}}\omega_Y^{o}(y,z)\,dz,
\end{align*}
with
\begin{align*}
\omega_Y^{e}(y,z):=\frac{\omega_Y(y,y+z)+\omega_Y(y,y-z)}{2}
\qquad\text{and}\qquad
\omega_Y^{o}(y,z):=\frac{\omega_Y(y,y+z)-\omega_Y(y,y-z)}{2}.
\end{align*}
Since $f_Y=f_Y^{e}+f_Y^{o}$, this coincides with the definition of $f_Y[H_\Omega^s(x,\cdot)](y)$.
Moreover, the same domination yields $|f_Y(y)|\le C\delta_\Omega(y)^{-s}$.

Finally, the same argument applies on compact subsets of $\Omega$, using the local $C^2$ regularity of $H_\Omega^s(x,\cdot)$, and we obtain convergence in $C^0_{\mathrm{loc}}(\Omega)$.
\end{proof}
The following simple result is fundamental. It says that even though $G_\Omega^s(x,\cdot)$ does not have a finite energy, the dual product $g[\psi](t):=\langle G^s_{\Omega_t}(x_t,(\cdot)_t),  \psi\rangle$ still make sense for any sufficiently regular function $\psi$. This observation will play a crucial for the proof of the results stated above. For the sake of simplicity, we will adopt the following notation .
\begin{equation}\label{z_t}
z_t:=\Phi_t(z),\quad g_t(x_t,z_t):=G^s_{\Omega_t}(x_t,z_t)\quad\text{and}\quad g_0(x,z)=G_\Omega^s(x,z)\quad\text{for $z\in \R^N$.}
\end{equation}
\begin{Proposition}\label{FundLemma}
  Let $s$ in $(0,1)$ and $x\in \Omega$. 
  Let $\psi$ in $C^\infty_0(\Omega)$ and define 
 \begin{align}\label{FundRest}
g[\psi](t,x):=\iint_{\R^{2N}} \frac{\big(g_t(x_t,y_t)-g_t(x_t,z_t)\big)\big(\psi(y)-\psi(z)\big)}{|y-z|^{N+2s}} 
\, dydz \quad\text{for $t\neq 0$},
\end{align} 
and 
\begin{align}\label{FundRes0-def}
g[\psi](0,x):=\iint_{\R^{2N}} \frac{\big(g_0(x,y)-g_0(x,z)\big)\big(\psi(y)-\psi(z)\big)}{|y-z|^{N+2s}}
\, dydz.
\end{align} 
Then we have $g[\psi](t,x)<\infty$ for all $|t|\ll 1$ sufficiently small.
\end{Proposition}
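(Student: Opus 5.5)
The plan is to use the splitting \eqref{Eq-apliting-of-Green}, applied on the perturbed domain $\Omega_t$. For $z\in\R^N$ it reads
\[
g_t(x_t,z_t)=F_s(x_t,z_t)-H^s_{\Omega_t}(x_t,z_t).
\]
Accordingly, $g[\psi](t,x)$ splits into two double integrals, and each is shown to be absolutely convergent. Since $\psi\in C^\infty_0(\Omega)$, we have $|\psi(y)-\psi(z)|\le C\min(1,|y-z|)$. Moreover $\psi(y)-\psi(z)=0$ unless $y$ or $z$ lies in $K:=\supp\psi\Subset\Omega$, so by symmetry it suffices to integrate over $y\in K$, $z\in\R^N$.

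\textbf{Regular part.} The function $z\mapsto H^s_{\Omega_t}(x_t,z)$ is $s$-Hölder on $\R^N$, with a constant uniform in $|t|\ll1$. This holds because its exterior datum $F_s(x_t,\cdot)$ is smooth away from $x_t$, and $x_t$ stays at distance $\ge\delta_\Omega(x)/2$ from $\partial\Omega_t$; the boundary regularity of \cite{RS} on the uniformly $C^{1,1}$ domains $\Omega_t$ then gives the bound. Composing with the bi-Lipschitz map $\Phi_t$, see \eqref{ellip-reg}, the function $z\mapsto H^s_{\Omega_t}(x_t,z_t)$ is also uniformly $C^s$. We then get
\[
\iint_{K\times\R^N}\frac{|y-z|^s\min(1,|y-z|)}{|y-z|^{N+2s}}\,dydz\le C|K|\Big(\int_{B_1}|w|^{1-s-N}dw+\int_{\R^N\setminus B_1}|w|^{-s-N}dw\Big)<\infty.
\]

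\textbf{Singular part.} Here we invoke \eqref{diff-quotient-fund-sol}, which gives
\[
|F_s(x_t,y_t)-F_s(x_t,z_t)|\le Cf_x(y,z)
\]
uniformly in $t$. We treat the cases of \eqref{f_xyz} separately.

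\emph{Case $2s>1$.} The integrand is bounded by
\[
C|y-z|^{1-N-2s}\min(1,|y-z|)\max\big(|x-y|^{2s-N-1},|x-z|^{2s-N-1}\big).
\]
On $\{|y-z|<1\}$ the factor $|y-z|^{2-N-2s}$ is integrable in $w=y-z$. Integrating first in $w$ and then in the remaining variable, the singularity $|x-\cdot|^{2s-N-1}$ is locally integrable since $2s-N-1>-N$. The same applies to the other term after the change of variables $(y,z)\mapsto(z,y)$. On $\{|y-z|\ge1\}$ with $y\in K$, the kernel $|y-z|^{-N-2s}$ is integrable in $z$. The factor $|x-y|^{2s-N-1}$ is integrable over $y\in K$. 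The factor $|x-z|^{2s-N-1}$ is bounded by $1$ away from $x$, and near $x$ it is integrable.

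\emph{Case $2s<1$.} We bound $\min(1,|y-z|)|y-z|^{-N-2s}$, which is integrable in $w$ because $1-2s>0$, against $\max(|x-y|^{2s-N},|x-z|^{2s-N})$. The latter is locally integrable in each variable.

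\emph{Case $s=1/2$.} This works in the same way by choosing any $\beta\in(0,1)$, since then $1+\beta-N-1>-N$ near the diagonal.

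\textbf{Main obstacle.} The delicate point is the joint singularity, where $y$, $z$ and $x$ all collide. There one must check that the product of $|y-z|^{\cdot}$ and $|x-\cdot|^{\cdot}$ remains integrable. I would handle it by the substitution $w=y-z$ and Tonelli, integrating first in $w$ for fixed $y$ (or $z$) and only afterwards against the point singularity at $x$. This is where $N>2s$ and the exponent bookkeeping in \eqref{f_xyz} are essential.

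The second point needing care is the uniform-in-$t$ $C^s$ bound for $H^s_{\Omega_t}(x_t,\cdot)$. I would take it from the uniform $C^{1,1}$ character of $\Omega_t$ and the standard estimate $\|H\|_{C^s}\le C\|F_s(x_t,\cdot)\|_{C^{1}(\R^N\setminus\Omega_t)}$. This estimate holds because $\delta_{\Omega_t}(x_t)$ is bounded below uniformly in $t$, so $F_s(x_t,\cdot)$ is smooth on $\R^N\setminus\Omega_t$ with uniform bounds.
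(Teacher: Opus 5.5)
Your proposal is correct and follows essentially the paper's proof. It uses the same splitting of $g_t$ into $F_s(x_t,\cdot)$ and $H^s_{\Omega_t}(x_t,\cdot)$, the same use of \eqref{diff-quotient-fund-sol} with the three cases of \eqref{f_xyz} for the singular part, and $C^s$ regularity of $H^s_{\Omega_t}(x_t,\cdot)$ for the regular part. The differences are only bookkeeping: you reduce by symmetry to $K\times\R^N$ and use $|\psi(y)-\psi(z)|\le C\min(1,|y-z|)$, whereas the paper splits into $B_R\times B_R$ (resp.\ $K'\times K'$) plus an exterior piece.

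Two small remarks. First, uniformity in $t$ of the $C^s$ bound is not needed for finiteness at each fixed $t$, and for nonzero exterior data the natural reference is \cite{AROS} rather than \cite{RS}. Second, for $2s>1$ on $\{|y-z|\ge1\}$ the kernel you must integrate is $|y-z|^{1-N-2s}$, not $|y-z|^{-N-2s}$; it is still integrable, precisely because $2s>1$.
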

\begin{proof} We give the proof of $g[\psi](t,x)<+\infty$ for $t\neq 0$. The case $t=0$ is similar we therefore skip its proof and leave it to the interested reader.
\par\;

We use the decomposition $g_t(x_t,z_t) = b_{N,s}|x_t-z_t|^{2s-N}-H^s_{\Omega_t}(x_t,z_t)$ of the fractional Green function, see   \eqref{Eq-apliting-of-Green} to split the quantity at stake into the sum of two quantities:
$$
g[\psi](t,x)=g_1[\psi](t,x)-g_2[\psi](t,x),
$$
where $g_1[\psi](t,x)$ and $g_2[\psi](t,x)$ are respectively given by:
\begin{align*}
g_1[\psi](t,x)&:=\iint_{\R^{2N}}\big(|x_t-y_t|^{2s-N}-|x_t-z_t|^{2s-N}\big)\big(\psi(y)-\psi(z)\big)|y-z|^{-2s-N}dydz,\\
g_2[\psi](t,x)&:= \iint_{\R^{2N}} \big(H^s_{\Omega_t}(x_t,y_t)-H^s_{\Omega_t}(x_t,z_t)\big)\big(\psi(y)-\psi(z)\big) |y-z|^{-2s-N}dydz.
\end{align*}
We start by estimating the contribution coming from $g_1[\psi](t,x)$. We claim that
\begin{equation}\label{Fund-es-btx}
|g_1[\psi](t,x)|\leq C\qquad\text{for some $C>0$ that is independent of $t$.}
\end{equation}
To prove the claim, we recall that in view of \eqref{diff-quotient-fund-sol}, we have 
\begin{equation}\label{sub-Fund-es-btx}
 |g_1[\psi](t,x)|\leq C\iint_{\R^{2N}}\frac{\big|\psi(y)-\psi(z)\big|}{|y-z|^{N+2s}}f_x(y,z)dydz,
\end{equation}
where
\begin{equation}\label{f_xyz-1}
f_x(y,z)=
\begin{cases}
\max\big(|x-y|^{2s-N},|x-z|^{2s-N}\big) & \text{if } 2s<1,\\
\big|y-z\big|\max\big(|x-y|^{2s-N-1},|x-z|^{2s-N-1}\big) & \text{if } 2s>1,\\
|y-z|^\beta\max\big(|x-y|^{1-N-\beta},|x-z|^{1-N-\beta}\big)\quad\text{for all $\beta\in (0,1)$}&\text{if $s=1/2$.}
\end{cases}
\end{equation}
Now observe that RHS of the above identity is always finite. To see this, let $R\gg 1$ so that $\supp(\psi)\Subset B_{R/2}(0)$ and write
\begin{align}\label{funnny-decomp}
\iint_{\R^{2N}}\frac{\big|\psi(y)-\psi(z)\big|}{|y-z|^{N+2s}}f_x(y,z)dydz=\iint_{B_{R}\times B_R}\cdots dydz+2\int_{B_{R/2}}\int_{\R^N\setminus B_R}\cdots dydz.
\end{align}
%
%\underline{\textbf{Case 1}}: $2s<1$. In this case, in view of \eqref{ellip-reg}, we have the bound:
%\begin{align*}
 %   |g_1[\psi](t,x)|&\leq C \iint_{\R^{2N}}\big(|x-y|^{2s-N}+|x-z|^{2s-N}\big)\big|\psi(y)-\psi(z)\big||y-z|^{-2s-N}dydz,
%\end{align*}
%for some $C>0$ that is independent of $t$. Next, let $R\gg 1$ so that $\supp(\psi)\Subset B_{R/2}(0)$ and write 
%\begin{align*}
%&\iint_{\R^{2N}}\big(|x-y|^{2s-N}+|x-z|^{2s-N}\big)\big|\psi(y)-\psi(z)\big||y-z|^{-2s-N}dydz\\
%&=\iint_{B_{R}\times B_R}\big(|x-y|^{2s-N}+|x-z|^{2s-N}\big)\big|\psi(y)-\psi(z)\big||y-z|^{-2s-N}dydz\\
%&+2\int_{B_{R/2}}|\psi(y)|\int_{\R^N\setminus B_R}\frac{|x-y|^{2s-N}+|x-z|^{2s-N}}{|y-z|^{N+2s}}dzdy.
%\end{align*}
For $y\in B_{R/2}$ and $z\in \R^N\setminus B_R$,  we can find $C_1=C_1(R)>0$ so that $|y-z|>C_1(1+|z|^2)^{1/2}$. Consequently, we have  
\begin{align*}
\int_{B_{R/2}}\int_{\R^N\setminus B_R}\frac{\big|\psi(y)-\psi(z)\big|}{|y-z|^{N+2s}}f_x(y,z) dydz
\leq C\int_{B_{R/2}}|\psi(y)|\int_{\R^N\setminus B_R}\frac{f_x(y,z)}{(1+|z|^2)^{\frac{N+2s}{2}}}dzdy,
%\\
%&\leq C_2\Bigg(\int_{B_{R/2}}\frac{dy}{|x-y|^{N-2s}}\int_{\R^N\setminus B_R}\frac{dz}{(1+|z|^2)^{\frac{N+2s}{2}}}+\int_{\R^N\setminus B_R}\frac{dz}{|x-z|^{N-2s}(1+|z|^2)^\frac{N+2s}{2}}\Bigg),\\
%&<+\infty.
\end{align*}
But 
\begin{equation}\label{iR(x)-finite}
i_R(x):=\int_{B_{R/2}}\int_{\R^N\setminus B_R}\frac{f_x(y,z)}{(1+|z|^2)^{\frac{N+2s}{2}}}dzdy<+\infty    \quad\text{for all $s\in (0,1)$}.
\end{equation}
Indeed, in view of \eqref{f_xyz-1}, we have, for $2s<1$, that 
\begin{align*}
 i_R(x)&\leq C    \int_{B_{R/2}}\int_{\R^N\setminus B_R}\frac{|x-y|^{2s-N}+|x-z|^{2s-N}}{(1+|z|^2)^{\frac{N+2s}{2}}}dzdy\\
 &\leq 2C\int_{B_{R/2}}\frac{dy}{|x-y|^{N-2s}}\int_{\R^N\setminus B_R}\frac{1}{(1+|z|^2)^{\frac{N+2s}{2}}}dz<+\infty.
\end{align*}
In the case $2s>1$, we have instead 
\begin{align}
i_R(x)&\leq C_1 \int_{B_{R/2}}\int_{\R^N\setminus B_R}\frac{\big|y-z\big|\max\big(|x-y|^{2s-N-1},|x-z|^{2s-N-1}\big)}{(1+|z|^2)^{\frac{N+2s}{2}}}\nonumber\\
&\leq C_2\int_{B_{R/2}}\int_{\R^N\setminus B_R}\frac{|z|}{|x-y|^{N-(2s-1)}(1+|z|^2)^{\frac{N+2s}{2}}}dydz\nonumber\\
&\leq C_2\int_{B_{R/2}}\frac{dy}{|x-y|^{N-(2s-1)}}\int_{\R^N\setminus B_R}\frac{|z|}{(1+|z|^2)^{\frac{N+2s}{2}}}dz<+\infty.\nonumber
\end{align}
And finally if $s=1/2$, we have 
\begin{align*}
i_R(x)&\leq C_1\int_{B_{R/2}}\int_{\R^N\setminus B_R}\frac{|y-z|^\beta\max\big(|x-y|^{1-N-\beta},|x-z|^{1-N-\beta}\big)}{(1+|z|^2)^{\frac{N+2s}{2}}}dz\\
&\leq C_2\int_{B_{R/2}}\frac{dy}{|x-y|^{N-(1-\beta)}}\int_{\R^N\setminus B_R}\frac{|z|^\beta}{(1+|z|^2)^{\frac{N+1}{2}}}dz <+\infty.
\end{align*}
In all three cases we have proved that $i_R(x)$ has a finite value which gives \eqref{iR(x)-finite}.
\par\;
On the other hand, since $\psi\in C^\infty_0(\Omega)$, it is clear that 
\begin{align}\label{jR(x)-finite}
j_R(x):=\iint_{B_R\times B_R}\frac{\big|\psi(y)-\psi(z)\big|}{|y-z|^{N+2s}}f_x(y,z)\,dy\,dz 
\leq C\iint_{B_R\times B_R}\frac{f_x(y,z)}{|y-z|^{N+2s-1}}\,dy\,dz<+\infty.
\end{align}
The finiteness of the RHS of \eqref{jR(x)-finite} follows from \eqref{f_xyz} by checking the integrability near the diagonal $y=z$ (and the integrability of the mild singularities in $y=x$ or $z=x$).
If $2s<1$, the function $f_x$ is locally bounded on $B_R\times B_R$ and the integrand behaves like $|y-z|^{-N-2s+1}$, which is integrable around $y=z$ since $2s<1$.
If $2s>1$, we have $f_x(y,z)\leq C|y-z|\max\big(|x-y|^{2s-N-1},|x-z|^{2s-N-1}\big)$, hence the integrand behaves like $|y-z|^{-N-2s+2}$ and is integrable around $y=z$ since $2s<2$.
If $s=\frac12$, choosing $\beta\in (0,1)$ we have $f_x(y,z)\leq C|y-z|^\beta\max\big(|x-y|^{-N-\beta},|x-z|^{-N-\beta}\big)$, so the integrand behaves like $|y-z|^{-N+\beta}$ and is integrable around $y=z$ since $\beta>0$.
\begin{equation}\label{dual-prod-f_xyz-psi}
\iint_{\R^{2N}}\frac{\big|\psi(y)-\psi(z)\big|}{|y-z|^{N+2s}}f_x(y,z)dydz<+\infty.
\end{equation}
That $g_1[\psi](t,x)$ is finite now follows by combining \eqref{sub-Fund-es-btx}, \eqref{funnny-decomp}, \eqref{iR(x)-finite}, \eqref{jR(x)-finite} and \eqref{dual-prod-f_xyz-psi}
\par\;

We next show that 
\begin{equation}\label{FundRes00}
|g_2[\psi](t,x)|< +\infty.
\end{equation}
For that aim, we choose $K\Subset K'\Subset\Omega$ with $K, K'$ compact and such that $\supp \psi\subset K$ and write 
\begin{align}\label{Homegt-dec}
g_2[\psi](t,x)&=  \iint_{K'\times  K'} \big(H^s_{\Omega_t}(x_t,y_t)-H^s_{\Omega_t}(x_t,z_t)\big)\big(\psi(y)-\psi(z)\big)|y-z|^{-2s-N}   \, dy\, dz \nonumber\\
&+2\int_{\supp(\psi)}\psi(y)\int_{\R^N\setminus K'}\big(H^s_{\Omega_t}(x_t,y_t)-H^s_{\Omega_t}(x_t,z_t)\big)|y-z|^{-2s-N}\, dz\,dy.
\end{align}
For $t$ sufficiently small, we know by regularity (see e.g \cite[Proposition 1.3]{AROS}) that $H^s_{\Omega_t}(x_t,\cdot)\in C^s(\R^N)$. Therefore for some constant $C>0$ that might depend on $t$, we may write, for $|t|\ll 1$ small enough:
\begin{align}\label{Homegt-es1}
&\iint_{K'\times  K'} \big|H^s_{\Omega_t}(x_t,y_t)-H^s_{\Omega_t}(x_t,z_t)\big|\big|\psi(y)-\psi(z)\big||y-z|^{-2s-N}dydz\nonumber\\
&\leq C\iint_{K'\times  K'} \frac{|y_t-z_t|^s}{|y-z|^{N+2s-1}}dydz\nonumber\\
&\leq C\iint_{K'\times  K'} \frac{dy\,dz}{|y-z|^{N+s-1}}<+\infty,
\end{align}
where we used \eqref{ellip-reg}. On the other hand, 
since $|H^s_{\Omega_t}(x_t,z_t)|\leq C |x_t-z_t|^{2s-N}\leq C|x-z|^{2s-N}$, we have 
\begin{align}\label{Homegt-es2}
&\Bigg|\int_{\supp(\psi)}\psi(y)\int_{\R^N\setminus K'}\frac{H^s_{\Omega_t}(x_t,y_t)-H^s_{\Omega_t}(x_t,z_t)}{|y-z|^{N+2s}}\,dz\,dy \Bigg|\nonumber\\
&\leq C\int_{K}\int_{\R^N\setminus K'}\Big(\frac{1}{|x-y|^{N-2s}}+\frac{1}{|x-z|^{N-2s}}\Big)\frac{\, dy\, dz }{1+|z|^{N+2s}} <+\infty.   
\end{align}
Combining \eqref{Homegt-dec}, \eqref{Homegt-es1}, and \eqref{Homegt-es2} we obtain \eqref{FundRes00}. The estimate \eqref{Fund-es-btx} together with \eqref{FundRes00} shows that $g[\psi](t,x)<+\infty$. The proof is finished and we are done.
\end{proof}
\par\;
The following lemma will be useful in what follows, as it will allow us, thanks to a regularity result that we prove along the way, to obtain uniform control over the difference quotient defined in \eqref{defStx}. We continue to use the notation introduced in \eqref{z_t}.
\begin{Lemma}\label{unifest-gtx}
Let $x$ in $\Omega$ be fixed and define
\begin{equation}\label{defgtx}  
  g^t_x:\R^N\to\R, \; y\mapsto g^t_x(y):=b_{N,s}\frac{|x_t-y_t|^{2s-N}-|x-y|^{2s-N}}{t}\quad\text{for $t\neq 0$}. 
\end{equation}
Then $|g^t_x(y)|\leq C|x-y|^{2s-N}$ for all $y\neq x$ in $\R^N$. Moreover, for all $\alpha\in (0,1)$ and $|t|\ll 1$ sufficiently small, there holds
\begin{equation}\label{unif-est-extdata}
|g^t_x(y)|+\frac{|g^t_x(y)-g^t_x(z)|}{|y-z|^\alpha}   \leq C_0\qquad\textup{for all $y\in\partial\Omega, \,z\in \R^N\setminus\Omega$}, 
\end{equation}
for some $C_0=C_0(N,s,\alpha, x, \Omega)>0$ that is independent of $t$.
\end{Lemma}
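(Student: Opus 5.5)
The plan is to reduce both estimates to the expansion \eqref{UExp} with $\beta=2s-N$. Write
\[
g^t_x(y)=b_{N,s}|x-y|^{2s-N}\,\frac{\big(1+2t\,q(x,y)+O(t^2)\big)^{\frac{2s-N}{2}}-1}{t},\qquad q(x,y):=\frac{(Y(x)-Y(y))\cdot(x-y)}{|x-y|^2}.
\]
Since $Y$ is globally Lipschitz, $|q|\le \|DY\|_\infty$. The $O(t^2)$ in \eqref{UExp} is uniform in $y\neq x$. Hence, for $|t|$ small, the bracket stays in a fixed compact subinterval of $(0,\infty)$. The mean value theorem applied to $r\mapsto r^{(2s-N)/2}$ then bounds the quotient by a constant independent of $t$ and $y$. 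This gives $|g^t_x(y)|\le C|x-y|^{2s-N}$ for all $y\neq x$. In particular, for $y\in\R^N\setminus\Omega$ we have $|x-y|\ge\delta_\Omega(x)>0$, so $|g^t_x(y)|\le C\delta_\Omega(x)^{2s-N}$. This controls the first term in \eqref{unif-est-extdata}.

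For the Hölder part I would prove a uniform Lipschitz bound for $g^t_x$ on the region $\{y:|x-y|\ge\delta_\Omega(x)/2\}$. Since $g^t_x$ is also uniformly bounded there, the $C^\alpha$ bound for every $\alpha\in(0,1)$ follows at once. The idea is to avoid differentiating the $O(t^2)$ remainder and to work with exact formulas instead. By the fundamental theorem of calculus in $t$,
\[
g^t_x(y)=\frac{b_{N,s}}{t}\int_0^t \partial_\tau |\Phi_\tau(x)-\Phi_\tau(y)|^{2s-N}\,d\tau
=\frac{b_{N,s}(2s-N)}{t}\int_0^t |x_\tau-y_\tau|^{2s-N-2}\,(x_\tau-y_\tau)\cdot(\partial_\tau\Phi_\tau(x)-\partial_\tau\Phi_\tau(y))\,d\tau .
\]
The integrand is a function of $y$ that is differentiable almost everywhere, since $\Phi_\tau$ and $\partial_\tau\Phi_\tau$ are $C^{1,1}$ (hence Lipschitz) in space with bounds uniform in $|\tau|\le 1/2$, by \eqref{transformations}. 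The factor $|x_\tau-y_\tau|$ is bounded below by $c\,\delta_\Omega(x)$ by \eqref{ellip-reg}. Differentiating in $y$ therefore gives a gradient bounded by $C(1+|x-y|)^{2s-N-1}\cdot$const, uniformly in $\tau$. The possible growth at infinity from the factor $\partial_\tau\Phi_\tau(x)-\partial_\tau\Phi_\tau(y)$ is compensated by $|x_\tau-y_\tau|^{2s-N-1}$, since $N>2s$. Averaging over $\tau\in(0,t)$ yields $\|\nabla g^t_x\|_{L^\infty(\{|x-y|\ge\delta_\Omega(x)/2\})}\le C$, independent of $t$.

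It remains to pass from this gradient bound to a Lipschitz bound between $y\in\partial\Omega$ and $z\in\R^N\setminus\Omega$, and I expect this to be the main obstacle. The segment $[y,z]$ may pass near $x$, so the gradient bound cannot be used along it directly. I would split into two cases.
\begin{itemize}
\item If $|y-z|\le\delta_\Omega(x)/2$, then every point of $[y,z]$ lies at distance at least $\delta_\Omega(x)/2$ from $x$, because $|x-y|\ge\delta_\Omega(x)$. The mean value inequality then gives $|g^t_x(y)-g^t_x(z)|\le C|y-z|\le C|y-z|^\alpha\,(\mathrm{diam}\text{-type constant})$.
\item If $|y-z|>\delta_\Omega(x)/2$, I would instead use the uniform bound: $|g^t_x(y)-g^t_x(z)|\le 2C\delta_\Omega(x)^{2s-N}\le C'|y-z|^\alpha$.
\end{itemize}
In the first case, $|y-z|\le |y-z|^\alpha\,\delta_\Omega(x)^{1-\alpha}$ because $|y-z|$ is bounded there. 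Combining the two cases gives \eqref{unif-est-extdata} with $C_0$ depending only on $N,s,\alpha,x,\Omega$ and the $C^{1,1}$ norms of $\Phi_t$.
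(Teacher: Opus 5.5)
Your proof is correct, and its skeleton matches the paper's. You write $g^t_x$ as the time average $\frac{b_{N,s}}{t}\int_0^t\partial_\tau|x_\tau-y_\tau|^{2s-N}\,d\tau$, bound the integrand uniformly in $\tau$ using $|x_\tau-y_\tau|\geq c\,\delta_\Omega(x)$, and get the pointwise bound from \eqref{UExp}.

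Where you depart from the paper is in how you obtain the H\"older estimate for $\partial_\tau b^x_\tau$.
\begin{itemize}
\item The paper first splits according to whether $z$ lies outside a large ball $B_R$. Outside, $|y-z|\geq R/2$ and a sup bound suffices.
\item Inside $B_R$, the paper decomposes $\partial_\tau b^x_\tau(y)-\partial_\tau b^x_\tau(z)$ algebraically into two pieces:
  \begin{itemize}
  \item a piece where only the singular factor $|x_\tau-\cdot|^{2s-N-2}$ is differenced, handled by a scalar mean value theorem that needs only both endpoints to be away from $x$;
  \item a piece where only the smooth numerator $m^x_\tau=(\partial_\tau x_\tau-\partial_\tau y_\tau)\cdot(x_\tau-y_\tau)$ is differenced. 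This one is handled along the segment $[y,z]$, where there is no singularity. However, $\nabla m^x_\tau$ grows linearly, which is why the restriction to $B_R$ is needed.
  \end{itemize}
\item You instead prove a global bound $|\nabla_y\partial_\tau b^x_\tau(y)|\leq C|x-y|^{2s-N-1}$ on $\{|x-y|\geq\delta_\Omega(x)/2\}$. You then split according to whether $|y-z|\leq\delta_\Omega(x)/2$:
  \begin{itemize}
  \item short segments stay inside that region, so the mean value inequality applies;
  \item long separations are absorbed by the uniform sup bound.
  \end{itemize}
\end{itemize}

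What each route buys: yours needs a single case split and no auxiliary radius, because the decay of the full gradient handles infinity automatically. The paper's decomposition never has to check where the segment $[y,z]$ passes relative to $x$, at the price of the extra split in $|z|$. Your argument also makes transparent that the estimate holds for any pair of points at distance at least $\delta_\Omega(x)$ from $x$, and even with $\alpha=1$.
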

\begin{proof}
The estimate $|g_x^t(y)|\leq C|x-y|^{2s-N}$ is an easy consequence of \eqref{UExp}. Next, let $x$ in $\R^N$ be fixed and set
$$
b_t^x(y):=|x_t-y_t|^{2s-N}.
$$
Then, up to a positive constant, we may write by the fundamental theorem of calculus that
\begin{equation}\label{expansion-bt}
g^t_x(y)= \frac{b_t^x(y)-b_0^x(y)}{t} = \frac{1}{t}\int_{0}^t\partial_rb_r^x(y)dr.
\end{equation}
Observe that to get \eqref{unif-est-extdata}, it is enough to consider $y\in \partial\Omega$ and $z\in B_{R}(0)\bigcap \Omega^c$ for some $R=R(\Omega)\gg 1$ sufficiently large so that $\Omega\Subset B_{\frac{R}{2}}(0)$. \par\;
Indeed, if $z\in \R^N\setminus B_{R}(0)$, then setting 
$$
a_r(x,y):=\frac{|\partial_rx_r-\partial_ry_r|}{|x_r-y_r|}, 
$$
A direct calculation gives 
\begin{align*}
&\frac{\big|\partial_rb_r^x(y)-\partial_rb_r^x(z)\big|}{N-2s}\leq a_r(x,y) \big|x_r-y_r\big|^{2s-N}+ a_r(x,z)\big|x_r-z_r\big|^{2s-N},
\end{align*}
Since $\partial_r\Phi_r\in C^{1,1}(\R^N,\R^N)$, and $|x_r-y_r|>C|x-y|$ by \eqref{ellip-reg}, we have $a_r(x,\cdot)\in L^\infty(\R^N)$. Moreover, in view of \eqref{ellip-reg} we also have 
$$
\big|x_r-y_r\big|^{2s-N}+ \big|x_r-z_r\big|^{2s-N}\leq C(s,N,x,y)\leq C(s,N,\Omega,x,\alpha,R)|y-z|^\alpha,
$$
for any $\alpha\in (0,1)$ since $|y-z|>R/2>0$. In conclusion we have 
$$
\frac{\big|\partial_rb_r^x(y)-\partial_rb_r^x(z)\big|}{N-2s}\leq C|y-z|^\alpha\qquad\text{for all $\alpha\in (0,1)$},
$$
for some $C=C(N,s,x, \Omega,R)>0$ that is independent of $r$. In view of the representation \eqref{expansion-bt}, this gives the estimate \eqref{unif-est-extdata} for $y\in \partial\Omega$ and $z\in \R^N\setminus B_{R}(0)$.
\par\;

Now if $y\in\partial\Omega$ and $z\in B_{R}(0)\bigcap \Omega^c$, we write
\begin{align}\label{dertbt}
&\;\quad\Big|\partial_rb_r^x(y)-\partial_rb_r^x(z)\Big|\nonumber\\
&= \Big|\frac{\big(\partial_rx_r-\partial_ry_r\big)\cdot\big(x_r-y_r\big)}{|x_r-y_r|^{N-2s+2}}-\frac{\big(\partial_rx_r-\partial_rz_r\big)\cdot\big(x_r-z_r\big)}{|x_r-z_r|^{N-2s+2}}\Big|\nonumber\\
&=\Big|\big|x_r-y_r\big|^{2s-2-N}-\big|x_r-z_r\big|^{2s-2-N}\Big|\big|\partial_rx_r-\partial_ry_r\big|\big|x_r-y_r\big|\nonumber\\
&\;+\dfrac{\Big|\big(\partial_rx_r-\partial_ry_r\big)\cdot\big(x_r-y_r\big)-\big(\partial_rx_r-\partial_rz_r\big)\cdot\big(x_r-z_r\big)\Big|}{|x_r-z_r|^{N-2s+2}}\nonumber\\
&\quad =:i_r^x(y,z)+j_r^x(y,z)\nonumber
\end{align}

Let $\beta:=2s-2-N<0$.
Since $x$ in $\Omega$ is fixed, we have $|x-y|,|x-z|\geq \delta_\Omega(x)$ for all $y$ in $\partial\Omega$ and $z$ in $\R^N\setminus\Omega$.
By \eqref{ellip-reg}, the same lower bound holds for the deformed points $x_r,y_r,z_r$ uniformly for $r$ in $[-\frac12,\frac12]$.
Therefore $r\mapsto r^{\beta}$ is Lipschitz on $[c\,\delta_\Omega(x),\infty)$ for some $c>0$ and, by the mean value theorem,
\begin{align*}
\Big||x_r-y_r|^{\beta}-|x_r-z_r|^{\beta}\Big|
&\leq C\Big||x_r-y_r|-|x_r-z_r|\Big|\\
&\leq C|y_r-z_r|\leq C|y-z|.
\end{align*}
Consequently,
\begin{align*}
i_r^x(y,z)\leq C|y-z|.
\end{align*}
Since $y$ in $\partial\Omega$ and $z$ in $B_R(0)$, we have $|y-z|\leq 2R$ and thus $|y-z|\leq (2R)^{1-\alpha}|y-z|^\alpha$.
It follows that
\begin{align*}
i_r^x(y,z)\leq C|y-z|^\alpha,
\end{align*}
for some $C>0$ that is independent of $r$.
\par\;
To control $j_r^x(y,z)$, we define
$$
m_r^x(y):=\big(\partial_rx_r-\partial_ry_r\big)\cdot\big(x_r-y_r\big)
$$
and apply the fundamental theorem of calculus to get
\begin{align*}
 j_r^x(y,z)&=  \dfrac{\Big|m_r^x(y)-m_r^x(z)\Big|}{|x_r-z_r|^{N-2s+2}}\\
 &\leq\dfrac{\int_{0}^1|\nabla m_r^x(\tau y+(1-\tau)z)||y-z|d\tau }{|x_r-z_r|^{N-2s+2}}\\
 &\leq C\frac{\sup_{r\in [-1/2,+1/2]}\|\nabla m_r^x\|_{L^\infty(\R^N)}}{|x_r-z_r|^{N-2s+2}}|y-z|\\
 &\leq C|y-z|\leq C|y-z|^\alpha,
\end{align*}
for some constant $C=C(x,y,s,N, \alpha, \Omega)>0$ where used that $m_r^x\in C^{1,1}(\R^N)$ and that $|x_r-z_r|\geq C|x-z|>\delta_\Omega(x)/2$. 
In conclusion we have proved that $\partial_r b_r^x:\R^N\setminus\Omega\to \R$ satisfies \eqref{unif-est-extdata} with a constant that is independent of $r$. Hence $g^t_x$ also satisfies \eqref{unif-est-extdata} as a consequence of the representation \eqref{expansion-bt}. The proof is finished.
\end{proof} 
\section{A useful regularity estimate}
In this section, we establish a regularity result for some nonlocal divergence-type equation with non-zero Dirichlet data. The proof relies on a recent result in \cite{KW}.
For the statement of the result, it will be convenient to make the following definition.
\begin{Definition} \label{defOp}
We denote by $\mathcal{G}_s(\lambda, \Lambda)$ the set of operators pointwise defined by 
$$
\mathcal{L}w(y):=p.v\int_{\R^N}(w(y)-w(y+z))\kappa(y,y+z)dz,
$$
with kernels $\kappa(\cdot,\cdot):\R^N\times\R^N\to [0,\infty]$ that are symmetric and uniformly elliptic; that is to say,
\begin{equation}
    \lambda |y-z|^{-2s-N}\leq \kappa(y,z)=\kappa(z,y)\leq \Lambda |y-z|^{-2s-N}\qquad\forall\, y,z\in \R^N,
\end{equation}
for some $\lambda, \Lambda>0$ and which, furthermore, satisfies:
\begin{equation}\label{Sauveur1b}
    |\kappa(y+h,z+h)-\kappa(y,z)|\leq C\frac{|h|^\sigma}{|y-z|^{N+2s}}\qquad\text{for all $y,z\in \R^N$ and all $h\in B_1(0)$, for some $\sigma\in (0,s)$.}
\end{equation}
\end{Definition}
Next, we recall the following result which is essentially contained in \cite{KW}.
\begin{thm}\label{KimWeidner}
Let $\Omega\subset \R^N$ be a bounded $C^{1,1}$ domain.
Let $f$ be a given function such that 
\begin{align}\label{normXOm}
    \|f\|_{X(\Omega)}:=\sup_{w\in H^s_0(\Omega)}\frac{\int_{\Omega}|fw|dx}{[w]_{H^s(\R^N)}}<+\infty.
\end{align}
Let $\mathcal{L}$ be given as in Definition \ref{defOp} and  $u$ be a weak solution (see \cite[Definition 2.1]{KW}) to 
\begin{equation*}
\left\{ \begin{array}{rcll} \mathcal{L} u&=& f  &\textrm{in }\Omega, \\ u&=&0&%\textrm{for } x 
\textrm{in }\R^N\setminus\Omega. \end{array}\right. 
\end{equation*}
Let $p$ in $(0,s]$ and $q\in (\frac{N}{s+p},\frac{N}{p})$. Then there holds $u\in C^{s+q-\frac{N}{p}}(\overline{\Omega})$. Moreover, 
\begin{equation}
    \|u\|_{C^{s+q-\frac{N}{p}}(\overline{\Omega})}\leq C\Big(\|u\|_{L^1_{2s}(\R^N)}+\|f\|_{X(\Omega)}\Big),
\end{equation}
for some $C$ that depends only on $N,s,\sigma,\lambda, \Lambda, p, q$ and $\Omega$. In the above, the norm $\|\cdot\|_{L^1_{2s}(\R^N)}$ is defined by 
$$
\|u\|_{L^1_{2s}(\R^N)}:=\int_{\R^N}\frac{|u(y)|}{1+|y|^{N+2s}}dy.
$$
\end{thm}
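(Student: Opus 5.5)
The statement is a repackaging of the boundary regularity theory of \cite{KW}, so the plan is to reduce it to their main theorem rather than reprove the Schauder-type estimates. The first step is to check that every $\mathcal L\in\mathcal G_s(\lambda,\Lambda)$ is admissible in \cite{KW}. Symmetry and uniform ellipticity are built into Definition \ref{defOp}. The translation condition \eqref{Sauveur1b} is exactly the "$C^\sigma$ in the translation variable" assumption used there to freeze coefficients. It is also the hypothesis that will later be verified for $\kappa_t$ through \eqref{Sauveur1} in Lemma \ref{Useful-bounds}, with $\sigma$ any number in $(0,s)$ since $|h|\leq|h|^\sigma$ for $h\in B_1$. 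The domain is $C^{1,1}$, which suffices for the flattening arguments in \cite{KW}. The weak formulation is taken as in \cite[Definition 2.1]{KW}, so no translation between notions of solution is needed.

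The second step is an a priori energy bound. Test the equation with $u\in H^s_0(\Omega)$ and use ellipticity to get
\[
\lambda [u]_{H^s(\R^N)}^2\leq \mathcal E_\kappa(u,u)=\int_\Omega fu\leq \|f\|_{X(\Omega)}[u]_{H^s(\R^N)},
\]
so that $[u]_{H^s}\leq \lambda^{-1}\|f\|_{X(\Omega)}$. Hence $\|f\|_{X(\Omega)}$ is precisely the dual $H^{-s}$-type norm controlling the right-hand side in the weak formulation. I will then localize this. For a ball $B_r(x_0)$ centred in $\overline\Omega$ and a test function $w$ supported in $B_r(x_0)$, use H\"older and fractional Sobolev with the exponents $p,q$ to obtain a Morrey-type decay $\int_{B_r}|fw|\lesssim r^{\,s+q-N/p}\,r^{N/2-s}\,\|f\|_{X(\Omega)}[w]_{H^s}$. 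This is the form of the data assumption under which \cite{KW} produce $C^{\gamma}$ estimates with $\gamma=s+q-\tfrac Np\in(0,s)$. The constraints $p\leq s$ and $q\in(\tfrac N{s+p},\tfrac Np)$ are exactly what make $\gamma$ positive and strictly below $s$.

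The third step is to invoke the $C^\gamma(\overline\Omega)$ boundary estimate of \cite{KW}. Its proof runs a Campanato iteration: compare $u$ on $B_r(x_0)\cap\Omega$ with the solution of the frozen, translation-invariant problem, control the error by \eqref{Sauveur1b} and by the data decay from the second step, and control the nonlocal tail by $\|u\|_{L^1_{2s}(\R^N)}$. This gives
\[
\|u\|_{C^{\gamma}(\overline\Omega)}\leq C\big(\|u\|_{L^1_{2s}}+\|f\|_{X(\Omega)}\big),
\]
with $C$ depending only on $N,s,\sigma,\lambda,\Lambda,p,q,\Omega$. Nothing in this argument depends on the specific kernel beyond these quantities, which is why the constant is uniform over $\mathcal G_s(\lambda,\Lambda)$.

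The main obstacle is matching the hypothesis on $f$. \cite{KW} state their result for data in a specific Lebesgue or Morrey class, whereas here only the global quantity $\|f\|_{X(\Omega)}$ is assumed. I would first try to show that the localized bound in the second step is the only property of $f$ actually used in their iteration, so that their proof goes through verbatim with $\|f\|_{X(\Omega)}$ in place of their data norm. If that fails, I would instead treat $f$ as a distribution in $H^{-s}(\Omega)$, write it as a divergence-type nonlocal datum via the Riesz representation in $\mathcal H^s_0(\Omega)$, and apply the version of \cite{KW} that allows such divergence-form right-hand sides.
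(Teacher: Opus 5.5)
\textbf{Gap: the localized decay of $f$ in your second step does not follow from the hypotheses, and this is exactly the ``main obstacle'' you leave open.}

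H\"older and fractional Sobolev ``with the exponents $p,q$'' would require $f$ to lie in some $L^q$-type class, which is not assumed. The only hypothesis is \eqref{normXOm}. For $w$ supported in $B_r(x_0)$ it gives exactly $\int|fw|\le\|f\|_{X(\Omega)}[w]_{H^s(\R^N)}$, with no extra power of $r$. Reading \cite{KW} more carefully will not fix this. Nor will your fallback: representing $f$ as an $H^{-s}$ datum only returns $u\in\mathcal H^s_0(\Omega)$, and for $N>2s$ that space does not embed into any H\"older space.

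In fact, for $N>2s$ the conclusion fails under \eqref{normXOm} alone, for every positive H\"older exponent. Take $\mathcal L=(-\Delta)^s$, $x_0\in\Omega$, and $f=|\cdot-x_0|^{-a}$ with $2s<a<\frac{N+2s}{2}$. Then $f\in L^{2N/(N+2s)}(\Omega)$, so $\|f\|_{X(\Omega)}<\infty$ by H\"older and the fractional Sobolev inequality. On the other hand, the interior lower bound $G^s_\Omega(x,y)\ge c|x-y|^{2s-N}$ near $x_0$ gives $u(x)\gtrsim|x-x_0|^{2s-a}\to\infty$ as $x\to x_0$. So no argument that uses $f$ only through $\|f\|_{X(\Omega)}$ can close.

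\textbf{What is missing is an extra hypothesis on $f$, not a cleverer argument.} The paper's own proof is essentially your third step. It cites \cite[Theorem 1.7]{KW}, after flattening the $C^{1,1}$ boundary so that on small balls the Lipschitz constant is as small as that theorem needs. It also cites \cite[Remark 3.2]{KW}, the interior estimate \cite[Proposition 3.5]{KW}, and a covering argument. It therefore relies silently on the data class admitted in \cite{KW}.

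In the only place the result is used (Proposition \ref{Dir-non-boundary-data}, applied in Lemma \ref{diff-reg-part}), the datum is $f-\mathcal L\bar g$ with $f=f_t[H^s_\Omega(x,\cdot)]$. Both terms are $O(\delta_\Omega^{-s})$, by Lemma \ref{LM1} and \eqref{lbarg-est}. You should put a bound of this type, or the actual data hypothesis of \cite[Theorem 1.7]{KW}, into the statement and let the constant depend on it. After that, your reduction is the paper's.

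\textbf{Separate issue: the exponent.} Your assertion that the constraints on $p,q$ make $\gamma=s+q-\frac Np$ positive is false. The condition $q>\frac N{s+p}$ only gives $\gamma>s-\frac{Ns}{p(s+p)}$, which can be negative: $N=3$, $s=p=\frac12$, $q=4$ gives $\gamma=-\frac32$. The constraints are exactly those making $s+p-\frac Nq\in(0,s)$, which is presumably the intended exponent.
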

\begin{proof}
It follows from \cite[Theorem 1.7]{KW}, \cite[Remark 3.2]{KW}, the interior regularity estimate \cite[Proposition 3.5]{KW} and a standard covering argument (see e.g. \cite[Proof of Corollary 2.6.11]{FRO}).
Since $\Omega$ is $C^{1,1}$, after flattening $\partial\Omega$ and restricting to sufficiently small balls, the Lipschitz constant required in \cite[Theorem 1.7]{KW} can be made as small as needed.
\end{proof}
\par\;
Thanks to the above result, we can now  prove the following statement.
\begin{Proposition}\label{Dir-non-boundary-data}
Let $\Omega\subset \R^N$ be a bounded $C^{1,1}$ domain.
Let $s$ in $(0,1)$ and let $\mathcal{L}, \kappa, \lambda, \Lambda$ be as in Definition \ref{defOp}.  Assume moreover that 
\begin{equation}\label{Sauveur2}
|\kappa^o(y,z)|\leq C\min(|z|^{-2s-N},|z|^{1-2s-N})\qquad\text{for all $y,z\in \R^N$},
\end{equation}
where $\kappa^o(\cdot,\cdot)$ is the odd part of the kernel $\kappa(\cdot,\cdot)$, that is, 
$$
\kappa^o(y,z)=\frac{\kappa(y,y+z)-\kappa(y,y-z)}{2}.
$$
Let $f$ satisfy \eqref{normXOm} and assume that $g$ satisfies
\begin{equation}\label{unif-est-extdata-cond}
|g(y)|+\frac{|g(y)-g(z)|}{|y-z|^\alpha}   \leq C_0\qquad\textup{for some $\alpha$ in $(s,\min\{1,2s\})$ and for all $y\in\partial\Omega, \,z\in \R^N\setminus\Omega$}. 
\end{equation}
Let $u$ be a weak solution to 
\begin{equation}\label{divergence-type-equation}
\left\{ \begin{array}{rcll} \mathcal{L} u&=& f  &\textrm{in }\Omega, \\ u&=&g&%\textrm{for } x 
\textrm{in }\R^N\setminus\Omega. \end{array}\right. 
\end{equation}
\par\;

Let $p$ in $(0,s]$ and $q\in (\frac{N}{s+p},\frac{N}{p})$. Then there holds $u\in C^{s+q-\frac{N}{p}}(\overline{\Omega})$. Moreover, 
\begin{equation}
\|u\|_{C^{s+q-\frac{N}{p}}(\overline{\Omega})}\leq CC_0\Big(1+\|u\|_{L^1_{2s}(\R^N)}+\|f\|_{X(\Omega)}\Big),
\end{equation}
for some $C$ that depends only on $N,s,\sigma,\lambda, \Lambda, p, q$ and $\Omega$.
\end{Proposition}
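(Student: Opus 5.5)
The plan is to reduce to the zero-exterior-data case of Theorem \ref{KimWeidner} by subtracting a suitable extension of $g$. First, extend $g$ from $\R^N\setminus\Omega$ to a function $\tilde g$ on $\R^N$. By \eqref{unif-est-extdata-cond}, $g$ is bounded and $C^\alpha$ on $\R^N\setminus\Omega$ (up to $\partial\Omega$), with constant $C_0$. We use a McShane/Whitney-type $C^\alpha$ extension to get $\tilde g$ with $\|\tilde g\|_{C^\alpha(\R^N)}\le CC_0$. Two further properties are useful:
\begin{itemize}
\item one can arrange $\tilde g\in C^\alpha$ with interior control $|D^2\tilde g(y)|\le CC_0\delta_\Omega(y)^{\alpha-2}$ in $\Omega$, using a regularized Whitney extension;
\item near infinity $\tilde g=g$, so $\|\tilde g\|_{L^1_{2s}}\le CC_0$.
\end{itemize}
Then set $w:=u-\tilde g$. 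It vanishes in $\R^N\setminus\Omega$ and weakly solves $\mathcal L w=f-\mathcal L\tilde g$ in $\Omega$.

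The key step is to show that $F:=\mathcal L\tilde g$ satisfies $\|F\|_{X(\Omega)}\le CC_0$. Fix $y\in\Omega$ with $r=\delta_\Omega(y)/2$, and split $\mathcal L\tilde g(y)$ into even and odd parts in $z$, as in the proof of Lemma \ref{LM1}.
\begin{itemize}
\item For the even part, symmetrize and bound the second difference by $CC_0\min(|z|^2 r^{\alpha-2},|z|^\alpha)$. Integrating against $|z|^{-N-2s}$ gives $CC_0 r^{\alpha-2s}$.
\item For the odd part, use \eqref{Sauveur2} together with the first difference bound $CC_0|z|^\alpha$. This gives
\[
\int |z|^\alpha\min(|z|^{-N-2s},|z|^{1-N-2s})\,dz<\infty,
\]
since $\alpha<2s$ handles large $|z|$ and $\alpha+1>2s$ handles small $|z|$. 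The odd part is therefore bounded by $CC_0$.
\end{itemize}
Hence $|F(y)|\le CC_0(1+\delta_\Omega^{\alpha-2s}(y))$. Since $\alpha>s$, the exponent satisfies $\alpha-2s>-s$. The fractional Hardy inequality $\|w/\delta^s\|_{L^2(\Omega)}\le C[w]_{H^s}$ on $C^{1,1}$ domains then gives
\[
\int_\Omega|Fw|\le CC_0\|\delta^{\alpha-s}\|_{L^2(\Omega)}[w]_{H^s},
\]
so $\|F\|_{X(\Omega)}\le CC_0$. If $\alpha-2s+s\ge0$ the weight is even bounded.

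Now apply Theorem \ref{KimWeidner} to $w$. We need \eqref{Sauveur1b} and ellipticity, which are assumed. This gives
\[
\|w\|_{C^{s+q-N/p}(\overline\Omega)}\le C\big(\|w\|_{L^1_{2s}}+\|f\|_X+CC_0\big).
\]
Moreover $\|w\|_{L^1_{2s}}\le\|u\|_{L^1_{2s}}+CC_0$. Finally $u=w+\tilde g$ on $\overline\Omega$, and $\tilde g\in C^\alpha$ with $\alpha> s\ge s+q-N/p$ because $q<N/p$. This yields the claimed bound, after absorbing constants and assuming without loss of generality $C_0\ge1$ to write it in the product form $CC_0(1+\cdots)$.

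The main obstacle is constructing $\tilde g$ with simultaneous global $C^\alpha$ control and the interior second-derivative blow-up rate $\delta^{\alpha-2}$. I would first try a Whitney decomposition of $\Omega$ with a partition of unity, assigning to each cube the value of $g$ at a nearest boundary point; the standard Stein estimates give exactly these bounds. A second, minor issue is that the weak formulation of $\mathcal L\tilde g$ must match the pointwise one. This should follow because $\tilde g\in C^2_{\rm loc}(\Omega)\cap C^\alpha$, together with the kernel bounds and the symmetric structure.
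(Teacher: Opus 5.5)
Your architecture is the paper's: extend $g$ into $\Omega$, subtract, show $|\mathcal L\tilde g|\le CC_0\delta_\Omega^{\alpha-2s}$ by splitting the kernel into even and odd parts, convert this into an $X(\Omega)$ bound via Hardy, and apply Theorem \ref{KimWeidner}. Three of your choices differ from the paper, and all are harmless:
\begin{itemize}
\item you use a Whitney extension instead of the harmonic one;
\item you estimate the even part directly by second differences instead of citing \cite[Lemma 2.6]{AROS};
\item you bound the odd part by a constant, which is slightly sharper than the paper's $\rho^{\alpha-2s}$.
\end{itemize}

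There is, however, a false step. You read \eqref{unif-est-extdata-cond} as saying that $g$ is bounded and $C^\alpha$ on all of $\R^N\setminus\Omega$, and you build $\tilde g$ with $\|\tilde g\|_{C^\alpha(\R^N)}\le CC_0$. The hypothesis only compares a point $y\in\partial\Omega$ with an exterior point $z$. It allows $|g(z)|$ to grow like $|z|^\alpha$ at infinity. It also allows $g$ to jump far from $\overline\Omega$: take $g=0$ within distance $1$ of $\Omega$ and $g=1$ on a ball lying farther away. Since $\tilde g=g$ off $\Omega$, no extension with a global $C^\alpha$ bound exists in general. Consequently, the first-difference bound $|\tilde g(y)-\tilde g(y+z)|\le CC_0|z|^\alpha$ is not justified as you wrote it. You invoke it for $|z|\ge r$ in the even part and for all $z$ in the odd part.

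The repair is what the paper does when it estimates the term $(2)(y)$. Your Whitney construction only uses $g|_{\partial\Omega}$, which is $C^\alpha$ with constant $C_0$ by \eqref{unif-est-extdata-cond} applied with $z\in\partial\Omega$. It therefore gives $\|\tilde g\|_{C^\alpha(\overline\Omega)}\le CC_0$ and $|D^2\tilde g|\le CC_0\delta_\Omega^{\alpha-2}$ in $\Omega$, and that is all you need inside $\Omega$. If $y\in\Omega$ and $y+z\notin\Omega$, pick $y_0\in\partial\Omega$ with $|y-y_0|=\delta_\Omega(y)\le|z|$. Then
\[
|\tilde g(y)-g(y+z)|\le|\tilde g(y)-\tilde g(y_0)|+|g(y_0)-g(y+z)|\le CC_0\big(\delta_\Omega(y)^\alpha+(\delta_\Omega(y)+|z|)^\alpha\big)\le CC_0|z|^\alpha .
\]
So the difference bound centred at interior points does hold, and this is the only form your even and odd estimates actually use.

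Likewise, $\|\tilde g\|_{L^1_{2s}(\R^N)}\le CC_0$ holds, but not because $g$ is bounded. It holds because $|g(z)|\le C_0(1+|z-y_0|^\alpha)$ and $\alpha<2s$.

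With these corrections the rest of your argument goes through. Your normalization $C_0\ge 1$ is genuinely needed for the product form of the final bound, and the paper uses it tacitly.
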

\begin{proof}
The idea is to simply transform the equation \eqref{divergence-type-equation} to a Dirichlet problem with zero boundary data and then apply Theorem \ref{KimWeidner}. For that aim, we let $\bar{g}\in C^\infty(\Omega)\cap C^\alpha(\overline{\Omega})$ be an extension of $g$ to the whole of $\R^N$. One can take for instance $\bar{g}=w$ in $\Omega$, where $w$ is the solution to the elliptic equation 
  $$
  -\Delta w=0\quad\text{in $\Omega$}\qquad\text{and}\qquad w=g\quad\text{on $\partial\Omega$}.
  $$
  We then set $\bar{g}=w$ in $\Omega$ and $\bar{g}=g$ in $\R^N\setminus\Omega$.

  Note that $g\in C^\alpha(\partial\Omega)$  by the hypothesis \eqref{unif-est-extdata-cond} and therefore by the standard regularity theory, we have that $w\in C^\infty(\Omega)\cap C^\alpha(\overline{\Omega})$. Moreover, we have
  $$
   |D^2w|\leq C\delta_\Omega^{\alpha-2}\quad\text{in}\quad \Omega. 
  $$ 
\par\;

Next let $v=u-\bar{g}$. Then $v$ solves weakly the equation 
\begin{equation}\label{eq:v-problem}
\left\{ \begin{array}{rcll} \mathcal{L} v&=& f-\mathcal{L}(\bar{g})  &\textrm{in }\Omega, \\ v&=&0&%\textrm{for } x 
\textrm{in }\R^N\setminus\Omega. \end{array}\right. 
\end{equation}
\par\;

Next we claim that
\begin{equation}\label{lbarg-est}
    |\mathcal{L}\bar{g}|\leq CC_0\delta_\Omega^{\alpha-2s} \quad\text{in}\quad  \Omega.
\end{equation} 

Indeed, let $\kappa^e(\cdot,\cdot)$ and $\kappa^o(\cdot,\cdot)$ denote the even and odd parts of $\kappa(\cdot,\cdot)$, respectively; that is,

$$
\kappa^e(y,z)=\frac{\kappa(y,y+z)+\kappa(y,y-z)}{2}\qquad\text{and} \qquad \kappa^o(y,z)=\frac{\kappa(y,y+z)-\kappa(y,y-z)}{2}.
$$
Then we may write
\begin{align*}
    (\mathcal{L}\bar{g})(y)&=p.v\int_{\R^N}(\bar{g}(y)-\bar{g}(y+z))\kappa^e(y,z)dz+p.v\int_{\R^N}(\bar{g}(y)-\bar{g}(y+z))\kappa^o(y,z)dz\\
    &:=(\mathcal{L}^e\bar{g})(y)+(\mathcal{L}^o\bar{g})(y).
\end{align*}
Because $\kappa^e(\cdot,\cdot)$ satisfies the hypothesis \cite[Equation (1.3)]{AROS}, we have by \cite[Lemma 2.6, (b)]{AROS} that 
\begin{equation}\label{Levenbarg}
|(\mathcal{L}^e\bar{g})(y)|\leq CC_0\delta_\Omega^{\alpha-2s}(y).
\end{equation}

In order to estimate $(\mathcal{L}^o\bar{g})(y)$, we let $\rho=\delta_\Omega(y)$ and write 
\begin{align}\label{Loddbarg-dec}
(\mathcal{L}^o\bar{g})(y)&=p.v\int_{B_{\rho/2}(0)}(\bar{g}(y)-\bar{g}(y+z))\kappa^o(y,z)dz+p.v\int_{\R^N\setminus B_{\rho/2}(0)}(\bar{g}(y)-\bar{g}(y+z))\kappa^o(y,z)dz\nonumber\\
&=(1)(y)+(2)(y).
\end{align}
Because $\bar{g}\in C^\alpha(\overline{\Omega})$ for any $\alpha\in (0,1)$, it follows in view of \eqref{Sauveur2}, that
\begin{align}\label{Loddbarg1}
|(1)(y)|\leq \int_{B_{\rho/2}(0)}|z|^\alpha|z|^{1-2s-N}dz\leq C\rho^{1+\alpha-2s}.
\end{align}
Next, let $y_0\in\partial\Omega$ such that $|y-y_0|=\rho$ and write 
\begin{align}\label{Loddbarg2}
|(2)(y)|
&\leq \Big|\int_{\R^N\setminus B_{\rho/2}(0)}(\bar{g}(y)-\bar{g}(y+z))\kappa^o(y,z)\,dz\Big|\nonumber\\
&\leq \int_{\R^N\setminus B_{\rho/2}(0)}\Big(|\bar{g}(y)-\bar{g}(y_0)|+|\bar{g}(y_0)-\bar{g}(y+z)|\Big)|\kappa^o(y,z)|\,dz\nonumber\\
&\leq C_0\int_{\R^N\setminus B_{\rho/2}(0)}\frac{\rho^{\alpha}+|y-y_0+z|^\alpha}{|z|^{N+2s}}\,dz\nonumber\\
&\leq C_0C_1\int_{\R^N\setminus B_{\rho/2}(0)}\frac{\rho^{\alpha}+|z|^\alpha}{|z|^{N+2s}}\,dz\nonumber\\
&\leq C_0C_2\rho^{\alpha-2s}.
\end{align}
where in the third line we used that $\bar{g}\in C^\alpha(\overline{\Omega})$ and \eqref{unif-est-extdata-cond}.
Combining \eqref{Loddbarg-dec}, \eqref{Loddbarg1} and \eqref{Loddbarg2} gives
\begin{equation}\label{Loddbarg}
|(\mathcal{L}^o\bar{g})(y)|\leq CC_0\delta_\Omega^{\alpha-2s}(y).
\end{equation}
The equation \eqref{Levenbarg} together with \eqref{Loddbarg} gives \eqref{lbarg-est} which proves the claim. Next, it is clear that 
\begin{align*}
\|\mathcal{L}\bar{g}\|_{X(\Omega)}&=\sup_{w\in H^s_0(\Omega)}\frac{\int_{\Omega}|\mathcal{L}(\bar{g})w|dx}{[w]_{H^s(\R^N)}}\nonumber\\
&\leq C_1C_0\sup_{w\in H^s_0(\Omega)}\frac{\int_{\Omega}|\delta_\Omega^{\alpha-2s}w|dx}{[w]_{H^s(\R^N)}}\nonumber\\
&\leq C_2C_0\sup_{w\in H^s_0(\Omega)}\frac{\int_{\Omega}|\delta_\Omega^{-s}w|dx}{[w]_{H^s(\R^N)}}<C_3C_0,
\end{align*}
where we used that $\alpha\in (s,2s)$.
It therefore follows, in view of Theorem \ref{KimWeidner}, that
\begin{align*}
\|v\|_{C^{s+q-\frac{N}{p}}(\overline{\Omega})}&\leq C\Big(\|u\|_{L^1_{2s}(\R^N)}+\|\bar{g}\|_{L^1_{2s}(\R^N)}+\|f\|_{X(\Omega)}+\|\mathcal{L}(\bar{g})\|_{X(\Omega)}\Big),\\
&\leq CC_0\Big(1+\|u\|_{L^1_{2s}(\R^N)}+\|f\|_{X(\Omega)}\Big).
\end{align*}
Consequently, since $\alpha>s>s-(\frac{N}{p}-q)$ and $\bar{g}\in C^\alpha(\overline{\Omega})$, we conclude that 
\begin{align*}
\|u\|_{C^{s+q-\frac{N}{p}}(\overline{\Omega})}    &\leq \|v\|_{C^{s+q-\frac{N}{p}}(\overline{\Omega})}+\|\bar{g}\|_{C^{s+q-\frac{N}{p}}(\overline{\Omega})}\\
&\leq C_1\Big(\|v\|_{C^{s+q-\frac{N}{p}}(\overline{\Omega})}+\|\bar{g}\|_{C^{\alpha}(\overline{\Omega})}\Big)\\
&\leq C_2C_0\Big(1+\|u\|_{L^1_{2s}(\R^N)}+\|f\|_{X(\Omega)}\Big).
\end{align*}
This is what we want to prove. The proof is therefore finished.
\end{proof}
\section{Some convergence results}\label{Sec5}
\begin{Lemma}\label{firstconvergenceresult}
Let $\rho\in C_c^\infty(-2,2)$ and let $\upsilon\in\R^N\setminus\{0\}$. Set
\begin{align}\label{h-est-2}
a[\rho](\mu,\upsilon):=\iint_{\R^{2N}}\frac{\big|\rho(|y|^2)-\rho(|z|^2)\big|\,\big||\upsilon+\mu y|^{2s-N}-|\upsilon+\mu z|^{2s-N}\big|}{|z|^{N-2s}|y-z|^{N+2s}}\,dy\,dz.
\end{align}
Then
\begin{align}\label{limAmu-v}
\lim_{\mu\to 0^+}a[\rho](\mu,\upsilon)=0.
\end{align}
\end{Lemma}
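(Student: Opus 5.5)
The plan is to split $\R^{2N}$ into a bounded region, where a mean value estimate gives a bound of order $\mu$, and an exterior region, where dominated convergence applies after isolating the singularity of $|\upsilon+\mu y|^{2s-N}$ near $y=-\upsilon/\mu$. We fix $R\geq 2\sqrt2$ and take $\mu<\mu_0:=|\upsilon|/(2R)$. The factor $\rho(|y|^2)-\rho(|z|^2)$ vanishes unless $|y|<\sqrt2$ or $|z|<\sqrt2$. Accordingly we write $a[\rho](\mu,\upsilon)=I_1+I_2+I_3$, where:
\begin{itemize}
\item $I_1$ is the integral over $B_R\times B_R$;
\item $I_2$ is the integral over $\{|y|\geq R,\ |z|<\sqrt2\}$;
\item $I_3$ is the integral over $\{|y|<\sqrt2,\ |z|\geq R\}$.
\end{itemize}
The difficulty is that $|\upsilon+\mu y|^{2s-N}$ blows up at $y=-\upsilon/\mu$, which escapes to infinity as $\mu\to0$. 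So a naive dominated convergence argument fails, and this singular point must be treated by hand.

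\emph{Bounded region.} For $y,z\in B_R$ and $\mu<\mu_0$ we have $|\upsilon+\mu w|\geq|\upsilon|/2$ on the segment $[y,z]$. The mean value theorem then gives
\[
\big||\upsilon+\mu y|^{2s-N}-|\upsilon+\mu z|^{2s-N}\big|\leq C\mu|y-z|.
\]
Since $\rho$ is smooth, $|\rho(|y|^2)-\rho(|z|^2)|\leq C|y-z|$. Hence
\[
I_1\leq C\mu\iint_{B_R\times B_R}\frac{dy\,dz}{|z|^{N-2s}|y-z|^{N+2s-2}}\leq C\mu,
\]
because the exponent $N+2s-2<N$ and $|z|^{2s-N}$ is locally integrable. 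Thus $I_1\to0$.

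\emph{Exterior regions.} In $I_2$ we have $|y-z|\geq|y|/2$ and $|\rho(|y|^2)-\rho(|z|^2)|\leq C$, so the integrand is bounded by
\[
C\,|z|^{2s-N}|y|^{-N-2s}\big(|\upsilon+\mu y|^{2s-N}+|\upsilon+\mu z|^{2s-N}\big).
\]
\begin{itemize}
\item The $z$-term is bounded by $C|\upsilon|^{2s-N}$, and $|y|^{-N-2s}$ is integrable on $\{|y|\geq R\}$. Pointwise the bracketed difference tends to $0$, so this part tends to $0$ by dominated convergence.
\item For the $y$-term we split $\{|y|\geq R\}$ according to whether $|\upsilon+\mu y|\geq|\upsilon|/2$. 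Where it is, the term is bounded by $C|y|^{-N-2s}$ and dominated convergence applies again.
\item On the remaining set $B(-\upsilon/\mu,|\upsilon|/(2\mu))$ we have $|y|\geq|\upsilon|/(2\mu)$. The change of variables $w=\upsilon+\mu y$ gives the bound
\[
C\left(\frac{\mu}{|\upsilon|}\right)^{N+2s}\mu^{-N}\int_{B_{|\upsilon|/2}}|w|^{2s-N}\,dw\leq C\mu^{2s}\to0 .
\]
\end{itemize}
$I_3$ is handled in the same way. There $|z|\geq R$ and $|y-z|\geq|z|/2$, so the weight becomes $|z|^{2s-N}|z|^{-N-2s}=|z|^{-2N}$. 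The singular piece near $z=-\upsilon/\mu$ is then of order $\mu^{2N}\mu^{-N}=\mu^N\to0$.

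Collecting the three contributions proves \eqref{limAmu-v}. The only delicate step is the moving singularity at $-\upsilon/\mu$. As shown above, it is controlled because the decay of the kernel outweighs the $\mu^{-N}$ Jacobian factor.
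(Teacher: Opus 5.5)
Your proof is correct and follows essentially the paper's route: a bounded block handled by the mean value theorem at cost $O(\mu)$, and two exterior blocks in which the moving singularity is isolated in the ball $B_{|\upsilon|/(2\mu)}(-\upsilon/\mu)$ and controlled by the change of variables $w=\upsilon+\mu y$ (resp.\ $w=\upsilon+\mu z$), which the decay of the kernel beats. The only difference is how you treat the exterior part away from this ball. There you use dominated convergence, which should be applied to the full integrand: off the ball it is dominated by a fixed integrable function and tends to $0$ pointwise, whereas the separate terms of your triangle-inequality bound do not tend to $0$. The paper instead extracts an explicit factor $\mu^\alpha$ from a H\"older-type estimate for $r\mapsto r^{2s-N}$.
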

\begin{proof}
We recall that $\rho(|\cdot|^2)$ vanishes identically outside the ball $B_2$. For any $\mu>0$, we decompose
\begin{align*}
a[\rho](\mu,\upsilon)=(1)_\mu+(2)_\mu+(3)_\mu,
\end{align*}
where
\begin{align*}
(1)_\mu&:=\iint_{B_4\times B_4}\frac{\big|\rho(|y|^2)-\rho(|z|^2)\big|\,\big||\upsilon+\mu y|^{2s-N}-|\upsilon+\mu z|^{2s-N}\big|}{|z|^{N-2s}|y-z|^{N+2s}}\,dy\,dz,\\
(2)_\mu&:=\int_{B_2}\rho(|y|^2)\int_{\R^N\setminus B_4}\frac{\big||\upsilon+\mu y|^{2s-N}-|\upsilon+\mu z|^{2s-N}\big|}{|z|^{N-2s}|y-z|^{N+2s}}\,dz\,dy,\\
(3)_\mu&:=\int_{B_2}\frac{\rho(|z|^2)}{|z|^{N-2s}}\int_{\R^N\setminus B_4}\frac{\big||\upsilon+\mu y|^{2s-N}-|\upsilon+\mu z|^{2s-N}\big|}{|y-z|^{N+2s}}\,dy\,dz.
\end{align*}
We first observe that $(1)_\mu\to 0$ as $\mu\to 0^+$. Indeed, for $\mu<|\upsilon|/8$ and all $y,z$ in $B_4$ we have
$|\upsilon+\mu y|\geq |\upsilon|/2$ and $|\upsilon+\mu z|\geq |\upsilon|/2$, hence the map $w\mapsto |w|^{2s-N}$ is Lipschitz on $B_{|\upsilon|/2}(\upsilon)$, with a Lipschitz constant bounded independently of $\mu$. Using also that $\rho(|\cdot|^2)$ is Lipschitz, we obtain
\begin{align*}
(1)_\mu
&\leq C\,\mu\iint_{B_4\times B_4}\frac{\big|\rho(|y|^2)-\rho(|z|^2)\big|\,|y-z|}{|z|^{N-2s}|y-z|^{N+2s}}\,dy\,dz\\
&\leq C\,\mu\iint_{B_4\times B_4}\frac{dy\,dz}{|z|^{N-2s}|y-z|^{N+2s-2}}\to 0
\qquad\hbox{as }\mu\to 0^+.
\end{align*}
Next we treat $(2)_\mu$. We fix $\alpha$ in $(2s/(N+2),2s/(N+1))$ and we set
\begin{align}\label{gns}
\gamma_{N,s}(\alpha):=N-2s+\alpha.
\end{align}
We use the elementary estimate
\begin{equation}\label{elementary-est}
|a^\beta-b^\beta|\leq C\frac{|a-b|^\alpha}{a^{\gamma_{N,s}(\alpha)}+b^{\gamma_{N,s}(\alpha)}},
\qquad \beta=2s-N,
\end{equation}
valid for all $a,b>0$.
For $\mu<|\upsilon|/8$, all $y$ in $B_2$ and all $z$ in $\R^N\setminus B_4$, we have $|\upsilon+\mu y|\geq |\upsilon|/2$ and hence
\begin{align*}
\big||\upsilon+\mu y|^{2s-N}-|\upsilon+\mu z|^{2s-N}\big|
&\leq C\,\mu^\alpha |y-z|^\alpha\Big(\frac{1}{|\upsilon+\mu y|^{\gamma_{N,s}(\alpha)}}+\frac{1}{|\upsilon+\mu z|^{\gamma_{N,s}(\alpha)}}\Big)\\
&\leq C\,\mu^\alpha\big(1+|z|^\alpha\big)\Big(1+\frac{1}{|\upsilon+\mu z|^{\gamma_{N,s}(\alpha)}}\Big).
\end{align*}
Since $|z|>4$ and $y$ in $B_2$, we also have $|y-z|\geq |z|/2$, hence
\begin{align*}
(2)_\mu
&\leq C\,\mu^\alpha\int_{\R^N\setminus B_4}\frac{1+|z|^\alpha}{(1+|z|^2)^{\frac{N+2s}{2}}}
\Big(1+\frac{1}{|\upsilon+\mu z|^{\gamma_{N,s}(\alpha)}}\Big)\,dz\\
&=:i(\mu,\upsilon)+j(\mu,\upsilon).
\end{align*}
Since $\frac{1+|z|^\alpha}{(1+|z|^2)^{\frac{N+2s}{2}}}$ is integrable and $i(\mu,\upsilon)=O(\mu^\alpha)$, we have $i(\mu,\upsilon)\to 0$.
We now estimate $j(\mu,\upsilon)$.

Assume $\mu<|\upsilon|/8$ and introduce the sets
\begin{align*}
A_\mu&:=\Big\{z\in\R^N\setminus B_4:\, \big|z+\upsilon/\mu\big|\geq \frac{|\upsilon|}{2\mu}\Big\},\\
B_\mu&:=\Big(\R^N\setminus B_4\Big)\setminus A_\mu=B_{\frac{|\upsilon|}{2\mu}}\Big(-\frac{\upsilon}{\mu}\Big).
\end{align*}
On $A_\mu$ we have $|\upsilon+\mu z|=\mu|z+\upsilon/\mu|\geq |\upsilon|/2$, hence
\begin{align*}
\mu^\alpha\int_{A_\mu}\frac{1+|z|^\alpha}{(1+|z|^2)^{\frac{N+2s}{2}}}\frac{dz}{|\upsilon+\mu z|^{\gamma_{N,s}(\alpha)}}
\leq C(\upsilon)\,\mu^\alpha\int_{\R^N}\frac{1+|z|^\alpha}{(1+|z|^2)^{\frac{N+2s}{2}}}\,dz\to 0.
\end{align*}
On $B_\mu$ we make the change of variables $w=z+\upsilon/\mu$. Then $|\upsilon+\mu z|=\mu|w|$ and, since $|w|\leq |\upsilon|/(2\mu)$, we have
$|z|=\big|w-\upsilon/\mu\big|\geq |\upsilon|/(2\mu)$ and $|z|\leq 3|\upsilon|/(2\mu)$.
Consequently,
\begin{align*}
\frac{1+|z|^\alpha}{(1+|z|^2)^{\frac{N+2s}{2}}}
\leq C(\upsilon)\,|z|^{-N-2s+\alpha}\leq C(\upsilon)\,\mu^{N+2s-\alpha}
\qquad \hbox{for all }z\in B_\mu.
\end{align*}
Using also that $\gamma_{N,s}(\alpha)<N$ because $\alpha<2s/(N+1)$, we obtain
\begin{align*}
\mu^\alpha\int_{B_\mu}\frac{1+|z|^\alpha}{(1+|z|^2)^{\frac{N+2s}{2}}}\frac{dz}{|\upsilon+\mu z|^{\gamma_{N,s}(\alpha)}}
&\leq C(\upsilon)\,\mu^{N+2s-\gamma_{N,s}(\alpha)}\int_{B_{\frac{|\upsilon|}{2\mu}}(0)}\frac{dw}{|w|^{\gamma_{N,s}(\alpha)}}\\
&\leq C(\upsilon)\,\mu^{N+2s-\gamma_{N,s}(\alpha)}\Big(\frac{|\upsilon|}{2\mu}\Big)^{N-\gamma_{N,s}(\alpha)}
\leq C(\upsilon)\,\mu^{2s}\to 0.
\end{align*}
This proves $j(\mu,\upsilon)\to 0$ and hence $(2)_\mu\to 0$.

The term $(3)_\mu$ is treated in the same way by exchanging the roles of $y$ and $z$. Altogether, we conclude that $a[\rho](\mu,\upsilon)\to 0$ as $\mu\to 0^+$.
\end{proof}
\begin{Lemma}\label{LimBC-mu-k-gamma}
Let $x,y$ in $\Omega$ with $x\neq y$ and recall the definition of $\omega^x_{Y}(\cdot,\cdot)$ in \eqref{defWsam}. Next let $\rho\in C_c^\infty(-2,2)$, and set
\begin{align}\label{def-a-y-rho}
a_Y[\rho](x):=b_{N,s}\iint_{\R^{2N}}\frac{\big(\rho(|q|^2)-\rho(|p|^2)\big)\big(|p|^{2s-N}-|q|^{2s-N}\big)}{|p-q|^{N+2s}}\omega^x_{Y}(p,q)\,dp\,dq.
\end{align}
For $\gamma>0$ we set
\begin{equation}\label{defgy-gamma}
g_{\gamma}(x,z):=g_0(x,z)\phi_x^\gamma(z).
\end{equation}
where $\phi_x^\gamma:=1-\rho_\gamma^x$ and $\rho_\gamma^x$ is defined in \eqref{psi-mu}.
Let $k>0$ and define
\begin{align}\label{def-G-la}
B_Y[\phi_x^{\mu},g_{0}(x,\cdot)](k,\gamma,y):=\iint_{\R^{2N}}\xi_k(q)g_{\gamma}(y,q)\big(\phi_x^{\mu}(p)-\phi_x^{\mu}(q)\big)\big(g_0(x,p)-g_0(x,q)\big)\kappa_Y(p,q)\,dp\,dq,
\end{align}
and
\begin{align}\label{def-H-la}
C_Y[\phi_x^{\mu},g_{\gamma}(y,\cdot)](k,x):=\iint_{\R^{2N}}\xi_k^2(q)g_0(x,q)\big(\phi_x^{\mu}(p)-\phi_x^{\mu}(q)\big)\big(g_{\gamma}(y,p)-g_{\gamma}(y,q)\big)\kappa_Y(p,q)\,dp\,dq.
\end{align}
Then we have
\begin{align}\label{LimCxy-mu-k-gamma}
\lim_{\mu\to 0^+}\lim_{k\to\infty} C_Y[\phi_x^{\mu},g_{\gamma}(y,\cdot)](k,x)=0,
\end{align}
and
\begin{equation}\label{lem-lim-g-k}
\lim_{\gamma\to 0^+}\lim_{\mu\to 0^+}\lim_{k\to\infty}B_Y[\phi_x^{\mu},g_{0}(x,\cdot)](k,\gamma,y)=g_0(x,y)a_Y[\rho](x).
\end{equation}
\end{Lemma}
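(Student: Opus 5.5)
The two limits are handled the same way. First let $k\to\infty$ with $\mu,\gamma$ fixed, using dominated convergence; then send $\mu\to0^+$ and analyse the rescaled singular part near $x$ by blowing up at scale $\mu$.

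\textbf{Limit $k\to\infty$.} Fix $\mu$ small, so that $\supp\rho^x_\mu\subset B_{\delta_\Omega(x)/2}(x)$. The factor $\phi^\mu_x(p)-\phi^\mu_x(q)$ vanishes unless $p$ or $q$ lies in $B_{\mu\delta_\Omega(x)/2}(x)$. We have $\xi_k\to 1_{\Omega}$ pointwise (and $\xi_k=0$ outside $\Omega$), with $|\xi_k|\le 1$. Also $|\kappa_Y(p,q)|\le C|p-q|^{-N-2s}$, and $\omega_Y$ is bounded.

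\begin{itemize}
\item \emph{Region with $p$ and $q$ both near $x$.} The Lipschitz bound on $\phi^\mu_x$ together with \eqref{diff-quotient-fund-sol}-type estimates for $g_0(x,\cdot)$ gives integrability, exactly as in Proposition \ref{FundLemma}.
\item \emph{Region with one variable near $x$ and the other far.} The kernel is bounded there, and $g_0,g_\gamma$ lie in $L^1_{2s}$.
\end{itemize}

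Hence the integrands are dominated uniformly in $k$, and the limits are the same integrals with $\xi_k,\xi_k^2$ replaced by $1_\Omega$. Since $g_0$ vanishes outside $\Omega$, the indicator is in fact harmless.

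\textbf{Claim \eqref{LimCxy-mu-k-gamma}.} After $k\to\infty$ we must show
\[
\iint g_0(x,q)\bigl(\phi^\mu_x(p)-\phi^\mu_x(q)\bigr)\bigl(g_\gamma(y,p)-g_\gamma(y,q)\bigr)\kappa_Y\,dp\,dq\to0 .
\]
Choose $\gamma$ so small that $y\notin \supp(1-\phi^\gamma_y)$ fails to matter; the key point is that $g_\gamma(y,\cdot)$ is smooth, hence Lipschitz, near $x$ because $x\ne y$.

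\begin{itemize}
\item \emph{$p,q$ both in a fixed ball around $x$.} Bound $|g_\gamma(y,p)-g_\gamma(y,q)|\le C|p-q|$ and $|\phi^\mu_x(p)-\phi^\mu_x(q)|\le \min(1,C|p-q|/\mu)$. Substitute $p=x+\mu p'$, $q=x+\mu q'$ and use $g_0(x,q)\le C|x-q|^{2s-N}$. This yields a bound $C\mu^{N}\mu^{2s-N}\mu^{1-2s}\cdot(\text{finite integral})=C\mu\to0$.
\item \emph{Remaining region.} Here $|p-q|$ is bounded below, $\phi^\mu_x$ differs from $1$ only on a ball of volume $O(\mu^N)$, and $g_0(x,\cdot)\in L^1_{\mathrm{loc}}$. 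This contribution is $O(\mu^{2s})$.
\end{itemize}

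\textbf{Claim \eqref{lem-lim-g-k}.} This is the main obstacle, since the singularity of $g_0(x,\cdot)$ sits exactly where $\phi^\mu_x$ varies. Split $g_0(x,\cdot)=F_s(x,\cdot)-H^s_\Omega(x,\cdot)$.

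\begin{itemize}
\item \emph{The $H^s_\Omega$ part.} Since $H^s_\Omega(x,\cdot)$ is $C^2$ near $x$, the same scaling as above shows its contribution is $O(\mu^{2s})$, so it vanishes.
\item \emph{The $F_s$ part, rescaled.} Substitute $p=x+\mu\delta_\Omega(x)p'/\sqrt8$ and similarly for $q$, so that $\rho^x_\mu$ becomes $\rho(|\cdot|^2)$. The homogeneity of $F_s$ and of the kernel gives exact scale invariance: the power count is $2N-(N-2s)-(N+2s)=0$.
\item \emph{Freezing the weights.} Taylor expansion gives $\omega_Y(x+\mu p',x+\mu q')\to\omega^x_Y(p',q')$ as $\mu\to0$, with an error $O(\mu|p'-q'|)$ coming from the $C^{1,1}$ regularity of $Y$. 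Meanwhile $g_\gamma(y,x+\mu q')\to g_\gamma(y,x)=g_0(y,x)$ once $\gamma$ is small enough that the cutoff does not touch $x$. This is the reason for taking $\gamma\to0$ last; symmetry gives $g_0(y,x)=g_0(x,y)$.
\item \emph{Passing to the limit.} To justify moving these limits inside the integral, I would reuse the argument of Lemma \ref{firstconvergenceresult}: the difference between $g_\gamma(y,x+\mu q')$ and its limit is controlled by a weighted integral of the form $a[\rho](\mu,\upsilon)$, which tends to $0$.
\end{itemize}

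The sign convention $(\phi(p)-\phi(q))=-(\rho(|p|^2)-\rho(|q|^2))$ then produces exactly $a_Y[\rho](x)$. The delicate point is obtaining a domination uniform in $\mu$ for the far-field tails after rescaling, where $g_\gamma(y,\cdot)$ is evaluated at points $x+\mu q'$ with $|q'|$ large. I would treat these tails by splitting as in the $(2)_\mu$ and $(3)_\mu$ terms of Lemma \ref{firstconvergenceresult}.
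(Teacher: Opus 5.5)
Your proof is correct and uses the same mechanism as the paper: dominated convergence in $k$, then a blow-up at scale $\widetilde\mu=\delta_\Omega(x)\mu/(2\sqrt2)$ around $x$, followed by dominated convergence. You organize it differently, and in places more efficiently.

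For \eqref{lem-lim-g-k}, the paper first localizes to $B_\varepsilon(x)\times B_{2\varepsilon}(x)$ via \cite[Lemma 2.2]{Sidy-Franck-BP}. It then rescales the whole increment of $g_0(x,\cdot)$ and puts the regular part into the majorant through the bound $C\widetilde\mu^{N-2s+1}|p-q|$. You instead split $g_0(x,\cdot)=F_s(x,\cdot)-H^s_\Omega(x,\cdot)$ on all of $\R^{2N}$, discard the $H^s_\Omega$ part by a scaling estimate, and use exact scale invariance of the $F_s$ part. This yields a majorant independent of $\mu$,
\[
\|g_\gamma(y,\cdot)\|_{L^\infty}\,\|\omega_Y\|_{L^\infty}\,\frac{|\rho(|q|^2)-\rho(|p|^2)|\,\big||p|^{2s-N}-|q|^{2s-N}\big|}{|p-q|^{N+2s}}\in L^1(\R^{2N}).
\]
By contrast, the paper's majorant \eqref{domination-f} is integrable only if the factor $\widetilde\mu^{N-2s+1}$ and the indicator of $B_{2\varepsilon/\widetilde\mu}(0)$ are kept. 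The term $|\rho(|p|^2)-\rho(|q|^2)|\,|p-q|^{1-N-2s}$ on its own is not integrable when $2s\le1$.

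For \eqref{LimCxy-mu-k-gamma}, you use the Lipschitz bound of $g_\gamma(y,\cdot)$ on a ball around $x$ directly. This is legitimate for every fixed $\gamma$, with no smallness needed, because $g_\gamma(y,\cdot)$ vanishes near $y$ and is smooth in $\Omega$. The paper instead separates off the fundamental-solution increment and controls it with Lemma \ref{firstconvergenceresult}; your route makes that lemma unnecessary.

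Some corrections:
\begin{enumerate}
\item Your appeal to Lemma \ref{firstconvergenceresult} in the $B_Y$ part is misplaced. There $g_\gamma(y,x+\widetilde\mu q')$ appears undifferenced and is bounded, so plain dominated convergence with the majorant above suffices. The far-field tails you flag as delicate are harmless: $\rho(|q'|^2)-\rho(|p'|^2)\neq0$ forces one of the two variables into $B_{\sqrt2}(0)$.
\item In the near region for $C_Y$, the rescaled integral over $B_{r/\widetilde\mu}\times B_{r/\widetilde\mu}$ is not uniformly bounded. It grows like $\widetilde\mu^{2s-1}$ if $2s<1$ and like $\log(1/\widetilde\mu)$ if $2s=1$. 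So your bound is $O(\widetilde\mu^{\min\{1,2s\}})$ up to a logarithm, not $O(\mu)$; it still tends to $0$. This is exactly the paper's $I_1,I_2$ computation.
\item $\xi_k$ does not vanish outside $\Omega$; it tends to $1$ off $\partial\Omega$. This is harmless, since $\xi_k$ only multiplies functions that vanish outside $\Omega$.
\item The error in freezing $\omega_Y$ is $O(\min\{1,\mu(|p'|+|q'|)\})$, not $O(\mu|p'-q'|)$. This is also harmless, since you only use boundedness and pointwise convergence of $\omega_Y$.
\end{enumerate}
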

\begin{remark}
We note that the integrals in \eqref{def-G-la} and \eqref{def-H-la} are well defined. This can be proved in the same way as in Lemma \ref{FundLemma}.
\end{remark}
\begin{proof}
We first prove \eqref{lem-lim-g-k}.
By dominated convergence, we have
\begin{align*}
\lim_{k\to\infty}B_Y[\phi_x^{\mu},g_{0}(x,\cdot)](k,\gamma,y)
=B_Y[\phi_x^{\mu},g_{0}(x,\cdot)](\gamma,y),
\end{align*}
where
\begin{align*}
B_Y[\phi_x^{\mu},g_{0}(x,\cdot)](\gamma,y)
:=\iint_{\R^{2N}}g_{\gamma}(y,q)\big(\phi_x^{\mu}(p)-\phi_x^{\mu}(q)\big)\big(g_0(x,p)-g_0(x,q)\big)\kappa_Y(p,q)\,dp\,dq.
\end{align*}
Following \cite[Lemma 2.2]{Sidy-Franck-BP}, to compute $\lim_{\mu\to 0^+}B_Y[\phi_x^{\mu},g_{0}(x,\cdot)](\gamma,y)$, it suffices to restrict the integral to balls around $x$.
Let $\varepsilon>0$ be such that $B_{2\varepsilon}(x)\subset\Omega$ and define
\begin{align}\label{lim-B-mu}
B_Y[\phi_x^{\mu},g_{0}(x,\cdot)](\varepsilon,\gamma,y)
:=\iint_{B_{\varepsilon}(x)\times B_{2\varepsilon}(x)}g_{\gamma}(y,q)\big(\phi_x^{\mu}(p)-\phi_x^{\mu}(q)\big)\big(g_0(x,p)-g_0(x,q)\big)\kappa_Y(p,q)\,dp\,dq.
\end{align}
Then
\begin{equation}\label{proof58}
\lim_{\mu\to 0^+}B_Y[\phi_x^{\mu},g_{0}(x,\cdot)](\gamma,y)=\lim_{\mu\to 0^+}B_Y[\phi_x^{\mu},g_{0}(x,\cdot)](\varepsilon,\gamma,y).
\end{equation}
We set $\widetilde{\mu}:=\delta_\Omega(x)\mu/(2\sqrt{2})$ and we change variables $p=x-\widetilde{\mu}p$ and $q=x-\widetilde{\mu}q$ in \eqref{lim-B-mu}, then we rename the new variables $(p,q)$.
This yields
\begin{align}\label{1st-deduc}
B_Y[\phi_x^{\mu},g_{0}(x,\cdot)](\varepsilon,\gamma,y)
=\iint_{\R^{2N}} g_\gamma(y,x-\widetilde{\mu}q)F_x^y(\varepsilon,\widetilde{\mu};\,p,q)\,dp\,dq,
\end{align}
where
\begin{align*}
F_x^y(\varepsilon,\widetilde{\mu};\,p,q)
:=
1_{B_{\varepsilon/\widetilde{\mu}}(0)}(p)\,1_{B_{2\varepsilon/\widetilde{\mu}}(0)}(q)\,
\widetilde{\mu}^{N-2s}\big(\rho(|q|^2)-\rho(|p|^2)\big)
\big(g_0(x,x-\widetilde{\mu}p)-g_0(x,x-\widetilde{\mu}q)\big)\kappa_Y(\widetilde{\mu};p,q)
\end{align*}
and
\begin{align}\label{def-kappa-Y-mu}
\kappa_Y(\widetilde{\mu};\,p,q)
:=|p-q|^{-2s-N}\omega_Y(x-\widetilde{\mu}p,x-\widetilde{\mu}q).
\end{align}
We now show that the integrand in \eqref{1st-deduc} is dominated by an integrable function independent of $\mu$.
We note that $g_\gamma(y,\cdot)$ is bounded on $B_{2\varepsilon}(x)$ because $x\neq y$. Hence there exists $C>0$, independent of $\mu$, such that
\begin{equation}\label{g-y-q-bound}
|g_\gamma(y,x-\widetilde{\mu}q)|\leq C
\qquad \hbox{for all }q\in B_{2\varepsilon/\widetilde{\mu}}(0).
\end{equation}
Recall that
\begin{equation}\label{def-w-Y}
\omega_Y(y,z)=\frac{c_{N,s}}{2}\Big(\div Y(y)+\div Y(z)-(N+2s)\frac{\big(Y(y)-Y(z)\big)\cdot(y-z)}{|y-z|^2}\Big).
\end{equation}
Since $(x-\widetilde{\mu}p)-(x-\widetilde{\mu}q)=\widetilde{\mu}(q-p)$, we have
\begin{align*}
&\omega_Y(x-\widetilde{\mu}p,x-\widetilde{\mu}q)\\
&=\frac{c_{N,s}}{2}\Big(\div Y(x-\widetilde{\mu}p)+\div Y(x-\widetilde{\mu}q)
+(N+2s)\frac{\big(Y(x-\widetilde{\mu}p)-Y(x-\widetilde{\mu}q)\big)\cdot(p-q)}{\widetilde{\mu}|p-q|^2}\Big).
\end{align*}
In particular, $\omega_Y(x-\widetilde{\mu}p,x-\widetilde{\mu}q)$ is bounded uniformly in $\mu$, hence
\begin{equation}\label{kappa-Y-bound}
\big|\kappa_Y(\widetilde{\mu};p,q)\big|\leq C|p-q|^{-2s-N}.
\end{equation}
Using also that $g_0(x,x-\widetilde{\mu}\cdot)$ is Hölder continuous away from the origin, we obtain
\begin{equation}\label{01reg-es}
\widetilde{\mu}^{N-2s}\big|g_0(x,x-\widetilde{\mu}p)-g_0(x,x-\widetilde{\mu}q)\big|\leq C\big||p|^{2s-N}-|q|^{2s-N}\big|+C\widetilde{\mu}^{N-2s+1}|p-q|.
\end{equation}
To justify \eqref{01reg-es}, we write
\begin{align*}
&\widetilde{\mu}^{N-2s}\big|g_0(x,x-\widetilde{\mu}p)-g_0(x,x-\widetilde{\mu}q)\big|\\
&=\widetilde{\mu}^{N-2s}\Big|b_{N,s}\big(|\widetilde{\mu}p|^{2s-N}-|\widetilde{\mu}q|^{2s-N}\big)-\big(H_\Omega^s(x,x-\widetilde{\mu}p)-H_\Omega^s(x,x-\widetilde{\mu}q)\big)\Big|\\
&\leq b_{N,s}\big||p|^{2s-N}-|q|^{2s-N}\big|+\widetilde{\mu}^{N-2s}\big|H_\Omega^s(x,x-\widetilde{\mu}p)-H_\Omega^s(x,x-\widetilde{\mu}q)\big|.
\end{align*}
By \cite[Lemma A.1]{KN}, for all $x,p$ in $\Omega$ we have $|\nabla_pH_\Omega^s(x,p)|\leq C\,\delta_\Omega(x)^{2s-N-1}$. Hence, by the fundamental theorem of calculus,
\begin{align*}
\widetilde{\mu}^{N-2s}\big|H_\Omega^s(x,x-\widetilde{\mu}p)-H_\Omega^s(x,x-\widetilde{\mu}q)\big|
&\leq C\widetilde{\mu}^{N-2s+1}|p-q|
\leq C|p-q|.
\end{align*}
Combining \eqref{g-y-q-bound}, \eqref{kappa-Y-bound} and \eqref{01reg-es} we deduce
\begin{align}\label{domination-f}
\big|g_\gamma(y,x-\widetilde{\mu}q)F_x^y(\varepsilon,\widetilde{\mu};p,q)\big|
\leq \frac{\big|\rho(|p|^2)-\rho(|q|^2)\big|}{|p-q|^{N+2s}}
\Big(\big||p|^{2s-N}-|q|^{2s-N}\big|+|p-q|\Big).
\end{align}
The right-hand side is integrable on $\R^{2N}$, hence dominated convergence applies in \eqref{1st-deduc}.
We also note that
\begin{align}\label{lim-B-mu-integrand}
\lim_{\mu\to 0^+}g_\gamma(y,x-\widetilde{\mu}q)F_x^y(\varepsilon,\widetilde{\mu};p,q)
=g_\gamma(y,x)b_{N,s}\frac{\big(\rho(|q|^2)-\rho(|p|^2)\big)\big(|p|^{2s-N}-|q|^{2s-N}\big)}{|p-q|^{N+2s}}\omega^x_{Y}(p,q),
\end{align}
where the limit of $\omega_Y$ follows from the Taylor expansion of $Y$ at $x$.
Therefore, we obtain
\begin{align}\label{proof59}
\lim_{\mu\to 0^+}B_Y[\phi_x^{\mu},g_{0}(x,\cdot)](\varepsilon,\gamma,y)=g_\gamma(y,x)a_Y[\rho](x).
\end{align}
Combining \eqref{proof58} and \eqref{proof59} yields
\begin{align*}
\lim_{\mu\to 0^+}\lim_{k\to\infty}B_Y[\phi_x^{\mu},g_{0}(x,\cdot)](k,\gamma,y)=g_\gamma(y,x)a_Y[\rho](x).
\end{align*}
Letting $\gamma\to 0^+$, we have $g_\gamma(y,x)\to g_0(y,x)=g_0(x,y)$ since $x\neq y$, which proves \eqref{lem-lim-g-k}.

We now prove \eqref{LimCxy-mu-k-gamma}.
By dominated convergence, we have
\begin{align*}
\lim_{k\to\infty}C_Y[\phi_x^{\mu},g_{\gamma}(y,\cdot)](k,x)
=C_Y[\phi_x^{\mu},g_{\gamma}(y,\cdot)](x),
\end{align*}
where
\begin{align}\label{Cxy-mu}
C_Y[\phi_x^{\mu},g_{\gamma}(y,\cdot)](x)
:=\iint_{\R^{2N}}g_0(x,q)\big(\phi_x^{\mu}(p)-\phi_x^{\mu}(q)\big)\big(g_{\gamma}(y,p)-g_{\gamma}(y,q)\big)\kappa_Y(p,q)\,dp\,dq.
\end{align}
Using again \cite[Lemma 2.2]{Sidy-Franck-BP}, it suffices to restrict the integral to balls around $x$. With the same $\varepsilon>0$ as above, we define
\begin{align*}
C_Y[\phi_x^{\mu},g_{\gamma}(y,\cdot)](\varepsilon,x)
:=\iint_{B_{\varepsilon}(x)\times B_{2\varepsilon}(x)}g_0(x,q)\big(\phi_x^{\mu}(p)-\phi_x^{\mu}(q)\big)\big(g_{\gamma}(y,p)-g_{\gamma}(y,q)\big)\kappa_Y(p,q)\,dp\,dq,
\end{align*}
so that
\begin{equation}\label{proofC1}
\lim_{\mu\to 0^+}C_Y[\phi_x^{\mu},g_{\gamma}(y,\cdot)](x)=\lim_{\mu\to 0^+}C_Y[\phi_x^{\mu},g_{\gamma}(y,\cdot)](\varepsilon,x).
\end{equation}
With the same change of variables $p=x-\widetilde{\mu}p$ and $q=x-\widetilde{\mu}q$ as above (and renaming $(p,q)$), we obtain
\begin{align}\label{Cxy-mu-rescaled}
C_Y[\phi_x^{\mu},g_{\gamma}(y,\cdot)](\varepsilon,x)
=\iint_{\R^{2N}}\widetilde{\mu}^{N-2s}g_0(x,x-\widetilde{\mu}q)G_x^y(\varepsilon,\widetilde{\mu};\,p,q)\,dp\,dq,
\end{align}
where
\begin{align}\label{defGxy}
G_x^y(\varepsilon,\widetilde{\mu};\,p,q)
:=
1_{B_{\varepsilon/\widetilde{\mu}}(0)}(p)\,1_{B_{2\varepsilon/\widetilde{\mu}}(0)}(q)\,
\big(\rho(|q|^2)-\rho(|p|^2)\big)
\big(g_{\gamma}(y,x-\widetilde{\mu}p)-g_{\gamma}(y,x-\widetilde{\mu}q)\big)\kappa_Y(\widetilde{\mu};p,q).
\end{align}
Since $g_0(x,\cdot)$ vanishes identically outside $\Omega$, we have
\begin{equation}\label{bx-es}
\widetilde{\mu}^{N-2s}|g_0(x,x-\widetilde{\mu}q)|\leq C|q|^{2s-N}.
\end{equation}
Write $\upsilon:=x-y$. We estimate, for $p,q$ in $B_{2\varepsilon/\widetilde{\mu}}(0)$,
\begin{align*}
\big|g_\gamma(y,x-\widetilde{\mu}p)-g_\gamma(y,x-\widetilde{\mu}q)\big|
&=\big|g_0(x-\widetilde{\mu}p,y)\phi_y^\gamma(x-\widetilde{\mu}p)-g_0(x-\widetilde{\mu}q,y)\phi_y^\gamma(x-\widetilde{\mu}q)\big|\\
&\leq A(\widetilde{\mu};p,q)+B(\widetilde{\mu};p,q),
\end{align*}
where
\begin{align*}
A(\widetilde{\mu};p,q)&:=\big|g_0(x-\widetilde{\mu}p,y)-g_0(x-\widetilde{\mu}q,y)\big|,\\
B(\widetilde{\mu};p,q)&:=\big|\phi_y^\gamma(x-\widetilde{\mu}p)-\phi_y^\gamma(x-\widetilde{\mu}q)\big|\,|g_0(x-\widetilde{\mu}q,y)|.
\end{align*}
Since $\phi_y^\gamma=1-\rho_\gamma^y$, we have
\begin{align*}
B(\widetilde{\mu};p,q)
&=\big|\rho_\gamma^y(x-\widetilde{\mu}p)-\rho_\gamma^y(x-\widetilde{\mu}q)\big|\,|g_0(x-\widetilde{\mu}q,y)|.
\end{align*}
Using the definition of $\rho_\gamma^y$ in \eqref{psi-mu}, the mean value theorem and the fact that $|x-\widetilde{\mu}p-y|$ stays bounded away from $0$ for $p,q$ in $B_{2\varepsilon/\widetilde{\mu}}(0)$ (since $x\neq y$), we obtain
\begin{align}\label{C-uni-es-1}
B(\widetilde{\mu};p,q)\leq C\,\widetilde{\mu}|p-q|.
\end{align}
On the other hand,
\begin{align*}
A(\widetilde{\mu};p,q)
&=\Big|b_{N,s}\big(|\upsilon+\widetilde{\mu}p|^{2s-N}-|\upsilon+\widetilde{\mu}q|^{2s-N}\big)
-\big(H_\Omega^s(y,x-\widetilde{\mu}p)-H_\Omega^s(y,x-\widetilde{\mu}q)\big)\Big|\\
&\leq b_{N,s}\big||\upsilon+\widetilde{\mu}p|^{2s-N}-|\upsilon+\widetilde{\mu}q|^{2s-N}\big|
+\big|H_\Omega^s(y,x-\widetilde{\mu}p)-H_\Omega^s(y,x-\widetilde{\mu}q)\big|.
\end{align*}
Since $H_\Omega^s(y,\cdot)$ is Lipschitz on $B_{2\varepsilon}(x)$, we get
\begin{equation}\label{C-uni-es-2}
A(\widetilde{\mu};p,q)\leq C\big||\upsilon+\widetilde{\mu}p|^{2s-N}-|\upsilon+\widetilde{\mu}q|^{2s-N}\big|+C\,\widetilde{\mu}|p-q|.
\end{equation}
Combining \eqref{kappa-Y-bound}, \eqref{C-uni-es-1} and \eqref{C-uni-es-2}, we obtain
\begin{align}\label{Gxymu-es}
\big|\widetilde{\mu}^{N-2s}g_0(x,x-\widetilde{\mu}q)G_x^y(\varepsilon,\widetilde{\mu};p,q)\big|
\leq C\frac{W_x^y(\widetilde{\mu};p,q)\,\big|\rho(|p|^2)-\rho(|q|^2)\big|}{|q|^{N-2s}|p-q|^{N+2s}},
\end{align}
where
\begin{align*}
W_x^y(\widetilde{\mu};p,q)
:=\widetilde{\mu}|p-q|\big(1+|\upsilon+\widetilde{\mu}q|\big)+\big||\upsilon+\widetilde{\mu}p|^{2s-N}-|\upsilon+\widetilde{\mu}q|^{2s-N}\big|.
\end{align*}
We claim that
\begin{equation}\label{LimRmu-v}
\lim_{\widetilde{\mu}\to 0^+}R(\widetilde{\mu},\upsilon)=0.
\end{equation}
Here
\begin{align*}
R(\widetilde{\mu},\upsilon)
:=\iint_{\R^{2N}}1_{B_{\varepsilon/\widetilde{\mu}}(0)}(p)\,1_{B_{2\varepsilon/\widetilde{\mu}}(0)}(q)\,
\frac{W_x^y(\widetilde{\mu};p,q)\,\big|\rho(|p|^2)-\rho(|q|^2)\big|}{|q|^{N-2s}|p-q|^{N+2s}}\,dp\,dq.
\end{align*}
Indeed, by the definition of $W_x^y$ we can write $R(\widetilde{\mu},\upsilon)=R_1(\widetilde{\mu},\upsilon)+R_2(\widetilde{\mu},\upsilon)$, where
\begin{align*}
R_1(\widetilde{\mu},\upsilon)
&:=\iint_{\R^{2N}}1_{B_{\varepsilon/\widetilde{\mu}}(0)}(p)\,1_{B_{2\varepsilon/\widetilde{\mu}}(0)}(q)\,
\frac{\big|\rho(|p|^2)-\rho(|q|^2)\big|\,\big||\upsilon+\widetilde{\mu}p|^{2s-N}-|\upsilon+\widetilde{\mu}q|^{2s-N}\big|}{|q|^{N-2s}|p-q|^{N+2s}}\,dp\,dq,\\
R_2(\widetilde{\mu},\upsilon)
&:=\widetilde{\mu}\iint_{\R^{2N}}1_{B_{\varepsilon/\widetilde{\mu}}(0)}(p)\,1_{B_{2\varepsilon/\widetilde{\mu}}(0)}(q)\,
\frac{\big(1+|\upsilon+\widetilde{\mu}q|\big)\big|\rho(|p|^2)-\rho(|q|^2)\big|}{|q|^{N-2s}|p-q|^{N+2s-1}}\,dp\,dq.
\end{align*}
Since the indicators are bounded by $1$, we have $R_1(\widetilde{\mu},\upsilon)\leq a[\rho](\widetilde{\mu},\upsilon)$, hence $R_1(\widetilde{\mu},\upsilon)\to 0$ by Lemma \ref{firstconvergenceresult}.

To estimate $R_2$, we note that $|q|\leq 2\varepsilon/\widetilde{\mu}$ implies $1+|\upsilon+\widetilde{\mu}q|\leq 1+|\upsilon|+2\varepsilon$.
Moreover, since $\rho(|\cdot|^2)$ is Lipschitz and bounded, there exists $C>0$ such that
\begin{equation*}
\big|\rho(|p|^2)-\rho(|q|^2)\big|\leq C\min\{1,|p-q|\}
\qquad \hbox{for all }p,q\in\R^N.
\end{equation*}
Therefore,
\begin{align*}
R_2(\widetilde{\mu},\upsilon)
&\leq C\,\widetilde{\mu}\iint_{\R^{2N}}1_{B_{\varepsilon/\widetilde{\mu}}(0)}(p)\,1_{B_{2\varepsilon/\widetilde{\mu}}(0)}(q)\,
\frac{\min\{1,|p-q|\}}{|q|^{N-2s}|p-q|^{N+2s-1}}\,dp\,dq.
\end{align*}
Since $\rho(|\cdot|^2)$ is supported in $B_{\sqrt{2}}(0)$, the integrand is nonzero only if $p\in B_{\sqrt{2}}(0)$ or $q\in B_{\sqrt{2}}(0)$.
Accordingly, we write $R_2\leq C\widetilde{\mu}(I_1+I_2)$ with
\begin{align*}
I_1&:=\int_{B_{\sqrt{2}}(0)}\frac{dq}{|q|^{N-2s}}\int_{B_{\varepsilon/\widetilde{\mu}}(0)}\frac{\min\{1,|p-q|\}}{|p-q|^{N+2s-1}}\,dp,\\
I_2&:=\int_{B_{\sqrt{2}}(0)}dp\int_{B_{2\varepsilon/\widetilde{\mu}}(0)\setminus B_{\sqrt{2}}(0)}\frac{dq}{|q|^{N-2s}|p-q|^{N+2s-1}}.
\end{align*}
For $I_1$, since $q$ stays bounded and $p$ ranges in a ball of radius $\varepsilon/\widetilde{\mu}$, we have
\begin{align*}
\int_{B_{\varepsilon/\widetilde{\mu}}(0)}\frac{\min\{1,|p-q|\}}{|p-q|^{N+2s-1}}\,dp
\leq C\int_{B_{\varepsilon/\widetilde{\mu}+2}(0)}\frac{\min\{1,|r|\}}{|r|^{N+2s-1}}\,dr
\leq C\Big(1+\Big(\frac{\varepsilon}{\widetilde{\mu}}\Big)^{\max\{0,1-2s\}}\Big),
\end{align*}
(with the obvious $\log(\varepsilon/\widetilde{\mu})$ modification in the borderline case $2s=1$).
Since $\int_{B_{\sqrt{2}}(0)}|q|^{-(N-2s)}\,dq<\infty$, it follows that $\widetilde{\mu}I_1\to 0$.

For $I_2$, since $p$ is bounded and $|q|\geq \sqrt{2}$, we have $|p-q|\geq |q|/2$, hence
\begin{align*}
I_2\leq C\int_{B_{\sqrt{2}}(0)}dp\int_{B_{2\varepsilon/\widetilde{\mu}}(0)\setminus B_{\sqrt{2}}(0)}\frac{dq}{|q|^{2N-1}}
\leq C\int_{\sqrt{2}}^{2\varepsilon/\widetilde{\mu}}r^{-N}\,dr,
\end{align*}
which is bounded uniformly in $\widetilde{\mu}$ if $N\geq 2$, and grows at most like $\log(1/\widetilde{\mu})$ if $N=1$. In both cases, $\widetilde{\mu}I_2\to 0$.
Thus $R_2(\widetilde{\mu},\upsilon)\to 0$, which proves \eqref{LimRmu-v}.
Combining \eqref{Gxymu-es}, \eqref{LimRmu-v} and dominated convergence in \eqref{Cxy-mu-rescaled}, we obtain
\begin{align}\label{LimCxy}
\lim_{\mu\to 0^+}C_Y[\phi_x^{\mu},g_{\gamma}(y,\cdot)](\varepsilon,x)=0.
\end{align}
Finally, \eqref{proofC1} and \eqref{LimCxy} yield \eqref{LimCxy-mu-k-gamma}.
\end{proof}
With a similar argument as above, we also prove the following which will be used in the proof of Lemma \ref{main-res-sec4.4} as well. We keep using the notations introduced in the section above:
\begin{Lemma}\label{Lem.sec-4.3}
Fix $x,y$ in $\Omega$ with $x\neq y$.
Let $u:=\xi_k g_{\gamma}(y,\cdot)$ and $v:=\xi_k \phi_x^\mu$.
Then
\begin{equation}\label{C2}
\lim_{\gamma\to 0^+}\lim_{\mu\to 0^+}\lim_{k\to \infty}D_Y[v](\gamma,y)=g_0(x,y)a_Y[\rho](x),
\end{equation}
where
\begin{equation}\label{C3}
D_Y[v](\gamma,y)=\mathrm{p.v.}\iint_{\R^{2N}}\big(v(p)-v(q)\big)\big(u(q)g_0(x,p)-u(p)g_0(x,q)\big)\kappa_Y(p,q)\,dp\,dq.
\end{equation}
\end{Lemma}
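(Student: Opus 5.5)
The plan is to reduce $D_Y[v](\gamma,y)$ to the two quantities already handled in Lemma \ref{LimBC-mu-k-gamma}. With $u=\xi_k g_\gamma(y,\cdot)$ and $v=\xi_k\phi_x^\mu$, I expand the bracket $u(q)g_0(x,p)-u(p)g_0(x,q)$ algebraically. Adding and subtracting $u(q)g_0(x,q)$ gives
\[
u(q)g_0(x,p)-u(p)g_0(x,q)=u(q)\big(g_0(x,p)-g_0(x,q)\big)-g_0(x,q)\big(u(p)-u(q)\big).
\]
For the difference $v(p)-v(q)$, I write $\xi_k(p)\phi_x^\mu(p)-\xi_k(q)\phi_x^\mu(q)=\xi_k(q)(\phi_x^\mu(p)-\phi_x^\mu(q))+\phi_x^\mu(p)(\xi_k(p)-\xi_k(q))$. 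I do the same for $u(p)-u(q)$, splitting off $\xi_k(q)(g_\gamma(y,p)-g_\gamma(y,q))$ plus a term carrying $\xi_k(p)-\xi_k(q)$.

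The main terms then match the definitions exactly. The first is $\xi_k(q)g_\gamma(y,q)\,\xi_k(q)(\phi^\mu_x(p)-\phi^\mu_x(q))(g_0(x,p)-g_0(x,q))\kappa_Y$. It is $B_Y$ with $\xi_k^2$ in place of $\xi_k$, which does not affect the $k\to\infty$ limit by dominated convergence, since $\xi_k\to 1$ in $\Omega$. The second is $-\xi_k^2(q)g_0(x,q)(\phi^\mu_x(p)-\phi^\mu_x(q))(g_\gamma(y,p)-g_\gamma(y,q))\kappa_Y$, which is exactly $-C_Y$. Lemma \ref{LimBC-mu-k-gamma} then gives the limit $g_0(x,y)a_Y[\rho](x)-0$, which is \eqref{C2}.

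What remains are the error terms containing a factor $\xi_k(p)-\xi_k(q)$. Each is integrated against a product with one of two profiles. One profile is of the type $g_0(x,\cdot)\,\xi_k g_\gamma(y,\cdot)\,\phi^\mu_x$, which vanishes near $x$ and near $y$ and is bounded by $C\delta^{s}\cdot\delta^{s}$ near $\partial\Omega$. The other is a difference $g_0(x,p)-g_0(x,q)$ or $g_\gamma(y,p)-g_\gamma(y,q)$, both $C^s$ near $\partial\Omega$. Since $\xi_k-1$ is supported in $\{|\delta|<2/k\}$, away from $x$ and $y$ for large $k$, these terms only see the boundary strip. I would show they tend to $0$ as $k\to\infty$ for fixed $\mu,\gamma$, using
\[
|\xi_k(p)-\xi_k(q)|\le \min(1,k|p-q|)
\]
together with the boundary behavior $g_0(x,\cdot),g_\gamma(y,\cdot)\le C\delta_+^s$ from regularity. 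A typical term is bounded by
\[
\iint\frac{|\xi_k(p)-\xi_k(q)|\,\delta_+^s(q)\,|g(p)-g(q)|}{|p-q|^{N+2s}},
\]
with the other factors bounded there. Using $|g(p)-g(q)|\le C|p-q|^s$ and integrating in $p$ first, I get roughly $\int_{\{\delta<3/k\}}\delta^s(q)\,k^{s}\,dq\lesssim k^{-1}\to 0$. The bound $\int\min(1,k|h|)|h|^{-N-s}dh\sim k^{s}$ is the one to check. Alternatively, I could add and subtract $\xi_k(q)$ symmetrically so that each error term is a product in which one factor carries $\xi_k(p)-\xi_k(q)$ and the other carries two boundary-vanishing functions, and argue as in the treatment of the cutoff limits in \cite{Sidy-Franck-BP}.

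The main obstacle is making these boundary error terms vanish uniformly, because $\kappa_Y$ is a signed kernel and the terms are not symmetric. In particular, the term where $\xi_k(p)-\xi_k(q)$ multiplies $u(q)(g_0(x,p)-g_0(x,q))$ has only one vanishing factor $\delta^s$ from $u(q)$ and one Hölder difference. The count above suggests it is borderline-safe, giving $k^{s}\cdot k^{-1-s}$, but it needs care at the level of $\min(|p-q|^{-N-2s},\dots)$ near the boundary. The p.v. is harmless, since after the expansion every integrand is absolutely integrable, as in Proposition \ref{FundLemma}.
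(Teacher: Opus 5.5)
Your proposal is correct and is essentially the paper's proof: the same algebraic expansion isolates a $B_Y$-type term (with $\xi_k^2$ in place of $\xi_k$) and the term $-C_Y$, both handled by Lemma \ref{LimBC-mu-k-gamma}, and every remaining term carries a factor $\xi_k(p)-\xi_k(q)$ and vanishes as $k\to\infty$.

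For these error terms the paper does not use a rate count. It first replaces $g_0(x,\cdot)$ by $g_\mu(x,\cdot)=g_0(x,\cdot)\phi_x^\mu$, which is H\"older continuous on $\overline{\Omega}$. It then applies Lemma \ref{lem-crucial}: the inner $p$-integral is bounded by $C\delta_\Omega^{-s}(q)$ uniformly in $k$ and tends to $0$ pointwise, so dominated convergence gives the limit.

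Your count, as written, only treats $q$ in the strip $\{\delta<3/k\}$. The region where only $p$ lies in the strip is not covered by it. There you need the extra bound $|p-q|\gtrsim\delta(q)$, and the contribution still vanishes, at order $k^{-1}\log k$ for the term you single out.
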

\begin{proof}
We first expand
\begin{align}\label{align}
\big(v(p)-v(q)\big)\big(u(q)g_0(x,p)-u(p)g_0(x,q)\big)
&=u(q)\big(v(p)-v(q)\big)\big(g_0(x,p)-g_0(x,q)\big)\nonumber\\
&\quad +g_0(x,q)\big(v(p)-v(q)\big)\big(u(q)-u(p)\big),
\end{align}
and we use the decomposition
\begin{equation}\label{E}
v(p)-v(q)=\phi_x^\mu(p)\big(\xi_k(p)-\xi_k(q)\big)+\xi_k(q)\big(\phi_x^\mu(p)-\phi_x^\mu(q)\big).
\end{equation}
We also set $g_\mu(x,\cdot):=g_0(x,\cdot)\phi_x^\mu(\cdot)$.
Then
\begin{align*}
\phi_x^\mu(p)\big(g_0(x,p)-g_0(x,q)\big)
=g_\mu(x,p)-g_\mu(x,q)+g_0(x,q)\big(\phi_x^\mu(q)-\phi_x^\mu(p)\big).
\end{align*}
Inserting the last identity into \eqref{align}, we obtain
\begin{align*}
D_Y[v](\gamma,y)=E_Y[g_\mu]-F_Y[\phi_x^\mu]+G_Y[u]+H_Y[\phi_x^\mu,g_0(x,\cdot)]+I_Y[\phi_x^\mu,u],
\end{align*}
where
\begin{align}\label{definitions-EFGHI}
E_Y[g_\mu](\gamma,y)
&:=\mathrm{p.v.}\iint_{\R^{2N}}u(q)\big(\xi_k(p)-\xi_k(q)\big)\big(g_\mu(x,p)-g_\mu(x,q)\big)\kappa_Y(p,q)\,dp\,dq,\\
F_Y[\phi_x^\mu](\gamma,y)
&:=\mathrm{p.v.}\iint_{\R^{2N}}u(q)g_0(x,q)\big(\xi_k(p)-\xi_k(q)\big)\big(\phi_x^\mu(q)-\phi_x^\mu(p)\big)\kappa_Y(p,q)\,dp\,dq,\\
G_Y[u](\gamma,y)
&:=\mathrm{p.v.}\iint_{\R^{2N}}\phi_x^\mu(p)g_0(x,q)\big(\xi_k(p)-\xi_k(q)\big)\big(u(q)-u(p)\big)\kappa_Y(p,q)\,dp\,dq,\\
H_Y[\phi_x^\mu,g_0(x,\cdot)](\gamma,y)
&:=\mathrm{p.v.}\iint_{\R^{2N}}\xi_k(q)u(q)\big(\phi_x^\mu(p)-\phi_x^\mu(q)\big)\big(g_0(x,p)-g_0(x,q)\big)\kappa_Y(p,q)\,dp\,dq,\\
I_Y[\phi_x^\mu,u](\gamma,y)
&:=\mathrm{p.v.}\iint_{\R^{2N}}\xi_k(q)g_0(x,q)\big(\phi_x^\mu(p)-\phi_x^\mu(q)\big)\big(u(q)-u(p)\big)\kappa_Y(p,q)\,dp\,dq.
\end{align}
We claim that
\begin{equation}\label{lim-EFG}
\lim_{k\to\infty}E_Y[g_\mu]=\lim_{k\to\infty}F_Y[\phi_x^\mu]=\lim_{k\to\infty}G_Y[u]=0.
\end{equation}
Indeed, since $g_\gamma(y,\cdot)$ is bounded and $g_\mu(x,\cdot)$ is H\"older continuous on $\overline{\Omega}$, Lemma \ref{lem-crucial} yields
\begin{equation*}
\big|E_Y[g_\mu](\gamma,y)\big|\leq C\int_{\Omega}g_0(x,q)\delta_\Omega(q)^{-s}\,dq<\infty,
\end{equation*}
and dominated convergence gives $\lim_{k\to\infty}E_Y[g_\mu]=0$.

Since $g_0(x,\cdot)$ and $u$ vanish identically outside $\Omega$, we have
\begin{equation*}
\big|F_Y[\phi_x^\mu](\gamma,y)\big|
\leq C\int_{\Omega}g_0(x,q)\delta_\Omega(q)^{-s}\,dq<\infty,
\end{equation*}
hence dominated convergence gives $\lim_{k\to\infty}F_Y[\phi_x^\mu]=0$.

Finally, we use the splitting
\begin{equation}\label{split-u}
u(p)-u(q)=g_{\gamma}(y,p)\big(\xi_k(p)-\xi_k(q)\big)+\xi_k(q)\big(g_{\gamma}(y,p)-g_{\gamma}(y,q)\big).
\end{equation}
Using \eqref{split-u}, we bound
\begin{align*}
\big|G_Y[u](\gamma,y)\big|
&\leq C\iint_{\R^{2N}}|g_0(x,q)\phi_x^\mu(p)|\,\big|\xi_k(p)-\xi_k(q)\big|^2\,\frac{dp\,dq}{|p-q|^{N+2s}}\\
&\quad +C\iint_{\R^{2N}}|g_0(x,q)\phi_x^\mu(p)|\,\big|\xi_k(p)-\xi_k(q)\big|\,\frac{|g_\gamma(y,p)-g_\gamma(y,q)|}{|p-q|^{N+2s}}\,dp\,dq.
\end{align*}
Both integrals are finite and tend to $0$ as $k\to\infty$ by Lemma \ref{lem-crucial} and dominated convergence, hence $\lim_{k\to\infty}G_Y[u]=0$.
This proves \eqref{lim-EFG}.

We now treat the remaining terms.
Since $u=\xi_k g_\gamma(y,\cdot)$, we have $\xi_k(q)u(q)=\xi_k^2(q)g_\gamma(y,q)$ and therefore
\begin{align*}
H_Y[\phi_x^\mu,g_0(x,\cdot)](\gamma,y)
=\mathrm{p.v.}\iint_{\R^{2N}}\xi_k^2(q)g_\gamma(y,q)\big(\phi_x^\mu(p)-\phi_x^\mu(q)\big)\big(g_0(x,p)-g_0(x,q)\big)\kappa_Y(p,q)\,dp\,dq.
\end{align*}
Letting $k\to\infty$ and using dominated convergence, we obtain
\begin{align*}
\lim_{k\to\infty}H_Y[\phi_x^\mu,g_0(x,\cdot)](\gamma,y)
=\mathrm{p.v.}\iint_{\R^{2N}}g_\gamma(y,q)\big(\phi_x^\mu(p)-\phi_x^\mu(q)\big)\big(g_0(x,p)-g_0(x,q)\big)\kappa_Y(p,q)\,dp\,dq,
\end{align*}
which coincides with $\lim_{k\to\infty}B_Y[\phi_x^\mu,g_0(x,\cdot)](k,\gamma,y)$.
Therefore, by \eqref{lem-lim-g-k},
\begin{equation}\label{lim-H}
\lim_{\gamma\to 0^+}\lim_{\mu\to 0^+}\lim_{k\to\infty}H_Y[\phi_x^\mu,g_0(x,\cdot)](\gamma,y)=g_0(x,y)a_Y[\rho](x).
\end{equation}
Next we write
\begin{align*}
u(q)-u(p)=\xi_k(q)\big(g_\gamma(y,q)-g_\gamma(y,p)\big)+g_\gamma(y,p)\big(\xi_k(q)-\xi_k(p)\big),
\end{align*}
so that $I_Y[\phi_x^\mu,u]=I_{Y,1}+I_{Y,2}$ with
\begin{align*}
I_{Y,1}
&:=\mathrm{p.v.}\iint_{\R^{2N}}\xi_k^2(q)g_0(x,q)\big(\phi_x^\mu(p)-\phi_x^\mu(q)\big)\big(g_\gamma(y,q)-g_\gamma(y,p)\big)\kappa_Y(p,q)\,dp\,dq,\\
I_{Y,2}
&:=\mathrm{p.v.}\iint_{\R^{2N}}\xi_k(q)g_0(x,q)\big(\phi_x^\mu(p)-\phi_x^\mu(q)\big)g_\gamma(y,p)\big(\xi_k(q)-\xi_k(p)\big)\kappa_Y(p,q)\,dp\,dq.
\end{align*}
Since $I_{Y,1}=-C_Y[\phi_x^\mu,g_\gamma(y,\cdot)](k,x)$, Lemma \ref{LimBC-mu-k-gamma} gives
\begin{equation}\label{lim-I1}
\lim_{\mu\to 0^+}\lim_{k\to\infty}I_{Y,1}=0.
\end{equation}
The term $I_{Y,2}$ contains the factor $\xi_k(q)-\xi_k(p)$ and is handled in the same way as $F_Y[\phi_x^\mu]$ by Lemma \ref{lem-crucial}; in particular,
\begin{equation}\label{lim-I2}
\lim_{k\to\infty}I_{Y,2}=0.
\end{equation}
Combining \eqref{lim-EFG}, \eqref{lim-H}, \eqref{lim-I1} and \eqref{lim-I2} yields \eqref{C2}.
\end{proof}
Having gathered all necessary preliminary results, we are ready to prove the main results.
\section{Differentiability of the mapping $t\mapsto H^s_{\Omega_t}(\Phi_t(x),\Phi_t(y))$}
Here we prove that, for all $x,y\in \Omega$, the real valued function $m_x^y:t\mapsto H^s_{\Omega_t}(\Phi_t(x),\Phi_t(y))$ is differentiable at zero. More generally, we will prove the following
%Then, the differentiability of $t\mapsto G^s_{\Omega_t}(x,y)$ at zero will follows from the later by using Leibnitz differentiation rule and the decomposition
%\begin{equation}\label{Decomp}
 %   G_{\Omega_t}(x,y)=b_{N,s}|x-y|^{2s-N}-H_{\Omega_t}^s(x,y)\quad\text{for all $x,y\in \R^N\setminus\{x,x\}$}.
%\end{equation}
%See Section \ref{...} for details.
%
\begin{Lemma}\label{diff-reg-part}
Let $s$ in $(0,1)$ and $x\in \Omega$ be fixed. Then, there exists $\gamma\in (0, s)$ such that, the mapping 
$$
H_x: t\to C^\gamma(\overline{\Omega}), \quad t\mapsto H_{\Omega_t}^s\big(\Phi_t(x), \Phi_t(\cdot)\big),
$$
is differentiable at $0$. 
\par\;
In particular, for any $y\in \Omega$, the real valued function $m_x^y:t\mapsto H^s_{\Omega_t}(\Phi_t(x),\Phi_t(y))\in \R$ is differentiable at zero.
%Moreover, the derivative satisfies
%$$
%\partial_t H^s_{\Omega_t}(\Phi_t(x),\Phi_t(\cdot))\Big|_{t=0}\in C^{3s}(\Omega)\cap L^\infty(\Omega).
%$$
%In particular, the derivative is in $C^1(\Omega)$ provided that $s>1/3$.
\end{Lemma}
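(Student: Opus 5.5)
We pull the problem back to the fixed domain $\Omega$. Set $w_t(y):=H^s_{\Omega_t}(x_t,y_t)$, with the notation \eqref{z_t}. By the change of variables $y\mapsto\Phi_t(y)$ in the weak formulation of \eqref{regular-part-of-Green} on $\Omega_t$, the function $w_t$ is a weak solution of
\[
\mathcal L_t w_t=0\ \text{in }\Omega,\qquad w_t(y)=b_{N,s}|x_t-y_t|^{2s-N}\ \text{in }\R^N\setminus\Omega ,
\]
where $\mathcal L_t$ is the operator with kernel $\kappa_t$ from \eqref{ktyz}. By \eqref{ellptic-kernel} and \eqref{Sauveur1}, $\mathcal L_t\in\mathcal G_s(\lambda,\Lambda)$ uniformly in $|t|\ll1$. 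Moreover, the odd part $\kappa_t^o$ satisfies \eqref{Sauveur2} uniformly in $t$: it vanishes at $t=0$ and is $t$ times $\overline{\kappa_t}^o$, which is controlled by \eqref{SauVsec3}.

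Next we form the difference quotient $W_t:=(w_t-w_0)/t$. Subtracting the two equations gives
\[
\mathcal L_t W_t=-f_t[H^s_\Omega(x,\cdot)]\ \text{in }\Omega,\qquad W_t=g^t_x\ \text{in }\R^N\setminus\Omega,
\]
with $f_t$ from Lemma \ref{LM1} and $g^t_x$ from \eqref{defgtx}. The interior identity holds because $\mathcal L_0 w_0=0$ and
\[
\mathcal L_t w_0-\mathcal L_0 w_0=t\,f_t[H^s_\Omega(x,\cdot)].
\]
We now check that the hypotheses of Proposition \ref{Dir-non-boundary-data} hold uniformly in $t$.
\begin{itemize}
\item By \eqref{pointwise-est-fyreg}, $|f_t|\leq C\delta_\Omega^{-s}$. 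The fractional Hardy inequality then gives $\|f_t\|_{X(\Omega)}\leq C$.
\item By Lemma \ref{unifest-gtx}, applied with some $\alpha\in(s,\min\{1,2s\})$, the exterior datum satisfies \eqref{unif-est-extdata-cond} with a constant $C_0$ independent of $t$.
\item Since $|g^t_x(y)|\leq C|x-y|^{2s-N}$ away from $x$, and $W_t$ is bounded on $\Omega$ by the maximum principle (or by the $L^\infty$ bound contained in the Kim--Weidner estimate), the norm $\|W_t\|_{L^1_{2s}}$ is bounded uniformly. Some care is needed to avoid circularity here; a bootstrap with a small constant should handle it.
\end{itemize}
Proposition \ref{Dir-non-boundary-data} then gives a bound $\|W_t\|_{C^{\beta}(\overline\Omega)}\leq C$ uniformly in $t$, for some $\beta=s+q-N/p\in(0,s)$.

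Fix $\gamma\in(0,\beta)$. By Arzelà--Ascoli, every sequence $t_n\to0$ has a subsequence along which $W_{t_n}\to W$ in $C^\gamma(\overline\Omega)$. We identify the limit as the unique weak solution of
\[
\mathcal L_0W=-f_Y[H^s_\Omega(x,\cdot)]\ \text{in }\Omega,\qquad W=g_x^Y\ \text{in }\R^N\setminus\Omega,
\]
where $g_x^Y(y):=\partial_t|_{t=0}\,b_{N,s}|x_t-y_t|^{2s-N}$. To do so, we pass to the limit in the weak formulation tested against $\psi\in C_c^\infty(\Omega)$, using:
\begin{itemize}
\item the pointwise convergence \eqref{pointwise-ft-lim}, together with the domination $C\delta_\Omega^{-s}|\psi|$;
\item the pointwise convergence $g^t_x\to g_x^Y$, with the bound $C|x-y|^{2s-N}$;
\item the expansion \eqref{expansion-k-t-notations}, which gives $\kappa_t\to\kappa_0$ with the domination $C|y-z|^{-N-2s}$.
\end{itemize}
Uniqueness of this limit problem (maximum principle for $\mathcal L_0=(-\Delta)^s$) shows that the whole family converges, so $H_x$ is differentiable at $0$ in $C^\gamma(\overline\Omega)$. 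Evaluating at a point $y$ gives the differentiability of $m_x^y$.

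The main obstacle is the uniform $L^1_{2s}$ and weak-formulation control of $W_t$. On $\Omega$ the function $W_t$ is a priori only known to lie in $H^s_{\loc}$, and its exterior datum $g^t_x$ is singular only at $x$, which lies inside $\Omega$, so in fact $g^t_x$ is smooth on $\R^N\setminus\Omega$. I would first obtain an energy bound for $W_t-\bar g_t$, with $\bar g_t$ the harmonic extension of the exterior datum as in the proof of Proposition \ref{Dir-non-boundary-data}, by testing with $W_t-\bar g_t$ itself and using Hardy's inequality for $f_t$. I would then feed this energy bound into the Hölder estimate.
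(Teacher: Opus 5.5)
Your proposal is correct. It shares the paper's skeleton: the difference-quotient equation with data $f_t$ and $g^t_x$, a uniform H\"older bound from Proposition~\ref{Dir-non-boundary-data}, Arzel\`a--Ascoli, identification of the limit, and uniqueness. Where it differs is in the one missing uniform input, the bound on $\|W_t\|_{L^1_{2s}(\R^N)}$.

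\textbf{The paper's route.} It gets a uniform $L^\infty(\Omega)$ bound by representing $S^t_x$ through the Green function and Poisson kernel of $\mathcal L_t$. It then uses the two-sided estimates \eqref{Gts-es}--\eqref{Pts-es} from \cite{KW} and an integral estimate from \cite{SvenSaldanaTobias}.

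\textbf{Your route.} You test the equation for $v_t:=W_t-\bar g_t\in\mathcal H^s_0(\Omega)$ with $v_t$ itself. Three ingredients suffice: uniform coercivity; the bound $|f_t|+|\mathcal L_t\bar g_t|\le C\delta_\Omega^{-s}$, whose second term is exactly what the proof of Proposition~\ref{Dir-non-boundary-data} establishes using $\alpha>s$; and Hardy's inequality. Together they give $\lambda[v_t]_{H^s}^2\le C[v_t]_{H^s}$, hence uniform $L^2(\Omega)$ and $L^1_{2s}$ bounds.

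\textbf{Comparison.} Your argument uses only the variational structure and avoids the deep kernel estimates. The $L^\infty$ bound needed later (e.g.\ in \eqref{TGH}) still follows, a posteriori, from the uniform $C^\beta(\overline\Omega)$ bound. The paper's route delivers that pointwise bound directly.

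\textbf{Drop the alternatives in your middle bullet.}
\begin{itemize}
\item The $L^\infty$ bound ``contained in'' the Kim--Weidner estimate is circular: the $L^1_{2s}$ norm sits on its right-hand side, and there is no small constant to absorb it.
\item A maximum-principle bound with right-hand side of size $\delta_\Omega^{-s}$ needs a barrier for $\mathcal L_t$. That is essentially the Green-function bound the paper imports.
\end{itemize}
The energy estimate needs no bootstrap at all.

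\textbf{Details to add when you run the energy estimate.}
\begin{itemize}
\item Extend the weak formulation from $C^\infty_c(\Omega)$ to $\mathcal H^s_0(\Omega)$ by density; the $f_t$-term is continuous by Hardy.
\item Note that $v_t\in\mathcal H^s_0(\Omega)$, because $W_t$ has locally finite energy and $\bar g_t$ is $C^\alpha$ across $\partial\Omega$ with $\alpha>s$.
\item When passing to the limit in the $\Omega\times\Omega$ part of the bilinear form, domination requires H\"older convergence with an exponent $\gamma'>2s-1$. This is available because $\beta$ can be taken close to $s>2s-1$.
\end{itemize}

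Finally, your sign $-f_t$ in the difference-quotient equation is the correct one. The paper's \eqref{dest-eq-Stx} carries the opposite sign, which is harmless for the estimates.
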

\begin{proof}
For simplicity, we shall use the notation 
$$
z_t:=\Phi_t(z)\qquad\text{for \; $z\in\R^N$}.
$$
For a fixed $x$ in $\Omega$, we consider the function 
\begin{equation}\label{defStx}
S_x^t:\R^N\to\R, \quad y\mapsto    S_x^t(y):=\frac{H_{\Omega_t}^s(x_t,y_t)-H_\Omega^s(x,y)}{t}\quad\text{ for $t\neq 0$}.
\end{equation}
We are going to prove that $S^x_t$ can be controlled uniformly in $C^\gamma(\overline{\Omega})$ for some $\gamma\in (0,s)$.
\par\;

For that aim, we define the family of $t$-dependent uniformly elliptic operators $\mathcal{L}_t$ by
\begin{align}\label{defopl}
\big\langle\mathcal{L}_t[u],v\big\rangle:=\iint_{\R^{2N}}(u(y)-u(z))(v(y)-v(z))\kappa_t(y,z)dydz.
\end{align}
where the kernel $\kappa_t(\cdot,\cdot)$ is given by \eqref{ktyz}. We also set $\kappa_0:=\kappa_{t=0}$, i.e.\ $\kappa_0(y,z)=\frac{c_{N,s}}{2}|y-z|^{-N-2s}$.
\par\;

For $t\neq 0$, we consider the two functions $f_t[H_\Omega^s(x,\cdot)]:\Omega\to\R$ and $g^t_x:\R^N\setminus\Omega\to\R$ defined respectively by:
\begin{align}
f_t[H_\Omega^s(x,\cdot)](y)&:=p.v\int_{\R^N}(H_\Omega^s(x,y)-H_\Omega^s(x,z))\frac{\kappa_t(y,z)-\kappa_0(y,z)}{t}dz,\label{def-ftx}\\
g^t_x(y)&:=b_{N,s}\frac{|x_t-y_t|^{2s-N}-|x-y|^{2s-N}}{t}. 
\end{align}
With these notations, we easily check that $S_x^t\in C^s(\overline{\Omega})$ is a weak solution of 
\begin{equation}\label{dest-eq-Stx}
\left\{ \begin{array}{rcll} \mathcal{L}_t[S_x^t]&=& f_t[H_\Omega^s(x,\cdot)]  &\textrm{in }\Omega, \\ S_x^t&=&g^t_x&%\textrm{for } x 
\textrm{in }\R^N\setminus\Omega.
\end{array}\right. 
\end{equation}
\par\;

In other words, we have $S_x^t(y)=g^t_x(y)$ for all $y\in \R^N\setminus\Omega$ and 
\begin{equation}\label{eqStx-weak}
\big\langle\mathcal{L}_t[S_x^t],\psi\big\rangle=\int_{\Omega}f_t[H_\Omega^s(x,\cdot)](y)\psi(y)\,dy\qquad\forall \;\psi\in C^\infty_0(\Omega),
\end{equation} 
%
%where in the above, the operator $\mathcal{L}_t$ acting on $\psi$ is pointwise defined by
%$$
%\frac{1}{2}\mathcal{L}_t[\psi](y):=p.v\int_{\R^N}(\psi(y)-\psi(z))\kappa_t(y,z)dz.
%$$
%We note that in view of Proposition \ref{FundLemma}, the duality pairing in the LHS of \eqref{eqStx-weak}.
\par\;

To check the identity \eqref{eqStx-weak}, we note that if $\psi\in C^\infty_0(\Omega)$ then $\psi_t:=\psi\circ\Phi_t^{-1}\in C^{1,1}_0(\Omega_t)$. Using the latter as a test function into the equation

\begin{equation}\label{regular-part-of-Green-t}
\left\{ \begin{array}{rcll} (-\Delta)^s H^s_{\Omega_t}(x_t,\cdot)&=& 0  &\textrm{in }\Omega_t, \\ H^s_{\Omega_t}(x_t,z)&=&F_s(x_t,z)&%\textrm{for } x 
\textrm{in }\R^N\setminus\Omega_t. \end{array}\right. 
\end{equation}
and changing variables, we get, for $|t|\ll 1$ small enough,
\begin{align*}
    0&=\int_{\Omega_t}\psi_t(-\Delta)^s H^s_{\Omega_t}(x_t,\cdot)dy=\int_{\R^N}\psi_t(-\Delta)^s H^s_{\Omega_t}(x_t,\cdot)dy,\nonumber\\
    &=\frac{c_{N,s}}{2}\iint_{\R^{2N}}\frac{\big(H^s_{\Omega_t}(x_t,y)-H^s_{\Omega_t}(x_t,z)\big)\big(\psi_t(y)-\psi_t(z)\big)}{|y-z|^{N+2s}}dydz,\nonumber\\
    &=\frac{c_{N,s}}{2}\iint_{\R^{2N}}\big(H^s_{\Omega_t}(x_t,y_t)-H^s_{\Omega_t}(x_t,z_t)\big)\big(\psi(y)-\psi(z)\big)\frac{\textrm{Jac}_{\Phi_t}(y)\textrm{Jac}_{\Phi_t}(z)}{|y_t-z_t|^{N+2s}}dydz,\nonumber\\
    &=\iint_{\R^{2N}}\big(H^s_{\Omega_t}(x_t,y_t)-H^s_{\Omega_t}(x_t,z_t)\big)\big(\psi(y)-\psi(z)\big)\kappa_t(y,z)dydz,
\end{align*}
As above, we note that the integrals above are all well defined by Proposition \ref{FundLemma}.  On the other hand, we also have 
\begin{equation*}
    0=\iint_{\R^{2N}}\big(H^s_{\Omega}(x,y)-H^s_{\Omega}(x,z)\big)\big(\psi(y)-\psi(z)\big)\kappa_0(y,z)dydz,
\end{equation*}
for all $\psi\in C^\infty_0(\Omega)$. Equating the above two identities and rearranging leads to \eqref{eqStx-weak}
%Note that the bracket $\big\langle\mathcal{L}_t[S_x^t],\psi\big\rangle$ is well-defined for $|t|<\eps$ small enough, see e.g proof of Lemma \ref{lemma2.4}. 
The second equation in \eqref{dest-eq-Stx} follows immediately from the second equation in \eqref{regular-part-of-Green-t}. This proves the claim.
\par\;

Next, we let $G^{s,t}_\Omega(x,\cdot)$ and  $\operatorname{P}^{s,t}_x$ be respectively the Green function with singularity at $x$ and the Poisson kernel associated to the operator $\mathcal{L}_t$. Then by \cite[Theorem 1.1 and Corollary 9.6]{KW}, we have the estimate 
\par\;

\begin{align}
    &G^{s,t}_\Omega(x,y)\leq C_1|x-y|^{2s-N}\min\Big(1,\frac{\delta_\Omega(x)}{|x-y|}\Big)^s\min\Big(1,\frac{\delta_\Omega(y)}{|x-y|}\Big)^s\qquad\text{for $x,y\in \Omega$},\label{Gts-es}\\
    &P^{s,t}(y,z)\leq C_2|y-z|^{-N}\frac{\delta_\Omega^s(y)}{\delta_\Omega^s(z)\big(1+\delta_\Omega(z)\big)^s},\qquad\text{for $y\in \Omega$ and $z\in \R^N\setminus\Omega$}\label{Pts-es}
\end{align}
for some $C_1, C_2=C(N,s,\lambda, \Lambda, \Omega)>0$ where $\lambda, \Lambda$ are the elliptic constant given in \eqref{ellptic-kernel}.
\par\;

It is a standard argument in potential theory that the solution $S_x^t$ to the equation \eqref{dest-eq-Stx} can be written, for $y\in \Omega$, as:  
\begin{align}
S_x^t(y) = \int_{\Omega}f_t[H_\Omega^s(x,\cdot)](z)G^{s,t}_\Omega(y,z)dz+\int_{\R^N\setminus \Omega}g_x^t(z)\operatorname{P}^{s,t}(y,z)dz.
\end{align}
In view of \eqref{Gts-es} and \eqref{Pts-es} and recalling Lemma \ref{LM1} and Lemma \ref{unifest-gtx}, we obtain the bound
\begin{align}\label{Sxt-Linfty0}
|S_x^t(y)| &\leq \int_{\Omega}|f_t[H_\Omega^s(x,\cdot)](z)|G^{s,t}_\Omega(y,z)dz+\int_{\R^N\setminus \Omega}|g_x^t(z)|P^{s,t}(y,z)dz,\nonumber\\
&\leq C_3\int_{\Omega}\frac{dz}{|y-z|^{N-s}}+C_4\delta_\Omega^s(y)\int_{\R^N\setminus\Omega}\frac{dz}{\delta_\Omega^s(z)|x-z|^{N-2s}|y-z|^{N}},\nonumber\\
&\leq C_5+C_6\delta_\Omega^s(y)\int_{\R^N\setminus \Omega}\frac{dz}{\delta_\Omega^s(z)|y-z|^N }.
\end{align}
To estimate the last integral, we let $r_\Omega=1+\textrm{diam($\Omega)$}$ and write 
\begin{align*}
\int_{\R^N\setminus \Omega}\frac{dz}{\delta_\Omega^s(z)|y-z|^N }    = \int_{B_{r_\Omega}(y)\setminus \Omega}\frac{dz}{\delta_\Omega^s(z)|y-z|^N } +\int_{\R^N\setminus B_{r_\Omega}(y)}\frac{dz}{\delta_\Omega^s(z)|y-z|^N }.
\end{align*}
On the other hand, by \cite[Lemma 2.3]{SvenSaldanaTobias}, we have 
\begin{equation*}
\int_{B_{r_\Omega}(y)\setminus \Omega}\frac{dz}{\delta_\Omega^s(z)|y-z|^N }\leq C_7(1+\delta_\Omega^{-s}(y)).    
\end{equation*}
On the other hand, we may find $C_8>0$ such that $\delta_\Omega(z)\geq C_8|y-z|$ for all $z\in \R^N\setminus B_{r_\Omega}(y)$. Consequently
\begin{align*}
\int_{\R^N\setminus B_{r_\Omega}(y)}\frac{dz}{\delta_\Omega^s(z)|y-z|^N }&\leq C_8^s\int_{\R^N\setminus B_{r_\Omega}(y)}\frac{dz}{|y-z|^{N+s}}\nonumber\\
&\leq C_8^s\int_{\R^N\setminus B_{r_\Omega}(0)}\frac{dz}{|z|^{N+s}}\leq C_9.
\end{align*}
In conclusion, we have 
\begin{align*}
\int_{\R^N\setminus \Omega}\frac{dz}{\delta_\Omega^s(z)|y-z|^N }\le C_{10}(1+\delta_\Omega^{-s}(y)).
\end{align*}
Plug this into \eqref{Sxt-Linfty0} gives 
\begin{equation}\label{Sxt-Linfty1}
    \|S_x^t\|_{L^\infty(\Omega)}\leq C_{11},
\end{equation}
for some $C_{11}>0$ that is independent of $t$.
\par\;

By \eqref{ellptic-kernel}, Lemma \ref{Useful-bounds}, and the definition \eqref{ktyz}, the family of kernels $\kappa_t(\cdot,\cdot)$ satisfies the assumptions in Proposition \ref{Dir-non-boundary-data} (in particular, \eqref{Sauveur1} and \eqref{Sauveur2}) uniformly for $|t|\ll 1$. Moreover, Lemma \ref{unifest-gtx} shows that the exterior datum $g_x^t$ satisfies \eqref{unif-est-extdata-cond}. We can therefore use the latter to get, for $p\in (0,s]$ and $q\in  (\frac{N}{s+p},\frac{N}{p})$, that
\begin{align}
\|S_x^t\|_{C^{s+q-\frac{N}{p}}(\overline{\Omega})}&\leq C_1C_0\Big(1+\|S_x^t\|_{L^1_{2s}(\R^N)}+\|f_t[H_\Omega^s(x,\cdot)]\|_{X(\Omega)}\Big),\nonumber\\
&\leq C_2C_0\Big(1+\|S_x^t\|_{L^\infty(\Omega)}+\sup_{w\in H^s_0(\Omega)}\frac{\int_{\Omega}|f_t[H_\Omega^s(x,\cdot)](z)||w(z)|dz}{[w]_{H^s(\R^N)}}\Big)\nonumber\\
&\leq C_3C_0\Big(1+\|S_x^t\|_{L^\infty(\Omega)}+\sup_{w\in H^s_0(\Omega)}\frac{\int_{\Omega}\delta^{-s}_\Omega(z)|w(z)|dz}{[w]_{H^s(\R^N)}}\Big)\nonumber\\
&\leq C_4,
\end{align}
for some $C_4>0$ that is independent of $t$. In the second line we used that $\|S_x^t\|_{L^1_{2s}(\R^N)}\leq C\big(1+\|S_x^t\|_{L^\infty(\Omega)}\big)$. Indeed, since $\Omega$ is bounded and $S_x^t=g_x^t$ on $\R^N\setminus\Omega$, Lemma \ref{unifest-gtx} yields $|g_x^t(y)|\leq C|x-y|^{2s-N}$ and therefore
\begin{align*}
\|S_x^t\|_{L^1_{2s}(\R^N)}
&\leq \int_{\Omega}|S_x^t(y)|\,dy +\int_{\R^N\setminus\Omega}\frac{|g_x^t(y)|}{1+|y|^{N+2s}}\,dy\\
&\leq C\|S_x^t\|_{L^\infty(\Omega)}+C\int_{\R^N\setminus\Omega}\frac{|x-y|^{2s-N}}{1+|y|^{N+2s}}\,dy
\leq C\big(1+\|S_x^t\|_{L^\infty(\Omega)}\big),
\end{align*}
where the last integral is finite because $|x-y|\geq \delta_\Omega(x)$ for $y$ in $\R^N\setminus\Omega$ and the integrand behaves like $|y|^{-2N}$ at infinity. In the last line we used \eqref{Sxt-Linfty1} and that $\int_{\Omega}\delta^{-s}_\Omega(z)|w(z)|dz\leq |\Omega|^{1/2}\Big(\int_{\Omega}\delta^{-2s}_\Omega(z)|w(z)|^2dz\Big)^{1/2}\leq |\Omega|^{1/2}C[w]_{H^s(\R^N)}$ by the classical Hardy inequality.
\par\;

Then by the Arzel\`a--Ascoli theorem, there exists $S_0^x\in C^{s+q-\frac{N}{p}}(\overline{\Omega})$ such that, up to passing to a subsequence $(S_x^{t'})_{t'}$, we have:
\begin{equation}\label{Holder-convergence}
    S^{t'}_x\quad\to\quad S^0_x\qquad\text{in $C^\beta(\overline{\Omega})$ for all $0\leq \beta<s-(\frac{N}{p}-q)$.}
\end{equation}

Next, we need to find the equation that the limiting function solves. For that, we recall that 

\begin{align}\label{D1}
\iint_{\R^{2N}}(S^{t'}_x(y)-S^{t'}_x(z))(\psi(y)-\psi(z))\kappa_{t'}(y,z)\,dy\,dz=\int_{\Omega}f_{t'}[H_\Omega^s(x,\cdot)](z)\psi(z)\,dz.
\end{align}
\par\;

By \eqref{pointwise-ft-lim} and \eqref{pointwise-ftY-es}, we know that
\par\;
$$
|f_{t'}[H_\Omega^s(x,\cdot)](z)|\leq C\delta_\Omega^{-s}(z)\qquad\text{and}\qquad f_{t'}[H_\Omega^s(x,\cdot)](z)\to f_Y[H_\Omega^s(x,\cdot)](z)\quad \text{as $t'\to 0$}.
$$
It therefore follows by the Lebesgue dominated convergence theorem that
\begin{equation}\label{D2}
\int_{\Omega}f_{t'}[H_\Omega^s(x,\cdot)](z)\psi(z)\,dz \quad\to\quad \int_{\Omega}f_Y[H_\Omega^s(x,\cdot)](z)\psi(z)\,dz\qquad\text{as\; $t'\to 0$}.
\end{equation}
On the other hand, because $\psi$ is compactly supported in $\Omega$, we may write:
\begin{align}\label{D3}
&\iint_{\R^{2N}}(S^{t'}_x(y)-S^{t'}_x(z))(\psi(y)-\psi(z))\kappa_{t'}(y,z)\,dy\,dz\nonumber\\ 
&=\iint_{\Omega\times \Omega}(S^{t'}_x(y)-S^{t'}_x(z))(\psi(y)-\psi(z))\kappa_{t'}(y,z)\,dy\,dz\nonumber\\
&\quad+2\int_{\supp(\psi)}\psi(y)\int_{\R^N\setminus \Omega}(S^{t'}_x(y)-S^{t'}_x(z))\kappa_{t'}(y,z)\,dz\,dy.
\end{align}
In view of \eqref{Holder-convergence} and the smoothness of $\psi$, for every $\beta$ in $(0,s-(\frac{N}{p}-q))$ there holds
$$
|S^{t'}_x(y)-S^{t'}_x(z)|\,|\psi(y)-\psi(z)|\,|\kappa_{t'}(y,z)|\leq C|y-z|^{\beta+1-N-2s}\qquad\text{for all $y,z$ in $\Omega$.}
$$
Choosing $\beta>2s-1$ (which is possible since $s<1$ and we may take $\beta$ close to $s$), the right-hand side is integrable on $\Omega\times\Omega$. Since $\kappa_{t'}(y,z)\to \kappa_0(y,z)$ pointwise as $t'\to 0$ by \eqref{ktyz}, the dominated convergence theorem gives that 
\begin{align}\label{D4}
&\lim_{t'\to 0}\iint_{\Omega\times \Omega}(S^{t'}_x(y)-S^{t'}_x(z))(\psi(y)-\psi(z))\kappa_{t'}(y,z)\,dy\,dz\nonumber\\
&=\iint_{\Omega\times \Omega}(S^0_x(y)-S^0_x(z))(\psi(y)-\psi(z))\kappa_0(y,z)\,dy\,dz.
\end{align}
On the other hand, using that $S_x^t=g^t_x$ in $\R^N\setminus\Omega$, we get --for $y\in \supp(\psi)\Subset \Omega$ and $z\in \R^N\setminus\Omega$-- that 
\begin{align*}
    |S^{t'}_x(y)-S^{t'}_x(z)||\kappa_{t'}(y,z)|&=|S^{t'}_x(y)-g^{t'}_x(z)||\kappa_{t'}(y,z)|\\
    &\leq C_1(1+|x-z|^{2s-N})|y-z|^{-2s-N}\\
    &\leq C_2(1+|x-z|^{2s-N})(1+|z|^2)^{-\frac{2s+N}{2}}
\end{align*}
Again, since 
$$
\int_{\Omega}\int_{\R^N\setminus\Omega}(1+|x-z|^{2s-N})(1+|z|^2)^{-\frac{2s+N}{2}}dzdy<+\infty,
$$
we deduce again by the Lebesgue dominated convergence theorem that 
\begin{align}\label{D5}
&\lim_{t'\to 0}\int_{\supp(\psi)}\psi(y)\int_{\R^N\setminus \Omega}(S^{t'}_x(y)-S^{t'}_x(z))\kappa_{t'}(y,z)\,dz\,dy\nonumber\\
&=\int_{\supp(\psi)}\psi(y)\int_{\R^N\setminus \Omega}(S^{0}_x(y)-g_Y^x(z))\kappa_0(y,z)\,dz\,dy.
\end{align}
\par\;

Next define
\begin{equation*}
S_x(z):=
\begin{cases}
S_x^0(z) & \text{if } z\text{ in }\Omega,\\
g_Y^x(z) & \text{if } z\text{ in }\R^N\setminus\Omega,
\end{cases}
\end{equation*}
where
\begin{equation*}
g_Y^x(z)=(2s-N)b_{N,s}\frac{(x-z)\cdot(Y(x)-Y(z))}{|x-z|^{N-2s+2}}.
\end{equation*}
\par\;

Combining \eqref{D1}, \eqref{D2}, \eqref{D3}, \eqref{D4} and \eqref{D5}, we get
\begin{align*}
\frac{c_{N,s}}{2}\iint_{\R^{2N}}\frac{(S_x(y)-S_x(z))(\psi(y)-\psi(z))}{|y-z|^{N+2s}}dydz=\int_{\Omega}f_Y[H_\Omega^s(x,\cdot)](z)\psi(z)\,dz\quad\text{for all $\psi\in C^\infty_0(\Omega)$.}  
\end{align*}
\par\;

  In other words, the limiting function $S_x$ is a distributional solution of 
\begin{equation}\label{EqS0x}
\left\{ \begin{array}{rcll} (-\Delta)^sS_x&=& f_Y[H_\Omega^s(x,\cdot)]  &\;\textrm{in }\;\Omega, \\ S_x&=&g_Y^x&%\textrm{for } x 
\;\textrm{in }\;\R^N\setminus\Omega.
\end{array}\right. 
\end{equation}
 To recap, we have proved that along a subsequence $t'$,
\begin{equation}
S_x^{t'}:=\frac{H^s_{\Omega_{t'}}(\Phi_{t'}(x),\Phi_{t'}(\cdot))-H_\Omega^s(x,\cdot)}{t'} \quad\to\quad  S_x \quad\text{in $C^\beta(\overline{\Omega})$ for all $\beta\in\big(0,s-(\frac{N}{p}-q)\big)$.}
\end{equation}
 and that $S_x$ solves \eqref{EqS0x} in the sense of distribution. By the standard maximum principle \cite[Theorem 2.6.12]{FRO}, we know that the latter has a unique solution. Therefore every convergent subsequence of $(S_x^t)$ must converge to $S_x$. Since we already have a convergent subsequence, we conclude that the whole sequence $S_x^t$ converges to $S_x$ in $C^\beta(\overline{\Omega})$. In other words, we have proved that, the mapping 
\begin{align}\label{Holder-est-Stx}
H_x^t:(-\epsilon_0,+\epsilon_0)\to C^\beta(\overline{\Omega}), \;t\mapsto H^s_{\Omega_t}(\Phi_t(x),\Phi_t(\cdot))
\end{align}
is differentiable at zero for all $\beta \in [0,s+q-\frac{N}{p})$ and $p\in (0,s]$, $q\in (\frac{N}{s+p},\frac{N}{p})$ sufficiently close to $N/p$. The proof is therefore finished.
\end{proof}

\section{First step toward the derivation of the formula \eqref{var-green}}

%From the section above, we know that the function .... is differentiable....blabla ...

Consider the function $g_s(x,\cdot):\R^N\to \R$ defined, for $y\in \Omega$, by  
\begin{equation}\label{gsxy}
    g_s(x,y)=\partial_t \Big[G_{\Omega_t}(\Phi_t(x),\Phi_t(\cdot))\Big]\big|_{t=0}(y),
\end{equation}
and extended by zero outside of $\Omega$. In this section we look for the equation that $g_s(x,\cdot):\R^N\to\R$ solves. This is the first step toward the derivation of the formula \eqref{var-green} in Theorem \ref{THM-1.5}. We note that $g_s(x,\cdot)\in L^1(\Omega)$ as a consequence of  Lemma \ref{diff-reg-part}  and the decomposition \eqref{Eq-apliting-of-Green}. In fact, we have even more. We have indeed 
\begin{equation}\label{l1-deltas-reg}
g_s(x,\cdot)\in L^1(\Omega; \delta^{-s}_\Omega):=\Big\{u:\Omega\to\R:\, u(\cdot)\delta_\Omega^{-s}(\cdot)\in L^1(\Omega)\Big\}. 
\end{equation}
This follows from the decomposition \eqref{Eq-apliting-of-Green},  Lemma \ref{diff-reg-part} and the fact that 
$$
\Big|\delta^{-s}_\Omega(\cdot)\partial_t|\Phi_t(x)-\Phi_t(\cdot)|^{2s-N}\Big|_{t=0}\Big|\leq C\delta^{-s}_\Omega(\cdot)|x-\cdot|^{2s-N}\in L^1(\Omega),
$$
and this fact will be used in the proof of Lemma \ref{main-res-sec4.4} below.

For the sake of simplicity, we let 
 \begin{align}\label{duality-pairing}
 E_Y\big(u,v\big):=
\iint_{\R^{2N}}\big(u(y)- u(z)\big)\big(v(y)-v(z)\big)\kappa_Y(y,z)\, dydz,
\end{align}   
for any sufficiently regular functions $u,v$ and where  $\kappa_Y(\cdot,\cdot)$ is given as in \eqref{kappaY}.
\begin{Lemma}\label{lem-eq-ahape-deriv-Green}
Let $s$ in $(0,1)$ and $ x\in \Omega$ be fixed. Let $g_s(x,\cdot):\R^N\to \R$ be given as above. Then, for all $\psi\in C^\infty_0(\Omega)$, we have 
\begin{align}\label{eq-ahap-green}
\int_{\Omega}g_s(x,\cdot)(-\Delta)^s\psi\,dz
=- E_Y\big(G_\Omega^s(x,\cdot),\psi\big) .
\end{align}
\end{Lemma}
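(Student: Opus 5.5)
We obtain \eqref{eq-ahap-green} by differentiating at $t=0$ the weak formulation of the Green function equation, transported to the fixed domain $\Omega$. Fix $\psi\in C^\infty_0(\Omega)$ and $x\in\Omega$. For $|t|$ small, $\psi_t:=\psi\circ\Phi_t^{-1}\in C^{1,1}_0(\Omega_t)$. Testing $(-\Delta)^s G^s_{\Omega_t}(x_t,\cdot)=\delta_{x_t}$ with $\psi_t$ and changing variables $y\mapsto y_t$, $z\mapsto z_t$ exactly as in the proof of Lemma \ref{diff-reg-part}, we get, with the notation \eqref{z_t} and \eqref{ktyz},
\begin{equation*}
\iint_{\R^{2N}}\big(g_t(x_t,y_t)-g_t(x_t,z_t)\big)\big(\psi(y)-\psi(z)\big)\kappa_t(y,z)\,dy\,dz=\psi_t(x_t)=\psi(x),
\end{equation*}
for every $|t|\ll1$. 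All these integrals are finite by Proposition \ref{FundLemma}, with $\kappa_t$ in place of $|y-z|^{-N-2s}$, which is harmless by \eqref{ellptic-kernel}. The right-hand side does not depend on $t$. We subtract the identity at $t=0$ and divide by $t$, which yields $I_1(t)+I_2(t)=0$, where
\begin{align*}
I_1(t)&:=\iint_{\R^{2N}}\big(S^t(y)-S^t(z)\big)\big(\psi(y)-\psi(z)\big)\kappa_t(y,z)\,dy\,dz,\qquad S^t:=\frac{g_t(x_t,(\cdot)_t)-g_0(x,\cdot)}{t},\\
I_2(t)&:=\iint_{\R^{2N}}\big(g_0(x,y)-g_0(x,z)\big)\big(\psi(y)-\psi(z)\big)\,\overline{\kappa_t}(y,z)\,dy\,dz.
\end{align*}

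For the limit of $I_2$ we use \eqref{expansion-k-t-notations}. It gives $\overline{\kappa_t}\to\kappa_Y$ pointwise, with $|\overline{\kappa_t}(y,z)|\le C|y-z|^{-N-2s}$. Proposition \ref{FundLemma} shows that $(g_0(x,y)-g_0(x,z))(\psi(y)-\psi(z))|y-z|^{-N-2s}$ is integrable on $\R^{2N}$, so this function serves as a dominating function. Dominated convergence then gives $I_2(t)\to E_Y(G^s_\Omega(x,\cdot),\psi)$.

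For $I_1$, we note that $S^t$ vanishes identically outside $\Omega$, because both Green functions vanish there ($y\notin\Omega$ iff $y_t\notin\Omega_t$). Inside $\Omega$ we use the splitting \eqref{Eq-apliting-of-Green}:
\begin{equation*}
S^t=g^t_x-\frac{H^s_{\Omega_t}(x_t,(\cdot)_t)-H^s_\Omega(x,\cdot)}{t},
\end{equation*}
with $g^t_x$ as in \eqref{defgtx}. By Lemma \ref{diff-reg-part}, the second term converges in $C^\beta(\overline\Omega)$. The first term converges pointwise to $g^x_Y$ and obeys $|g^t_x(y)|\le C|x-y|^{2s-N}$. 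Its differences are controlled by the analogue of \eqref{diff-quotient-fund-sol} for the $t$-derivative, namely $|g^t_x(y)-g^t_x(z)|\le Cf_x(y,z)$; this follows from \eqref{expansion-bt} and the same elementary inequalities used for \eqref{diff-quotient-fund-sol}. With these bounds, the finiteness argument of Proposition \ref{FundLemma} (the estimates \eqref{iR(x)-finite} and \eqref{jR(x)-finite}) supplies a $t$-independent integrable majorant for $(S^t(y)-S^t(z))(\psi(y)-\psi(z))\kappa_t(y,z)$. Near the diagonal we also need $|\psi(y)-\psi(z)|\le C|y-z|$, and off $\Omega\times\Omega$ we use that $S^t=0$ outside $\Omega$. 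Dominated convergence then gives
\begin{equation*}
I_1(t)\to\frac{c_{N,s}}{2}\iint_{\R^{2N}}\frac{(g_s(x,y)-g_s(x,z))(\psi(y)-\psi(z))}{|y-z|^{N+2s}}\,dy\,dz,
\end{equation*}
where $g_s(x,\cdot)$ is the pointwise limit of $S^t$, extended by zero.

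The remaining step, which is also the delicate one, is to identify this limit with $\int_\Omega g_s(x,\cdot)(-\Delta)^s\psi\,dz$. This is a Fubini rearrangement, and it requires $g_s(x,\cdot)\,(-\Delta)^s\psi\in L^1(\R^N)$. This holds because $g_s(x,\cdot)\in L^1(\Omega)$ by \eqref{l1-deltas-reg}, while $(-\Delta)^s\psi$ is bounded. The integral over $\R^N$ reduces to one over $\Omega$ since $g_s(x,\cdot)=0$ outside $\Omega$. Combining, $I_1+I_2=0$ in the limit gives \eqref{eq-ahap-green}.

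The main obstacle is the uniform domination of the singular part $g^t_x$ near $y=x$ in $I_1$. There I would exploit that the $t$-derivative of $|x_t-y_t|^{2s-N}$ is of the form $|x-y|^{2s-N}$ times a bounded $C^{0,1}$ factor, which makes the $f_x$ bounds hold uniformly in $t$.
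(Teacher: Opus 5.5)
Your proof is correct, and its overall scheme is the paper's. You differentiate the transported identity $\psi(x)=\iint (g_t(x_t,y_t)-g_t(x_t,z_t))(\psi(y)-\psi(z))\kappa_t\,dy\,dz$ and treat the $\overline{\kappa_t}$ term by dominated convergence, exactly as in \eqref{1st-lim}. The difference is in how you handle the difference-quotient term $I_1(t)$.

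The paper splits $\kappa_t=\kappa_0+(\kappa_t-\kappa_0)$ inside $I_1$:
\begin{itemize}
\item For each $t$, the $\kappa_0$ part is rewritten as $\int_\Omega q^t_x(-\Delta)^s\psi$. Its limit needs only the pointwise bound \eqref{TGH}.
\item The remainder $\langle q^t_x,\psi\rangle$ carries $\kappa_t-\kappa_0=O(t)|y-z|^{-N-2s}$. So for the singular part one only has to dominate $t(g^t_x(y)-g^t_x(z))$, a plain difference of fundamental solutions already bounded by \eqref{diff-quotient-fund-sol}.
\end{itemize}
You instead pass to the limit in the whole form with the moving kernel $\kappa_t$. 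This forces a new estimate: $|g^t_x(y)-g^t_x(z)|\le Cf_x(y,z)$ uniformly in $t$, for the difference quotient itself. Your route gives a slightly stronger intermediate fact: the pairing of $g_s(x,\cdot)$ with $\psi$ is absolutely convergent and is the limit of the transported pairings. The paper's route avoids any estimate on differences of $t$-derivatives of the singular part.

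\textbf{The new estimate.} It is true, but your stated reason is not. Write $\partial_r b^x_r=b^x_r\,m_r$ with $m_r(y)=(2s-N)\frac{(\partial_r x_r-\partial_r y_r)\cdot(x_r-y_r)}{|x_r-y_r|^2}$. Then $m_r$ is bounded uniformly in $r$, but it is \emph{not} Lipschitz at $y=x$. For example, for $\Phi_t=\Id+tY$ with $Y$ linear, $m_r$ depends only on the direction of $x-y$.

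What is true is $|\nabla m_r(y)|\le C|x-y|^{-1}$. Hence $|m_r(y)-m_r(z)|\le C|y-z|/\min(|x-y|,|x-z|)$; this is trivial when $|y-z|$ is comparable to that minimum. Now split
$\partial_r b^x_r(y)-\partial_r b^x_r(z)=(b^x_r(y)-b^x_r(z))m_r(y)+b^x_r(z)(m_r(y)-m_r(z))$.
Bounding the first term by \eqref{diff-quotient-fund-sol} gives the $2s>1$ form of $f_x$. The case $2s<1$ is immediate, and $s=1/2$ follows by interpolating the two bounds.

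\textbf{Two smaller points.}
\begin{itemize}
\item In your majorant, the H\"older exponent $\beta$ of the $H$-part must satisfy $\beta>2s-1$. Lemma \ref{diff-reg-part} provides this, since $\beta$ can be taken close to $s$.
\item The final conversion of the double integral into $\int_\Omega g_s(x,\cdot)(-\Delta)^s\psi$ should go through the truncation $|y-z|>\varepsilon$. The symmetric splitting is not absolutely convergent when $2s\ge 1$. With $g_s(x,\cdot)\in L^1(\Omega)$ and the uniform boundedness of the truncated $(-\Delta)^s\psi$, this is routine. The paper performs the same rearrangement, but at each fixed $t$.
\end{itemize}
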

\par\;

We note that since $\kappa_Y(y,z)$ is comparable to $|y-z|^{-N-2s}$, the RHS of the identity above is well defined thanks to Lemma \ref{FundLemma}. Moreover, since $(-\Delta)^s\psi\in L^\infty(\Omega)$, we also know that the LHS is well defined by because $g_s(x,\cdot)\in L^1(\Omega)$.
\begin{proof}[Proof of Lemma \ref{lem-eq-ahape-deriv-Green}]
Let $x$ in $\Omega$ be fixed. For the sake of simplicity, we set
$$
g_t(x_t,z_t):=G_{\Omega_t}^s(\Phi_t(x),\Phi_t(z))\quad\text{and}\quad g_0(x,z)=G_\Omega^s(x,z)\quad\text{for all $z\in\R^N$}.
$$
Next, we consider the function $q^t_x:\R^N\to\R$ defined by
\begin{equation}\label{Qtx}
q^t_x(z):=\frac{g_t(x_t,z_t)-g_0(x,z)}{t}\qquad\textup{for $t\neq 0$}.
\end{equation} 
For any $\psi\in C^\infty_0(\Omega)$, we also define
\begin{align}\label{def-R-t}
 \big\langle q^t_x,\psi\big\rangle := \iint_{\R^{2N}}\big(q^t_x(y)-q^t_x(z)\big)\big(\psi(y)-\psi(z)\big)\big(\kappa_t(y,z)-\kappa_0(y,z)\big)dydz,
\end{align}
\begin{align}\label{def-a-t}
\big\langle g_0(x,\cdot),\psi\big\rangle_t:=\iint_{\R^{2N}}\big(g_0(x,y)-g_0(x,z)\big)\big(\psi(y)-\psi(z)\big)\frac{\kappa_t(y,z)-\kappa_0(y,z)}{t}dydz.
\end{align}
Then, observe that
\,\,
\begin{align} \label{Eq-distr-Gt}
\int_{\Omega}q^t_x(-\Delta)^s \psi\,dy=-\big\langle q^t_x,\psi\big\rangle-\big\langle g_0(x,\cdot),\psi\big\rangle_t\quad\text{for all $\psi\in C^\infty_0(\Omega)$.}
\end{align}
To see this, for $\psi\in C^\infty_0(\Omega)$, we set
\begin{gather*}
 \mathcal E_t(x) := 
\iint_{\R^{2N}}\big(g_t(x_t,y_t)-g_t(x_t,z_t)\big)\big(\psi(y)-\psi(z)\big)\kappa_t(y,z)dydz,
\end{gather*}
where we recall that  $\kappa_t$ is the kernel  defined in \eqref{ktyz}.
We note that the integral above is finite for $t$ small by Proposition \ref{FundLemma}.  Then,   we note that for any $x$ in $\Omega$ and for any small $t$, we have
\begin{equation} \label{same}
\psi(x)=   \mathcal E_t(x) . 
\end{equation}
  Indeed, 
    by using the Green representation formula in the domain $\Omega_t$, since $\psi_t:=\psi\circ \Phi_t^{-1}\in C^{1,1}_0(\Omega_t)$, we have
\begin{align}
\psi(x)&=  \int_{\Omega_t}G^s_{\Omega_t}(x_t,y)(-\Delta)^s\psi_t(y)dy,\nonumber\\
&= \iint_{\R^{2N}}\big(G^s_{\Omega_t}(x_t,y)-G^s_{\Omega_t}(x_t,z)\big)\big(\psi_t(y)-\psi_t(z)\big) \kappa_0(y,z)
\,   dydz,\nonumber
\end{align}     which, by changing variables, leads to \eqref{same}. 
\par\;

Now to see \eqref{Eq-distr-Gt}, we are going to compute  in two different ways the following ratio: 
\begin{equation} \label{ratio}
    \frac{\mathcal E_t(x)-\mathcal E_0(x)}{t} \quad\text{for $t\neq 0$}
\end{equation}
 The first way is very easy: it follows from 
\eqref{same} that the ratio in  \eqref{ratio} is $0$ since the left hand side of 
\eqref{same}  does not depend on $t$. 
Next, we claim that the ratio in  \eqref{ratio} is also equal to
\begin{align} \label{60}
\big\langle g_0(x,\cdot),\psi\big\rangle_t +\big\langle q^t_x,\psi\big\rangle+\int_{\Omega}q^t_x(y)(-\Delta)^s \psi(y)dy 
\end{align}
Indeed, the ratio in  \eqref{ratio} is equal to
\begin{align}\label{60B}
&t^{-1}\iint_{\R^{2N}}\Big\{\big(g_t(x_t,y_t)-g_t(x_t,z_t)\big)\kappa_t(y,z)-\big(g_0(x,y)-g_0(x,z)\big)\kappa_0(y,z)\Big\}\big(\psi(y)-\psi(z)\big)dydz,\nonumber\\
&=\iint_{\R^{2N}}\big(g_0(x,y)-g_0(x,z)\big)\big(\psi(y)-\psi(z)\big)\frac{\kappa_t(y,z)-\kappa_0(y,z)}{t}dydz,\nonumber\\
&+\iint_{\R^{2N}}\big(q^t_x(y)-q^t_x(z)\big)\big(\psi(y)-\psi(z)\big)\kappa_t(y,z)dydz\nonumber\\
&=\big\langle g_0(x,\cdot),\psi\big\rangle_t+\iint_{\R^{2N}}\big(q^t_x(y)-q^t_x(z)\big)\big(\psi(y)-\psi(z)\big)\kappa_0(y,z)dydz+\big\langle q^t_x,\psi\big\rangle,
\end{align}
what leads to \eqref{60}. Combining \eqref{same}, \eqref{ratio}, \eqref{60}, and \eqref{60B} gives \eqref{Eq-distr-Gt}.
\par\;

We are now going to pass to the limit as $t \rightarrow 0$ into \eqref{Eq-distr-Gt}. 
We start with the first term. 
By \eqref{expansion-k-t-notations}, we have 
\begin{equation}
 \kappa_t(y,z)-\frac{c_{N,s}}{2}|y-z|^{-N-2s}-t\kappa_Y(y,z)=O(t^2)|y-z|^{-N-2s}
\end{equation}
uniformly for all $y\neq z\in \R^N$.
Consequently, and since 
$$
\iint_{\R^{2N}}\big|g_0(x,y)-g_0(x,z)\big|\big|\psi(y)-\psi(z)\big||y-z|^{-N-2s}dydz<+\infty\quad\text{(by Proposition \ref{FundLemma}),}
$$
 we get by the Lebesgue dominated convergence theorem that 
\begin{align}\label{1st-lim}
    \lim_{t\to 0} \big\langle g_0(x,\cdot),\psi\big\rangle_t&=\lim_{t\to 0}\iint_{\R^{2N}}\big(g_0(x,y)-g_0(x,z)\big)\big(\psi(y)-\psi(z)\big)\frac{\kappa_t(y,z)-\kappa_0(y,z)}{t}dydz,\nonumber\\
    &=\iint_{\R^{2N}}\big(g_0(x,y)-g_0(x,z)\big)\big(\psi(y)-\psi(z)\big)\kappa_Y(y,z)dydz,\nonumber\\
    &=
     E_Y\big(g_0(x,\cdot), \psi\big)= E_Y\big(G_\Omega^s(x,\cdot), \psi\big)
\end{align}
Next, since $g_t(x_t,z_t)=b_{N,s}|x_t-z_t|^{2s-N}-H_{\Omega_t}^s(x_t,z_t)$, we may write
\begin{align}\label{TGH}
    |q^t_x(z)|&=  \Big|\frac{g_t(x_t,z_t)-g_0(x,z)}{t}\Big|\nonumber\\
    &\leq b_{N,s}\Big|\frac{|x_t-z_t|^{2s-N}-|x-z|^{2s-N}}{t}\Big|+\Big|\frac{H_{\Omega_t}^s(x_t,z_t)-H_{\Omega}^s(x,z)}{t}\Big|\nonumber\\
    &\leq C(1+|x-z|^{2s-N}),
\end{align}
where we used \eqref{UExp} and \eqref{Sxt-Linfty1}. Since $|x-\cdot|^{2s-N}\in L^1(\Omega)$, we may use the Lebesgue dominated convergence theorem that
\begin{align}\label{2nd-lim}
   \lim_{t\to 0}\int_{\Omega}q^t_x(-\Delta)^s \psi\,dz =\int_{\Omega} g_s(x,\cdot)(-\Delta)^s\psi\,dz.
\end{align}
And finally we claim that 
\begin{align}\label{3th-limit}
\lim_{t\to 0}\big\langle q^t_x,\psi\big\rangle=0 .
\end{align}
The identity \eqref{eq-ahap-green}, tested with the function $\psi$, then follows by combining \eqref{Eq-distr-Gt}, \eqref{1st-lim}, \eqref{2nd-lim} and \eqref{3th-limit}, so that, up to proving \eqref{3th-limit} the proof of Lemma \ref{lem-eq-ahape-deriv-Green} is finished. 
\par\;

\underline{\textbf{Proof of \eqref{3th-limit}}:} We use the decomposition of the fractional Green function $G_\Omega^s(x,\cdot)$ 
into the sum of the fundamental solution $b_{N,s}|x-\cdot|^{2s-N}$ and of the $H^s_\Omega(x,\cdot)$ and write $q^t_x = g^t_x-S_x^t$ so that
\begin{equation}\label{decopm-gxt-prod-psi}
 \big\langle q^t_x,\psi\big\rangle=\big\langle g^t_x,\psi\big\rangle-\big\langle S_x^t,\psi\big\rangle   
\end{equation}
where $g^t_x$ and $S_x^t$ are respectively defined in \eqref{defgtx} and \eqref{defStx}. We start by estimating $\big\langle S_x^t,\psi\big\rangle$ which we break into two pieces as follows:
\begin{align}\label{bracketst-phi}
 \big|\big\langle S_x^t,\psi\big\rangle \big|&:= \Big|\iint_{\R^{2N}}\big(S_x^t(y)-S_x^t(z)\big)\big(\psi(y)-\psi(z)\big)\big(\kappa_t(y,z)-\kappa_0(y,z)\big)dydz\Big|,\nonumber\\
 &\leq \iint_{\Omega\times\Omega}\Big|\big(S_x^t(y)-S_x^t(z)\big)\big(\psi(y)-\psi(z)\big)\big(\kappa_t(y,z)-\kappa_0(y,z)\big)\Big|dydz,\nonumber\\
&\quad +2\int_{\Omega}|\psi(y)S_x^t(y)|\int_{\R^N\setminus\Omega}|\kappa_t(y,z)-\kappa_0(y,z)|dydz.
\end{align}

By \eqref{Holder-est-Stx}, we know that $S_x^t\in C^{\beta}(\overline{\Omega})$ for all $0\leq\beta<{s-(\frac{N}{p}-q)}$ with $p\in (0,s]$ and $q\in (\frac{N}{s+p}, \frac{N}{p})$ sufficiently close to $N/p$. Moreover, $\|S_x^t\|_{C^{s-(\frac{N}{p}-q)}(\overline{\Omega})}\leq C$ for some $C>0$ independent of $t$. Consequently, we have 

\begin{equation*}
\Big|\big(S_x^t(y)-S_x^t(z)\big)\big(\psi(y)-\psi(z)\big)\big(\kappa_t(y,z)-\kappa_0(y,z)\big)\Big|\leq C|y-z|^{s-(\frac{N}{p}-q)}|y-z|^{-2s-N}|\psi(y)-\psi(z)|   
\end{equation*}
\par\;

for some constant $C>0$ independent of $t,y$ and $z$ where we used that $\big|\kappa_t(y,z)-\kappa_0(y,z)\big|\leq C|y-z|^{-2s-N}$ the latter being a simple consequence of \eqref{expansion-k-t-notations}. We also have that
\begin{equation*}
\Big|\big(S_x^t(y)-S_x^t(z)\big)\big(\psi(y)-\psi(z)\big)\big(\kappa_t(y,z)-\kappa_0(y,z)\big)\Big|\leq C    \big|\kappa_t(y,z)-\kappa_0(y,z)\big|\quad \to \quad 0.
\end{equation*}
Since 
$$
\iint_{\Omega\times\Omega}|y-z|^{s-(\frac{N}{p}-q)}|y-z|^{-2s-N}|\psi(y)-\psi(z)| dydz<+\infty,
$$
we may use the Lebesgue dominated convergence theorem to conclude that
\begin{align}\label{bracketst-phi-1}
\iint_{\Omega\times\Omega}\Big|\big(S_x^t(y)-S_x^t(z)\big)\big(\psi(y)-\psi(z)\big)\big(\kappa_t(y,z)-\kappa_0(y,z)\big)\Big|dydz\to 0\quad\text{as $t\to 0$}.
\end{align}
On the other hand, since $\|S_x^t\|_{L^\infty(\Omega)}\leq C$, we also have
\begin{align}\label{bracketst-phi-2}
&\int_{\Omega}|\psi(y)S_x^t(y)|\int_{\R^N\setminus\Omega}\big|\kappa_t(y,z)-\kappa_0(y,z)\big|dz\,dy\nonumber\\
&\leq C\int_{\Omega}\int_{\R^N\setminus\Omega}\big|\kappa_t(y,z)-\kappa_0(y,z)\big|dydz \quad\to\quad 0\qquad\text{as $t\to 0$.}
\end{align}
Combining \eqref{bracketst-phi}, \eqref{bracketst-phi-1} and \eqref{bracketst-phi-2} we get 
\begin{equation}\label{lim-sxt-prod-psi}
\big|\big\langle S_x^t,\psi\big\rangle \big|\quad\to\quad 0\quad\textrm{as $t\to 0$}.
\end{equation}
\par\;

It remains to estimate $ \big|\big\langle g^t_x,\psi\big\rangle \big|$. For this, first note that since $t^{-1}\big|\kappa_t(y,z)-\kappa_0(y,z)\big|\leq C|y-z|^{-2s-N}$, we have 
\begin{align}\label{JM0}
 \big|\big\langle g^t_x,\psi\big\rangle \big|&:= \Big|\iint_{\R^{2N}}t\big(g^t_x(y)-g^t_x(z)\big)\big(\psi(y)-\psi(z)\big)t^{-1}\big(\kappa_t(y,z)-\kappa_0(y,z)\big)dydz\Big|,\nonumber\\
 &\leq \iint_{\R^{2N}}t\big|g^t_x(y)-g^t_x(z)\big|\big|\psi(y)-\psi(z)|y-z|^{-2s-N}dydz.
\end{align}
\par\;

Next observe that, by \eqref{diff-quotient-fund-sol}, we have:
\begin{align}\label{JM1}
t\big|g^t_x(y)-g^t_x(z)\big| &=  b_{N,s}   \Big||x_t-y_t|^{2s-N}-|x-y|^{2s-N}-\big(|x_t-z_t|^{2s-N}-|x-z|^{2s-N}\big)\Big|\nonumber\\
&\leq  \Big||x_t-y_t|^{2s-N}-|x_t-z_t|^{2s-N}\Big|+  \Big||x-y|^{2s-N}-|x-z|^{2s-N}\Big|\nonumber\\
&\leq Cf_x(y,z),
\end{align}
for some constant $C>0$ independent of $t,y$ and $z$ where 
\begin{equation*}
f_x(y,z)=
\begin{cases}
\max\big(|x-y|^{2s-N},|x-z|^{2s-N}\big) & \text{if } 2s<1,\\
\big|y-z\big|\max\big(|x-y|^{2s-N-1},|x-z|^{2s-N-1}\big) & \text{if } 2s>1,\\
|y-z|^\beta\max\big(|x-y|^{1-N-\beta},|x-z|^{1-N-\beta}\big)\quad\text{for all $\beta\in (0,1)$}&\text{if $s=1/2$}.
\end{cases}
\end{equation*}
By \eqref{dual-prod-f_xyz-psi}, we also know that 
\begin{align}\label{JM2}
 \iint_{\R^{2N}}\frac{|\psi(y)-\psi(z)|}{|y-z|^{N+2s}}f_x(y,z)dydz<+\infty.
\end{align}
And finally we have 
\begin{equation}\label{JM3}
\frac{t|g^t_x(y)-g^t_x(z)|}{b_{N,s}}\leq \Big||x_t-y_t|^{2s-N}-|x-y|^{2s-N}\Big|+\Big||x_t-z_t|^{2s-N}-|x-z|^{2s-N}\Big|\;\to\; 0\quad\text{as $t\to 0$}.
\end{equation}
Combining \eqref{JM0},\eqref{JM1},  \eqref{JM2} and \eqref{JM3} we get by the Lebesgue dominated convergence theorem that 
\begin{equation}\label{lim-gxt-prod-psi}
    \big|\big\langle g^t_x,\psi\big\rangle \big|\quad\to\quad 0.
\end{equation}
Combining \eqref{decopm-gxt-prod-psi}, \eqref{lim-sxt-prod-psi} and \eqref{lim-gxt-prod-psi} we get \eqref{3th-limit}. This finishes the proof of the lemma.
\end{proof}
\par\;

We end this section with the following convergence result.  For the sake of simplicity, we shall use the following notations:
\begin{align}
%&g_s(x,\cdot):\R^N\to\R,\; y\mapsto g_s(x,y):=\partial_t\Big[G^s_{\Omega_t}\Big(\Phi_t(x),\Phi_t(\cdot)\Big)\Big]\Big|_{t=0}(y),\label{defg}\\
&h_s(x,\cdot):\Omega\to\R,\; y\mapsto h_s(x,y):=\partial_t\Big[H^s_{\Omega_t}\Big(\Phi_t(x),\Phi_t(\cdot)\Big)\Big]\Big|_{t=0}(y),\label{defh}\\
&f_s(x,\cdot):\Omega\to\R,\; y\mapsto f_s(x,y):=(2s-N)b_{N,s}\frac{(x-y)\cdot(Y(x)-Y(y))}{|x-y|^{N-2s+2}}\label{deff}.
\end{align}
Note that $g_s(x,\cdot)=f_s(x,\cdot)-h_s(x,\cdot)\in C^0(\Omega\setminus\{x\})$. With this,  we can now state and prove the following result:
\begin{Lemma}\label{main-res-sec4.4}
Let $x,y$ in $\Omega$ with $x\neq y$, and recall the notation  $g_\gamma(y,\cdot):=G_\Omega^s(y,\cdot)\phi_y^\gamma$ and $g_s(x,\cdot):\R^N\to\R$ defined by \eqref{gsxy}. Next, we set
\begin{equation}
    b_Y[\rho](x):=b_{N,s}(N-2s)\int_{\R^N}\frac{[DY(x)\cdot z]\cdot z}{|z|^{N-2s+2}}(-\Delta)^s(\rho\circ |\cdot|^2)(z)dz
\end{equation}
and defined
\begin{align}\label{def-J}
J^{k}_{\mu,\gamma}(x,y) :=  \int_{\Omega}g_s(x,\cdot)(-\Delta)^s\big(\xi_k^2 g_\gamma(y,\cdot)\phi_x^{\mu}\big)\,dz.
\end{align}
Then we have 
\begin{align}\label{C}
\lim_{\gamma\to 0^+}\lim_{\mu\to 0^+}\lim_{k\to\infty}J^k_{\mu,\gamma}(x,y)
= g_s(x,y)+G^s_\Omega(x,y)b_Y[\rho](x).
\end{align}
\end{Lemma}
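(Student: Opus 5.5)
The idea is to split the test function according to $\phi_x^\mu=1-\rho_\mu^x$. By linearity,
\begin{equation*}
J^k_{\mu,\gamma}(x,y)=\int_\Omega g_s(x,\cdot)(-\Delta)^s\big(\xi_k^2g_\gamma(y,\cdot)\big)\,dz-\int_\Omega g_s(x,\cdot)(-\Delta)^s\big(\xi_k^2g_\gamma(y,\cdot)\rho_\mu^x\big)\,dz=:J_1^{k,\gamma}-J_2^{k,\mu,\gamma}.
\end{equation*}
I expect the first term to produce $g_s(x,y)$, through an approximate Dirac mass at $y$. I expect the second term to produce $G^s_\Omega(x,y)b_Y[\rho](x)$, through a blow-up at $x$.

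Both test functions lie in $C^{1,1}_c(\Omega)$. Indeed, $\xi_k$ vanishes near $\partial\Omega$, $g_\gamma(y,\cdot)$ vanishes near $y$, and $G^s_\Omega(y,\cdot)$ is smooth in $\Omega\setminus\{y\}$. Hence all integrals are finite, because $g_s(x,\cdot)\in L^1(\Omega;\delta_\Omega^{-s})$ by \eqref{l1-deltas-reg}.

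\textbf{The term $J_1$.} First let $k\to\infty$. Expand
\begin{equation*}
(-\Delta)^s(\xi_k^2 g_\gamma)=\xi_k^2(-\Delta)^s g_\gamma+g_\gamma(-\Delta)^s\xi_k^2-2c_{N,s}\,\pv\!\int\frac{(\xi_k^2(\cdot)-\xi_k^2(z))(g_\gamma(\cdot)-g_\gamma(z))}{|\cdot-z|^{N+2s}}\,dz .
\end{equation*}
The last two pieces are bounded by $C\delta_\Omega^{-s}$ uniformly in $k$, since $g_\gamma(y,\cdot)\in C^s(\R^N)$ and $|g_\gamma|\le C\delta_\Omega^s$. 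They are moreover supported in $\{\delta_\Omega\le 2/k\}$. This is the same mechanism as in Lemma \ref{lem-crucial} used in Lemma \ref{Lem.sec-4.3}. Dominated convergence against $g_s(x,\cdot)\delta_\Omega^{-s}\in L^1$ then kills these pieces as $k\to\infty$. We are left with $\int_\Omega g_s(x,\cdot)(-\Delta)^s g_\gamma(y,\cdot)$.

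Next write $g_\gamma(y,\cdot)=G^s_\Omega(y,\cdot)-G^s_\Omega(y,\cdot)\rho_\gamma^y$. In $\Omega\setminus\{y\}$ we have $(-\Delta)^sG^s_\Omega(y,\cdot)=0$. Hence $(-\Delta)^s g_\gamma(y,\cdot)=-(-\Delta)^s(G^s_\Omega(y,\cdot)\rho^y_\gamma)$ away from $B_{c\gamma}(y)$. As $\gamma\to0$ this family converges to $\delta_y$: it has total mass tending to $1$ and tails bounded by $C\gamma^{2s}|\cdot-y|^{-N-2s}$ outside $B_{c\gamma}(y)$. Since $g_s(x,\cdot)$ is continuous near $y\neq x$ (Lemma \ref{diff-reg-part} and \eqref{deff}), I expect $J_1\to g_s(x,y)$. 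The cleanest way to verify this is to rescale around $y$, as in the proof of Lemma \ref{LimBC-mu-k-gamma}.

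\textbf{The term $J_2$.} Let $k\to\infty$ trivially, since $\rho_\mu^x$ is supported near $x$ and $\xi_k\equiv1$ there for large $k$. Write $g_s=f_s-h_s$.

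For $h_s$, Lemma \ref{diff-reg-part} gives $h_s(x,\cdot)\in L^\infty$. Moreover $\|(-\Delta)^s(g_\gamma\rho_\mu^x)\|_{L^1}\to0$ as $\mu\to0$, since its mass scales like $\widetilde\mu^{N-2s}$. So this contribution vanishes.

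For $f_s$, expand $Y$ to second order at $x$ using $Y\in C^{1,1}$:
\begin{equation*}
f_s(x,x+z)=-(N-2s)b_{N,s}\frac{[DY(x)z]\cdot z}{|z|^{N-2s+2}}+O(|z|^{2s-N+1}).
\end{equation*}
Also replace $g_\gamma(y,\cdot)$ by its value $G^s_\Omega(x,y)$ at $x$. The error is Lipschitz near $x$, and the product-rule commutator gains a factor $\widetilde\mu$. Now set $z=\widetilde\mu w$ with $\widetilde\mu=\delta_\Omega(x)\mu/(2\sqrt2)$, so that $\rho_\mu^x(x+\widetilde\mu w)=\rho(|w|^2)$. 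The homogeneous factor contributes $\widetilde\mu^{2s-N}$, the fractional Laplacian contributes $\widetilde\mu^{-2s}$, and the Jacobian contributes $\widetilde\mu^{N}$. The powers cancel, so $-J_2$ tends to
\begin{equation*}
G^s_\Omega(x,y)\,b_{N,s}(N-2s)\int_{\R^N}\frac{[DY(x)w]\cdot w}{|w|^{N-2s+2}}(-\Delta)^s\rho(|\cdot|^2)(w)\,dw=G^s_\Omega(x,y)b_Y[\rho](x).
\end{equation*}
Integrability holds because $(-\Delta)^s\rho(|\cdot|^2)$ is bounded and decays like $|w|^{-N-2s}$. The $O(|z|^{2s-N+1})$ remainder and the commutator errors are $O(\widetilde\mu)$.

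\textbf{Main obstacle.} I expect the main difficulty to be the $\gamma\to0$ limit in $J_1$. One has to justify that $-(-\Delta)^s(G^s_\Omega(y,\cdot)\rho_\gamma^y)$ acts as an approximate identity at $y$. This requires controlling the singular part $b_{N,s}|y-\cdot|^{2s-N}\rho^y_\gamma$ after rescaling, and using the continuity of $g_s(x,\cdot)$ at $y$. The order of the limits $k\to\infty$, then $\mu\to0$, then $\gamma\to0$ must also be respected. A second delicate point is the uniform-in-$k$ bound $C\delta_\Omega^{-s}$ on the boundary commutators in $J_1$.
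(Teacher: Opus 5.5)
Your argument is correct, but you organize it differently from the paper.

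\textbf{The paper's route.} It applies the product rule \eqref{pl} to the whole product $\xi_k^2\phi_x^\mu\, g_\gamma(y,\cdot)$, which produces three pieces.
\begin{itemize}
\item The piece $\xi_k^2\phi_x^\mu(-\Delta)^s g_\gamma(y,\cdot)$ yields $g_s(x,y)$ through Lemma \ref{lemma-green}.
\item The commutator $\mathcal I_s[\xi_k^2\phi_x^\mu;g_\gamma(y,\cdot)]$ is split in two. The part $L^{k,y}_{\mu,\gamma}$ is killed by a separate rescaling estimate at $x$. The part $\phi_x^\mu\mathcal I_s[\xi_k^2;g_\gamma(y,\cdot)]$ is killed by Remark \ref{remark-Append}.
\item The piece $g_\gamma(y,\cdot)(-\Delta)^s(\xi_k^2\phi_x^\mu)$ needs a second product rule. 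Its cross terms $\phi_x^\mu(-\Delta)^s\xi_k^2-\mathcal I_s[\xi_k^2,\phi_x^\mu]$ are removed by Lemma \ref{lem-crucial}. The main term $-\xi_k^2 g_\gamma(y,\cdot)(-\Delta)^s\rho_\mu^x$ is blown up at $x$, keeping $g_\gamma(y,x-\widetilde\mu z)$ inside the integral.
\end{itemize}

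\textbf{Your route.} Your additive splitting $\phi_x^\mu=1-\rho_\mu^x$ separates the two scales from the outset. $J_1$ sees only the boundary cutoff. $J_2$ no longer depends on $k$ once $k$ is large, uniformly in $\mu\in(0,1)$. As a result, none of the mixed $\xi_k$--$\phi_x^\mu$ terms ever appear. The commutator between $\rho_\mu^x$ and $g_\gamma(y,\cdot)$ is absorbed into an $O(\widetilde\mu)$ error by freezing $g_\gamma(y,\cdot)$ at $x$. Your version is somewhat more economical.

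\textbf{Shared ingredients.} Otherwise both routes rest on the same facts:
\begin{itemize}
\item Lemma \ref{lem-crucial} and Remark \ref{remark-Append} near $\partial\Omega$.
\item The approximate-identity behaviour of $(-\Delta)^s g_\gamma(y,\cdot)$ at $y$, i.e.\ Lemma \ref{lemma-green}. You rightly single this out as the main point and propose to verify it by rescaling at $y$.
\item For the $h_s$ part, the bound $\|(-\Delta)^s(g_\gamma\rho_\mu^x)\|_{L^1}=O(\widetilde\mu^{N-2s})$ combined with $h_s(x,\cdot)\in L^\infty(\Omega)$. This is exactly what the paper's \eqref{star2} asserts without detail.
\end{itemize}

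\textbf{Small slips, none affecting the conclusion.}
\begin{itemize}
\item The commutator in \eqref{pl} carries the constant $c_{N,s}$, not $2c_{N,s}$.
\item The terms $g_\gamma(y,\cdot)(-\Delta)^s\xi_k^2$ and $\mathcal I_s[\xi_k^2;g_\gamma(y,\cdot)]$ are \emph{not} supported in $\{\delta_\Omega\le 2/k\}$; both are nonlocal and nonzero deep inside $\Omega$. For dominated convergence, use instead their pointwise convergence to $0$, given by Lemma \ref{lem-crucial} and \eqref{xi-k2-limit}.
\item The frozen value is $g_\gamma(y,x)$, which equals $G^s_\Omega(x,y)$ only once $\gamma$ is small. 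This is harmless since $\gamma\to 0^+$ is the outermost limit.
\item When $N=1$, the Taylor remainder in $J_2$ is $O(\widetilde\mu\log(1/\widetilde\mu))$ rather than $O(\widetilde\mu)$, which still vanishes.
\end{itemize}
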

\par\;

For the proof, we will need the following result the proof of which follows by a similar argument as in \cite[Lemma 2.4]{Sidy-Franck-BP}.
\begin{Lemma}\label{lemma-green}
Fix $x$ in $\Omega$ and let $j(x,\cdot)\in C(\Omega\setminus\{x\})$. Then for all $y\in\Omega$ with $y\neq x$, there holds 
\begin{equation}
    \lim_{\gamma\to 0^+}\int_{\Omega}j(x,\cdot)(-\Delta)^sg_\gamma(y,\cdot) dz = j(x,y).
\end{equation}
\end{Lemma}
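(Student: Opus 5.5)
The statement to prove is Lemma \ref{lemma-green}: $\int_\Omega j(x,\cdot)(-\Delta)^s g_\gamma(y,\cdot)\,dz\to j(x,y)$ as $\gamma\to0^+$. The idea is that $(-\Delta)^s g_\gamma(y,\cdot)$ is an approximate identity concentrating at $y$. Since $x\neq y$, I will only use the continuity of $j(x,\cdot)$ near $y$, together with the integrability of $j(x,\cdot)$ against the tail of $(-\Delta)^s g_\gamma(y,\cdot)$. Note that $j(x,\cdot)$ need only be continuous away from $x$, and it may be unbounded near $x$ and near $\partial\Omega$. In the application, $j=g_s$ satisfies $|j(x,z)|\le C(1+|x-z|^{2s-N})$, and I will use this type of integrability implicitly, as in \cite[Lemma 2.4]{Sidy-Franck-BP}.

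\textbf{Step 1: structure of $(-\Delta)^s g_\gamma(y,\cdot)$.} Write $g_\gamma(y,\cdot)=G^s_\Omega(y,\cdot)-\rho^y_\gamma G^s_\Omega(y,\cdot)$. The function $\rho^y_\gamma$ is supported in $B_{r_\gamma}(y)$ with $r_\gamma=\gamma\delta_\Omega(y)/2$, and it equals $1$ on $B_{r_\gamma/\sqrt2}(y)$. Since $(-\Delta)^sG^s_\Omega(y,\cdot)=\delta_y$ in $\Omega$ and $\phi^\gamma_y$ vanishes near $y$, we obtain the following.
\begin{itemize}
\item[(a)] Away from $B_{2r_\gamma}(y)$, we have $(-\Delta)^s g_\gamma(y,z)=c_{N,s}\int (\rho^y_\gamma G)(w)|z-w|^{-N-2s}dw$. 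This is bounded by $C|z-y|^{-N-2s}\int_{B_{r_\gamma}}|y-w|^{2s-N}dw\le C r_\gamma^{2s}|z-y|^{-N-2s}$.
\item[(b)] On $B_{2r_\gamma}(y)$, we split $G^s_\Omega(y,\cdot)=F_s(y,\cdot)-H^s_\Omega(y,\cdot)$. The smooth part $\phi^\gamma_y H$ contributes $O(1)$ pointwise there. The singular part $\phi^\gamma_yF_s(y,\cdot)$ equals, by scaling, $r_\gamma^{-N}\Psi((\cdot-y)/r_\gamma)$, where $\Psi=(-\Delta)^s\big((1-\rho(8|\cdot|^2/\gamma^2\ldots))b_{N,s}|\cdot|^{2s-N}\big)$ is a fixed profile after normalization. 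This profile is bounded and decays like $|w|^{-N-2s}$.
\end{itemize}

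\textbf{Step 2: the total mass tends to $1$.} Testing against the constant $1$ is not allowed directly, so instead I use a cutoff $\chi\in C_c^\infty(\Omega)$ with $\chi\equiv1$ near $y$, away from $x$. By symmetry of the Green function, $\int g_\gamma(y,\cdot)(-\Delta)^s\chi=\int G(y,\cdot)\phi^\gamma_y(-\Delta)^s\chi\to\int G(y,\cdot)(-\Delta)^s\chi=\chi(y)=1$, using dominated convergence. This yields $\int\chi(-\Delta)^sg_\gamma(y,\cdot)\to1$.

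\textbf{Step 3: conclusion.} Write $j(x,z)=j(x,y)+(j(x,z)-j(x,y))$. Given $\eta>0$, choose $\varepsilon$ so that $|j(x,z)-j(x,y)|<\eta$ on $B_\varepsilon(y)$. By Step 1, the inner contribution is bounded by $\eta\int|(-\Delta)^sg_\gamma(y,\cdot)|\le C\eta$, since the rescaled profile has $L^1$ norm bounded uniformly in $\gamma$. By (a), the outer contribution is at most $Cr_\gamma^{2s}\varepsilon^{-N-2s}\int_\Omega|j(x,\cdot)|$, which tends to $0$. The constant term gives $j(x,y)$ by Step 2, after comparing with the cutoff $\chi$; the difference again lies in the outer region. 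Letting $\gamma\to0$ and then $\eta\to0$ concludes the argument.

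\textbf{Main obstacle.} The delicate point is Step 1(b), namely the uniform $L^1$ bound on $(-\Delta)^s g_\gamma(y,\cdot)$. This requires the exact scaling of $F_s$, and it requires controlling the nonlocal interaction between the cutoff region and the Green function's boundary behaviour. I would handle it exactly as in \cite[Lemma 2.4]{Sidy-Franck-BP}, by splitting the nonlocal integral into the ball $B_{2r_\gamma}(y)$ and its complement.
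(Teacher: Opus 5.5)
Your proof is correct and is essentially the approximate-identity argument the paper points to in \cite[Lemma 2.4]{Sidy-Franck-BP}: decay $Cr_\gamma^{2s}|z-y|^{-N-2s}$ away from $y$, a rescaled profile with uniformly bounded $L^1$ norm near $y$, and total mass tending to $1$, which you obtain neatly from the cutoff $\chi$ and the Green representation. You are also right that an integrability hypothesis such as $j(x,\cdot)\in L^1(\Omega)$ (satisfied by $g_s(x,\cdot)$) must be added, because $(-\Delta)^s g_\gamma(y,\cdot)$ is bounded below by a positive constant near $\partial\Omega$.

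One small correction: on $B_{2r_\gamma}(y)$ the piece $(-\Delta)^s\big(\phi^\gamma_y H^s_\Omega(y,\cdot)\big)$ is of size $r_\gamma^{-2s}$, not $O(1)$, because of the term $H^s_\Omega(y,\cdot)(-\Delta)^s\phi^\gamma_y$. Its $L^1(B_{2r_\gamma}(y))$ norm is still $O(r_\gamma^{N-2s})\to 0$, so your uniform $L^1$ bound and the conclusion are unaffected.
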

With this lemma at hand, we can now proceed with the proof of Lemma \ref{main-res-sec4.4}.
\begin{proof}[Proof of Lemma \ref{main-res-sec4.4}]
We recall the product rule for the fractional Laplacian:
 \begin{equation}\label{pl}
(-\Delta)^s (uv) =  v(-\Delta)^s u + u (-\Delta)^s v  -\mathcal I_s [u,v],
\end{equation}
where
\begin{align}\label{def-I-a}
\mathcal I_s [u,v] :=  c_{N,s} \, \pv\int_{\R^N} \frac{(u(\cdot)-u(y)) (v(\cdot)-v(y))}{|\cdot-y|^{N+2s}}dy .
\end{align}
 We use the product rule above to write 
 \begin{align}\label{C1-1}
(-\Delta)^s\big(\xi_k^2 \phi_x^{\mu}g_\gamma(y,\cdot)\big)&=\xi_k^2\phi_x^{\mu}(-\Delta)^sg_\gamma(y,\cdot)+g_\gamma(y,\cdot)(-\Delta)^s\big( \xi_k^2\phi_x^{\mu}\big)  -\mathcal I_s\big[\xi_k^2\phi_x^{\mu};\;g_\gamma(y,\cdot) \big].
 \end{align}
 \par\;
 
First observe that since $g_s(x,\cdot)\in C\big(\Omega\setminus\{x\}\big)$, then Lemma \ref{lemma-green} yields
 \begin{align}\label{C1-2}
     &\lim_{\gamma\to 0^+}\lim_{\mu\to 0^+}\lim_{k\to\infty}\int_{\Omega}\xi_k^2(z)\phi_x^{\mu}(z)g_s(x,z)(-\Delta)^sg_\gamma(y,\cdot)(z)dz\nonumber\\
     &=\lim_{\gamma\to 0^+}\lim_{\mu\to 0^+}\int_{\Omega}\phi_x^{\mu}(z)g_s(x,z)(-\Delta)^sg_\gamma(y,\cdot)(z)dz\nonumber\\
     &=g_s(x,y).
 \end{align} 
\par\;
Next we claim that 
\begin{align}\label{probleme}
\lim_{k\to \infty}\int_{\Omega}g_s(x,\cdot)(z)\mathcal I_s\big[\xi_k^2\phi_x^{\mu};\;g_\gamma(y,\cdot) \big](z)dz=0.
\end{align}
To prove the claim, we write 
\begin{align}\label{final-apliting}
\mathcal I_s\big[\xi_k^2\phi_x^{\mu}; \;g_\gamma(y,\cdot) \big](z)&=c_{N,s}\int_{\R^N}\xi_k^2(p)\frac{\big(\phi_x^{\mu}(z)-\phi_x^{\mu}(p)\big)\big(g_\gamma(y,z)-g_\gamma(y,p)\big)}{|z-p|^{N+2s}}dp\nonumber\\
&\quad+\phi_x^{\mu}(z)\mathcal I_s\big[\xi_k^2;\;g_\gamma(y,\cdot) \big](z)\nonumber\\
&=:L^{k,y}_{\mu,\gamma}(x,z)+\phi_x^{\mu}(z)\mathcal I_s\big[\xi_k^2;\;g_\gamma(y,\cdot) \big](z).
\end{align}
There holds:
\begin{align}\label{C1-4}
\lim_{\gamma\to 0^+} \lim_{\mu\to 0^+}\lim_{k\to\infty}\int_{\Omega}g_s(x,\cdot)(z) L^{k,y}_{\mu,\gamma}(x,z)dz=0.
\end{align}
\underline{\textbf{Proof of \eqref{C1-4}}}:
By the Lebesgue dominated convergence theorem, we have 
$$
L^{k,y}_{\mu,\gamma}(x,z)\quad\to\quad \mathcal I_s\big[\phi_x^{\mu}, g_\gamma(y,\cdot)\big](z)
\qquad\text{as}\qquad k\to\infty,
$$
for every $z$ in $\R^N$, and therefore
$$
\lim_{\gamma\to 0^+}\lim_{\mu\to 0^+}\lim_{k\to\infty}\int_{\Omega}g_s(x,z) L^{k,y}_{\mu,\gamma}(x,z)dz
=\lim_{\gamma\to 0^+}\lim_{\mu\to 0^+}\int_{\Omega}g_s(x,z)\,\mathcal I_s\big[\phi_x^{\mu}, g_\gamma(y,\cdot)\big](z)\,dz.
$$
We set $\widetilde\mu:=\delta_\Omega(x)\mu/(2\sqrt{2})$.
We fix $\varepsilon>0$ such that $B_{2\varepsilon}(x)\subset \Omega$ and $y\notin B_{2\varepsilon}(x)$.
For $\mu>0$ small, the support of $\rho_\mu^x$ is contained in $B_{\varepsilon}(x)$, hence $\phi_x^{\mu}\equiv 1$ on $\Omega\setminus B_{\varepsilon}(x)$.

We split the integral into
$$
\int_{\Omega} g_s(x,z)\,\mathcal I_s\big[\phi_x^{\mu}, g_\gamma(y,\cdot)\big](z)\,dz
=\int_{B_{\varepsilon}(x)}\cdots+\int_{\Omega\setminus B_{\varepsilon}(x)}\cdots.
$$
If $z$ in $\Omega\setminus B_{\varepsilon}(x)$, then $\phi_x^{\mu}(z)=1$ and $\phi_x^{\mu}(p)=1$ unless $p$ in $\supp(\rho_\mu^x)\subset B_{\sqrt{2}\widetilde\mu}(x)$.
Thus
$$
\Big|\mathcal I_s\big[\phi_x^{\mu}, g_\gamma(y,\cdot)\big](z)\Big|
\leq C\int_{B_{\sqrt{2}\widetilde\mu}(x)}\frac{|g_\gamma(y,z)-g_\gamma(y,p)|}{|z-p|^{N+2s}}\,dp
\leq C\,\widetilde\mu^N,
$$
and therefore
$$
\int_{\Omega\setminus B_{\varepsilon}(x)}|g_s(x,z)|\,\Big|\mathcal I_s\big[\phi_x^{\mu}, g_\gamma(y,\cdot)\big](z)\Big|\,dz
\leq C\,\widetilde\mu^N\|g_s(x,\cdot)\|_{L^1(\Omega)}\to 0
\qquad\text{as}\qquad \mu\to 0^+.
$$
For the integral over $B_{\varepsilon}(x)$, we use that $x\neq y$ and $B_{2\varepsilon}(x)\subset \Omega$ imply that $g_\gamma(y,\cdot)$ is $C^1$ on $B_{2\varepsilon}(x)$, uniformly for $\gamma>0$ small.
In particular,
$$
|g_\gamma(y,p)-g_\gamma(y,q)|\leq C|p-q|,
\qquad p,q \;\text{in}\, B_{2\varepsilon}(x).
$$
For $z=x-\widetilde\mu p$ in $B_{\varepsilon}(x)$, we change variables in the definition of $\mathcal I_s$ and obtain
$$
\mathcal I_s\big[\phi_x^{\mu}, g_\gamma(y,\cdot)\big](x-\widetilde\mu p)
=c_{N,s}\widetilde\mu^{-2s}\int_{\R^N}\frac{(\rho(|q|^2)-\rho(|p|^2))\big(g_\gamma(y,x-\widetilde\mu p)-g_\gamma(y,x-\widetilde\mu q)\big)}{|p-q|^{N+2s}}\,dq.
$$
Hence
$$
\Big|\mathcal I_s\big[\phi_x^{\mu}, g_\gamma(y,\cdot)\big](x-\widetilde\mu p)\Big|
\leq C\,\widetilde\mu^{1-2s}\int_{\R^N}\frac{|\rho(|q|^2)-\rho(|p|^2)|}{|p-q|^{N+2s-1}}\,dq=:C\,\widetilde\mu^{1-2s}A(p).
$$
By the decomposition $g_s(x,\cdot)=f_s(x,\cdot)-h_s(x,\cdot)$ and the boundedness of $h_s(x,\cdot)$, we also have
$$
|g_s(x,x-\widetilde\mu p)|\leq C\,\widetilde\mu^{2s-N}|p|^{2s-N}+C.
$$
Therefore
$$
\int_{B_{\varepsilon}(x)}|g_s(x,z)|\,\Big|\mathcal I_s\big[\phi_x^{\mu}, g_\gamma(y,\cdot)\big](z)\Big|\,dz
\leq C\,\widetilde\mu \int_{\R^N}|p|^{2s-N}A(p)\,dp + C\,\widetilde\mu^{N+1-2s}\int_{\R^N}A(p)\,dp.
$$
Both integrals in the right-hand side are finite because $\rho$ is smooth, compactly supported, and equals $1$ near $0$.
We conclude that the right-hand side is $o(1)$ as $\mu\to 0^+$, uniformly in $\gamma>0$ small, which proves \eqref{C1-4}.
\par\;
On the other hand, by Remark \ref{remark-Append}, we know:
$$
\Big|\mathcal I_s\big[\xi_k^2;\;g_\gamma(y,\cdot) \big](z)\Big|\leq C\delta^{-s}_\Omega(z)\qquad\text{and}\qquad \lim_{k\to\infty}\mathcal I_s\big[\xi_k^2;\;g_\gamma(y,\cdot) \big](z)=0.
$$
And because $g_s(x,\cdot)\delta^{-s}_\Omega(\cdot)\in L^1(\Omega)$ (recall  \eqref{l1-deltas-reg}), we may use the Lebesgue dominated convergence theorem to conclude that
\begin{align}\label{Eq-pb}
\lim_{k\to\infty}\int_{\Omega}g_s(x,\cdot)(z)\mathcal I_s\big[\xi_k^2;\;g_\gamma(y,\cdot) \big](z)dz=0.
\end{align}
The claim \eqref{probleme} follows from \eqref{final-apliting}, \eqref{C1-4} and \eqref{Eq-pb}. In view of \eqref{C1-1}, \eqref{C1-2},  and \eqref{probleme}, to finish the proof of \eqref{C} it is sufficient to prove that
 \begin{align}  \label{eq-15-01}
    &\lim_{\gamma\to 0^+} \lim_{\mu\to 0^+}\lim_{k\to\infty}\int_{\Omega}g_s(x,\cdot)(z)g_\gamma(y,z) (-\Delta)^s\big( \xi_k^2\phi_x^{\mu}\big)(z)dz\\
    &=G^s_\Omega(x,y)\,b_Y[\rho](x).     \nonumber     
 \end{align}
 To see this, we use the product rule  \eqref{pl} once more to split: 
$$
(-\Delta)^s\big[ \xi_k^2\phi_x^{\mu}\big]=
\xi_k^2(-\Delta)^s\phi_x^{\mu}
+\phi_x^{\mu}(-\Delta)^s\xi_k^2
-\mathcal I_s[\xi_k^2,\phi_x^{\mu}].
$$
By Lemma \ref{lem-crucial} and Remark \ref{remark-Append}, we have the estimates:
$$
\Big|\phi_x^{\mu}(z)(-\Delta)^s\xi_k^2(z)-\mathcal I_s[\xi_k^2,\phi_x^{\mu}](z)\Big|\leq C\big[\delta^{-2s}_\Omega(z)+\delta^{-s}_\Omega(z)\big]\leq C\delta^{-2s}_\Omega(z).
$$
Next, since $\phi^y_\gamma$ vanishes identically near $y$, then in view of the standard estimate o the Green function (see e.g \eqref{green100} in the appendix), we have 
$$
\delta^{-s}_\Omega(\cdot)g_\gamma(y,\cdot)\in L^\infty(\Omega).
$$
Combining the above two estimates, we get 
$$
\Big|g_s(x,\cdot)g_\gamma(y,\cdot) \Big[\phi_x^{\mu}(-\Delta)^s\xi_k^2-\mathcal I_s[\xi_k^2,\phi_x^{\mu}]\Big]\Big|\leq C\delta^{-s}_\Omega(\cdot)g_s(x,\cdot)\in L^1(\Omega).
$$
Since 
$$
\phi_x^{\mu}(z)(-\Delta)^s\xi_k^2(z)-\mathcal I_s[\xi_k^2,\phi_x^{\mu}](z)\quad\to\quad 0\qquad\text{as}\qquad k\to \infty,
$$
we may apply the Lebesgue dominated convergence theorem to get that 
\begin{align}\label{C1-3}
&\lim_{k\to\infty}\int_{\Omega}g_s(x,\cdot)(z)g_\gamma(y,z) \Big[\phi_x^{\mu}(-\Delta)^s\xi_k^2-\mathcal I_s[\xi_k^2,\phi_x^{\mu}]\Big](z)dz=0.
\end{align}

Next, recalling the decomposition $g_s(x,\cdot)=f_s(x,\cdot)-h_s(x,\cdot)$ we write:
\begin{align}
\int_{\Omega}\xi_k^2(z)g_s(x,\cdot)(z)g_\gamma(y,z) (-\Delta)^s \phi_x^{\mu}(z)dz
&=\int_{\Omega}\xi_k^2(z)f_s(x,\cdot)(z)g_\gamma(y,z) (-\Delta)^s \phi_x^{\mu}(z)dz \nonumber\\
&-\int_{\Omega}\xi_k^2(z)h_s(x,\cdot)(z)g_\gamma(y,z) (-\Delta)^s \phi_x^{\mu}(z)dz. \label{inquiet}
\end{align}
\par\;

As already remarked, when computing the limits, the integrals above can be reduced to an arbitrary small neighborhood of $x$, i.e, 
\begin{align}
    &\lim_{\gamma\to 0^+} \lim_{\mu\to 0^+}\lim_{k\to\infty}\int_{\Omega}\xi_k^2(z)f_s(x,\cdot)(z)g_\gamma(y,z) (-\Delta)^s \phi_x^{\mu}(z)dz, \nonumber\\
    &=\lim_{\gamma\to 0^+} \lim_{\mu\to 0^+}\int_{\Omega}f_s(x,\cdot)(z)g_\gamma(y,z) (-\Delta)^s \phi_x^{\mu}(z)dz, \nonumber\\
    &=\lim_{\gamma\to 0^+} \lim_{\mu\to 0^+}\int_{B_\varepsilon(x)}f_s(x,z)g_\gamma(y,z) (-\Delta)^s \phi_x^{\mu}(z)dz,  \label{tart1}
\end{align}
and 
\begin{align}
    &\lim_{\gamma\to 0^+} \lim_{\mu\to 0^+}\lim_{k\to\infty}\int_{\Omega}\xi_k^2(z)h_s(x,z)g_\gamma(y,z) (-\Delta)^s \phi_x^{\mu}(z)dz\nonumber\\
    &=\lim_{\gamma\to 0^+} \lim_{\mu\to 0^+}\int_{\Omega}h_s(x,z)g_\gamma(y,z) (-\Delta)^s \phi_x^{\mu}(z)dz\nonumber\\
    &=\lim_{\gamma\to 0^+} \lim_{\mu\to 0^+}\int_{B_\varepsilon(x)}h_s(x,z)g_\gamma(y,z) (-\Delta)^s \phi_x^{\mu}(z)dz, \label{tart2}
\end{align}
for all $\varepsilon>0$. Next using \eqref{tart1}
and changing variables, we get
\begin{align*}
    &\lim_{\gamma\to 0^+}\lim_{\mu\to 0^+}\int_{\Omega}f_s(x,z)g_\gamma(y,z) (-\Delta)^s \phi_x^{\mu}(z)dz\nonumber\\
    &=-\lim_{\gamma\to 0^+}\lim_{\mu\to 0^+}\int_{B_{\frac{\varepsilon}{\widetilde\mu}}(0)}\widetilde\mu^{N-2s}f_s(x,x-\widetilde{\mu}z)g_\gamma(y,x-\widetilde\mu z) (-\Delta)^s\big(\rho\circ |\cdot|^2\big)(z)dz ,
\end{align*}
where we recall that $g_\gamma(y,\cdot)\in L^\infty(\Omega)$. Now since 

$$
\Big|\widetilde\mu^{N-2s}f_s(x,x-\widetilde{\mu}z)g_\gamma(y,x-\widetilde\mu z)\Big|\leq C|z|^{2s-N},
$$
and
$$
\int_{\R^N}|z|^{2s-N}\big|(-\Delta)^s\big(\rho\circ |\cdot|^2\big)(z)\big|dz<+\infty,
$$
it follows from the Lebesgue dominated convergence theorem that 
\begin{align}
	&=-\lim_{\gamma\to 0^+}\lim_{\mu\to 0^+}\int_{B_{\frac{\varepsilon}{\widetilde\mu}}(0)}\widetilde\mu^{N-2s} f_s(x,x-\widetilde\mu z)g_\gamma(y,x-\widetilde\mu z)(-\Delta)^s(\rho\circ |\cdot|^2)(z)dz     \nonumber\\
	&=G^s_\Omega(x,y)\,b_Y[\rho](x). \label{star}
 \end{align}
where we used that 
$$
\widetilde\mu^{N-2s}f_s(x,x-\widetilde{\mu}z)\quad\to\quad (2s-N)b_{N,s}\frac{[DY(x)\cdot z]\cdot z}{|z|^{N-2s+2}}\qquad\text{as}\qquad\mu\to 0^+,
$$
and 
$$
\lim_{\gamma\to 0^+}\lim_{\mu\to 0^+}g_\gamma(y,x-\widetilde{\mu}z)=\lim_{\gamma\to 0^+}\lim_{\mu\to 0^+}G^s_\Omega(y,x-\widetilde\mu z)\phi_y^{\gamma}(x-\widetilde\mu z)= G^s_\Omega(y,x).
$$
Proceeding as above and using $h_s(x,\cdot)\in C(\Omega)\cap L^\infty(\Omega)$ (see e.g Lemma \ref{diff-reg-part}), we easily see that
 \begin{align}\label{star2}
     \lim_{\gamma\to 0^+} \lim_{\mu\to 0^+}\lim_{k\to\infty}&\int_{B_\varepsilon(x)}\xi_k^2(z)h_s(x,\cdot)(z)g_\gamma(y,\cdot)(z) (-\Delta)^s \phi_x^{\mu}(z)dz= 0   .
 \end{align}
By combining  \eqref{inquiet}, \eqref{tart1}, \eqref{tart2}, \eqref{star} and \eqref{star2}, we conclude that 
 \begin{align} 
    &\lim_{\gamma\to 0^+} \lim_{\mu\to 0^+}\lim_{k\to\infty}
\int_{\Omega}\xi_k^2(z)g_s(x,\cdot)(z)g_\gamma(y,\cdot)(z) (-\Delta)^s \phi_x^{\mu}(z)dz \nonumber\\
 &=G^s_\Omega(x,y)\,b_Y[\rho](x). \label{tart5}
 \end{align}
Combining  \eqref{C1-3} and \eqref{tart5}, we 
 obtain     \eqref{eq-15-01} and therefore 
the proof of \eqref{C}  is finished.
\end{proof}
\section{Derivation of the formula \eqref{var-green} and Proof of Theorem \ref{THM-1.5}}\label{Sec-proof-main-result}
 Here we complete the proof of the Theorem \ref{THM-1.5}. The first part of the theorem follows from Lemma \ref{diff-reg-part} and the decomposition \eqref{Eq-apliting-of-Green}. Indeed, by Lemma \ref{diff-reg-part}, we know that the function $t\mapsto H_{\Omega_t}(\Phi_t(x), \Phi_t(y))$ is differentiable at zero for all $x,y\in \Omega$. It is also plain that the function $t\mapsto|\Phi_t(x)-\Phi_t(y)|^{2s-N}$ is differentiable for all $x\neq y$. Therefore, since 
 $$
 G^s_{\Omega_t}(\Phi_t(x),\Phi_t(y))=b_{N,s}|\Phi_t(x)-\Phi_t(y)|^{2s-N}-H^s_{\Omega_t}(\Phi_t(x),\Phi_t(y)),
 $$
 we conclude that the function  $t\mapsto G^s_{\Omega_t}(\Phi_t(x),\Phi_t(y))$ is differentiable at zero for all $x\neq y$. By standard regularity result, we also know that $G_\Omega^s(\cdot,\cdot)$ is smooth off the diagonal. Hence the directional shape derivative as defined in \eqref{def-direc-shape-deriv} is well defined. To finish the proof of the theorem it therefore remains to establish the formula \eqref{var-green} which we restate in the lemma below for the reader convenience
 \begin{Lemma}\label{babyformula}
Let $x,y$ in $\Omega$ and let $g_s(x,y)$ be given as in \eqref{gsxy}. Then, the following formula holds. 
\begin{align}
&g_s(x,y)-\nabla_x G^s_\Omega(x,y)\cdot Y(x)-\nabla_y G^s_\Omega(x,y)\cdot Y(y)\nonumber\\
&= \Gamma^2(1+s)\int_{\partial\Omega}\gamma_0^s(G_\Omega^s(x,\cdot))\gamma_0^s(G^s_\Omega(y,\cdot))Y\cdot\nu d\sigma.
\end{align}     
 \end{Lemma}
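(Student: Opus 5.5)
The plan is to test the equation of Lemma \ref{lem-eq-ahape-deriv-Green} against the cut-off function
\[
\psi=\xi_k^2\,\phi_x^{\mu}\,g_\gamma(y,\cdot),
\]
and then pass to the limits $k\to\infty$, $\mu\to0^+$, $\gamma\to0^+$ in that order. This $\psi$ is not $C^\infty_0(\Omega)$, but it is $C^{1,1}$-like and compactly supported away from $x$, $y$ and $\partial\Omega$, so I will first justify \eqref{eq-ahap-green} for it by density. The left-hand side of \eqref{eq-ahap-green} is then exactly $J^k_{\mu,\gamma}(x,y)$. By Lemma \ref{main-res-sec4.4}, it converges to
\[
g_s(x,y)+G^s_\Omega(x,y)\,b_Y[\rho](x).
\]
It remains to compute the limit of $-E_Y\big(G^s_\Omega(x,\cdot),\xi_k^2\phi_x^\mu g_\gamma(y,\cdot)\big)$.

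For the right-hand side, I expand $E_Y(g_0(x,\cdot),\psi)$ with the product decomposition $\psi(p)-\psi(q)$ used in Lemma \ref{Lem.sec-4.3}. This separates three groups of terms:
\begin{itemize}
\item[(i)] terms where the difference falls on $\phi_x^\mu$; these are $B_Y$/$C_Y$-type terms, and by Lemma \ref{LimBC-mu-k-gamma} and Lemma \ref{Lem.sec-4.3} they converge to $g_0(x,y)a_Y[\rho](x)$;
\item[(ii)] terms where the difference falls on $g_\gamma(y,\cdot)$ with the other factors smooth near $y$; by the symmetric analogue of the same computation these produce, as $\gamma\to0$, the local contribution $-\nabla_yG^s_\Omega(x,y)\cdot Y(y)$ plus a $\rho$-dependent constant at $y$;
\item[(iii)] boundary terms where the difference falls on $\xi_k$.
\end{itemize}

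The $x$-derivative term $\nabla_xG\cdot Y(x)$ will come from combining $a_Y[\rho](x)$ with $b_Y[\rho](x)$. I expect to verify by a direct computation that $a_Y[\rho]+b_Y[\rho]$ is independent of $\rho$ and, after adding the regular-part contributions, equals the term $\nabla_xG^s_\Omega\cdot Y(x)$ (using the identity $(-\Delta)^s$ of the fundamental solution $=\delta_0$). The independence of $\rho$ is plausible because the left-hand side does not depend on $\rho$. If needed, I will instead use a Pohozaev-type identity for $(-\Delta)^s$ with the vector field $Y$ on $\R^N$ to identify these constants, and I will check them first in the special case $Y$ a translation.

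The main obstacle is the boundary group (iii). I will write $G^s_\Omega(x,\cdot)=\delta^s\,\gamma_0^s(\cdot)+\dots$ near $\partial\Omega$, and similarly for $G^s_\Omega(y,\cdot)$, and show that
\[
\lim_{k\to\infty}\iint\big(\xi_k^2(p)-\xi_k^2(q)\big)(\cdots)\,\kappa_Y\,dp\,dq
\]
reduces to the fractional Pohozaev boundary term of Ros-Oton--Serra. The pattern is $\frac{c_{N,s}}2\,\Gamma(1+s)^2$ times $\int_{\partial\Omega}\gamma_0^s(G(x,\cdot))\gamma_0^s(G(y,\cdot))Y\cdot\nu$, and the bilinear polarization of their constant gives exactly $\Gamma^2(1+s)$. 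Concretely, I will blow up at a boundary point at scale $1/k$ and flatten the boundary. The leading behaviour of both Green functions is then $(\delta_+)^s$ times the traces, and $\omega_Y$ localizes to its value at the boundary point. The resulting one-dimensional integral is the model computation in \cite{Sidy-Franck-BP} (the bilinear version of the Ros-Oton--Serra constant), while the error terms are controlled by the bounds from Lemma \ref{lem-crucial} and Remark \ref{remark-Append} together with dominated convergence.

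Finally, I collect all contributions, cancel the $\rho$-dependent constants, and read off the identity of Lemma \ref{babyformula}.
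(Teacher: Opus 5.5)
Your set-up matches the paper's: the same test function $\xi_k^2\phi_x^\mu g_\gamma(y,\cdot)$, the left-hand side handled by Lemma \ref{main-res-sec4.4}, and the $\phi_x^\mu$-difference terms handled by Lemmas \ref{LimBC-mu-k-gamma} and \ref{Lem.sec-4.3}. The accounting of the right-hand side, however, fails at the step you call the main obstacle: your group (iii) tends to $0$ as $k\to\infty$ and cannot produce the boundary integral. In (iii) the undifferenced factor is $\phi_x^\mu g_\gamma(y,\cdot)$, which is bounded by $C\delta_\Omega^{s}$ by \eqref{green100}. By Lemma \ref{lem-crucial} with $h=G^s_\Omega(x,\cdot)$, the $q$-integral of $|\xi_k(p)-\xi_k(q)|\,|h(p)-h(q)|\,|p-q|^{-N-2s}$ is at most $C\delta_\Omega^{-s}(p)$ uniformly in $k$ and tends to $0$ pointwise. (The singularity at $x$ is harmless, since the other point must lie in the boundary layer.) Dominated convergence then kills (iii), exactly as the paper kills $E_Y[g_\mu]$, $F_Y[\phi_x^\mu]$, $G_Y[u]$ and $I_{Y,2}$ in Lemma \ref{Lem.sec-4.3}. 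Your blow-up at scale $1/k$ would confirm this: the difference $\xi_k^2(p)-\xi_k^2(q)$ is only $O(1)$, and a scaling count gives $O(k^{-1}\log k)$. An energy-type form $E_Y(u,v)$ between two functions behaving like $\delta^s$ is insensitive to the boundary cut-off. The Pohozaev boundary integral is hidden in your group (ii), which in the limit is essentially the uncut form $E_Y\big(G^s_\Omega(x,\cdot),G^s_\Omega(y,\cdot)\big)$, cut off only near $x$ and $y$. It becomes visible only after that form is rewritten as $-\int Y\cdot\nabla u\,(-\Delta)^s v-\int Y\cdot\nabla v\,(-\Delta)^s u$.

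The same misallocation affects the $x$-derivative term. The quantities $a_Y[\rho](x)$ and $b_Y[\rho](x)$ enter multiplied by $G^s_\Omega(x,y)$ and depend only on $DY(x)$ and $\rho$, so they cannot produce $\nabla_xG^s_\Omega(x,y)\cdot Y(x)$. Your own translation test shows this: $a_Y=b_Y=0$ there while $\nabla_xG^s_\Omega\cdot Y\neq 0$. In the paper these two terms simply cancel; that is what Lemma \ref{appen} is used for in \eqref{toproove}. Your group (ii) therefore has to deliver all three of $-\nabla_xG^s_\Omega\cdot Y(x)$, $-\nabla_yG^s_\Omega\cdot Y(y)$ and $-\Gamma^2(1+s)\int_{\partial\Omega}\dots$, and your plan gives no way to evaluate it.

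The missing idea is the paper's algebraic step. Use \eqref{64} with $u=\xi_kg_\gamma(y,\cdot)$, $v=\xi_k\phi_x^\mu$, $w=G^s_\Omega(x,\cdot)$ to write $E_Y(G^s_\Omega(x,\cdot),W^x_y)=E_Y\big(\xi_kg_\mu(x,\cdot),\xi_kg_\gamma(y,\cdot)\big)+R^k_{\mu,\gamma}$, where $R^k_{\mu,\gamma}\to G^s_\Omega(x,y)a_Y[\rho](x)$. Both arguments of the first form are compactly supported in $\Omega$, so \cite[Lemma 2.1]{DFW2023} turns it into $-A^k_{\mu,\gamma}-B^k_{\mu,\gamma}$. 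After the product rule, the pieces where $(-\Delta)^s$ or $\mathcal I_s$ falls on $\xi_k$ converge by \cite[Proposition 2.2]{DFW} to half of the boundary integral each. Meanwhile $(-\Delta)^sg_\gamma(y,\cdot)$ and $(-\Delta)^sg_\mu(x,\cdot)$ concentrate at $y$ and $x$, giving $\nabla_yG^s_\Omega(x,y)\cdot Y(y)$ and $\nabla_xG^s_\Omega(x,y)\cdot Y(x)$.
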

%
 %%%
 %%%

%
 \begin{proof}
Let $x,y$ in $\Omega$ such that $x\neq y$ and defined  
\begin{equation}
W^{x}_y(\cdot):=\xi_k^2g_\gamma(y,\cdot)\phi_x^\mu:=\xi_k^2(\cdot)G^s_\Omega(y,\cdot)\phi_y^{\gamma}(\cdot) \phi_x^{\mu}(\cdot)  \qquad\text{for $\mu,\gamma\in (0,1)$,}
\end{equation}
 where $\xi_k$ and $\phi_z^{\mu}$ are defined as in \eqref{eta-k} and  \eqref{psi-mu}. Clearly, we have $W^x_{y}\in C^\infty_0(\Omega)$, and thus it is admissible as a test function into \eqref{eq-ahap-green}. Hence
\begin{align}\label{Key-idd-02}
 \int_{\Omega}g_s(x,\cdot)(-\Delta)^s(W^{x}_y)dz+ E_Y\big(G_\Omega^s(x,\cdot), W^x_{y}\big)=0.
\end{align}
Note that since $ W^x_{y}\in C^\infty_0(\Omega)$, and $g_s(x,\cdot)\in L^1(\Omega)$ --see e.g \eqref{l1-deltas-reg}-- then by Lemma \ref{FundLemma}, the quantities in the identity above are well defined.
\par\;

For a real valued function $u$, and two positions $p$ and $q$, set $[ u] := u(p) - u(q)$. Then we observe that for any triple of functions  $u,v, w$, 
\begin{align}\label{64}
  [uv]    [w] =  [u]    [vw] +  [v]  \Big(u(q)w(p)-u(p)w(q)\Big).
  \end{align}
To establish this, it is only a matter of developing on both sides. The left-hand side then contains four terms, the right-hand side contains twice as many, but we may observe that two terms with different signs of $u(q)v(p)w(p)$ cancel out as well as two terms with different signs of $u(p)v(q)w(q)$. The remaining terms match, hence the identity above. 
Therefore
 \begin{equation}
     E_Y\big(uv,w\big)= E_Y\big(u,vw\big)+\pv\iint_{\R^{2N}}\Big(v(p)-v(q)\Big)\Big(u(q)w(p)-u(p)w(q)\Big)\kappa_Y(p,q)\, dp\, dq  .
\end{equation}
Applying the identity above with
\begin{align*}
w=g_0(x,\cdot), \quad v=\xi_k\phi_x^{\mu}\quad \text{and} \quad u=\xi_kg_\gamma(y,\cdot): 
\end{align*}
gives
\begin{align}
 E_Y\big(g_0(x,\cdot), W^x_{y}\big)= E_Y\big(g_0(x,\cdot), \xi_k^2 g_\gamma(y,\cdot)\phi_x^{\mu}\big)= E_Y(\xi_kg_\mu(x,\cdot),\; \xi_k g_\gamma(y,\cdot)\Big) + R^k_{\mu,\gamma}(x,y),
\end{align}
where the error term $R^k_{\mu,\gamma}$ is given by 
\begin{align*}
 R^k_{\mu,\gamma}(x,y)=\pv\iint_{\R^{2N}}\Big(v(p)-v(q)\Big)\Big(u(q)w(p)-u(p)w(q)\Big)\kappa_Y(p,q)\, dp\, dq, 
\end{align*}
with $u,v,w$ defined above. Since $\xi_kg_\mu(x,\cdot)$ and $ \xi_k g_\gamma(y,\cdot)$ are in $C^\infty_0(\Omega)$, we may apply \cite[Lemma 2.1]{DFW2023} with two different coordinates to get: 
\begin{align}
E_Y\big(\xi_kg_\mu(x,\cdot), \xi_k g_\gamma(y,\cdot)\big)
&=-\int_{\Omega}Y\cdot\nabla(\xi_kg_\mu(x,\cdot))(-\Delta)^s(\xi_k g_\gamma(y,\cdot))dz\nonumber\\
&-\int_{\Omega}Y\cdot\nabla\big(\xi_k g_\gamma(y,\cdot)\big)(-\Delta)^s(\xi_kg_\mu(x,\cdot))dz,\nonumber\\
&:=-A^{k}_{\mu,\gamma}(x,y)- B^{k}_{\mu,\gamma}(x,y).
\end{align}

Applying the product rule  \eqref{pl} we decompose again each of the two terms above into two terms:
\begin{align}
A^k_{\mu,\gamma}(x,y)&=    \int_{\Omega}Y\cdot\nabla(\xi_kg_\mu(x,\cdot))\Big\{g_\gamma(y,\cdot)(-\Delta)^s\xi_k-\mathcal I_s\big[\xi_k,g_\gamma(y,\cdot)\big]\Big\}dz\nonumber\\
&+\int_{\Omega}\xi_kY\cdot\nabla(\xi_kg_\mu(x,\cdot))(-\Delta)^s g_\gamma(y,\cdot)dz,\nonumber\\
&:=A^{k,1}_{\mu,\gamma}(x,y)+ A^{k,2}_{\mu,\gamma}(x,y),
\end{align}
and 
\begin{align}
B^k_{\mu,\gamma}(x,y)&=    \int_{\Omega}Y\cdot\nabla(\xi_kg_\gamma(y,\cdot))\Big\{g_\mu(x,\cdot)(-\Delta)^s\xi_k-\mathcal I_s\big[\xi_k,g_\mu(x,\cdot)\big]\Big\}dz\nonumber\\
&+\int_{\Omega}\xi_kY\cdot\nabla(\xi_kg_\gamma(y,\cdot))(-\Delta)^s g_\mu(x,\cdot)dz, \nonumber\\
&:= B^{k,1}_{\mu,\gamma}(x,y)+ B^{k,2}_{\mu,\gamma}(x,y).
\end{align}
With these notations, the identity \eqref{Key-idd-02} becomes
\begin{align}\label{Key-idd-03}
& C^k_{\mu,\gamma}(x,y)- A^{k,2}_{\mu,\gamma}(x,y)- B^{k,2}_{\mu,\gamma}(x,y)+R^k_{\mu,\gamma}(x,y)= A^{k,1}_{\mu,\gamma}(x,y)+ B^{k,1}_{\mu,\gamma}(x,y), 
\end{align}
with 
\begin{align}\label{def-C}
C^{k}_{\mu,\gamma}(x,y) :=  \int_{\Omega}g_s(x,z)(-\Delta)^s\big(\xi_k^2 g_\gamma(y,\cdot)\phi_x^{\mu}\big)(z)dz.
\end{align}
First using \cite[Proposition 2.2]{DFW} and then passing to the limit as $\mu\to 0^+$ and then as $\gamma\to 0^+$, we get
\begin{align}
    \lim_{\gamma\to 0^+} \lim_{\mu\to 0^+}\lim_{k\to\infty}A^{k,1}_{\mu,\gamma}(x,y)=\frac{\Gamma^2(1+s)}{2}\int_{\partial\Omega}\gamma_0^s(G_\Omega^s(x,\cdot))\gamma_0^s(G^s_\Omega(y,\cdot))Y\cdot\nu d\sigma,\label{lim-A1}\\
    \lim_{\gamma\to 0^+} \lim_{\mu\to 0^+}\lim_{k\to\infty} B^{k,1}_{\mu,\gamma}(x,y)=\frac{\Gamma^2(1+s)}{2}\int_{\partial\Omega}\gamma_0^s(G_\Omega^s(x,\cdot))\gamma_0^s(G^s_\Omega(y,\cdot))Y\cdot\nu d\sigma, \label{lim-B1} 
\end{align}
where $\nu$ denotes the outward unit normal.  Moreover, a similar argument as in \cite[Lemma 2.4]{Sidy-Franck-BP} yields 
\begin{align}
   \lim_{\gamma\to 0^+} \lim_{\mu\to 0^+}\lim_{k\to\infty} A^{k,2}_{\mu,\gamma}(x,y)=\nabla_y G^s_\Omega(x,y)\cdot Y(y),\label{lim-A2}\\
   \lim_{\gamma\to 0^+} \lim_{\mu\to 0^+}\lim_{k\to\infty} B^{k,2}_{\mu,\gamma}(x,y)=\nabla_x G^s_\Omega(x,y)\cdot Y(x).\label{lim-B2}
\end{align}
On the other hand, appealing to Lemma \ref{Lem.sec-4.3} and Lemma \ref{main-res-sec4.4}, and recalling Lemma \ref{appen}. we also have:
\begin{align}\label{toproove}
&\lim_{\gamma\to 0^+}\lim_{\mu\to 0^+}\lim_{k\to\infty} \Big[ C^k_{\mu,\gamma}(x,y)+ R^k_{\mu,\gamma}(x,y)    \Big],\nonumber\\
&= G^s_\Omega(y,x)b_{N,s}\iint_{\R^{2N}}\frac{\big(\rho(|y|^2)-\rho(|z|^2)\big)\big(|y|^{2s-N}-|z|^{2s-N}\big)}{|y-z|^{N+2s}} \omega^x_{Y}(y,z)\, dydz,\nonumber\\
&+g_s(x,y)+b_{N,s}(N-2s)G^s_\Omega(x,y)\int_{\R^N}\frac{[DY(x)\cdot z]\cdot z}{|z|^{N-2s+2}}(-\Delta)^s(\rho\circ |\cdot|^2)(z)dz,\nonumber\\
&=-G^s_\Omega(y,x)b_{N,s}(N-2s)\int_{\R^N}\frac{[DY(x)\cdot z]\cdot z}{|z|^{N-2s+2}}(-\Delta)^s(\rho\circ|\cdot|^2)(z)dz,\nonumber\\
&+g_s(x,y)+b_{N,s}(N-2s)G^s_\Omega(x,y)\int_{\R^N}\frac{[DY(x)\cdot z]\cdot z}{|z|^{N-2s+2}}(-\Delta)^s(\rho\circ |\cdot|^2)(z)dz,\nonumber\\
&=g_s(x,y).
\end{align}
We conclude by combining \eqref{Key-idd-03}, \eqref{def-C}, \eqref{lim-A1}, \eqref{lim-B1}, \eqref{lim-A2}, \eqref{lim-B2}, and \eqref{toproove}. The proof is finished. 
 \end{proof}
\section{Proof of Corollary \ref{cor6}}
\label{sec-ftt}
We first note that \eqref{var-Robin-func} is a simple consequence of \eqref{var-green}. Indeed, observe that 
\begin{equation}\label{remark}
\partial_{t}(|x_t-y_t|^{2s-N}){\Big|_{t=0}}-\nabla_x(|x-y|^{2s-N})\cdot Y(x)-\nabla_y(|x-y|^{2s-N})\cdot Y(y) = 0.
\end{equation}
Therefore, by writing  
$$ G^s_{\Omega_t}(x_t,y_t)  = b_{N,s}|x_t-y_t|^{2s-N}-H^s_{\Omega_t}(x_t, y_t)$$
and recalling \eqref{remark}, we reduce the identity \eqref{var-green} to: 
\begin{align*}
  &-\partial_tH^s_{\Omega_t
  }(x_t,y_t)\big|_{t=0}+\nabla_xH^s_\Omega(x,y)\cdot Y(x)+\nabla_yH^s_\Omega(x,y)\cdot Y(y)\nonumber\\
  &= \Gamma^2(1+s)\int_{\partial\Omega}\gamma_0^s(G_\Omega^s(x,\cdot))\gamma_0^s(G^s_\Omega(y,\cdot))Y\cdot\nu \;d\sigma.
\end{align*}
Now that we got rid of the singularity, we can take $x= y$ in the identity above to derive \eqref{var-Robin-func}.
%thanks to the chain rule
%$$  \partial_t R^s_{\Omega_t}(x){\big|_{t=0}} =\partial_t R^s_{\Omega_t}(x_t)\big|_{t=0}
%A_{t}^x{\big|_{t=0}}\Big)(y)
%-\nabla_x H^s_\Omega (x,x)\cdot Y(x)-\nabla_z H^s_\Omega(x,x)\cdot Y(x) .
%$$
%%
%which in view of the identity above gives the result.
\section{Proof of Theorem \ref{THM-1.2}}\label{Proof-THM-1.2}
We start by proving two preliminary lemmas that will be essential for the proof. We begin with the following which is a consequence of \cite[Lemma 3.1.3]{Abatangelo}.
\begin{Lemma}\label{Ab}Let $f$ in $C^0(\partial\Omega)$ and $\psi\in L^\infty(\Omega)$. Then we have  
$$
\int_{\Omega}\Big(\int_{\partial\Omega}\gamma^s_0(G_\Omega^s(x,\cdot))(\sigma)f(\sigma)d\sigma\Big)\psi(x)dx = \int_{\partial\Omega}f(\sigma)\gamma_0^s(\phi)(\sigma)d\sigma,
$$
where we set 
$$
\gamma_0^s(w):=w/\delta^s \qquad\text{and}\qquad\phi(x)=\int_{\Omega}G^s_\Omega(x,y)\psi(y)dy.
$$
\end{Lemma}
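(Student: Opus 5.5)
The plan is to exchange the order of integration by Fubini and then identify the boundary trace of $\phi$ with the integral of the traces of the Green function, using the boundary behaviour of $G^s_\Omega$ recorded in \cite[Lemma 3.1.3]{Abatangelo}.

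\textbf{Step 1: Fubini is justified.} The standard two-sided Green estimate (see \eqref{green100} in the appendix) gives
\[
0\le G^s_\Omega(x,\sigma')/\delta^s(\sigma')\le C\,\delta_\Omega^s(x)\,|x-\sigma'|^{-N}
\]
for $\sigma'$ near $\partial\Omega$, and this bound passes to the limit $\sigma'\to\sigma\in\partial\Omega$. Hence
\[
|\gamma_0^s(G^s_\Omega(x,\cdot))(\sigma)|\le C\,\delta_\Omega^s(x)\,|x-\sigma|^{-N}.
\]
For fixed $\sigma\in\partial\Omega$, the function $x\mapsto \delta_\Omega^s(x)|x-\sigma|^{-N}$ is integrable on $\Omega$. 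Indeed, near $\sigma$ one has $\delta_\Omega(x)\le|x-\sigma|$, so it is dominated by $|x-\sigma|^{s-N}$. This integrability is uniform in $\sigma$ because $\partial\Omega$ is compact and $C^{1,1}$. Since $f$ is bounded and $\psi\in L^\infty(\Omega)$, Fubini applies and
\[
\int_\Omega\Big(\int_{\partial\Omega}\gamma_0^s(G^s_\Omega(x,\cdot))f\,d\sigma\Big)\psi(x)\,dx=\int_{\partial\Omega}f(\sigma)\Big(\int_\Omega \gamma_0^s(G^s_\Omega(x,\cdot))(\sigma)\psi(x)\,dx\Big)d\sigma.
\]

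\textbf{Step 2: identify the inner integral.} It remains to show that
\[
\int_\Omega \gamma_0^s(G^s_\Omega(x,\cdot))(\sigma)\,\psi(x)\,dx=\gamma_0^s(\phi)(\sigma)\quad\text{for every }\sigma\in\partial\Omega.
\]
By symmetry of the Green function, $\phi(z)/\delta^s(z)=\int_\Omega \big(G^s_\Omega(x,z)/\delta^s(z)\big)\psi(x)\,dx$ for $z\in\Omega$. I will let $z\to\sigma$ and pass the limit inside the integral by dominated convergence. This is the content of \cite[Lemma 3.1.3]{Abatangelo}, which asserts exactly that the trace commutes with the Green potential for bounded data.

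\textbf{Main obstacle: a $z$-uniform dominating function.} The difficulty is that $x\mapsto G^s_\Omega(x,z)/\delta^s(z)$ needs a dominating function independent of $z$ as $z\to\sigma$. From the Green estimate one gets
\[
\frac{G^s_\Omega(x,z)}{\delta^s(z)}\le C\,|x-z|^{2s-N}\min\big(1,\delta(x)/|x-z|\big)^s\,|x-z|^{-s}\le C\,|x-z|^{s-N}.
\]
This bound is uniformly integrable in $x$ for $z$ in a neighbourhood of $\sigma$ (the family $\{|\cdot-z|^{s-N}\}$ is uniformly integrable on bounded sets). Vitali's theorem then gives the passage to the limit, using the pointwise convergence $G^s_\Omega(x,z)/\delta^s(z)\to\gamma_0^s(G^s_\Omega(x,\cdot))(\sigma)$ for each $x\neq\sigma$. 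Finally, the existence of $\gamma_0^s(\phi)$ as a continuous function (which follows from $\psi\in L^\infty$ and \cite{RS}) ensures the right-hand side of the lemma makes sense, and combining Steps 1 and 2 concludes the proof.
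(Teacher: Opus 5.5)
Your proof is correct, but it takes a different route from the paper. The paper does not interchange the integrals by hand. It truncates $\psi$ to $\psi_k=\psi\xi_k$ and applies \cite[Lemma 3.1.3]{Abatangelo} to $\psi_k$; it argues that only the compact support of the datum matters in that proof, since the lemma is stated for $C^\infty_0$ data. It then lets $k\to\infty$. On the left it uses dominated convergence with the majorant $\int_{\partial\Omega}\gamma_0^s(G^s_\Omega(x,\cdot))\,d\sigma\le C\delta^{s-1}(x)$. On the right it uses the global estimate $\|(\phi_k-\phi)/\delta^s\|_{C^0(\overline{\Omega})}\to 0$ from \cite[Corollary 1.2]{Fall}, since $\psi_k-\psi$ is supported in a shrinking boundary strip.

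You instead prove directly that the trace commutes with the Green potential. You use Fubini together with the pointwise convergence $G^s_\Omega(x,z)/\delta^s(z)\to\gamma_0^s(G^s_\Omega(x,\cdot))(\sigma)$ as $z\to\sigma$. This convergence is controlled by the uniformly integrable family $c_1|x-z|^{s-N}$, from the third term of \eqref{green100}, and Vitali's theorem.

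Your argument is more elementary and self-contained. It needs only the Green upper bound and the existence of the traces for fixed $x$. It avoids both the unverified claim about the proof of Abatangelo's lemma and the global boundary-regularity estimate. The paper's route, in exchange, reuses a known identity and gets uniform convergence $\gamma_0^s(\phi_k)\to\gamma_0^s(\phi)$, which is more than needed.

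Two small corrections:
\begin{itemize}
\item The estimate \eqref{green100} as stated only yields $\gamma_0^s(G^s_\Omega(x,\cdot))(\sigma)\le C|x-\sigma|^{s-N}$. Your sharper bound $C\delta^s(x)|x-\sigma|^{-N}$ requires the product-form estimate of \cite{CS}, or \eqref{Gts-es} at $t=0$. This is harmless, because the weaker bound already justifies Fubini: $\sup_{\sigma\in\partial\Omega}\int_\Omega|x-\sigma|^{s-N}\,dx<\infty$.
\item You should present Step 2 as your own Vitali argument rather than as ``the content of'' \cite[Lemma 3.1.3]{Abatangelo}. That lemma is formulated for compactly supported data, and your argument is precisely what extends it to bounded $\psi$.
\end{itemize}
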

\begin{proof}We note that the integrals above are all well defined. Indeed, it is a classical result in potential theory that $\int_{\partial\Omega}(G_\Omega^s(x,\cdot)/\delta^s)(\sigma)d\sigma\leq C\delta^{s-1}(x)$ and clearly $\int_{\Omega}\delta^{s-1}(x)\psi(x)dx<\infty$. We also know that $\phi/\delta^s\in L^\infty(\overline{\Omega})$ by boundary regularity (see e.g \cite{RS}) and therefore the RHS of the identity is also well defined.
\par\;
We now proceed with the proof of the identity. Set $\psi_k:=\psi\xi_k$. Then $\psi_k$ is bounded and compactly supported in $\Omega$. We apply \cite[Lemma 3.1.3]{Abatangelo} with $\psi=\psi_k$ to get 
\begin{align}\label{Approx-Id}
\int_{\Omega}\Big(\int_{\partial\Omega}\gamma^s_0(G_\Omega^s(x,\cdot))(\sigma)f(\sigma)d\sigma\Big)\psi_k(x)dx = \int_{\partial\Omega}f(\sigma)\gamma_0^s(\phi_k)(\sigma)d\sigma,    
\end{align}
with $$
\phi_k(x)=\int_{\Omega}G^s_\Omega(x,y)\psi_k(y)dy.
$$
We note that although the statement of \cite[Lemma 3.1.3]{Abatangelo} assumes $\psi\in C^\infty_0(\Omega)$, only the fact that $\psi$ \textit{has a compact support in $\Omega$} is relevant for the proof. We can therefore apply it with $\psi_k$. We now pass to the limit as $k$ tends to infinity in \eqref{Approx-Id}. On the other hand, we have by the Lebesgue dominated convergence theorem that 
\begin{align}\label{L1}
\int_{\Omega}\Big(\int_{\partial\Omega}\gamma^s_0(G_\Omega^s(x,\cdot))(\sigma)f(\sigma)d\sigma\Big)\psi_k(x)dx\to\int_{\Omega}\Big(\int_{\partial\Omega}\gamma^s_0(G_\Omega^s(x,\cdot))(\sigma)f(\sigma)d\sigma\Big)\psi(x)dx.    
\end{align}
On the other hand, because 
$$
(-\Delta)^s(\phi_k-\phi) = \psi_k-\psi = -\rho_k\psi\quad\text{in $\Omega$}\qquad\text{and}\qquad \phi_k-\phi\equiv 0\quad\text{in}\quad \R^N\setminus\Omega,
$$
then by  the standard boundary regularity theory (see e.g \cite[Corollary 1.2]{Fall}), we have, for any $p>N/s$, that
\begin{align*}
\|(\phi_k-\phi)/\delta^s\|_{C^0(\overline{\Omega})}&\leq C\big(\|\phi_k-\phi\|_{L^2(\Omega)}+\|\rho_k\psi\|_{L^p(\Omega)}\big)\\
&\leq C\big(\|\rho_k\psi\|_{L^2(\Omega)}+\|\rho_k\psi\|_{L^p(\Omega)}\big)\to 0\quad \text{as $k\to\infty$},
\end{align*}
where in the last line we used the classical fact that the $L^2$-norm of the solution is controlled, up to a constant, by the $L^2$-norm of the right-hand side. It therefore follows that 
\begin{align}\label{L2}
\int_{\partial\Omega}f(\sigma)\gamma_0^s(\phi_k)(\sigma)d\sigma\to\int_{\partial\Omega}f(\sigma)\gamma_0^s(\phi)(\sigma)d\sigma.      
\end{align}
Combining \eqref{Approx-Id}, \eqref{L1} and \eqref{L2} we get the result and the proof is finished.
\end{proof}
\begin{Lemma}\label{reduc-FY}
Let $h$ be a globally Lipschitz continuous function in $\Omega$ and $u$ be the classical solution of the equation

\begin{equation}
\left\{ \begin{array}{rcll} (-\Delta)^s u&=& h  &\textrm{in }\;\;\Omega \\ u&=&0&
\textrm{in }\;\;\R^N\setminus\Omega , \end{array}\right. 
\end{equation}
Let $Y$ be a globally Lipschitz vector field in $\Omega$. Then for all $x\in \Omega$, there holds
\begin{equation}\label{Last-Claim}
\int_{\Omega}G_\Omega^s(x,y)\div(h Y)dy+
\int_{\Omega}\Bigg(\nabla_y G^s_\Omega(x,y)\cdot Y(y)+\nabla_x G^s_\Omega(x,y)\cdot Y(x)\Bigg)h(y)dy=\nabla u\cdot Y(x).
\end{equation}
\end{Lemma}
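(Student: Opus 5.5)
The identity \eqref{Last-Claim} should follow from the representation $u(x)=\int_\Omega G^s_\Omega(x,y)h(y)\,dy$, an integration by parts in $y$, and the translation-type identity obtained by differentiating the Green function along a flow. The plan is first to compute $\nabla u(x)\cdot Y(x)$ directly. Since $h$ is Lipschitz, $u\in C^{1}$ in the interior, and by differentiation under the integral (the singularity $|\nabla_x G^s_\Omega(x,y)|\lesssim |x-y|^{2s-N-1}$ is integrable only when $2s>1$, so I would instead differentiate after removing a small ball, or write $u=F_s*h\,-\,\int H^s_\Omega(\cdot,y)h(y)dy$ and treat the fundamental-solution part through $\nabla(F_s*\tilde h)=F_s*\nabla\tilde h$ plus a boundary correction) we get formally
\[
\nabla u(x)\cdot Y(x)=\int_\Omega \nabla_xG^s_\Omega(x,y)\cdot Y(x)\,h(y)\,dy .
\]
With this, \eqref{Last-Claim} reduces to showing
\[
\int_\Omega G^s_\Omega(x,y)\,\div(hY)(y)\,dy+\int_\Omega \nabla_yG^s_\Omega(x,y)\cdot Y(y)\,h(y)\,dy=0 .
\]

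This is exactly an integration by parts: $\div_y\big(G^s_\Omega(x,\cdot)hY\big)=G\,\div(hY)+hY\cdot\nabla_yG$. I would therefore apply the divergence theorem on $\Omega_{\varepsilon,k}:=\{\delta>1/k\}\setminus B_\varepsilon(x)$, for which there are two boundary contributions. The first is on $\{\delta=1/k\}$, where $|G^s_\Omega(x,\cdot)|\le C\delta^s$ (the Green estimate \eqref{Gts-es}) and $h,Y$ are bounded, so the flux is $O(k^{-s})\to0$. The second is on $\partial B_\varepsilon(x)$, where $|G|\le C\varepsilon^{2s-N}$ and the surface measure is $\varepsilon^{N-1}$, so the flux is $O(\varepsilon^{2s-1})$. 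This vanishes only when $2s>1$; for $2s\le1$ I would split $G=F_s-H^s_\Omega$. The $H$-part is smooth near $x$ and gives $O(\varepsilon^{N-1})$. For the $F_s$-part I would use that $\int_{\partial B_\varepsilon}F_s(x,\cdot)\,h Y\cdot n\,d\sigma=b_{N,s}\varepsilon^{2s-N}\int_{\partial B_\varepsilon}(hY(\cdot)-hY(x))\cdot n\,d\sigma$, since the constant vector has zero flux. Lipschitz continuity of $hY$ then gives $O(\varepsilon^{2s})\to0$. Integrability of $\nabla_yG\cdot Y h$ near $\partial\Omega$ uses $|\nabla_yG|\lesssim \delta^{s-1}$, which is integrable, so dominated convergence passes to the limit in the volume integrals.

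The main obstacle is the first step, the rigorous identity $\nabla u(x)\cdot Y(x)=\int\nabla_xG\cdot Y(x)\,h$ when $2s\le1$, where $\nabla_xG$ is not locally integrable. Here I would run the same cancellation trick. I would write
\[
u(x)=\int_\Omega F_s(x,y)\big(h(y)-h(x)\big)dy+h(x)\int_\Omega F_s(x,y)dy-\int_\Omega H^s_\Omega(x,y)h(y)dy
\]
and interpret the middle term's $x$-derivative together with the principal-value integral of $\nabla_xF_s$, so that the right-hand side of the claim is understood in the same regularized sense. With this reading, the pieces match the boundary-flux computation above. The $\nabla_xG\cdot Y(x)$ term then pairs with the $\nabla_yG\cdot Y(y)$ term, whose combination $(\nabla_x+\nabla_y)F_s\cdot$ acts only on $Y(y)-Y(x)=O(|x-y|)$. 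This makes the combined integrand in \eqref{Last-Claim} absolutely integrable, and it is how I would organize the final write-up.
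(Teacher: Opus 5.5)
Your argument is correct, but it proves the identity by a different route from the paper's. The paper does not differentiate $u(x)=\int_\Omega G^s_\Omega(x,y)h(y)\,dy$ by hand. Instead it imports two results from \cite{Sidy-Franck-BP}:
\begin{itemize}
\item the translation formula $(\nabla_x+\nabla_y)G^s_\Omega(x,y)=-\Gamma^2(1+s)\int_{\partial\Omega}\gamma_0^s(G^s_\Omega(x,\cdot))\gamma_0^s(G^s_\Omega(y,\cdot))\nu\,d\sigma$;
\item the gradient formula $\nabla u(x)=-\Gamma^2(1+s)\int_{\partial\Omega}\gamma_0^s(u)\gamma_0^s(G^s_\Omega(x,\cdot))\nu\,d\sigma+\int_\Omega G^s_\Omega(x,y)\nabla h(y)\,dy$.
\end{itemize}
It integrates the first against $h$ using Lemma \ref{Ab}, and the two resulting $\gamma_0^s(u)$ boundary terms cancel. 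For $2s\le 1$ the paper first replaces $\nabla_xG\cdot Y(x)$ by $-\nabla_yG\cdot Y(x)$ plus that boundary term. It then only has to integrate the absolutely convergent $\int_\Omega\nabla_yG\cdot(Y(y)-Y(x))h$ by parts, with the same $O(\varepsilon^{2s})$ sphere estimate you use. For $2s>1$ its reduction is exactly yours. However, it proves the reduced identity $\nabla u\cdot Y(x)=\int_\Omega\nabla_xG\cdot Y(x)h$ from those two theorems, whereas you get it by differentiating under the integral, which is legitimate there.

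Your route shows that the fractional Neumann traces play no role in this lemma: it is a pure integration-by-parts fact. It needs only:
\begin{itemize}
\item the splitting $G=F_s-H^s_\Omega$;
\item $G\lesssim\delta^s$ and $|\nabla_yG|\lesssim\delta^{s-1}$ near $\partial\Omega$;
\item a bound on $\nabla_xH^s_\Omega(x,\cdot)$ uniform for $x$ near the base point;
\item the zero flux of a constant vector through $\partial B_\varepsilon(x)$.
\end{itemize}
So it is self-contained and uniform in $s$, while the paper's proof is shorter given \cite{Sidy-Franck-BP}.

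For $2s\le1$, make the principal values explicit in the write-up. Both $\int\nabla_yG\cdot Y h$ and $\int\nabla_xG\cdot Y(x)h$ should be defined as limits over $\Omega\setminus B_\varepsilon(x)$ with the same balls. The absolutely convergent left-hand side of \eqref{Last-Claim} is then the sum of these two limits. The identity $\nabla u(x)=\lim_{\varepsilon\to0}\int_{\Omega\setminus B_\varepsilon(x)}\nabla_xG^s_\Omega(x,y)h(y)\,dy$ is cleanest via your first suggestion. Namely, $\nabla_x\int_\Omega F_s(x,y)h(y)\,dy=\int_\Omega F_s(x,y)\nabla h(y)\,dy-\int_{\partial\Omega}F_s(x,\sigma)h(\sigma)\nu(\sigma)\,d\sigma$. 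The divergence theorem for $F_s(x,\cdot)h$ on $\Omega\setminus B_\varepsilon(x)$ gives the same limit, since the sphere term is $O(\varepsilon^{2s})$ by your cancellation argument.

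A few smaller corrections:
\begin{itemize}
\item In the subtraction variant, freeze $h(x_0)$ rather than differentiating $h(x)$, since $h$ is only Lipschitz.
\item Since $(\nabla_x+\nabla_y)F_s=0$, the pairing you mean is $\nabla_xF_s\cdot Y(x)+\nabla_yF_s\cdot Y(y)=\nabla_yF_s\cdot(Y(y)-Y(x))$.
\item The translation identity along a flow that you mention in your first sentence is never used in your argument.
\end{itemize}
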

\begin{proof}
We note that the LHS of the identity above is well defined. Indeed, the first integral is clearly finite. And also, since $Y$ is globally Lipschitz continuous, we have that $\nabla_y G_\Omega^s(x,\cdot)\cdot Y(\cdot)+\nabla_x G_\Omega^s(x,\cdot)\cdot Y(x)\in L^1(\Omega)$. This follows since 
\begin{align*}
&\nabla_y G^s_\Omega(x,\cdot)\cdot Y(\cdot)+\nabla_x G^s_\Omega(x,\cdot)\cdot Y(x)\nonumber\\
&= (2s-N)\frac{(x-\cdot)\cdot(Y(x)-Y(\cdot))}{|x-\cdot|^{N-2s+2}}-\nabla_y H^s_\Omega(x,\cdot)\cdot Y(\cdot)-\nabla_x H^s_\Omega(x,\cdot)\cdot Y(x)\in L^1(\Omega).
\end{align*}  
Since $h$ is bounded, it follows that the second integral is also finite.
\par\;
We now proceed with the proof. We distinguish two different cases depending on whether $2s>1$ or $2s\leq 1$.
\par\;

\textbf{Case 1}: $2s\leq 1$. 
By \cite[Theorem 1.6]{Sidy-Franck-BP}, we have, for all $j=1,2,\cdots N$, that
\begin{equation*}
 \partial_{x_i} G^s_\Omega(x,y)+\partial_{y_i} G^s_\Omega(x,y)=  -\Gamma^2(1+s)\int_{\partial \Omega} 
 \gamma_0^s(G^s_\Omega(x,\cdot)) \gamma_0^s(G^s_\Omega(y,\cdot))
 \nu_i \, d\sigma.    
\end{equation*}
Consequently,
\begin{align*}
\nabla_x G_\Omega^s(x,y)\cdot Y(x) = -\nabla_y G_\Omega^s(x,y)\cdot Y(x)-\Gamma^2(1+s)\int_{\partial \Omega} 
\gamma_0^s(G^s_\Omega(x,\cdot)) \gamma_0^s(G^s_\Omega(y,\cdot))Y(x)\cdot\nu \, d\sigma.   
\end{align*}
It follows, in view of Lemma \ref{Ab}, that
\begin{align}\label{Tr-term}
&\int_{\Omega}\Bigg(\nabla_y G^s_\Omega(x,y)\cdot Y(y)+\nabla_x G^s_\Omega(x,y)\cdot Y(x)\Bigg)h(y)dy\nonumber\\
&=-\Gamma^2(1+s)\int_{\Omega}\Bigg(\int_{\partial \Omega} 
\gamma_0^s(G^s_\Omega(x,\cdot)) \gamma_0^s(G^s_\Omega(y,\cdot))Y(x)\cdot\nu \, d\sigma   \Bigg)h(y)dy\nonumber\\
&+\int_{\Omega}\nabla_y G_\Omega^s(x,y)\cdot (Y(y)-Y(x))h(y)dy\nonumber\\
&=-\Gamma^2(1+s)\int_{\partial\Omega}\gamma_0^s(G^s_\Omega(x,\cdot)) \gamma_0^s(u)Y(x)\cdot\nu \, d\sigma+\int_{\Omega}\nabla_y G_\Omega^s(x,y)\cdot (Y(y)-Y(x))h(y)dy.  
\end{align}
It is not difficult to see that 
\begin{align}\label{Fr-D}
\int_{\Omega}\nabla_y G_\Omega^s(x,y)\cdot (Y(y)-Y(x))h(y)dy
%&= \lim_{\varepsilon\to 0^+}\int_{\Omega\cap\{|x-y|>\varepsilon\}}\nabla_y G_\Omega^s(x,y)\cdot (Y(y)-Y(x))h(y)dy. \nonumber\\
=-\int_{\Omega}G_\Omega^s(x,y)\big[ \div(Yh)-\nabla h(y)\cdot Y(x)\big]dy.
\end{align}
Indeed, for every $\varepsilon>0$, we have by integration by parts 
\begin{align}\label{FC2}
\int_{\Omega\setminus  B_{\varepsilon}(x)}\nabla_y G_\Omega^s(x,y)\cdot (Y(y)-Y(x))&h(y)dy
= \int_{\partial B_\varepsilon(x)}G_\Omega^s(x,y)\,\frac{(Y(y)-Y(x))\cdot(x-y)}{|x-y|}\,h(y)\,d\sigma(y)\nonumber\\
&-\int_{\Omega\setminus  B_{\varepsilon}(x)}G_\Omega^s(x,y)\big[ \div(Yh)-\nabla h(y)\cdot Y(x)\big]dy.
\end{align}
It is clear that
\begin{align}\label{FC3}
&\lim_{\varepsilon\to 0^+}\int_{\Omega\setminus  B_{\varepsilon}(x)}G_\Omega^s(x,y)\big[ \div(Yh)-\nabla h(y)\cdot Y(x)\big]dy
=\int_{\Omega}G_\Omega^s(x,y)\big[ \div(Yh)-\nabla h(y)\cdot Y(x)\big]dy.
\end{align}
In the other hand, we also have
\begin{align}\label{FC4}
&\Big|\int_{\partial B_\varepsilon(x))}G_\Omega^s(x,y)\,(Y(y)-Y(x))\cdot(x-y)|x-y|^{-1}\,h(y)\,d\sigma(y)\Big| \nonumber\\
&\leq C\int_{\partial B_\varepsilon(x)}|x-y|^{1+2s-N}d\sigma(y)=C\varepsilon^{1+2s-N}|\partial B_\varepsilon|\leq C\varepsilon^{2s}\to 0\quad\text{as $\varepsilon\to 0^+$.}
\end{align}
Combining \eqref{FC2}, \eqref{FC3} and \eqref{FC4} we get \eqref{Fr-D}. Next, plug \eqref{Fr-D}  into \eqref{Tr-term} to finally obtain
\begin{align*}
&\int_{\Omega}\Bigg(\nabla_y G^s_\Omega(x,y)\cdot Y(y)+\nabla_x G^s_\Omega(x,y)\cdot Y(x)\Bigg)h(y)dy\\
&=-\int_{\Omega}G_\Omega^s(x,y)\cdot \div(hY)dy-\Gamma^2(1+s)\int_{\partial\Omega}\gamma_0^s(G^s_\Omega(x,\cdot)) \gamma_0^s(u)Y(x)\cdot\nu \, d\sigma\\
&+\int_{\Omega}G_\Omega^s(x,y)Y(x)\cdot\nabla h(y)dy.
\end{align*}
In other words, 
\begin{align*}
&\int_{\Omega}G_\Omega^s(x,y)\cdot \div(hY)dy+\int_{\Omega}\Bigg(\nabla_y G^s_\Omega(x,y)\cdot Y(y)+\nabla_x G^s_\Omega(x,y)\cdot Y(x)\Bigg)h(y)dy\nonumber\\
&=-\Gamma^2(1+s)\int_{\partial\Omega}\gamma_0^s(G^s_\Omega(x,\cdot)) \gamma_0^s(u)Y(x)\cdot\nu \, d\sigma+\int_{\Omega}G_\Omega^s(x,y)Y(x)\cdot\nabla h(y)dy\\
&=\nabla u\cdot Y(x)
\end{align*}
where in the last line we used  \cite[Eq (8), Theorem 1.1]{Sidy-Franck-BP}. This proves the claim in the case $2s\leq 1$.\par\;

\textbf{Case 1}: $2s>1$. We first integrate by parts and write
\begin{align*}
\int_{\Omega}G_\Omega^s(x,y)\div(hY)dy=-\int_{\Omega}\nabla_yG_\Omega^s(x,y)\cdot Y(y)h(y)dy.  
\end{align*}
Note that the RHS of the identity above is well defined since the gradient of the Green function is integrable for $2s>1$. In view of this, the proof of the claim reduces into proving that
\begin{equation}\label{CL-daughter}
\nabla u\cdot Y(x)=\int_{\Omega}\nabla_x G^s_\Omega(x,y)\cdot Y(x)h(y)dy.   
\end{equation}
The identity \eqref{CL-daughter} is an easy consequence of \cite[Theorem 1.1]{Sidy-Franck-BP}, \cite[Theorem 1.6]{Sidy-Franck-BP} and Lemma \ref{Ab}
\par\;
Indeed, by \cite[Theorem 1.1]{Sidy-Franck-BP} we have 
\begin{align}\label{nablau.Y}
\nabla u\cdot Y(x)= -\Gamma^2(1+s)\int_{\partial \Omega} \gamma_0^s(u)\gamma_0^s(G^s_\Omega(x,\cdot)) Y(x)\cdot\nu\,d\sigma -\int_\Omega \nabla_y G^s_\Omega(x,y)\cdot Y(x)h(y)\,dy.
\end{align}
in the other hand, by \cite[Theorem 1.6]{Sidy-Franck-BP} we also have 
\begin{equation*}
 \nabla_{x} G^s_\Omega(x,y)\cdot Y(x)=-\nabla_{y} G^s_\Omega(x,y)\cdot Y(x)  -\Gamma^2(1+s)\int_{\partial \Omega} 
 \gamma_0^s(G^s_\Omega(x,\cdot)) \gamma_0^s(G^s_\Omega(y,\cdot))
 Y(x)\cdot\nu \, d\sigma.    
\end{equation*}
Multiply the latter by $h(y)$ and integrate over $\Omega$ with respect to $y$ and use Lemma \ref{Ab} (with $\psi=h$ and $f=f_x:=\gamma_0^s(G_\Omega^s(x,\cdot))Y(x)\cdot\nu\in C^0(\partial\Omega)$)  to get 
\begin{align}\label{scal-nablaGr.Y-h}
&\int_{\Omega}\nabla_x G^s_\Omega(y,x)\cdot Y(x)h(y)dy\nonumber\\
&=-\Gamma^2(1+s)\int_{\Omega}\Bigg(\int_{\partial \Omega} 
 \gamma_0^s(G^s_\Omega(x,\cdot)) \gamma_0^s(G^s_\Omega(y,\cdot))
 Y(x)\cdot\nu \, d\sigma\Bigg)h(y)dy-\int_{\Omega}\nabla_{y} G^s_\Omega(x,y)\cdot Y(x)  h(y)dy\nonumber\\
 &=-\Gamma^2(1+s)\int_{\partial \Omega} \gamma_0^s(u)\gamma_0^s(G^s_\Omega(x,\cdot)) Y(x)\cdot\nu\,d\sigma-\int_{\Omega}\nabla_{y} G^s_\Omega(x,y)\cdot Y(x)  h(y)dy.
\end{align}
Comparing \eqref{nablau.Y} and \eqref{scal-nablaGr.Y-h} we get \eqref{CL-daughter} which is what we want to prove.
\end{proof}
We are now ready to complete the proof of Theorem \ref{THM-1.2}.
\begin{proof}[Proof of Theorem \ref{THM-1.2} (Completed)]
Let $u_t$ be the unique weak solution of 
\begin{equation}\label{Feq}
\left\{ \begin{array}{rcll} (-\Delta)^s u_t&=& h  &\textrm{in }\;\;\Omega_t \\ u_t&=&0&
\textrm{in }\;\;\R^N\setminus\Omega_t , \end{array}\right. 
\end{equation}
It is a classical result in potential theory that for all $x\in \Omega_t$, we have
$$
u_t(x) = \int_{\Omega_t}G_{\Omega_t}^s(x,y)h(y)dy.
$$
It follows that, for $x\in \Omega$, there holds:  
\begin{align}\label{vt}
u_t\circ \Phi_t(x)=\int_{\Omega_t}G_{\Omega_t}^s(\Phi_t(x),y)h(y)dy=\int_{\Omega}g_t(x_t,y_t)h(\Phi_t(y))\textrm{Jac}_{\Phi_t}(y)dy.
\end{align}
where, as above, we set
$$
g_t(x_t,y_t):=G_{\Omega_t}^s(\Phi_t(x),\Phi_t(y))\quad\text{and}\quad
z_t := \Phi_t(z)\qquad\text{for $z\in \R^N$}.
$$
By \eqref{TGH}, we know that 
\begin{align*}
t^{-1}\big|g_t(x_t,y_t)-g_0(x,y)\big|\leq C(1+|x-y|^{2s-N})\in L^1(\Omega),
\end{align*}
\par\;
In view of this, we may differentiate \eqref{vt} to get 
\begin{align*}
v'(x) := \partial_t(u_t\circ\Phi_t)\big|_{t=0}(x)&=\int_{\Omega}g_s(x,y)h(y)dy+\int_{\Omega}G_\Omega^s(x,y)\big[\nabla h\cdot Y+h\div Y\big]dy\nonumber\\
&=\int_{\Omega}g_s(x,y)h(y)dy+\int_{\Omega}G_\Omega^s(x,y)\div(h Y)dy.
\end{align*}
By Lemma \ref{babyformula}, we know that
\begin{align*}
g_s(x,y)
&= \nabla_y G^s_\Omega(x,y)\cdot Y(y)+\nabla_x G^s_\Omega(x,y)\cdot Y(x)\\
&+\Gamma^2(1+s)\int_{\partial\Omega}\gamma_0^s(G_\Omega^s(x,\cdot))\gamma_0^s(G^s_\Omega(y,\cdot))Y\cdot\nu d\sigma.
\end{align*}  
%
%The idea now would be to multiply the above identity by $h$ and integrate over $\Omega$. However, one can only do this for $s$ in the range $s\in (1/2,1)$ since for $s\in (0,1/2)$ we do not know whether the function $y\mapsto\nabla_y G_\Omega(x,\cdot)\cdot Y(y)$ integrated against $h$ is finite or not. To go around this difficulty and treat both cases simultaneously, we remove the singularity by multiplying with $h\widetilde{\upsilon}^x_\varepsilon$ where $\widetilde{\upsilon}_\varepsilon^x(\cdot) := 1-\upsilon(\frac{x-\cdot}{\varepsilon})$ with $\upsilon\in C^\infty_c(-2,+2)$ such that $\upsilon\equiv 1$ in $(-1,+1)$ and integrate over $\Omega$ to get:
Next, multiply the above identity with $h$ and integrate over $\Omega$ to get
\begin{align*}
v'(x)&=\int_{\Omega}g_s(x,y)h(y)dy+\int_{\Omega}G_\Omega^s(x,y)\div(h Y)dy\nonumber\\
&=\Gamma^2(1+s)\int_{\Omega}\Bigg(\int_{\partial\Omega}\gamma_0^s(G_\Omega^s(x,\cdot))\gamma_0^s(G^s_\Omega(y,\cdot))Y\cdot\nu d\sigma\Bigg)h(y)dy\nonumber\\
&+\int_{\Omega}G_\Omega^s(x,y)\div(h Y)dy+\int_{\Omega}\Bigg(\nabla_y G^s_\Omega(x,y)\cdot Y(y)+\nabla_x G^s_\Omega(x,y)\cdot Y(x)\Bigg)h(y)dy\nonumber\\
&=\Gamma^2(1+s)\int_{\partial\Omega}\gamma_0^s(u)\gamma_0^s(G_\Omega^s(x,\cdot))Y\cdot\nu\,d\sigma+\nabla u\cdot Y(x)
\end{align*}
where we used Lemma \ref{Ab} with $\psi=h$ and $f=f_x:=\gamma_0^s(G_\Omega^s(x,\cdot))Y\cdot\nu\in C^0(\partial\Omega)$ and Lemma \ref{reduc-FY}.
This is the identity we want to prove. The proof of Theorem \ref{THM-1.2} is therefore finished.
\end{proof}
\appendix
\section{Proof of Lemma \ref{Useful-bounds}}\label{Appendix.A}

\begin{Lemma}
Let $\Omega$ and $\{\Phi_t\}_{t\in (-1/2,1/2)}$ satisfy Assumption \ref{transformations}. Then for all $\beta$ in $\R$, there exist $C>0$ and $t_0>0$ such that, for all $t$ in $(-t_0,t_0)$ and all $y,z,h$ in $\R^N$, the following estimates hold:
\begin{align}
    \Big||\Phi_t(y)-\Phi_t(y+z)|^{-\beta}-|\Phi_t(y)-\Phi_t(y-z)|^{-\beta}\Big|&\leq C|z|^{1-\beta}, \label{WWW-append}\\
    |\kappa_t(y+h,z+h)-\kappa_t(y,z)|&\leq C\frac{|h|}{|y-z|^{N+2s}},\label{Sauveur1-append}
\end{align}
where $\kappa_t$ is defined in \eqref{ktyz}.
\end{Lemma}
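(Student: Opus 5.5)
The plan is to derive both estimates from three facts about the family $(\Phi_t)$ that follow from \eqref{transformations}. Since $t\mapsto\Phi_t\in C^{1,1}(\R^N,\R^N)$ is $C^2$ with $\Phi_0=\Id$, there exist $t_0>0$ and $C>0$ such that, for $|t|<t_0$:
\begin{itemize}
\item[(i)] $\|D\Phi_t-\Id\|_{L^\infty}\leq C|t|\leq \tfrac12$;
\item[(ii)] $[D\Phi_t]_{C^{0,1}}\leq C$;
\item[(iii)] consequently, $\tfrac12|y-z|\leq|\Phi_t(y)-\Phi_t(z)|\leq 2|y-z|$ for all $y,z$.
\end{itemize}
Item (iii) comes from writing $\Phi_t(y)-\Phi_t(z)=(y-z)+\int_0^1(D\Phi_t-\Id)(z+\tau(y-z))(y-z)\,d\tau$, which is the uniform bi-Lipschitz bound \eqref{ellip-reg} with explicit constants. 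Combining (i) and (ii), $\textrm{Jac}_{\Phi_t}=\det D\Phi_t$ is bounded above and below and is uniformly Lipschitz. The main point to get right is that all constants are uniform in $t$. This is exactly what the $C^2$-in-$t$ hypothesis with values in $C^{1,1}$ gives, because $\sup_{|t|<t_0}\|\Phi_t-\Id\|_{C^{1,1}}\leq C|t|$.

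For \eqref{WWW-append}, set $a:=|\Phi_t(y+z)-\Phi_t(y)|$ and $b:=|\Phi_t(y)-\Phi_t(y-z)|$. By (iii), $a,b\in[\tfrac12|z|,2|z|]$.
\begin{itemize}
\item By Taylor's formula with the Lipschitz bound (ii) on $D\Phi_t$, both $\Phi_t(y+z)-\Phi_t(y)$ and $\Phi_t(y)-\Phi_t(y-z)$ equal $D\Phi_t(y)z+O(|z|^2)$, with an $O$-constant uniform in $t$ and $y$. Hence $|a-b|\leq C|z|^2$.
\item By the Lipschitz bound alone, $|a-b|\leq a+b\leq C|z|$.
\item Together, $|a-b|\leq C\min(|z|,|z|^2)\leq C|z|^2$. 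The mean value theorem for $r\mapsto r^{-\beta}$ on $[\tfrac12|z|,2|z|]$ then gives
\[
|a^{-\beta}-b^{-\beta}|\leq C|\beta|\,|z|^{-\beta-1}|a-b|\leq C|z|^{1-\beta}.
\]
\end{itemize}
The case $\beta=0$ is trivial.

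For \eqref{Sauveur1-append}, write $\kappa_t=\frac{c_{N,s}}{2}J(y,z)\,D(y,z)^{-N-2s}$ with $J(y,z):=\textrm{Jac}_{\Phi_t}(y)\textrm{Jac}_{\Phi_t}(z)$ and $D(y,z):=|\Phi_t(y)-\Phi_t(z)|$, and split the difference as
\[
\big[J(y+h,z+h)-J(y,z)\big]D(y+h,z+h)^{-N-2s}+J(y,z)\big[D(y+h,z+h)^{-N-2s}-D(y,z)^{-N-2s}\big].
\]
\begin{itemize}
\item The first term is at most $C|h|\,|y-z|^{-N-2s}$, because the Jacobian is uniformly Lipschitz and (iii) holds.
\item For the second term, use
\[
\big(\Phi_t(y+h)-\Phi_t(z+h)\big)-\big(\Phi_t(y)-\Phi_t(z)\big)=\int_0^1\big[D\Phi_t(z+h+\tau(y-z))-D\Phi_t(z+\tau(y-z))\big](y-z)\,d\tau.
\]
By (ii), its norm is at most $C|h||y-z|$, so $|D(y+h,z+h)-D(y,z)|\leq C|h||y-z|$.
\item Both distances lie in $[\tfrac12|y-z|,2|y-z|]$, so the mean value theorem for $r\mapsto r^{-N-2s}$ gives a bound $C|y-z|^{-N-2s-1}\cdot|h||y-z|=C|h||y-z|^{-N-2s}$.
\end{itemize}
Adding the two contributions yields \eqref{Sauveur1-append} with $C$ independent of $y,z,h,t$.
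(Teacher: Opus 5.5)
Your proposal is correct and follows essentially the same route as the paper. The first estimate uses uniform bi-Lipschitz bounds, the second-difference estimate $|2\Phi_t(y)-\Phi_t(y+z)-\Phi_t(y-z)|\leq C|z|^2$ and the mean value theorem; the second uses the same Jacobian/distance splitting with $|b-a|\leq C|h|\,|y-z|$. The only cosmetic difference is that you apply the reverse triangle inequality to the norms directly, whereas the paper works with squared norms via $r_+-r_-=(A_+-A_-)\cdot(A_++A_-)$.
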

\begin{proof}
We first recall that, for $t\neq 0$, we may write
\begin{align}\label{ExpPhit}
  \Phi_t(x)=x+tY_t(x),
\end{align}
where $Y_t:=(\Phi_t-\mathrm{Id})/t$. Assumption \ref{transformations} implies that the family $\{Y_t\}_{t\in (-1/2,1/2)}$ is uniformly Lipschitz and that $\{D\Phi_t\}_{t\in (-1/2,1/2)}$ is uniformly Lipschitz. In particular, there exists $C>0$ such that for all $t$ in $(-1/2,1/2)$ and all $u,v$ in $\R^N$,
\begin{align}\label{LipYt}
  |Y_t(u)-Y_t(v)|\leq C|u-v|,
  \qquad
  \|D\Phi_t(u)-D\Phi_t(v)\|\leq C|u-v|.
\end{align}
Moreover, shrinking $t_0>0$ if needed, \eqref{ellip-reg} yields the uniform bi-Lipschitz bounds
\begin{align}\label{bilipPhi}
  C^{-1}|u-v|\leq |\Phi_t(u)-\Phi_t(v)|\leq C|u-v|,
  \qquad
  t\in (-t_0,t_0).
\end{align}

\medskip

We now prove \eqref{WWW-append}. For fixed $t$ in $(-t_0,t_0)$, $y$ in $\R^N$ and $z\neq 0$, we set
\begin{align}\label{defApm}
  A_+(y,z):=\Phi_t(y)-\Phi_t(y+z),
  \qquad
  A_-(y,z):=\Phi_t(y)-\Phi_t(y-z),
\end{align}
and $r_\pm(y,z):=|A_\pm(y,z)|^2$. Using \eqref{bilipPhi}, we have
\begin{align}\label{r-comp}
  C^{-2}|z|^2\leq r_\pm(y,z)\leq C^{2}|z|^2.
\end{align}
Moreover,
\begin{align}\label{diff-r}
  r_+(y,z)-r_-(y,z)=\big(A_+(y,z)-A_-(y,z)\big)\cdot\big(A_+(y,z)+A_-(y,z)\big).
\end{align}
By \eqref{bilipPhi}, $|A_+(y,z)-A_-(y,z)|=|\Phi_t(y-z)-\Phi_t(y+z)|\leq C|z|$. On the other hand, the Lipschitz bound on $D\Phi_t$ in \eqref{LipYt} implies the second-difference estimate
\begin{align}\label{second-diff-Phi}
  |A_+(y,z)+A_-(y,z)|
  =|2\Phi_t(y)-\Phi_t(y+z)-\Phi_t(y-z)|
  \leq C|z|^2.
\end{align}
Combining \eqref{diff-r} and \eqref{second-diff-Phi} gives $|r_+(y,z)-r_-(y,z)|\leq C|z|^3$.

We apply the mean value theorem to the function $f(r):=r^{-\beta/2}$ on $(0,+\infty)$. Using \eqref{r-comp} we obtain
\begin{align}\label{mvtWWW}
  \Big||A_+(y,z)|^{-\beta}-|A_-(y,z)|^{-\beta}\Big|
  &=\big|f(r_+(y,z))-f(r_-(y,z))\big|\nonumber\\
  &\leq \frac{|\beta|}{2}\sup_{\theta\in (0,1)}\big(\theta r_+(y,z)+(1-\theta)r_-(y,z)\big)^{-\beta/2-1}|r_+(y,z)-r_-(y,z)|\nonumber\\
  &\leq C|z|^{-\beta-2}\,|z|^3=C|z|^{1-\beta},
\end{align}
which is \eqref{WWW-append}.

\medskip

We now prove \eqref{Sauveur1-append}. Recall that
\begin{align}\label{defkappa-append}
  \kappa_t(y,z)=\frac{c_{N,s}}{2}\frac{\mathrm{Jac}_{\Phi_t}(y)\mathrm{Jac}_{\Phi_t}(z)}{|\Phi_t(y)-\Phi_t(z)|^{N+2s}}.
\end{align}
We write
\begin{align}\label{kappa-diff-decomp}
  \kappa_t(y+h,z+h)-\kappa_t(y,z)=\mathcal A_t+\mathcal B_t,
\end{align}
where
\begin{align}\label{defAB}
  \mathcal A_t
  &:=\frac{c_{N,s}}{2}\frac{\mathrm{Jac}_{\Phi_t}(y+h)\mathrm{Jac}_{\Phi_t}(z+h)-\mathrm{Jac}_{\Phi_t}(y)\mathrm{Jac}_{\Phi_t}(z)}{|\Phi_t(y+h)-\Phi_t(z+h)|^{N+2s}},\nonumber\\
  \mathcal B_t
  &:=\frac{c_{N,s}}{2}\mathrm{Jac}_{\Phi_t}(y)\mathrm{Jac}_{\Phi_t}(z)\Big(|\Phi_t(y+h)-\Phi_t(z+h)|^{-N-2s}-|\Phi_t(y)-\Phi_t(z)|^{-N-2s}\Big).
\end{align}
The Jacobian map $y\mapsto \mathrm{Jac}_{\Phi_t}(y)$ is Lipschitz uniformly in $t$ (since $D\Phi_t$ is uniformly Lipschitz), hence
$|\mathrm{Jac}_{\Phi_t}(y+h)\mathrm{Jac}_{\Phi_t}(z+h)-\mathrm{Jac}_{\Phi_t}(y)\mathrm{Jac}_{\Phi_t}(z)|\leq C|h|$.
Together with \eqref{bilipPhi}, this gives
\begin{align}\label{boundA}
  |\mathcal A_t|\leq C\frac{|h|}{|y-z|^{N+2s}}.
\end{align}

For $\mathcal B_t$, we set $a:=\Phi_t(y)-\Phi_t(z)$ and $b:=\Phi_t(y+h)-\Phi_t(z+h)$. By the fundamental theorem of calculus and the Lipschitz bound on $D\Phi_t$ in \eqref{LipYt},
\begin{align}\label{bmina}
  |b-a|
  &=\Big|\int_0^1\big(D\Phi_t(y+\theta h)-D\Phi_t(z+\theta h)\big)h\, d\theta\Big|
  \leq C|y-z|\,|h|.
\end{align}
Since $|a|\simeq |y-z|$ and $|b|\simeq |y-z|$ by \eqref{bilipPhi}, the mean value theorem applied to $r\mapsto r^{-N-2s}$ yields
\begin{align}\label{boundB}
  \Big||b|^{-N-2s}-|a|^{-N-2s}\Big|\leq C|y-z|^{-N-2s-1}\,|b-a|
  \leq C\frac{|h|}{|y-z|^{N+2s}}.
\end{align}
Using the uniform boundedness of $\mathrm{Jac}_{\Phi_t}$, \eqref{boundB} implies
\begin{align}\label{boundB2}
  |\mathcal B_t|\leq C\frac{|h|}{|y-z|^{N+2s}}.
\end{align}
Combining \eqref{kappa-diff-decomp}, \eqref{boundA}, and \eqref{boundB2} proves \eqref{Sauveur1-append}.
\end{proof}

\section{Proof of Lemma \ref{FundLemma2}}\label{Appendix.B}
We recall the statement of the lemma for the reader convinience
\begin{Lemma}
    Let $\kappa_t(\cdot,\cdot)$ be given as in \eqref{ktyz} and set
\begin{equation}
\overline{\kappa_t}(y,z):=\frac{\kappa_t(y,y+z)-\kappa_0(y,y+z)}{t}\qquad\textrm{for $t\neq 0$,}
\end{equation}
Let $ \overline{\kappa_t}^o(\cdot,\cdot)$ be the odd part of $ \overline{\kappa_t}(\cdot,\cdot)$, i.e, 
\begin{equation}
\overline{\kappa_t}^o(y,z)=\frac{\overline{\kappa_t}(y,y+z)- \overline{\kappa_t}(y,y-z)}{2}. 
\end{equation}
Then, there holds  
\begin{equation}\label{SauV}
    \big|\overline{\kappa_t}^o(y,z)\big|\leq C\min(|z|^{-2s-N},|z|^{1-2s-N}), 
    \end{equation}
  for some constant $C>0$ that is independent of $t$ and $z$.
\end{Lemma}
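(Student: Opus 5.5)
The estimate follows from two separate bounds on $\overline{\kappa_t}^o$, valid for all $t$ small. For $|z|\geq 1$ we bound the difference quotient crudely. For $|z|\leq 1$ we exploit the odd symmetry to gain one power of $|z|$, using the second-difference structure already seen in the proof of \eqref{WWW-append}.

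\textbf{Large $|z|$.} By \eqref{expansion-k-t-notations} we have, uniformly in $t$ small,
\[
|\overline{\kappa_t}(y,y\pm z)|\leq |\kappa_Y(y,y\pm z)|+O(|t|)|z|^{-N-2s}\leq C|z|^{-N-2s},
\]
since $\omega_Y$ is bounded, $Y$ being globally Lipschitz. Hence $|\overline{\kappa_t}^o(y,z)|\leq C|z|^{-N-2s}$ for all $z$. This settles $|z|\geq1$, where the minimum equals $|z|^{-N-2s}$.

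\textbf{Small $|z|$.} Since $\kappa_0(y,y+z)=\kappa_0(y,y-z)$, the odd part is
\[
\overline{\kappa_t}^o(y,z)=\frac{\kappa_t(y,y+z)-\kappa_t(y,y-z)}{2t},
\]
and we must show this is $O(|z|^{1-N-2s})$ uniformly in $t$. Write $\kappa_t(y,y\pm z)=\frac{c_{N,s}}{2}J_t(y)J_t(y\pm z)|A_\pm|^{-N-2s}$, with $J_t=\mathrm{Jac}_{\Phi_t}$ and $A_\pm$ as in \eqref{defApm}. Set $F(t,\cdot)$ for this expression and split the difference as
\[
\tfrac{c_{N,s}}{2}J_t(y)\Big[(J_t(y+z)-J_t(y-z))|A_+|^{-N-2s}+J_t(y-z)\big(|A_+|^{-N-2s}-|A_-|^{-N-2s}\big)\Big].
\]
Both brackets vanish at $t=0$, because $J_0\equiv1$ and $|A_\pm|=|z|$ when $t=0$. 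So it suffices to show that their $t$-derivatives are bounded by $C|z|^{1-N-2s}$ uniformly for $|t|<t_0$. The mean value theorem in $t$ then gives the bound for the quotient by $t$.

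\textbf{The Jacobian bracket.} We have $\partial_t J_t$ Lipschitz in space uniformly in $t$, because $\partial_t D\Phi_t$ is Lipschitz by \eqref{transformations}. Hence
\[
|\partial_t(J_t(y+z)-J_t(y-z))|\leq C|z|,
\]
and also $|J_t(y+z)-J_t(y-z)|\leq C|t||z|$. Together with $|A_+|\simeq|z|$ this contributes $C|z|^{1-N-2s}$.

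\textbf{The distance bracket, the main step.} Write $r_\pm=|A_\pm|^2$. Then
\[
\partial_t\big(r_+^{-\frac{N+2s}{2}}-r_-^{-\frac{N+2s}{2}}\big)=-\tfrac{N+2s}{2}\Big[r_+^{-\frac{N+2s}{2}-1}\partial_tr_+-r_-^{-\frac{N+2s}{2}-1}\partial_tr_-\Big].
\]
We split this as
\[
\big(r_+^{-\frac{N+2s}{2}-1}-r_-^{-\frac{N+2s}{2}-1}\big)\partial_tr_+ + r_-^{-\frac{N+2s}{2}-1}\,\partial_t(r_+-r_-).
\]
\emph{First term.} From the proof of \eqref{WWW-append}, $|r_+-r_-|\leq C|z|^3$, so the mean value theorem gives
\[
\big|r_+^{-\frac{N+2s}{2}-1}-r_-^{-\frac{N+2s}{2}-1}\big|\leq C|z|^{-N-2s-4}|z|^3.
\]
Also $|\partial_tr_\pm|\leq C|z|^2$, since $\partial_t\Phi_t$ is Lipschitz. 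The first term is therefore $O(|z|^{1-N-2s})$.

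\emph{Second term.} We write $\partial_t(r_+-r_-)=\partial_t\big[(A_+-A_-)\cdot(A_++A_-)\big]$. Here $A_++A_-$ and $\partial_t(A_++A_-)$ are second differences of $\Phi_t$ and of $\partial_t\Phi_t$, so both are $O(|z|^2)$ by the $C^{1,1}$ regularity of these maps, uniformly in $t$. Meanwhile $A_+-A_-$ and $\partial_t(A_+-A_-)$ are $O(|z|)$. Hence $|\partial_t(r_+-r_-)|\leq C|z|^3$, and multiplying by $r_-^{-\frac{N+2s}{2}-1}\simeq|z|^{-N-2s-2}$ gives $C|z|^{1-N-2s}$.

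\textbf{Where the difficulty lies.} The main point is to make these second-difference bounds uniform in $t$. This requires $t\mapsto\Phi_t$ to be $C^1$ with values in $C^{1,1}$ (and the $C^{1,1}$ norms of $\Phi_t$, $\partial_t\Phi_t$ bounded for $|t|<t_0$), which I read into \eqref{transformations}. Combining the two ranges of $z$ gives \eqref{SauV}.
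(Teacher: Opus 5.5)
Your proposal is correct and follows essentially the same route as the paper. The crude bound $C|z|^{-2s-N}$ comes from \eqref{expansion-k-t-notations}. The bound $C|z|^{1-2s-N}$ comes from writing the odd part as $\frac{1}{2t}\big[\kappa_t(y,y+z)-\kappa_t(y,y-z)\big]$ and passing to the $t$-derivative (you use the mean value theorem, the paper the fundamental theorem of calculus in $\tau$). That derivative is then controlled through spatial Lipschitz bounds on $\mathrm{Jac}_{\Phi_t}$ and $\partial_t\mathrm{Jac}_{\Phi_t}$, together with second-difference estimates for $\Phi_t$ and $\partial_t\Phi_t$. The differences are only organizational: you split into a Jacobian bracket and a distance bracket before differentiating, whereas the paper differentiates first and splits into four terms. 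Your identity $\partial_t(r_+-r_-)=\partial_t\big[(A_+-A_-)\cdot(A_++A_-)\big]$ is a slightly tidier form of the paper's estimate of $\Psi_\tau(y,y-z)-\Psi_\tau(y,y+z)$.
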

\begin{proof}
The fact that $|\overline{\kappa_t}^o(y,z)|\leq C|z|^{-2s-N}$ follows from the expansion \eqref{expansion-k-t-notations}. To get the other bound, we use the fundamental theorem of calculus and write:
\begin{align}\label{E1}
2\overline{\kappa_t}^o(y,z)&=\overline{\kappa_t}(y,y+z)-\overline{\kappa_t}(y,y-z)\nonumber  \\
&=\frac{\kappa_t(y,y+z)-\kappa_0(y,y+z)}{t}-\frac{\kappa_t(y,y-z)-\kappa_0(y,y-z)}{t}\nonumber\\
&=t^{-1}\int_{0}^t\partial_\tau \Big[\kappa_\tau(y,y+z)-\kappa_\tau(y,y-z)\Big]d\tau.
\end{align}
We are going to prove that the quantity under the integral sign can be controlled uniformly in $t$ and $z$ by the power function $|z|^{1-2s-N}$. It will be convenient to use from now on the following notation. 
$$
z_t:=\Phi_t(z)\qquad\text{for $z\in\R^N$}.
$$  
For the sake of simplicity, we also set
\begin{align*}
&\Lambda_\tau(y,y+z):=(2s+N)\big(\partial_\tau y_\tau-\partial_\tau (y+z)_\tau\big)\cdot(y_\tau-(y+z)_\tau)|y_\tau-(y+z)_\tau|^{-2-2s-N},\\
&\Psi_\tau(y,y+z):=   \big(\partial_\tau y_\tau-\partial_\tau (y+z)_\tau\big)\cdot(y_\tau-(y+z)_\tau),\\
&\Gamma_\tau(y,y+z):=\textrm{Jac}_{\Phi_\tau}(y)\partial_\tau \textrm{Jac}_{\Phi_\tau}(y+z)+\textrm{Jac}_{\Phi_\tau}(y+z)\partial_\tau\textrm{Jac}_{\Phi_\tau}(y).
\end{align*}
\par\;
Then a direct calculation gives 
\begin{align*}
    \partial_\tau \kappa_\tau(y,y+z) &= |y_\tau-(y+z)_\tau|^{-2s-N}\Gamma_\tau(y,y+z) -\textrm{Jac}_{\Phi_\tau}(y)\textrm{Jac}_{\Phi_\tau}(y+z)\Lambda_\tau(y,y+z).
\end{align*}
Consequently, we have 
\begin{align}\label{E2}
& \partial_\tau \Big[\kappa_\tau(y,y+z)-\kappa_\tau(y,y-z)\Big]\nonumber\\
%&=\textrm{Jac}_{\Phi_\tau}(y)\textrm{Jac}_{\Phi_\tau}(y-z)\Lambda_\tau(y,y-z)-\textrm{Jac}_{\Phi_\tau}(y)\textrm{Jac}_{\Phi_\tau}(y+z)\Lambda_\tau(y,y+z)\\
%&+|y_\tau-(y+z)_\tau|^{-2s-N}\Gamma_\tau(y,y+z)-|y_\tau-(y-z)_\tau|^{-2s-N}\Gamma_\tau(y,y-z)\\
&=\textrm{Jac}_{\Phi_\tau}(y)\Big[\textrm{Jac}_{\Phi_\tau}(y-z)-\textrm{Jac}_{\Phi_\tau}(y+z)\Big]\Lambda_\tau(y,y-z)\nonumber\\
&+\textrm{Jac}_{\Phi_\tau}(y)\textrm{Jac}_{\Phi_\tau}(y+z)\Big[\Lambda_\tau(y,y-z)-\Lambda_\tau(y,y+z)\Big]\nonumber\\
&+\Big[|y_\tau-(y+z)_\tau|^{-2s-N}-|y_\tau-(y-z)_\tau|^{-2s-N}\Big]\Gamma_\tau(y,y+z)\nonumber\\
&+|y_\tau-(y-z)_\tau|^{-2s-N}\Big[\Gamma_\tau(y,y+z)-\Gamma_\tau(y,y-z)\Big].
\end{align}
\par\;
Next, we rewrite the second contribution from the above expansion as:
\begin{align}
&\frac{1}{2s+N}\Big|\Lambda_\tau(y,y-z)-\Lambda_\tau(y,y+z)\Big|\nonumber\\
&=\Big|\frac{\Psi_\tau(y,y-z)}{|y_\tau-(y-z)_\tau|^{N+2s+2}} -\frac{\Psi_\tau(y,y+z)}{|y_\tau-(y+z)_\tau|^{N+2s+2}}\Big|\nonumber\\
&=\big|\Psi_\tau(y,y-z)\big|\Big||y_\tau-(y-z)_\tau|^{-2-2s-N}-|y_\tau-(y+z)_\tau|^{-2-2s-N}\Big|\nonumber\\
&+|y_\tau-(y+z)_\tau|^{-2s-2-N}\Big|\Psi_\tau(y,y-z)-\Psi_\tau(y,y+z)\Big|.\label{E3}
\end{align}
The last term in \eqref{E3} can also be rewritten as:
\begin{align}\label{E4}
\Big|\Psi_\tau(y,y-z)-\Psi_\tau(y,y+z)\Big|
&=\Big|\Big(2\partial_\tau y_\tau-\partial_\tau(y+z)_\tau-\partial_\tau(y-z)_\tau\Big)\cdot\big(y_\tau-(y-z)_\tau\big)\nonumber\\
\qquad\qquad&+\big(\partial_\tau(y+z)_\tau-\partial_\tau y_\tau\big)\cdot\Big(2y_\tau-(y+z)_\tau-(y-z)_\tau\Big)\Big|\nonumber\\
&\leq \Big|2\partial_\tau y_\tau-\partial_\tau(y+z)_\tau-\partial_\tau(y-z)_\tau\Big|\big|y_\tau-(y-z)_\tau\big|\nonumber\\
&+\big|\partial_\tau(y+z)_\tau-\partial_\tau y_\tau\big|\Big|2y_\tau-(y+z)_\tau-(y-z)_\tau\Big|.
\end{align}
\par\;

By using the fundamental theorem of calculus and the hypothesis \eqref{transformations}, we see that (see proof of Lemma \ref{Useful-bounds})
\begin{align}\label{E5}
 \Big|2y_\tau-(y+z)_\tau-(y-z)_\tau\Big| &= \Big|2\Phi_\tau(y)-\Phi_\tau(y+z)-\Phi_\tau(y-z)\Big| \leq C|z|^2,\\
 \Big|2\partial_\tau y_\tau-\partial_\tau(y+z)_\tau-\partial_\tau(y-z)_\tau\Big|&=\Big|2\partial_\tau \Phi_\tau(y)-\partial_\tau\Phi_\tau(y+z)-\partial_\tau\Phi_\tau(y-z)\Big|\leq C|z|^2.
\end{align}
\par\;

By \eqref{ellip-reg} we already know that 
\begin{align}\label{E6}
 \big|y_\tau-(y-z)_\tau\big|\leq C|z|,
\end{align}
for some $C>0$ independent of $\tau,z$, and also
\begin{equation}\label{E6.1}
\big|\partial_\tau(y+z)_\tau-\partial_\tau y_\tau\big| \leq C|z|.
\end{equation}
Hence 
\begin{align}\label{E7}
    |\Psi_\tau(y,y+z)|=   \big|\partial_\tau y_\tau-\partial_\tau (y+z)_\tau\big||y_\tau-(y+z)_\tau|\leq C|z|^2.
\end{align}
By Lemma \ref{Useful-bounds} we also know that 
\begin{align}\label{E8}
\Big||y_\tau-(y+z)_\tau|^{-2-2s-N}-|y_\tau-(y-z)_\tau|^{-2-2s-N}\Big|\leq C|z|^{-1-2s-N},
\end{align}
and 
\begin{equation}\label{E9}
\Big||y_\tau-(y+z)_\tau|^{-2s-N}-|y_\tau-(y-z)_\tau|^{-2s-N}\Big|\leq C|z|^{1-2s-N}.
\end{equation}
\par\;

%Thanks to the assumption \eqref{transformations}, we may write:
%\begin{align}\label{ExpPhit}
%\Phi_t(x)=x+t Y_t(x)\qquad \text{with}\qquad Y_t(x) = \frac{1}{t}\int_{0}^t\partial_\tau \Phi_\tau(x)d\tau.
%\end{align}
%Since the mapping  $(-1,+1)\ni t\mapsto \partial_t\Phi_t\in C^{1,1}(\R^N,\R^N)$ is continuous, then , for $|t|\ll 1/2$ small enough, we have:
%\begin{align*}
%\big|Y_t(u)-Y_t(v)\big|&=\frac{1}{t}\int_{0}^t\big|\partial_\tau\Phi_\tau(u)-\partial_\tau\Phi_\tau(v)\big|d\tau\nonumber\\
%&\leq |u-v|\sup_{\tau\in [-1/2, +1/2]}\|\partial_\tau\Phi_\tau\|_{C^{0,1}(\R^N,\R^N)}\nonumber\\
%&\leq C|u-v|,\quad\text{$\forall\, u,v\in\R^N$.}
%\end{align*}
%It follows that 
%$$
%DY_t(u)=\frac{1}{t}\int_{0}^tD(\partial_\tau\Phi_\tau)(u)d\tau,
%$$
%and hence
%\begin{align*}
%\big|DY_t(u)-DY_t(v)\big|&=\frac{1}{t}\int_{0}^t\big|D(\partial_\tau\Phi_\tau)(u)-D(\partial_\tau\Phi_\tau)(v)\big|d\tau\nonumber\\
%&\leq |u-v|\sup_{\tau\in [-1/2, +1/2]}\|D\partial_\tau\Phi_\tau\|_{C^{0,1}(\R^N,\R^N)}\nonumber\\
%&\leq C|u-v|,\quad\text{$\forall\, u,v\in\R^N$.}
%\end{align*}
%In other words, for $|t|\ll 1/2$ sufficiently small we have 
%\begin{equation}\label{KB}
% \|Y_t\|_{C^{0,1}(\R^N,\R^N)}\leq C_0,\quad  
%   \| DY_t\|_{L^\infty(\R^N\times\R^N)}\leq %C_0\qquad\text{and}\qquad \|DY_t\|_{C^{0,1}(\R^N,\,\R^N\times\R^N)}\leq C_0, 
%\end{equation}
%for some $C>0$ that is independent of $t$
%\par\;

Since $\textrm{Jac}_{\Phi_\tau}(z)= \det(\textrm{Id}+\tau DY_\tau(z))$ and we know that the determinant is globally Lipschitz continuous in bounded sets of matrices, then in view of \eqref{LipYt}, we have, for all $u,v\in \R^N$, that
\begin{align}\label{Jacob-Lip}
\Big|\textrm{Jac}_{\Phi_\tau}(u)-\textrm{Jac}_{\Phi_\tau}(v)\Big| &= \Big|\det(\textrm{Id}+\tau DY_\tau(u))-\det(\textrm{Id}+\tau DY_\tau(v))\Big|\nonumber\\
&\leq C\|DY_\tau(u)-DY_\tau(v)\|\nonumber\\
&\leq C|u-v|,
\end{align}
for some $C>0$ that is independent of $\tau$.
It follows that
\begin{align}\label{E10}
\Big|\textrm{Jac}_{\Phi_\tau}(y-z)-\textrm{Jac}_{\Phi_\tau}(y+z)\Big| \leq C|z|,
\end{align}
for some $C>0$ that is independent of $\tau$ and $z$. Next using the classical identity
$$
\partial_t \det\big(\mathrm{Id}+\,M(t)\big)
=
\det\big(\mathrm{Id}+\,M(t)\big)\,
\operatorname{tr}\!\left[
\big(\mathrm{Id}+\,M(t)\big)^{-1}
\,M'(t)
\right],
$$
we write 
\begin{align}
\partial_\tau\textrm{Jac}_{\Phi_\tau}(z)
&=\partial_\tau\det(\textrm{Id}+\tau DY_\tau(z))\nonumber\\
&=\partial_\tau\det\Big(\textrm{Id}+\int_{0}^\tau D(\partial_r\Phi_r)(z)dr\Big)\nonumber\\
&=\det\big(\textrm{Id}+\tau DY_\tau(z)\big)\operatorname{tr}\Big[\big(\textrm{Id}+\tau DY_\tau(z)\big)^{-1}D(\partial_\tau\Phi_\tau)(z)\Big]\nonumber\\
&=\textrm{Jac}_{\Phi_\tau}(z)\operatorname{tr}\Big[\big(\textrm{Id}+\tau DY_\tau(z)\big)^{-1}D(\partial_\tau\Phi_\tau)(z)\Big].\label{partial-Jac}
\end{align}
By the hypothesis \eqref{transformations} we clearly have
\begin{equation}\label{c1-partial-Jac}
\|D(\partial_\tau\Phi_\tau)\|_{L^\infty(\R^N\times\R^N)}\leq \sup_{\tau\in [-1/2,+1/2]} \|\partial_\tau\Phi_\tau\|_{C^{0,1}(\R^N,\R^N)}\leq C.
\end{equation}
And also, since the mapping $\tau\mapsto D(\partial_\tau\Phi_\tau)\in C^{0,1}(\R^N,\R^N)$ is continuous, then we have 
\begin{equation}\label{c2-partial-Jac}
    \|D(\partial_\tau\Phi_\tau)(u)-D(\partial_\tau\Phi_\tau)(v)\|_{L^\infty(\R^N)}\leq \sup_{\tau\in [-1/2,+1/2]}\|D(\partial_\tau\Phi_\tau)\|_{C^{0,1}(\R^N,\R^N)}|u-v|\leq C|u-v|,
\end{equation}
for all $u,v\in \R^N$. And finally, using the identity
$$
\big(\textrm{Id}+\tau DY_\tau(z)\big)^{-1} = \sum_{k=0}^\infty (-1)^k\tau^k(DY_\tau(z))^k,
$$
we have, first 
\begin{equation}\label{c3-partial-Jac}
\Big\|\big(\textrm{Id}+\tau DY_\tau(z)\big)^{-1}\Big\|_{L^\infty(\R^N\times\R^N)}\leq \sum_{k=0}^\infty |\tau|^kC_0^k\leq 2\quad\text{for $|\tau|<1/(2C_0)$}.
\end{equation}
On the other hand, we have in view of \eqref{LipYt}, that
\begin{align}\label{c4-partial-Jac}
 &\Big\|\big(\textrm{Id}+\tau DY_\tau(u)\big)^{-1}-\big(\textrm{Id}+\tau DY_\tau(v)\big)^{-1}\Big\|\nonumber\\
 &\leq \sum_{k=0}^\infty (-1)^k|\tau|^k\Big\|(DY_\tau(u))^k-(DY_\tau(v))^k\Big\|\nonumber\\
 &\leq \sum_{k=0}^\infty (-1)^k|\tau|^k\Big\|DY_\tau(u)-DY_\tau(v)\Big\|\sum_{j=0}^{k-1}\Big\|DY_\tau(v)\Big\|^{k-1-j}\Big\|DY_\tau(u)\Big\|^j\nonumber\\
 &\leq C_0|u-v|\sum_{k=0}^\infty |\tau|^kkC_0^{k-1}\nonumber\\
 &\leq |u-v|\frac{C_0|\tau|}{(1-C_0|\tau|)^2}\leq 2|u-v|\quad\text{for $|\tau|<1/(2C_0)$},
\end{align}
for all $u,v\in \R^N$. Combining \eqref{partial-Jac}, \eqref{c1-partial-Jac}, \eqref{c2-partial-Jac}, \eqref{c3-partial-Jac} and \eqref{c4-partial-Jac} we get
\begin{align}\label{E11}
\Big|\partial_\tau\textrm{Jac}_{\Phi_\tau}(y-z)-\partial_\tau\textrm{Jac}_{\Phi_\tau}(y+z)\Big| \leq C|z|.
\end{align}

Combining \eqref{E1}--\eqref{E10} and \eqref{E11} we get

\begin{align}
  \Big|\partial_\tau(\kappa_\tau(y,y+z)-\kappa_\tau(y,y-z))\Big|\leq C|z|^{1-2s-N},
\end{align}
for some $C>0$ that is independent of $\tau$ and $z$. Use this into \eqref{E1} gives 
$$
|\overline{\kappa_t}^o(y,z)|\leq C|z|^{1-2s-N},
$$
as wanted.    
\end{proof}
\section{A useful integral identity involving the kernel $\omega^x_{Y}(\cdot,\cdot)$}

Recall the definition of $\omega^x_{Y}$ in \eqref{defWsam}. For a fixed $x$ in $\Omega$, we set
\begin{align}\label{defYtilde}
  \widetilde{Y_x}(y):=DY(x)\cdot y,
\end{align}
so that $\widetilde{Y_x}$ is a linear (hence globally Lipschitz) vector field on $\R^N$.

\begin{Lemma}\label{appen}
For all $x$ in $\Omega$ and all $w$ in $C^\infty_0(\R^N)$, we have
\begin{align}\label{surprise2}
  & \iint_{\R^{2N}}\frac{\big(w(y)-w(z)\big)\big(|y|^{2s-N}-|z|^{2s-N}\big)}{|y-z|^{N+2s}}\omega^x_{Y}(y,z)\, dy\, dz \nonumber\\
  &\qquad=-(N-2s)\int_{\R^N}\frac{\big[DY(x)\cdot z\big]\cdot z}{|z|^{N-2s+2}}(-\Delta)^s w(z)\, dz.
\end{align}
\end{Lemma}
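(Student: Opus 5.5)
The plan is to recognise the left-hand side as a first variation of the fractional energy along a \emph{linear} flow, and to compute that variation in two ways. Set $A:=DY(x)$ and $\widetilde{Y_x}(y)=Ay$ as in \eqref{defYtilde}. Since $\div\widetilde{Y_x}\equiv\operatorname{tr}A=\div Y(x)$ and $(\widetilde{Y_x}(y)-\widetilde{Y_x}(z))\cdot(y-z)=[A(y-z)]\cdot(y-z)$, we have exactly
\[
\omega^x_Y(y,z)=\omega_{\widetilde{Y_x}}(y,z),
\]
with $\omega_{\widetilde{Y_x}}$ as in \eqref{defoY-notations}. Hence the left-hand side of \eqref{surprise2} equals $E_{\widetilde{Y_x}}(w,v)$ from \eqref{duality-pairing}, with $v:=|\cdot|^{2s-N}$.

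\textbf{Two computations of a derivative.} I would take $\Phi_t(y)=(\Id+tA)y$ and differentiate at $t=0$ the quantity
\[
e(t):=\iint_{\R^{2N}}\frac{\big(w(\Phi_ty)-w(\Phi_tz)\big)\big(v(\Phi_ty)-v(\Phi_tz)\big)}{|y-z|^{N+2s}}\,dy\,dz .
\]
\begin{itemize}
\item \emph{Change of variables.} Substituting $y'=\Phi_ty$, $z'=\Phi_tz$ moves the $t$-dependence into the kernel $\det(\Id+tA)^{-2}\,|\Phi_t^{-1}(y-z)|^{-N-2s}$. Its $t$-derivative at $0$ is $\big(-2\operatorname{tr}A+(N+2s)[A(y-z)]\cdot(y-z)/|y-z|^2\big)|y-z|^{-N-2s}$, which is $-\frac{4}{c_{N,s}}\kappa_{\widetilde{Y_x}}$. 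So $e'(0)$ is a fixed constant multiple of $E_{\widetilde{Y_x}}(w,v)$.
\item \emph{Chain rule.} Differentiating in the original form and using that $\iint(f(y)-f(z))(g(y)-g(z))|y-z|^{-N-2s}$ equals $\frac{2}{c_{N,s}}\int f\,(-\Delta)^sg$ gives
\[
e'(0)=\frac{2}{c_{N,s}}\Big[\int \widetilde{Y_x}\cdot\nabla w\,(-\Delta)^sv+\int\widetilde{Y_x}\cdot\nabla v\,(-\Delta)^sw\Big].
\]
\end{itemize}
This is exactly the content of \cite[Lemma 2.1]{DFW2023}, here for a linear field and a non-$H^s$ function $v$.

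\textbf{Evaluating the two terms.} Since $(-\Delta)^s v=b_{N,s}^{-1}\delta_0$, the first term is $b_{N,s}^{-1}\,\widetilde{Y_x}(0)\cdot\nabla w(0)=0$, because $\widetilde{Y_x}(0)=0$. For the second term, $\widetilde{Y_x}\cdot\nabla v(z)=(2s-N)[Az]\cdot z\,|z|^{2s-N-2}$, which produces the right-hand side of \eqref{surprise2}. The overall sign and constant come out of equating the two expressions for $e'(0)$, and I would track them carefully against the normalisation of $\kappa_Y$ in \eqref{kappaY}.

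\textbf{Justification, the main obstacle.} The delicate point is that $v\notin H^s$ and is not compactly supported, so neither differentiation under the integral nor the use of $(-\Delta)^s v=b_{N,s}^{-1}\delta_0$ is automatic. For the difference quotients I would use dominated convergence, with dominating functions from the estimate \eqref{diff-quotient-fund-sol} (here with $x=0$) and from the integrability shown in Proposition \ref{FundLemma} (cf.\ \eqref{dual-prod-f_xyz-psi}); the kernel expansion is controlled by \eqref{expansion-k-t-notations}. The pairing $\int \widetilde{Y_x}\cdot\nabla w\,(-\Delta)^s v$ should be read as the weak form $\frac{c_{N,s}}{2}\iint(v(y)-v(z))(\phi(y)-\phi(z))|y-z|^{-N-2s}$ with $\phi=\widetilde{Y_x}\cdot\nabla w\in C^\infty_0$, which equals $b_{N,s}^{-1}\phi(0)$. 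Finally, the integral $\int [Az]\cdot z\,|z|^{2s-N-2}(-\Delta)^sw$ converges absolutely: the weight is of order $|z|^{2s-N}$, which is locally integrable, and $(-\Delta)^sw=O(|z|^{-N-2s})$ at infinity.

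If the direct differentiation proves awkward, I would instead first cut off $v$ near $0$ and near infinity, apply \cite[Lemma 2.1]{DFW2023} to the truncated smooth function, and then remove the cutoffs. The error terms would be handled in the spirit of Lemma \ref{firstconvergenceresult}.
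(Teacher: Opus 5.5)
Your route is the paper's. You recognise $\omega^x_Y=\omega_{\widetilde{Y_x}}$, apply the identity of \cite[Lemma 2.1]{DFW2023} (Lemma~\ref{preumlem}) with $v=|\cdot|^{2s-N}$, and discard the $\delta_0$-term because $\widetilde{Y_x}(0)=0$. The paper justifies this with the cutoffs $a^R_\mu$, which is your fallback. Your direct differentiation along $\Phi_t=\Id+tA$ is an acceptable substitute, and your integrability remarks are fine.

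The gap is the one step you postpone, the sign. Carrying it out, the $t$-derivative at $t=0$ of $\det(\Id+tA)^{-2}|(\Id+tA)^{-1}(y-z)|^{-N-2s}$ is $-\frac{2}{c_{N,s}}\kappa_{\widetilde{Y_x}}(y,z)$, not $-\frac{4}{c_{N,s}}\kappa_{\widetilde{Y_x}}$. Equating your two expressions for $e'(0)$ then gives
\[
E_{\widetilde{Y_x}}(w,v)=-\int_{\R^N}\widetilde{Y_x}\cdot\nabla w\,(-\Delta)^s v\,dz-\int_{\R^N}\widetilde{Y_x}\cdot\nabla v\,(-\Delta)^s w\,dz=(N-2s)\int_{\R^N}\frac{[DY(x)\cdot z]\cdot z}{|z|^{N-2s+2}}(-\Delta)^s w(z)\,dz ,
\]
which is the \emph{negative} of the right-hand side of \eqref{surprise2}. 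With your factor $4$ you would get half of this, still with the opposite sign.

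This cannot be repaired by bookkeeping, because \eqref{surprise2} is false as stated. Choose $Y$ with $DY(x)=\Id$, so that $\omega^x_Y\equiv\frac{c_{N,s}}{2}(N-2s)>0$. Let $w\geq 0$ be radial, nonincreasing in $|y|$, with $w(0)>0$. Then $(w(y)-w(z))(|y|^{2s-N}-|z|^{2s-N})\geq 0$ everywhere and is positive on a set of positive measure. So the left-hand side is positive; in fact it equals $(N-2s)b_{N,s}^{-1}w(0)$. The right-hand side, however, is $-(N-2s)\int_{\R^N}|z|^{2s-N}(-\Delta)^s w\,dz=-(N-2s)b_{N,s}^{-1}w(0)<0$.

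Your sentence ``which produces the right-hand side of \eqref{surprise2}'' is therefore exactly the unverified and incorrect step. What your argument actually proves is the identity with $+(N-2s)$, and you should have carried the sign through and flagged the discrepancy.

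The paper's own proof gives the same result: combining \eqref{step-preumlem} with \eqref{combibi2} yields $+(N-2s)$, so the minus sign in the statement is a slip. It is offset downstream, because \eqref{toproove} writes the first integral with $\rho(|y|^2)-\rho(|z|^2)$ instead of the order $\rho(|q|^2)-\rho(|p|^2)$ used in $a_Y[\rho]$ in \eqref{def-a-y-rho}.
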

\begin{proof}
Let $\eta$ in $C^\infty(\R)$ satisfy $0\leq \eta\leq 1$, $\eta\equiv 0$ on $(-\infty,1]$, and $\eta\equiv 1$ on $[2,+\infty)$. For $\mu>0$ and $R>0$, we set
\begin{align}\label{def-cutoff-muR}
  \eta_\mu(y):=\eta\Big(\frac{|y|}{\mu}\Big),
  \qquad
  \widetilde{\eta}_R(y):=\eta\Big(\frac{|y|}{R}\Big),
  \qquad
  a^R_{\mu}(y):=|y|^{2s-N}\eta_\mu(y)\widetilde{\eta}_R(y).
\end{align}
Since $w$ has compact support, dominated convergence gives
\begin{align}\label{approx-kernel}
  &\iint_{\R^{2N}}\frac{\big(w(y)-w(z)\big)\big(|y|^{2s-N}-|z|^{2s-N}\big)}{|y-z|^{N+2s}}\omega^x_{Y}(y,z)\, dy\, dz \nonumber\\
  &\qquad=\lim_{\mu\to 0^+}\lim_{R\to +\infty}\iint_{\R^{2N}}\frac{\big(w(y)-w(z)\big)\big(a^R_{\mu}(y)-a^R_{\mu}(z)\big)}{|y-z|^{N+2s}}\omega^x_{Y}(y,z)\, dy\, dz.
\end{align}
Applying Lemma \ref{preumlem} with $v=a^R_{\mu}$, we get
\begin{align}\label{step-preumlem}
  &\iint_{\R^{2N}}\frac{\big(w(y)-w(z)\big)\big(a^R_{\mu}(y)-a^R_{\mu}(z)\big)}{|y-z|^{N+2s}}\omega^x_{Y}(y,z)\, dy\, dz \nonumber\\
  &\qquad= -\int_{\R^N}\nabla w(y)\cdot \widetilde{Y_x}(y)(-\Delta)^s a^R_{\mu}(y)\, dy
          -\int_{\R^N}\nabla a^R_{\mu}(y)\cdot \widetilde{Y_x}(y)(-\Delta)^s w(y)\, dy.
\end{align}
We claim that the first term on the right-hand side vanishes as $\mu\to 0^+$ and $R\to+\infty$. Indeed, since $\widetilde{Y_x}(y)=DY(x)\cdot y$, the factor $\nabla w(y)\cdot \widetilde{Y_x}(y)$ vanishes at $y=0$, and $(-\Delta)^s a^R_{\mu}$ converges (in the sense of distributions) to a multiple of $\delta_0$ as $\mu\to 0^+$ and $R\to+\infty$.
Therefore,
\begin{align}\label{claim2prove}
  \lim_{\mu\to 0^+}\lim_{R\to +\infty}\int_{\R^N}\nabla w(y)\cdot \widetilde{Y_x}(y)(-\Delta)^s a^R_{\mu}(y)\, dy=0.
\end{align}
For the second term, we use $\nabla a^R_{\mu}(y)\to \nabla(|y|^{2s-N})$ for all $y\neq 0$, and
\begin{align}\label{grad-fundsol}
  \nabla(|y|^{2s-N})=-(N-2s)\frac{y}{|y|^{N-2s+2}}.
\end{align}
Since $(-\Delta)^s w$ is smooth and satisfies the decay bound $|(-\Delta)^s w(y)|\leq C(1+|y|)^{-N-2s}$, another application of dominated convergence yields
\begin{align}\label{combibi2}
  &\lim_{\mu\to 0^+}\lim_{R\to +\infty}\int_{\R^N}\nabla a^R_{\mu}(y)\cdot \widetilde{Y_x}(y)(-\Delta)^s w(y)\, dy \nonumber\\
  &\qquad=-(N-2s)\int_{\R^N}\frac{\big[DY(x)\cdot y\big]\cdot y}{|y|^{N-2s+2}}(-\Delta)^s w(y)\, dy.
\end{align}
Combining \eqref{approx-kernel}--\eqref{combibi2} proves \eqref{surprise2}.
\end{proof}
\begin{Lemma}\label{preumlem}
For all $x$ in $\Omega$ and all $w,v$ in $C^\infty_0(\R^N)$, we have
\begin{align}\label{FS-int-01}
    &\iint_{\R^{2N}}\frac{\big(w(y)-w(z)\big)\big(v(y)-v(z)\big)}{|y-z|^{N+2s}}\omega^x_{Y}(y,z)\, dy\, dz \nonumber\\
    &\qquad=-\int_{\R^N}\nabla w(z)\cdot \widetilde{Y_x}(z)(-\Delta)^s v(z)\, dz
           -\int_{\R^N}\nabla v(z)\cdot \widetilde{Y_x}(z)(-\Delta)^s w(z)\, dz.
\end{align}
In particular, when $w=v$ we obtain
\begin{align}\label{FS-int}
    \iint_{\R^{2N}}\frac{\big(w(y)-w(z)\big)^2}{|y-z|^{N+2s}}\omega^x_{Y}(y,z)\, dy\, dz
    =-2\int_{\R^N}\nabla w(y)\cdot \widetilde{Y_x}(y)(-\Delta)^s w(y)\, dy.
\end{align}
\end{Lemma}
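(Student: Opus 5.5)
The plan is to prove \eqref{FS-int-01} first for the model case of a linear field and then assemble it, using the invariance of the Gagliardo energy under the flow of $\widetilde{Y_x}$. Let $\Psi_t(y):=y+t\,\widetilde{Y_x}(y)=(\Id+tDY(x))y$. For $|t|$ small this is a linear isomorphism of $\R^N$ with constant Jacobian $J_t=\det(\Id+tDY(x))$. Consider the bilinear form
\[
\mathcal E_s(w\circ\Psi_t^{-1},v\circ\Psi_t^{-1})=\frac{c_{N,s}}{2}\iint_{\R^{2N}}\frac{(w(y)-w(z))(v(y)-v(z))}{|\Psi_t(y)-\Psi_t(z)|^{N+2s}}J_t^2\,dy\,dz,
\]
obtained by changing variables $y\mapsto\Psi_t(y)$ and $z\mapsto\Psi_t(z)$.

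The first step is to differentiate this expression at $t=0$ under the integral sign. We have $\partial_t J_t|_{t=0}=\div\widetilde{Y_x}=\div Y(x)$, and, by \eqref{UExp} with $Y(y)-Y(z)$ replaced by $DY(x)(y-z)$,
\[
\partial_t|\Psi_t(y)-\Psi_t(z)|^{-N-2s}\big|_{t=0}=-(N+2s)\frac{[DY(x)(y-z)]\cdot(y-z)}{|y-z|^{N+2s+2}}.
\]
The derivative of the integrand is therefore exactly $(w(y)-w(z))(v(y)-v(z))\,\omega^x_Y(y,z)|y-z|^{-N-2s}$, so the left-hand side of \eqref{FS-int-01} equals $\frac{d}{dt}\mathcal E_s(w\circ\Psi_t^{-1},v\circ\Psi_t^{-1})|_{t=0}$. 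Passing the derivative inside is justified by dominated convergence. The difference quotients of the kernel are bounded by $C|y-z|^{-N-2s}$ uniformly for small $t$, since $\Psi_t$ is uniformly bi-Lipschitz. In addition, $|w(y)-w(z)||v(y)-v(z)|\,|y-z|^{-N-2s}\in L^1(\R^{2N})$ because $w,v\in C^\infty_0$.

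The second step computes the same derivative on the Fourier/operator side. Write $\mathcal E_s(f,g)=\int_{\R^N} f\,(-\Delta)^s g$ and set $w_t:=w\circ\Psi_t^{-1}$, $v_t:=v\circ\Psi_t^{-1}$. Since $\Psi_t^{-1}(y)=y-t\widetilde{Y_x}(y)+O(t^2)$ in $C^\infty$ on compact sets, we get $\partial_t w_t|_{t=0}=-\nabla w\cdot\widetilde{Y_x}$, and similarly for $v$. All functions stay supported in a fixed compact set for small $t$, and the family is smooth in $t$ with values in $C^2_c$. Hence by bilinearity
\[
\frac{d}{dt}\mathcal E_s(w_t,v_t)\Big|_{t=0}=-\int_{\R^N}\nabla w\cdot\widetilde{Y_x}\,(-\Delta)^s v\,dz-\int_{\R^N}\nabla v\cdot\widetilde{Y_x}\,(-\Delta)^s w\,dz.
\]
Here one uses that $(-\Delta)^s$ maps $C^2_c$ continuously into bounded functions with $(1+|z|)^{-N-2s}$ decay, which makes the derivative of $\int w_t(-\Delta)^s v_t$ computable termwise.

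Equating the two computations gives \eqref{FS-int-01}, and taking $w=v$ gives \eqref{FS-int}. The main obstacle is the rigorous interchange of the $t$-derivative and the integral in the first step. I would handle it as in the proof of \eqref{1st-lim}: the second-order remainder is controlled by $O(t^2)|y-z|^{-N-2s}$, the linear-map analogue of \eqref{expansion-k-t-notations}, which here is elementary because $\Psi_t$ is linear. This alternatively matches the special case of \cite[Lemma 2.1]{DFW2023} applied with $Y=\widetilde{Y_x}$, since then $\omega_{\widetilde{Y_x}}=\omega^x_Y$. Citing that lemma directly, while noting that $\widetilde{Y_x}$ is globally Lipschitz with zero second derivatives, gives a short alternative route.
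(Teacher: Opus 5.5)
Your argument is correct, but your main route differs from the paper's. The paper's proof is exactly the ``short alternative'' you mention at the end: it observes that $\omega^x_Y=\omega_{\widetilde{Y_x}}$, because $\div\widetilde{Y_x}\equiv\div Y(x)$ and $\widetilde{Y_x}(y)-\widetilde{Y_x}(z)=DY(x)(y-z)$. Hence the left-hand side is $E_{\widetilde{Y_x}}(w,v)$, and the paper then invokes \cite[Lemma 2.1]{DFW2023} for the globally Lipschitz field $\widetilde{Y_x}$. Your flow argument instead reproves that lemma in the special case of a linear field. You compute $\frac{d}{dt}\mathcal E_s(w\circ\Psi_t^{-1},v\circ\Psi_t^{-1})|_{t=0}$ in two ways: once after the change of variables, which gives the $\omega^x_Y$-weighted energy, and once by bilinearity, which gives the two transport terms. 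The paper's version is a one-line reduction, but it rests on the external lemma and on checking that its hypotheses cover an unbounded linear vector field. Yours is self-contained, and the linearity of $\Psi_t$ makes the interchange of derivative and integral elementary. The kernel difference quotient equals $|y-z|^{-N-2s}$ times a smooth function of $(t,(y-z)/|y-z|)$, so the uniform bound needed for dominated convergence is immediate, with no appeal to the general expansion \eqref{expansion-k-t-notations}.
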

\begin{proof}
For $x$ in $\Omega$, we consider the linear vector field $\widetilde{Y_x}$ defined in \eqref{defYtilde}. By construction, $\omega^x_{Y}=\omega_{\widetilde{Y_x}}$ in the notation of \eqref{defoY-notations}.
Therefore, the left-hand side in \eqref{FS-int-01} is precisely $\mathcal E_{\widetilde{Y_x}}(w,v)$ as defined in \eqref{duality-pairing}.

Since $\widetilde{Y_x}$ is globally Lipschitz and $w,v$ are compactly supported, we may apply \cite[Lemma 2.1]{DFW2023} with the vector field $\widetilde{Y_x}$ and obtain \eqref{FS-int-01}. Taking $w=v$ gives \eqref{FS-int}.
\end{proof}

\section{Estimates on the cut-off function $\xi_k$}
The following results are consequences of the estimates in \cite[Lemma 6.7 and Lemma 6.8]{DFW}. They were used in the proof of Lemma \ref{Lem.sec-4.3} and Lemma \ref{main-res-sec4.4}.
\begin{Lemma}\label{lem-crucial} Let $\Omega$ be a bounded open set of class $C^{1,1}$.
 Let $\rho$ in $C^\infty_c(-2,2)$ such that $0\leq \rho\leq 1$ and $\rho\equiv 1$ in $(-1,1)$. 
For any $k\in \mathbb N^*$ we recall \eqref{eta-k}. 
Then for all $x\in \Omega$, there holds:
\begin{align}\label{apendix2.claim1}
& |(-\Delta)^s\xi_k^2(x)|\leq C(N,s,\Omega)\delta_{\Omega}^{-2s}(x)\quad\text{and}\quad (-\Delta)^s\xi_k^2(x)\to 0\quad\text{as}\quad k\to\infty, 
%\\
%&\Big|\mathcal I_s[\xi_k^2,\xi_k](x)\Big|\leq C(N,s)\delta_{\Omega}^{-s}(x)\quad\text{and}\quad  \mathcal I_s[\xi_k,\xi_k](x)\to 0\quad \text{as}\quad k\to\infty.\label{78} \;\textcolor{red}{\text{To be re-checked carefully!!!}}
\end{align}
Let $h$ in $C^s(\overline{\Omega})$ and $h=0$ in $\R^N\setminus \Omega$. Then for all $x\in \Omega$, there holds
\begin{align}\label{apendix2.claim2}
&\int_{\R^N}\frac{|\xi_k(x)-\xi_k(y)||h(x)-h(y)|}{|x-y|^{N+2s}}dy \leq C(N,s,\Omega)\delta_{\Omega}^{-s}(x).
\end{align}
Moreover, we have 
\begin{align}\label{xi-k-limit}
\int_{\R^N}\frac{|\xi_k(x)-\xi_k(y)||h(x)-h(y)|}{|x-y|^{N+2s}}dy\rightarrow 0\quad \text{as}\quad k\to\infty.
\end{align}
\end{Lemma}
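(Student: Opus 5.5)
This lemma is the last statement of the appendix (Lemma \ref{lem-crucial}). The plan is to reduce both claims to the estimates of \cite[Lemma 6.7 and Lemma 6.8]{DFW}, after recording the structure of $\xi_k$. By \eqref{eta-k}, $\xi_k=1-\rho(k\delta)$. Hence $\xi_k\equiv 1$ on $\{\delta\ge 2/k\}$, $\xi_k\equiv 0$ on the strip $\{|\delta|<1/k\}$, and $\xi_k\equiv1$ again far in the exterior. On $\{1/k\le |\delta|\le 2/k\}$ we have $|\nabla\xi_k|\le Ck$ and $|D^2\xi_k|\le Ck^2$, using that $\delta$ is $C^{1,1}$ near $\partial\Omega$. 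The same holds for $\xi_k^2$. Since $0\le\xi_k\le1$, every claim is a pointwise bound in $x\in\Omega$, and we may split according to whether $x$ lies in the transition layer $\{\delta\le 2/k\}$ or not.

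\textbf{First estimate.} Write $1-\xi_k^2=:\zeta_k$, supported in $\{|\delta|<2/k\}$, so that $(-\Delta)^s\xi_k^2=-(-\Delta)^s\zeta_k$.
\begin{itemize}
\item If $\delta_\Omega(x)\ge 4/k$, then $\zeta_k(x)=0$ and
\[
|(-\Delta)^s\zeta_k(x)|\le C\int_{\{|\delta|<2/k\}}|x-y|^{-N-2s}\,dy\le C k^{-1}\delta_\Omega(x)^{-1-2s}\le C\delta_\Omega(x)^{-2s}.
\]
Here the integral is bounded by the tubular-neighbourhood volume, computed in layer coordinates. The same bound tends to $0$ as $k\to\infty$ for fixed $x$, which gives the convergence.
\item If $\delta_\Omega(x)<4/k$, so $k\lesssim\delta_\Omega(x)^{-1}$, split the integral at $|x-y|=1/k$. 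Near $x$, use the second-order Taylor bound $|D^2\zeta_k|\le Ck^2$ to get $Ck^2\cdot k^{-(2-2s)}=Ck^{2s}$. Away from $x$, use boundedness to get $Ck^{2s}$. Altogether $|(-\Delta)^s\xi_k^2(x)|\le Ck^{2s}\le C\delta_\Omega(x)^{-2s}$.
\end{itemize}
The constant is uniform in $k$, as claimed.

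\textbf{Second estimate.} This is the product-type bound \eqref{apendix2.claim2}. Use $|h(x)-h(y)|\le C|x-y|^s$, and also $|h(x)-h(y)|\le |h(x)|+|h(y)|$ with $|h(y)|\le C\delta_\Omega(y)_+^{s}$, which follows from $h\in C^s$ and $h=0$ outside $\Omega$.
\begin{itemize}
\item If $\delta_\Omega(x)\ge 4/k$, then $\xi_k(x)=1$, and the integrand vanishes unless $\delta(y)<2/k$, where $|x-y|\ge\delta_\Omega(x)/2$. The integral is therefore at most $C\int_{|x-y|>\delta_\Omega(x)/2}|x-y|^{s-N-2s}\,dy\le C\delta_\Omega(x)^{-s}$. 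Localizing to the layer gives the extra factor $k^{-1}\delta_\Omega(x)^{-1}$, so the integral tends to $0$ pointwise.
\item If $\delta_\Omega(x)<4/k$, split at $|x-y|=1/k$. Inside, use $|\xi_k(x)-\xi_k(y)|\le Ck|x-y|$ together with the H\"older bound, giving $Ck\cdot k^{-(1-s)}=Ck^{s}$. Outside, use $|\xi_k(x)-\xi_k(y)|\le1$, giving $Ck^{s}$. Since $k\lesssim\delta_\Omega(x)^{-1}$, this is at most $C\delta_\Omega(x)^{-s}$.
\end{itemize}
For fixed $x$, eventually $\delta_\Omega(x)\ge4/k$, so the first case applies and yields the convergence \eqref{xi-k-limit}.

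\textbf{Main obstacle.} The delicate point is the uniformity in $k$ of the Taylor bound for $(-\Delta)^s\xi_k^2$ inside the layer. This requires $D^2\delta$ to be bounded near $\partial\Omega$, and $C^{1,1}$ regularity gives only an $L^\infty$ bound, not continuity. I would therefore use the integral form of the second-order Taylor remainder, which needs only $\nabla\zeta_k$ Lipschitz with constant $Ck^2$. This is exactly how \cite[Lemma 6.7]{DFW} proceeds, and I would cite it for this step.
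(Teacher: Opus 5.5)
Your proof is correct, and it takes a genuinely different and more elementary route than the paper.

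\textbf{The paper's route.} The paper mollifies, working with $\upsilon_t*(-\Delta)^s\xi_k^2(x)$, and splits into two parts:
\begin{itemize}
\item an interior part, where $(-\Delta)^s\xi_k^2$ is bounded uniformly in $k$ and tends to $0$;
\item a boundary layer $\Omega^\varepsilon_+$, where it imports from \cite[Proposition 6.3]{DFW} the profile bound $|(-\Delta)^s\xi_k^2(\Psi(\sigma,r/k))|\le Ck^{2s}(1+r^{1+2s})^{-1}$ in normal coordinates.
\end{itemize}
Rescaling $r\mapsto r/k$ then gives both $C\delta^{-2s}(x)$ and $Ck^{-1}\delta^{-1-2s}(x)$. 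The second claim is only asserted to follow ``similarly'' from \cite{DFW}.

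\textbf{Your route.} You split according to whether $x$ lies within $4/k$ of $\partial\Omega$.
\begin{itemize}
\item Away from the layer, the kernel is nonsingular on $\supp(1-\xi_k^2)$. The thin-layer integral gives $Ck^{-1}\delta_\Omega(x)^{-1-2s}$, the same quantity the paper obtains.
\item Inside, you combine the scaling bound $\|(-\Delta)^s(1-\xi_k^2)\|_{L^\infty(\R^N)}\le Ck^{2s}$ with $k<4\,\delta_\Omega(x)^{-1}$. The scaling bound needs only $\operatorname{Lip}(\nabla\zeta_k)\le Ck^2$.
\item The same dichotomy, using $\operatorname{Lip}(\xi_k)\le Ck$ and the global $C^s$ bound on $h$, gives \eqref{apendix2.claim2} and \eqref{xi-k-limit} explicitly. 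The global $C^s$ bound holds because $h$ vanishes on $\partial\Omega$.
\end{itemize}

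\textbf{Comparison.} Your route is self-contained. It uses only the $C^{1,1}$ regularity of $\delta$ in a tubular neighbourhood, with no mollifier, no normal coordinates and no external profile estimate. Your closing appeal to \cite[Lemma 6.7]{DFW} is therefore unnecessary: the integral form of the Taylor remainder you describe already finishes the argument. The paper's route carries sharper information on how $(-\Delta)^s\xi_k^2$ decays across the layer, which this lemma does not need.

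\textbf{A minor simplification.} In your first case ($\delta_\Omega(x)\ge 4/k$), the uniform bound already follows from $C\int_{|x-y|>\delta_\Omega(x)/2}|x-y|^{-N-2s}\,dy=C\delta_\Omega(x)^{-2s}$. The limit then follows by dominated convergence, since $1-\xi_k^2\to 0$ off $\partial\Omega$. The same remark applies to the second estimate with exponent $N+s$. So the layer-coordinate computation is needed only if you want the explicit rate $k^{-1}$.
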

\begin{remark}\label{remark-Append}
Since $|\xi_k^2(x)-\xi_k^2(y)|\leq 2|\xi_k(x)-\xi_k(y)|$, it follows that 
\begin{align}\label{Eq1-rm-Append}
&\int_{\R^N}\frac{|\xi_k^2(x)-\xi_k^2(y)||h(x)-h(y)|}{|x-y|^{N+2s}}dy \leq C(N,s,\Omega)\delta_{\Omega}^{-s}(x).
\end{align}
and 
\begin{align}\label{xi-k2-limit}
\int_{\R^N}\frac{|\xi_k^2(x)-\xi_k^2(y)||h(x)-h(y)|}{|x-y|^{N+2s}}dy\rightarrow 0\quad \text{as}\quad k\to\infty.
\end{align}
\end{remark}
\begin{proof} 
This follows by adapting the argument used in \cite[Lemma 6.8, Proposition 6.3]{DFW}. We briefly sketch the proof and refer to \cite{DFW} for details. Let $0<\upsilon$ in $C^\infty_0(\R^N)$ such that $\int_{\R^N}\upsilon=1$ and let $\upsilon_t(\cdot):=t^{-N}\upsilon(\frac{\cdot}{t})$. Let $x$ in $\Omega$ be fixed. Then for $t>0$ sufficiently small, we may write
\begin{align}\label{Akeps+Bkeps}
\big[\upsilon_t*(-\Delta)^s\xi_k^2\big](x) &= \int_{\R^N}(-\Delta)^s\xi_k^2(y)\upsilon_t(x-y)dy\nonumber\\
&=\int_{\Omega}(-\Delta)^s\xi_k^2(y)\upsilon_t(x-y)dy\nonumber\\
&=\int_{\Omega\setminus\Omega^\varepsilon_+}(-\Delta)^s\xi_k^2(y)\upsilon_t(x-y)dy+\int_{\Omega^\varepsilon_+}(-\Delta)^s\xi_k^2(y)\upsilon_t(x-y)dy\nonumber\\
&:=A^\varepsilon_k(t,x)+B^\varepsilon_k(t,x)
\end{align}
where in the above we let  $\Omega^{\varepsilon}_+
:= \{x\in \Omega : 0< \dist(x,\partial\Omega)<\varepsilon\}$ for every $\varepsilon>0$ small.  Next, we claim that for $k$  sufficiently large, we have 
\begin{equation}\label{unif-away-bd}
    \|(-\Delta)^s\xi_k^2\|_{L^\infty(\Omega\setminus\Omega_+^\eps)}\leq C\qquad\text{and}\qquad (-\Delta)^s\xi_k^2(y)\to 0\quad\text{as $k\to\infty$},
\end{equation}
for some $C>0$ independent of $k$. Indeed, let $K\subset\Omega$ be a compact subset of $\Omega\setminus\Omega_+^\varepsilon$. Then  we have 
$$
\frac{1}{c_{N,s}}(-\Delta)^s\xi_k^2(x)=\int_{\R^N\setminus K}\frac{1-\xi_k^2(y)}{|x-y|^{N+2s}}dy\quad\text{for $x\in\Omega\setminus\Omega_+^\varepsilon$ and $k$ sufficiently large}
$$
where $|1-\xi_k^2(y)||x-y|^{-N-2s}\leq C(1+|y|^{-N-2s})$ for $x\in\Omega\setminus\Omega_+^\varepsilon$ and $k$ sufficiently large where $C$ is independent of $k$. Since $1-\xi^2_k\to 0$ almost everywhere in $\R^N$, the claim \eqref{unif-away-bd} follows. Consequently, for $k$ sufficiently large and $t$ sufficiently small, we have 
\begin{align}\label{Akeps-es}
\big|A^\varepsilon_k(t,x)\big|\leq    C\quad\text{for some $C>0$ that is independent of $k$ and $t$}
\end{align}
where we used that $\int_{\R^N}\upsilon_t=1$.
Moreover, there holds 
\begin{align}\label{limAkeps}
\lim_{k\to\infty}\lim_{t\to0^+}A^\varepsilon_k(t,x)=0.
\end{align}
To estimate $B^\varepsilon_k(t,x)$, we define the mapping $\Psi: \partial\Omega\times (0,+\varepsilon)\to \Omega^\varepsilon:=\{y\in\Omega:0<\delta(y)< \eps\}$ by 
$$
\Psi(\sigma,r) := \sigma-r\nu(\sigma),
$$ where $\nu$ denotes the outward unit normal to the boundary. Next, we change variables using the transformation above and write 
\begin{align}\label{Beps}
    B^\varepsilon_k(t,x) &= \int_{\Omega^\varepsilon_+}(-\Delta)^s\xi_k^2(y)\upsilon_t(x-y)dy\nonumber\\
    &=k^{-1}\int_{0}^{k\varepsilon}\int_{\partial\Omega}(-\Delta)^s\xi_k^2(\Psi(\sigma, \frac{r}{k}))\upsilon_t(x-\Psi(\sigma, \frac{r}{k}))j(\sigma, \frac{r}{k})drd\sigma
\end{align}
Quoting \cite[Proposition 6.3]{DFW} we know  that there exists $\varepsilon'>0$ with the property that
\begin{equation}\label{xisquare-es}
\Big|\big[(-\Delta)^s\xi_k^2\big]\big(\Psi(\sigma,\frac{r}{k})\big)
\Big| \leq  C\frac{k^{2s}}{1+r^{1+2s}}\leq C\frac{k^{2s}}{1+r^{2s}}\quad  \text{for}\;\; k\in \N,\;\; 0\leq r \leq k\varepsilon',\; \sigma\in\partial\Omega,
\end{equation}
for some $C>0$ independent of $k$. Indeed, we have $(-\Delta)^s\xi_k^2=-2(-\Delta)^s\rho_k+(-\Delta)^s\rho_k^2=-2(-\Delta)^s\xi_k+(-\Delta)^s(1-\rho_k^2)$. Then by \cite[Proposition 6.3]{DFW} we know $\big|\big[(-\Delta)^s\xi_k\big]\big(\Psi(\sigma,\frac{r}{k})\big)
\big| \leq C\frac{k^{2s}}{1+r^{2s}}$. But the same argument applies without any change to $(-\Delta)^s(1-\rho_k^2)(\Psi(\sigma,r/k))$ since $\rho^2_k$ and $\rho_k$ have the same regularity required for the proof.
\par\;
Plugging \eqref{xisquare-es} into \eqref{Beps} we get, for $k$ large enough and $t$ sufficiently small that 
\begin{align}\label{Bkeps-es}
|B^{\varepsilon'}_k(t,x)|&\leq \frac{C}{k}k^{2s}\int_{0}^{k\varepsilon'}\int_{\partial\Omega}\frac{\upsilon_t(x-\Psi(\sigma, r/k))}{1+r^{2s}}j(\sigma, \frac{r}{k})drd\sigma\nonumber\\
&\leq \frac{C}{k}\int_{0}^{k\varepsilon'}\int_{\partial\Omega}\frac{\upsilon_t(x-\Psi(\sigma, r/k))}{(r/k)^{2s}}j(\sigma, \frac{r}{k})drd\sigma\nonumber\\
&\leq C\int_{0}^{\varepsilon'}\int_{\partial\Omega}\frac{\upsilon_t(x-\Psi(\sigma, r))}{r^{2s}}j(\sigma, r)drd\sigma\nonumber\\
&\leq C\int_{\Omega^\varepsilon_+}\frac{\upsilon_t(x-y)}{\delta^{2s}(y)}dy \leq C\int_{\Omega}\frac{\upsilon_t(y)}{\delta^{2s}(x-y)}dy \nonumber\\
&\leq C\delta^{-2s}(x)
\end{align}
for some $C>0$ that is independent of $k$ and $t$. On the other hand, using instead the bound\\ $\big|\big[(-\Delta)^s\xi_k^2\big]\big(\Psi(\sigma,\frac{r}{k})\big)
\big| \leq  C\frac{k^{2s}}{1+r^{1+2s}}$, we also have 
\begin{align}\label{limBkeps}
|B^{\varepsilon'}_k(t,x)|&\leq \frac{C}{k}k^{2s}\int_{0}^{k\varepsilon'}\int_{\partial\Omega}\frac{\upsilon_t(x-\Psi(\sigma, r/k))}{1+r^{1+2s}}j(\sigma, \frac{r}{k})drd\sigma\nonumber\\
&\leq \frac{C}{k}\int_{0}^{k\varepsilon'}\int_{\partial\Omega}\frac{\upsilon_t(x-\Psi(\sigma, r/k))}{r(r/k)^{2s}}j(\sigma, \frac{r}{k})drd\sigma\nonumber\\
&\leq \frac{C}{k}\int_{0}^{\varepsilon'}\int_{\partial\Omega}\frac{\upsilon_t(x-\Psi(\sigma, r))}{r^{1+2s}}j(\sigma, r)drd\sigma\nonumber\\
&\leq \frac{C}{k}\int_{\Omega^\varepsilon_+}\frac{\upsilon_t(x-y)}{\delta^{1+2s}(y)}dy \leq \frac{C}{k}\int_{\Omega}\frac{\upsilon_t(y)}{\delta^{1+2s}(x-y)}dy \nonumber\\
&\leq \frac{C}{k}\delta^{-1-2s}(x)\to 0\quad\text{as $k\to\infty$}.
\end{align}
Combining \eqref{Akeps+Bkeps}, \eqref{Akeps-es}, \eqref{limAkeps}, \eqref{Bkeps-es} and \eqref{limBkeps} we end up with 
\begin{align}
 \big|\big[\upsilon_t*(-\Delta)^s\xi_k^2\big](x)\big|\leq C\delta^{-2s}(x)\qquad\text{and}\qquad \lim_{k\to\infty}\lim_{t\to 0^+}\big[\upsilon_t*(-\Delta)^s\xi_k^2\big](x)=0,
\end{align}
for some $C>0$ that is independent of $k$ and $t$. Now since 
$$
|(-\Delta)^s\xi_k^2(x)| =\lim_{t\to 0^+}\big|\big[\upsilon_t*(-\Delta)^s\xi_k^2\big](x) \big|,
$$
we conclude that 
$$
 |(-\Delta)^s\xi_k^2(x)|\leq C\delta^{-2s}(x)\quad\text{and}\quad (-\Delta)^s\xi_k^2(x)\to 0\quad\text{as}\quad k\to\infty, 
$$
which conclude the first claim \eqref{apendix2.claim1}. The second claim \eqref{apendix2.claim2} is proved similarly by using \cite[Proposition 6.8]{DFW} and the fact that $|\xi_k^2(x)-\xi_k^2(y)|\leq 2|\xi_k(x)-\xi_k(y)|$.

\end{proof}
\section{Estimates on the Green function}\label{est-green}
The following estimates on the Green function are repeatedly used throughout this manuscript.
Let $\Omega$ be a bounded open set of $\R^N$ of class $C^{1,1}$. Then for all $x,y\in\Omega$ with $x\neq y$, there holds (see e.g \cites{Tedeuz, CS})
\begin{equation}\label{green100}
G^s_\Omega(x,y)\le c_1\min\left\{\frac{1}{|x-y|^{N-2s}}, \frac{\delta^s(x)}{|x-y|^{N-s}}, \frac{\delta^s(y)}{|x-y|^{N-s}}\right\}, \quad \textup{ for a.e. } x, y \in \Omega,
\end{equation}
for some constant $c_1>0$. 

%%%%%%%%%%%%%%%%%%%%%%%%%%%%%
%

%

\begin{thebibliography}{99}

\bibitem{AROS}
A. Audrito, and X. Ros-Oton. "The Dirichlet problem for nonlocal elliptic operators with $C^{0,\alpha}$ exterior data." Proceedings of the American Mathematical Society 148.10 (2020): 4455-4470.
\bibitem{Abatangelo}
N. Abatangelo, \textit{Large $ s $-harmonic functions and boundary blow-up solutions for the fractional Laplacian.}, Discrete Contin. Dyn. Syst. 35 (2015), no. 12, 5555--5607. 
%\bibitem{Abdellaoui}
%B. Abdellaoui, et al. "Global fractional Calder\'on-Zygmund type regularity." arXiv preprint arXiv:2107.06535 (2021).

%K. Bogdan, and T. Jakubowski. \textit{Estimates of the Green function for the fractional Laplacian perturbed by gradient}. Potential Anal. 36 (2012), no. 3, 455--481.


%\bibitem{Bucur}
%C. Bucur, \textit{Some observations on the Green function for the ball in the fractional Laplace framework.} Commun. Pure Appl. Anal. 15 (2016), no. 2, 657--699.



%\bibitem{DalibardVaret}
%A. L. Dalibard, and D. G\'erard-Varet, \textit{On shape optimization problems involving the fractional Laplacian.} ESAIM Control Optim. Calc. Var. 19 (2013), no. 4, 976--1013.
%\bibitem{Bogdan2011}
%K. Bogdan, and T. Jakubowski. \textit{Estimates of the Green function for the fractional Laplacian perturbed by gradient}. Potential Anal. 36 (2012), no. 3, 455--481.
\bibitem{Zolesio}
M. Delfour and J. Zolesio, \textit{Shapes and Geometries. Analysis, Differential Calculus, and Optimization,} Advances in Design and Control, Vol. 4, Society for Industrial and Applied Mathematics (SIAM), Philadelphia, PA, 2001.
%\bibitem{KN}
%N. De Nitti, and T. K\"onig. \textit{Critical functions and blow-up asymptotics for the fractional Brezis–Nirenberg problem in low dimension}. Calculus of variations and partial differential equations, 62(4), p.114. 2023
%\bibitem{Dias-et-al}
%J. I. Diaz, D. Gómez-Castro, and J. L. Vázquez. "The fractional Schr\"dinger equation with general nonnegative potentials. The weighted space approach." Nonlinear Analysis 177 (2018): 325-360.

\bibitem{DFW} 
S. M. Djitte, M. M. Fall, and T. Weth, \textit{A fractional Hadamard formula and applications.} Calc. Var. Partial Differential Equations 60 (2021), no. 6, Paper No. 231, 31 pp.

\bibitem{DFW2023}
S. M. Djitte, M. M. Fall and T. Weth, \textit{A generalized fractional Pohozaev identity and applications.} Adv. Calc. 0 (2023).

\bibitem{Sidy-Franck-BP}
S. M. Djitte, and F. Sueur. "A Brezis and Peletier type result for the fractional Robin function." Potential Analysis 64.1 (2026): 21.%\textcolor{red}{To be completed}}

%\bibitem{Sidy-Franck-RKHS}
%S. M. Djitte and F. Sueur, \textit{On Lions' formula for RKHS of $s$-harmonic functions}, Preprint 2024. %\textcolor{red}{To be completed}}

\bibitem{Fall}
 M. M. Fall, \textit{Regularity estimates for nonlocal Schr\"odinger equations}.  Discrete \& Continuous Dynamical Systems - A, 2019, 39 (3) : 1405-1456. doi: 10.3934/dcds.2019061
%\bibitem{ELPL}
%M. Englis, D. Lukkassen, J. Peetr, and L. E.  Persson, \textit{On the formula of Jacques-Louis Lions for reproducing kernels of harmonic and other functions.}  J. Reine Angew. Math. 570 (2004), 89--129. 



%\bibitem{FGMP}
%M. M. Fall, M. Ghimenti, A. M. Micheletti, and  A. Pistoia (2023), \textit{Generic properties of eigenvalues of the fractional Laplacian.}  Calculus of Variations and Partial Differential Equations, 62(8), 233.

\bibitem{FallSven}
M. M. Fall, and S. Jarohs, \textit{Gradient estimates in fractional Dirichlet problems.}  Potential Anal. 54 (2021), no. 4, 627--636. 


 \bibitem{FRO}
X. Fern\'andez-Real, and X. Ros-Oton, \textit{Integro-Differential Elliptic Equations }(2023).  Progress in Mathematics, Birkh\"auser Cham.


\bibitem{Oz}
D. Fujiwara and S. Ozawa, \textit{The Hadamard variational formula for the Green functions of some normal elliptic boundary value problems.} Proc. Japan Acad. Ser. A Math. Sci. 54 no. 8 (1978), 215-220.
\bibitem{Garabedian}
P. R. Garabedian, and M. Schiffer. "Convexity of domain functionals." Journal d'Analyse Math\'ematique 2.2 (1952): 281-368.

%\bibitem{Garafalo}
%N. Garofalo "Fractional thoughts." arXiv preprint arXiv:1712.03347 (2017).
%\bibitem{GMP}
%M. Ghimenti, A. M. Micheletti, and A. Pistoia (2023), \textit{Generic properties of eigenvalues of the fractional Laplacian.} arXiv preprint arXiv:2304.07335.



%\bibitem{Grubb-2020} 
%G. Grubb, \textit{Exact Green's formula for the fractional Laplacian and perturbations.} Math. Scand. 126 (2020), no. 3, 568--592.

%\bibitem{G}
%B. Gustafsson, \textit{On the convexity of a solution of Liouville ’s equation}. Duke Math. J., 60 (1990), pp. 303-311.

\bibitem{Hadamard}
 J. Hadamard, \textit{M\'emoire sur le probleme d’analyse relatif à  l’equilibre des plaques \'elastiques encastr\'ees.} Oeuvres, C. N. R. S., 2, Anatole France, 1968, 515-631.
  

\bibitem{HenrotPierre}
A. Henrot, and M. Pierre, \textit{Shape variation and optimization,} EMS Tracts in Mathematics, 28. European Mathematical Society (EMS), Z\"urich, 2018. xi+365 pp. ISBN: 978-3-03719-178-1 MR379.
\bibitem{SvenSaldanaTobias}
S. Jarohs, A. Saldana, and T. Weth, \textit{A new look at the fractional Poisson problem via the logarithmic Laplacian.} Journal of Functional Analysis 279.11 (2020): 108732.
\bibitem{KW}
 M. Kim, and M. Weidner. \textit{"Optimal boundary regularity and Green function estimates for nonlocal equations in divergence form."} arXiv preprint \url{https://arxiv.org/pdf/2408.12987} (2024).
 \bibitem{Kozono-Ushikoshi}
H. Kozono, and E. Ushikoshi, "\textit{Hadamard variational formula for the Green’s function of the boundary value problem on the Stokes equations.}" Archive for Rational Mechanics and Analysis 208.3 (2013): 1005-1055.
\bibitem{Tedeuz}
T. Kulczycki, \textit{Properties of Green function of symmetric stable processes,} Probab. Math. Statist. 17 (1997), 339-364.



%\bibitem{O.Rey}
%O. Rey, \textit{The role of the Green's function in a non-linear elliptic equation involving the critical Sobolev exponent.} J. Funct. Anal. 89.1 (1990): 1-52.

\bibitem{RS}
X. Ros-Oton, and J. Serra, \textit{The Dirichlet problem for the fractional Laplacian: regularity up to the boundary.}  J. Math. Pures Appl. (9) 101 (2014), no. 3, 275--302.

%\bibitem{RO-a}
%X. Ros-Oton, and J. Serra, \textit{The Pohozaev identity for the fractional Laplacian.}  Arch. ratio. Mech. Anal. 213 (2014), no. 2, 587--628.


\bibitem{Sokolowski}
L. Sokolowski, et al, \textit{Introduction to shape optimization.} Springer Berlin Heidelberg, 1992.
\bibitem{Schiffer}
M. Schiffer, "\textit{Hadamard's formula and variation of domain-functions.}" American Journal of Mathematics 68.3 (1946): 417-448.
\bibitem{CS} 
Z. Q. Chen, R. Song, \textit{Estimates on Green functions and Poisson kernels for symmetric stable processes.} Math. Ann. 312 (1998), no. 3, 465--501.
\bibitem{Sylvestre}
L. Silvestre. "Regularity of the obstacle problem for a fractional power of the Laplace operator." Communications on Pure and Applied Mathematics: A Journal Issued by the Courant Institute of Mathematical Sciences 60.1 (2007): 67-112.
\bibitem{Ushikoshi}
E. Ushikoshi, "\textit{Hadamard variational formula for the Green function for the velocity and pressure of the Stokes equations.}" Indiana University Mathematics Journal (2013): 1315-1379.

\end{thebibliography}
\end{document}